\newcommand{\p}{{\partial}}
\newcommand{\lsemioplus}{\mathbin{\mbox{$\lefteqn{\hspace{.77ex}\rule{.4pt}{1.2ex}}{\in}$}}}
\newtheorem{theorem}{Theorem}
\newtheorem{lemma}[theorem]{Lemma}
\newtheorem{corollary}[theorem]{Corollary}
\newtheorem{proposition}[theorem]{Proposition}
\newtheorem*{proposition*}{Proposition}
{\theoremstyle{definition}
\newtheorem{definition}[theorem]{Definition}

\newtheorem{remark}[theorem]{Remark}

\newtheorem*{notation}{Notation and convention}
}
\newcommand{\todo}[1][\null]{\ensuremath{\clubsuit}}
\newcommand{\noprint}[1]{}
\begin{document}
\par\noindent {\LARGE\bf
Admissible transformations and Lie symmetries\\ of linear systems of second-order ordinary differential\\ equations
\par}

\vspace{4mm}\par\noindent
{\large Vyacheslav M.\ Boyko$^{ab}$, Oleksandra V.\ Lokaziuk$^{ac}$ and Roman O.\ Popovych$^{ade}$
\par\vspace{2mm}\par}

\par\noindent{\it\small
$^a$\,Institute of Mathematics of NAS of Ukraine, 3 Tereshchenkivska Str., 01024 Kyiv, Ukraine
}

\par\smallskip\par\noindent{\it\small
$^b$\,Department of Mathematics, Kyiv Academic University, 36 Vernads'koho Blvd, 03142 Kyiv, Ukraine
\par}

\par\smallskip\par\noindent{\it\small
$^c$\,Borys Grinchenko Kyiv Metropolitan University, 18/2 Bulvarno-Kudryavska Str., 04053 Kyiv, Ukraine
\par}

\par\smallskip\par\noindent{\it\small
$^d$\,Mathematical Institute, Silesian University in Opava, Na Rybn\'\i{}\v{c}ku 1, 746 01 Opava, Czech Republic
\par}

\par\smallskip\par\noindent {\it\small
$^e$\,Fakult\"at f\"ur Mathematik, Universit\"at Wien, Oskar-Morgenstern-Platz 1, 1090 Wien, Austria
}

{\vspace{2mm}\par\noindent {\it
\textup{E-mail:} boyko@imath.kiev.ua, o.lokaziuk@kubg.edu.ua, rop@imath.kiev.ua
}\par}

\vspace{7mm}\par\noindent\hspace*{10mm}\parbox{140mm}{\small
We revisit the results on admissible transformations
between normal linear systems of second-order ordinary differential equations
with an arbitrary number of dependent variables
under several appropriate gauges of the arbitrary elements parameterizing these systems.
For each class from the constructed chain of nested gauged classes of such systems, we single out its singular subclass,
which appears to consist of systems being similar to the elementary (free particle) system
whereas the regular subclass is the complement of the singular one.
This allows us to exhaustively describe the equivalence groupoids of the above classes
as well as of their singular and regular subclasses.
Applying various algebraic techniques, we establish principal properties of Lie symmetries of the systems under consideration
and outline ways for completely classifying these symmetries.
In particular, we compute the sharp lower and upper bounds for the dimensions of the maximal Lie invariance algebras possessed by systems
from each of the above classes and subclasses.
We also show how equivalence transformations and Lie symmetries can be used
for reduction of order of such systems and their integration.
As an illustrative example of using the theory developed,
we solve the complete group classification problems for all these classes
in the case of two dependent variables.\looseness=-1
}\par\vspace{7mm}

\noprint{

MSC: 34C14 (Primary) 34A30, 34A26 (Secondary)
34-XX Ordinary differential equations
  34Axx	General theory for ordinary differential equations
   34A30 Linear ordinary differential equations and systems, general
   34A26 Geometric methods in ordinary differential equations
 34Cxx Qualitative theory [See also 37-XX]
   34C14 Symmetries, invariants
}

\noindent
{\small\emph{Keywords}:
group classification of differential equations,
linear systems of second-order ordinary differential equations,
algebraic method of group classification,
Lie symmetry,
equivalence groupoid,
normalized class of differential equations,
equivalence group,
equivalence algebra

}

\section{Introduction}\label{sec:Introduction}

Transformational properties and Lie symmetries of ordinary differential equations are classical objects of study
\cite{berk2002A,blum2002A,blum1989A,ibra1992a,ibra1999A,ibra2010A,lie1893A,maho2007a,olve1993A,olve1995A,schw2000a,schw2008A,wilc1906A}
but this is not the case for normal systems of ordinary differential equations, not to mention general systems of such equations.
The most studied are the systems of ordinary differential equations of the same order~$r$.
First reasonable upper bounds of the dimensions of maximal Lie invariance algebras of these systems
were derived in~\cite{gonz1983a,gonz1985a}.
The least upper bounds were initially found exclusively for the particular cases
$r=2$ (see
\cite[pp.~68--69, Theorem~44]{mark1960A} for the preliminary consideration
as well as \cite[Sections~4 and~5]{gonz1983a} and \cite{fels1993A,fels1995a} for further enhancements)
and~$r=3$~\cite{fels1993A}.
These bounds are equal to $n^2+4n+3$ and $n^2+3n+3$, and, up to the general point equivalence,
they are attained only on the elementary systems
${\rm d}^2\boldsymbol x/{\rm d}t^2=\boldsymbol0$ and ${\rm d}^3\boldsymbol x/{\rm d}t^3=\boldsymbol0$, respectively~\cite{fels1993A}.
Here and in what follows $n$ is the number of equations in systems, $n\geqslant2$,
and $\boldsymbol x(t)=\big(x^1(t),\dots,x^n(t)\big){}^{\mathsf T}$
is the unknown vector-valued function of the independent variable~$t$.
See also \cite{gonz1988d} for the linear systems with $r=2$,
\cite{amin2006a,amin2010a,
merk2006a
} for the general systems with $r=2$
and~\cite{medv2011a} for $r=3$.
So-called fundamental invariants for systems with $r=2$ were first computed in~\cite{fels1993A,fels1995a}.
Essentially later, such invariants for $r=3$ and $r\geqslant4$ were obtained in~\cite{medv2011a} and~\cite{doub2014b},
respectively.
Results of the latter paper imply
that the least upper bound of the dimensions of maximal Lie invariance algebras of systems with fixed $r\geqslant4$
is equal to $n^2+rn+3$, and, up to the general point equivalence,
it is attained only on the elementary system ${\rm d}^r\boldsymbol x/{\rm d}t^r=\boldsymbol0$.
For single ordinary differential equations, i.e., $n=1$,
the analogous least upper bounds were computed by Sophus Lie himself for arbitrary~$r$ \cite[S.~294--301]{lie1893A}
although this Lie's result was not well known and was repeatedly re-obtained;
see the discussion in the introduction of~\cite{boyk2015a}.

As a summary of the above results, the following theorem holds true.

\begin{theorem}\label{thm:SODEsMaxDim}
For an arbitrary $n\in\mathbb N$,
the least upper bound of the dimensions of maximal Lie invariance algebras
of normal systems of $n$ ordinary differential equations of the same order~$r$ is equal to
$n^2+4n+3$ if $r=2$ and $n^2+rn+3$ if $r\geqslant3$.
This bound is attained only by systems that are similar with respect to point transformations to
the elementary system ${\rm d}^r\boldsymbol x/{\rm d}t^r=\boldsymbol0$.
\end{theorem}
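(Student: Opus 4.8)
The plan is to prove this theorem as a consolidation of existing results, so the strategy is to assemble and reconcile the literature rather than to argue from scratch. The statement naturally splits into two assertions: first, the numerical value of the least upper bound (namely $n^2+4n+3$ for $r=2$ and $n^2+rn+3$ for $r\geqslant3$); and second, the rigidity claim that this bound is attained, up to point equivalence, only on the elementary system ${\rm d}^r\boldsymbol x/{\rm d}t^r=\boldsymbol0$. I would handle these two assertions case by case in $r$, since the cited sources partition precisely along the values $r=1$, $r=2$, $r=3$, and $r\geqslant4$.

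First I would dispose of the elementary system itself by a direct computation of its maximal Lie invariance algebra. The symmetries of ${\rm d}^r\boldsymbol x/{\rm d}t^r=\boldsymbol0$ can be found by integrating the determining equations explicitly: the general solution is $\boldsymbol x(t)=\sum_{k=0}^{r-1}\boldsymbol c_k t^k$, so the point symmetry group consists of projective-type transformations in $(t,\boldsymbol x)$-space, and one reads off the dimension as $n^2+rn+3$ for $r\geqslant3$ and the enlarged value $n^2+4n+3$ for $r=2$ (the extra $n$ generators for $r=2$ come from the additional symmetries $t\,\p_{x^i}$, reflecting the well-known exceptional richness of the second-order case). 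This establishes that the claimed bound is \emph{attained}, giving the lower estimate for the supremum.

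The substantive content is the upper estimate together with the uniqueness of the maximizer, and here I would invoke the cited results directly rather than reprove them. For $r=2$ the sharp bound and the characterization of the extremal system go back to the work recorded in \cite{mark1960A} and were completed in \cite{fels1993A,fels1995a}; for $r=3$ the analogous statement is in \cite{fels1993A,medv2011a}; and for $r\geqslant4$ it follows from the computation of the fundamental invariants in \cite{doub2014b}, whose vanishing forces the system to be point-equivalent to the free one. The case $n=1$ for general $r$ is Lie's original result \cite[S.~294--301]{lie1893A}. I would state these citations as the justification for the upper bound and the rigidity in each range, and then argue that they glue into the single uniform formula asserted in the theorem.

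The main obstacle is not any single calculation but the \emph{reconciliation of conventions across heterogeneous sources}: the cited papers use differing normalizations (some count the algebra of the system, some the algebra modulo the trivial/linear-superposition part), differing hypotheses on $n$, and differing notions of equivalence (point versus contact, or fibre-preserving versus general). The careful step is therefore to verify that each quoted bound is expressed in terms of the \emph{same} invariant — the dimension of the maximal Lie invariance algebra under general point equivalence — so that the four cases genuinely concatenate into one formula, and that the uniqueness clauses are all stated relative to point (not merely linear) similarity. Once the conventions are aligned, the theorem is immediate; the genuine mathematical work has already been done in the references, and what remains is to present it as a single coherent statement.
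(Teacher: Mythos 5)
Your proposal matches the paper's treatment exactly: Theorem~\ref{thm:SODEsMaxDim} is presented there purely as ``a summary of the above results'', with the bound and the rigidity claim assembled case by case ($r=2$ from \cite{mark1960A,gonz1983a,fels1993A,fels1995a}, $r=3$ from \cite{fels1993A,medv2011a}, $r\geqslant4$ from \cite{doub2014b}, $n=1$ from \cite{lie1893A}) and no independent proof given, so your final reconciliation-of-conventions step is indeed where all the residual work lies. One small correction to your attainment computation: for $r=2$ the surplus of $n^2+4n+3$ over the generic value $n^2+rn+3$ is $2n$, not $n$, and it is carried by the non-fibre-preserving generators $x^a\p_t$ and $tx^a\p_t+x^ax^c\p_{x^c}$ of the full projective algebra $\mathfrak{sl}(n+2,\mathbb F)$ rather than by $t\p_{x^a}$, which are present for every $r\geqslant2$ among the solution-shift symmetries $t^s\p_{x^a}$, $s=0,\dots,r-1$.
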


We further discuss the linear system of the above kind with $r\geqslant3$
in Section~\ref{sec:OnGeneralizationToArbitraryEqOrder}.
Note that the study of equivalence problem for classes of linear system of ordinary differential equations with $n>1$,
the computation of equivalence groups of such classes
and the construction of differential invariants of subgroups of these groups
was initiated by Wilczynski~\cite{wilc1906A}.
Problems and results in the above framework are known to have a geometric interpretation.
In particular, the group classification of linear normal systems of $n$ ordinary differential equations of order~$r$
is equivalent to the classification of curves in the Grassmann variety~${\rm Gr}_n(\mathbb F^{rn})$
over the complex or real field~$\mathbb F$, see, e.g.,~\cite{seas1993a}.

Normal systems of $n$ ordinary differential equations of the same order~$r=2$ over the field~$\mathbb F$,
\begin{gather}\label{SODEs}
\boldsymbol x_{tt}=\boldsymbol F(t,\boldsymbol x,\boldsymbol x_t),
\end{gather}
are the most studied in other aspects as well;
here
$\boldsymbol x_t ={\rm d}\boldsymbol x/{\rm d}t$,
$\boldsymbol x_{tt}={\rm d}^2\boldsymbol x/{\rm d}t^2$,
and $\boldsymbol F$ is an arbitrary (sufficiently smooth) vector-valued function of $(t,\boldsymbol x,\boldsymbol x_t)$.
They are the systems of ordinary differential equations that naturally arise in a number of contexts
in various fields of mathematics and its applications,
which includes equations for geodesics on manifolds in differential geometry, the calculus of variations,
Newtonian equations of motion in classical mechanics, e.g., in the classical $n$-body problem, to mention a few.
Some Lie-symmetry and transformational properties of systems of the form~\eqref{SODEs}
were described due to the relation of such systems to certain (Riemannian, parabolic, Cartan, etc.)
geometries when studying these geometries,
see, e.g., \cite{case2013a,krug2017a} and references therein.
Thus, these results imply the following theorem via applying the method of filtered deformation~\cite{krug2021z}.

\begin{theorem}\label{thm:SODEsSubMaxDim}
\looseness=-1
The submaximum dimension of the maximal Lie invariance algebras of systems of the form~\eqref{SODEs} with $n\geqslant2$
is equal to $n^2+5$,
and it is attained only by systems of the form~\eqref{SODEs}
that are similar with respect to  point transformations in the space $\mathbb F_t\times\mathbb F^n_{\boldsymbol x}$
to the (nonlinearizable) system
\begin{gather}\label{eq:SODECanonicalSystemOfSubmaxDimOfMIA}
x^1_{tt}=(x^2_t)^3,\quad x^2_{tt}=0,\quad \dots,\quad x^n_{tt}=0.
\end{gather}
\end{theorem}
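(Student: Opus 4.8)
The plan is to recognize Theorem~\ref{thm:SODEsSubMaxDim} as an instance of the \emph{gap} (submaximal symmetry) problem for a parabolic geometry and to resolve it by the method of filtered deformations, following~\cite{krug2021z}. First I would make precise the classical correspondence, refined in~\cite{case2013a,krug2017a}, between normal systems of the form~\eqref{SODEs} and a parabolic geometry whose local symmetries realize exactly the Lie point symmetries of the system. Since by Theorem~\ref{thm:SODEsMaxDim} the maximal dimension $n^2+4n+3=(n+2)^2-1=\dim\mathfrak{sl}(n+2,\mathbb F)$ is attained uniquely, up to point equivalence, by the free particle $\boldsymbol x_{tt}=\boldsymbol0$, the flat model must be this system, with structure group $G=\mathrm{PGL}(n+2,\mathbb F)$ and $P$ the parabolic subgroup stabilizing a flag of type $(1,n+1)$. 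The associated $|2|$-grading has $\dim\mathfrak g_{-2}=1$ and $\dim\mathfrak g_{-1}=2n$, so $\dim\mathfrak g_-=2n+1$ matches the dimension of the first-jet space on which the geometry lives, and the reductive part is $\mathfrak g_0\cong\mathfrak{gl}(n,\mathbb F)\oplus\mathbb F$, acting as the linearization of point transformations.

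The core of the argument is then cohomological. By Kostant's version of the Bott--Borel--Weil theorem, the harmonic curvature of such a geometry takes values in a completely reducible $\mathfrak g_0$-module $H=H^2(\mathfrak g_-,\mathfrak g)$ (positive-homogeneity part), whose irreducible components I would decompose explicitly. The flat (free-particle) geometry is precisely the one with identically vanishing harmonic curvature, so every system not point-equivalent to the free particle has $\kappa_H\not\equiv0$. Following~\cite{krug2021z}, the maximal symmetry dimension of any nonflat geometry of this type is bounded above by the universal prolongation bound
\[
\mathfrak S=\max_{0\ne\phi\in H}\dim\mathfrak a^{\phi},\qquad
\mathfrak a^{\phi}=\mathfrak g_-\oplus\mathrm{ann}_{\mathfrak g_0}(\phi)\oplus\big(\mathfrak a^{\phi}\big)_+,
\]
where $\big(\mathfrak a^{\phi}\big)_+$ denotes the Tanaka prolongation of $\mathrm{ann}_{\mathfrak g_0}(\phi)$ into the positive part of~$\mathfrak g$. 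Thus the problem reduces to finding, among all nonzero harmonic curvatures, the one whose annihilator in $\mathfrak g_0$ prolongs to the subalgebra of largest dimension.

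Carrying out this extremal computation is where I expect the main difficulty to lie. For each irreducible component of $H$ one selects the most degenerate nonzero orbit (typically a lowest-weight line), computes its annihilator in the reductive algebra $\mathfrak g_0$, and then evaluates the Tanaka prolongation; comparing the resulting dimensions across all components should yield the value $n^2+5$. One checks that the gap to the maximum is $4n-2>0$ for $n\geqslant2$, so the bound is genuinely submaximal. For $n=1$ the relevant $\mathfrak{sl}(n)$-factor degenerates and the representation-theoretic count changes, which is why the statement is correctly restricted to $n\geqslant2$.

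It remains to establish sharpness and to pin down the extremal system. I would verify by a direct computation that the system~\eqref{eq:SODECanonicalSystemOfSubmaxDimOfMIA} admits a maximal Lie invariance algebra of dimension exactly $n^2+5$, and confirm that it is nonlinearizable, so that it is not point-equivalent to the free particle and indeed belongs to the regular subclass. Finally, matching the harmonic curvature of~\eqref{eq:SODECanonicalSystemOfSubmaxDimOfMIA} with the extremal orbit isolated above, the prolongation-rigidity part of the filtered-deformation method forces any system attaining $n^2+5$ to have harmonic curvature of this distinguished type at generic points, hence to be point-equivalent, in the space $\mathbb F_t\times\mathbb F^n_{\boldsymbol x}$, to~\eqref{eq:SODECanonicalSystemOfSubmaxDimOfMIA}. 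This yields both the value of the submaximum and the uniqueness claim, completing the proof.
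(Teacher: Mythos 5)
Your overall strategy is the same one the paper itself relies on: Theorem~\ref{thm:SODEsSubMaxDim} is not proved in the paper but is imported from the parabolic-geometry literature (\cite{case2013a,krug2017a}) together with the filtered-deformation argument of~\cite{krug2021z}, i.e., exactly the gap-problem machinery you outline (Kostant's theorem for the harmonic curvature module, the bound $\mathfrak S=\max_{0\ne\phi}\dim\mathfrak a^{\phi}$, sharpness via the explicit model). However, your setup contains a concrete error: you identify the wrong parabolic. For $n\geqslant2$ a system of the form~\eqref{SODEs} corresponds to a regular normal parabolic geometry of type $\big(\mathrm{SL}(n+2,\mathbb F),P_{1,2}\big)$, the stabilizer of a flag of type $(1,2)$ (a line inside a plane), with block sizes $(1,1,n)$; the induced grading has $\dim\mathfrak g_{-1}=n+1$ (the line field spanned by the total derivative plus the $n$-dimensional vertical distribution on the first-jet space) and $\dim\mathfrak g_{-2}=n$. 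What you describe --- flag type $(1,n+1)$, $\dim\mathfrak g_{-2}=1$, $\dim\mathfrak g_{-1}=2n$ --- is the contact grading of $\mathfrak{sl}(n+2,\mathbb F)$, the geometry of Lagrangian contact structures, which coincides with the ODE geometry only for $n=1$. Since Kostant's computation of $H^2(\mathfrak g_-,\mathfrak g)$, the choice of extremal curvature orbit, its annihilator in $\mathfrak g_0$ and the Tanaka prolongation all depend on the grading as a $\mathfrak g_0$-module (the agreement of $\dim\mathfrak g_-=2n+1$ and of the abstract isomorphism type of $\mathfrak g_0$ is a coincidence of dimensions only), running your program on the $(1,n+1)$-parabolic computes the submaximal symmetry dimension of a different geometry and will not return $n^2+5$.

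A second, independent gap is the uniqueness clause: that $n^2+5$ is attained \emph{only} by systems point-equivalent to~\eqref{eq:SODECanonicalSystemOfSubmaxDimOfMIA}. Your closing sentence asserts that ``prolongation rigidity forces'' this, but knowing that a submaximally symmetric structure has harmonic curvature in the distinguished orbit at generic points, with annihilator of maximal prolongation, does not by itself pin down the structure up to equivalence; one must classify all filtered deformations of the extremal pair (symbol, annihilator) and show that, up to equivalence, exactly one of them realizes the bound. That classification is precisely the content of the filtered-deformation argument the paper attributes to~\cite{krug2021z}; the published results give the value $n^2+5$ and exhibit the model, while the ``only'' in the theorem requires this additional step, which your proposal names but does not carry out.
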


The system~\eqref{eq:SODECanonicalSystemOfSubmaxDimOfMIA} was presented in~\cite[Example~1]{case2013a} for $n=2$ 
and in~\cite[Proposition~5.3.2]{krug2017a} for general $n\geqslant2$, 
see also~\cite[Section~5.2]{krug2022a}.
A basis of the maximal Lie invariance algebra of the system~\eqref{eq:SODECanonicalSystemOfSubmaxDimOfMIA} consists of the vector fields
\begin{gather*}
\p_{x^a},\quad t\p_{x^a}+\tfrac32\delta_{a2}(x^2)^2\p_{x^1},\quad x^b\p_{x^c},\,_{b\ne1,\,c\ne2},\quad 3x^1\p_{x^1}+x^2\p_{x^2},
\\
\p_t,\quad t\p_t-x^1\p_{x^1},\quad t^2\p_t+tx^d\p_{x^d}+\tfrac12(x^2)^3\p_{x^1}.
\end{gather*}
Here and in what follows $a,b,c,d=1,\dots,n$, the summation convention is used for repeated indices,
and $\delta_{a2}$ denotes the Kronecker delta.
Although the system~\eqref{eq:SODECanonicalSystemOfSubmaxDimOfMIA} is not linearizable,
integrating it reduces to integrating (elementary) linear systems. 

Moreover, a similar theorem holds for the next possible less dimension $n^2+4$~\cite{krug2021z}.

\begin{theorem}\label{thm:SODEsSubSubMaxDim}\looseness=-1
Up to the point equivalence, there exists a unique system of the form~\eqref{SODEs} with $n\geqslant2$,
whose maximal Lie invariance algebra is $(n^2+4)$-dimensional.
This is, e.g., the linear system
\begin{gather}\label{eq:CanonicalSystemOfMaxDimOfMIAinL1}
x^1_{tt}=x^2,\quad x^2_{tt}=0,\quad \dots,\quad x^n_{tt}=0.
\end{gather}
\end{theorem}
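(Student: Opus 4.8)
The plan is to split the statement into two parts, the determination of the symmetry dimension of the specific model~\eqref{eq:CanonicalSystemOfMaxDimOfMIAinL1} and the uniqueness up to point equivalence, and to obtain the second part from the same filtered-deformation (parabolic-geometry) analysis of~\cite{krug2021z} that underlies Theorems~\ref{thm:SODEsMaxDim} and~\ref{thm:SODEsSubMaxDim}.

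First I would compute the maximal Lie invariance algebra of~\eqref{eq:CanonicalSystemOfMaxDimOfMIAinL1} directly. Writing the system as $\boldsymbol x_{tt}=A\boldsymbol x$ with the constant nilpotent matrix $A=(\delta_{i1}\delta_{j2})_{i,j=1}^n$ and integrating the (routine, constant-coefficient) determining equations, one finds that the algebra is spanned by the $2n$ superposition fields $\boldsymbol s(t)\cdot\p_{\boldsymbol x}$, where $\boldsymbol s$ runs through a basis of the $2n$-dimensional solution space; the $n^2-2n+2$ fields $(M\boldsymbol x)\cdot\p_{\boldsymbol x}$ with $M$ in the centralizer of $A$ in $\mathfrak{gl}(n,\mathbb F)$; the translation $\p_t$; and the single scaling $t\p_t+2x^1\p_{x^1}$. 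This gives $2n+(n^2-2n+2)+1+1=n^2+4$ independent symmetries. That there are no further ones is also forced by Theorems~\ref{thm:SODEsMaxDim} and~\ref{thm:SODEsSubMaxDim}: a larger dimension would have to be $n^2+5$ or $n^2+4n+3$, making~\eqref{eq:CanonicalSystemOfMaxDimOfMIAinL1} point-equivalent either to the nonlinearizable system~\eqref{eq:SODECanonicalSystemOfSubmaxDimOfMIA} or to the free-particle system; the former is impossible since~\eqref{eq:CanonicalSystemOfMaxDimOfMIAinL1} is linear and linearizability is point-invariant, and the latter is impossible since~\eqref{eq:CanonicalSystemOfMaxDimOfMIAinL1} has nonzero harmonic curvature (cf.\ below) and so is not point-equivalent to the flat system. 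Hence its algebra is exactly $(n^2+4)$-dimensional, which proves the existence part and, as $n^2+4$ is the integer immediately below the submaximum $n^2+5$, makes it the only candidate value below the submaximum that need be examined.

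The essential content is the uniqueness. Here I would use the equivalence, up to point transformations, between normal systems~\eqref{SODEs} and regular normal parabolic geometries of type $(\mathrm{SL}(n+2,\mathbb F),P)$ whose flat model is the free-particle system, with graded symmetry algebra $\mathfrak g\cong\mathfrak{sl}(n+2,\mathbb F)=\mathfrak g_{-2}\oplus\mathfrak g_{-1}\oplus\mathfrak g_0\oplus\mathfrak g_1\oplus\mathfrak g_2$, where $\dim\mathfrak g_-=2n+1$ and $\dim\mathfrak g_0=n^2+1$. For $n\geqslant2$ the harmonic curvature is a section of a bundle whose fibre is a $\mathfrak g_0$-module $W=W_1\oplus W_2$ splitting into two irreducible components, one of which, say $W_1$, is the obstruction to linearizability. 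The filtered-deformation estimate bounds $\dim\mathfrak s$ by $\dim\mathfrak g_-$ plus the dimension of the annihilator in $\mathfrak g_0\oplus\mathfrak g_1\oplus\mathfrak g_2$ of a nonzero harmonic-curvature value, and maximizing this over all $G_0$-orbits in $W$ recovers the submaximum $n^2+5$ of Theorem~\ref{thm:SODEsSubMaxDim}, attained on a distinguished orbit in $W_1$ represented by the nonlinearizable system~\eqref{eq:SODECanonicalSystemOfSubmaxDimOfMIA}.

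I would then push the orbit analysis one step further: rank the annihilator dimensions over all $G_0$-orbits in $W$ and show that the second-largest total, $n^2+4$, is attained on a single orbit, which lies in the linearizable component $W_2$ and whose canonical representative is exactly~\eqref{eq:CanonicalSystemOfMaxDimOfMIAinL1}. Uniqueness then follows once one proves that the corresponding graded subalgebra is rigid, i.e.\ its Tanaka prolongation closes precisely at dimension $n^2+4$ and carries no nontrivial filtered deformation (vanishing of the relevant Spencer/Kostant cohomology), so that it integrates to a unique locally homogeneous model. The main obstacle is exactly this twofold step: (i) the $G_0$-orbit enumeration on $W=W_1\oplus W_2$ must be precise enough to guarantee that no orbit other than this $W_2$-orbit, including non-generic orbits in $W_1$ and mixed orbits, yields an annihilator of the required size, and that the distinguished $W_2$-orbit is unique modulo $G_0$ and the residual point equivalence; and (ii) the pointwise curvature information must be upgraded to a genuine classification, ruling out inhomogeneous systems of dimension $n^2+4$ by the standard argument that so large a symmetry algebra forces open orbits and hence local homogeneity. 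Once both are settled, the unique system of dimension $n^2+4$ is~\eqref{eq:CanonicalSystemOfMaxDimOfMIAinL1}.
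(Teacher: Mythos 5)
Your computation of the symmetry algebra of the model~\eqref{eq:CanonicalSystemOfMaxDimOfMIAinL1} is correct: the $2n$ superposition fields, the $(n^2-2n+2)$-dimensional centralizer of the nilpotent coefficient matrix, $\p_t$ and the scaling $t\p_t+2x^1\p_{x^1}$ reproduce exactly the basis listed after the theorem, and the appeal to Theorems~\ref{thm:SODEsMaxDim} and~\ref{thm:SODEsSubMaxDim} to cap the dimension at $n^2+4$ is sound. But that is the routine half. The entire content of the theorem is the uniqueness claim, and there your text is a programme rather than a proof: the two assertions on which everything rests --- that the second-largest annihilator total over all orbits of the structure group on the harmonic-curvature module $W_1\oplus W_2$ equals $n^2+4$ and is attained on a \emph{single} orbit lying in the linearizable component, and that the corresponding filtered deformation is rigid and integrates to a unique locally homogeneous model --- are announced as what ``must be'' shown and explicitly flagged by you as the main obstacles, not carried out. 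Neither the orbit enumeration with its annihilator dimensions nor the prolongation/rigidity computation is supplied, and the remark that a symmetry algebra of dimension $n^2+4$ ``forces open orbits and hence local homogeneity'' does not follow from a dimension count alone; ruling out non-homogeneous structures at this dimension is one of the delicate points of the Kruglikov--The machinery. As it stands, the proposal proves existence but leaves uniqueness open.

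You should also be aware that the paper itself contains no proof of this statement: Theorem~\ref{thm:SODEsSubSubMaxDim} is quoted from~\cite{krug2021z}, a private communication, precisely because the uniqueness over all (possibly nonlinearizable) systems requires the filtered-deformation analysis you sketch. What the paper does prove, by entirely elementary means --- the classifying condition~\eqref{eq:ClassifyingCondL'_V}, reduction to constant~$V$, and the Frobenius centralizer count --- is the linear-class version, Theorem~\ref{thm:MaxDimOfMIAinL1}. A cleaner division of labour would be to invoke that theorem to dispose of every linearizable system of dimension $n^2+4$, so that your geometric machinery only needs to exclude systems with nonvanishing linearization obstruction; but either way the representation-theoretic and rigidity steps still have to be executed, and until they are the argument has a genuine gap.
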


The maximal Lie invariance algebra of the system~\eqref{eq:CanonicalSystemOfMaxDimOfMIAinL1} is spanned by the vector fields
\begin{gather*}
\p_t,\ \ t\p_t+2x^1\p_{x^1},\ \
x^b\p_{x^c},\,_{b\ne1,\,c\ne2},\ \ x^1\p_{x^1}+x^2\p_{x^2},\ \
\p_{x^a}+\tfrac12\delta_{a2}t^2\p_{x^1},\ \ t\p_{x^a}+\tfrac16\delta_{a2}t^3\p_{x^1}.
\end{gather*}
The system~\eqref{eq:CanonicalSystemOfMaxDimOfMIAinL1}
also realizes the submaximum dimension of the maximal Lie invariance algebras of linear systems of the form~\eqref{SODEs},
see Theorem~\ref{thm:MaxDimOfMIAinL1}.
In view of Theorem~\ref{thm:SODEsSubSubMaxDim},
it is not surprising that Egorov's system $\boldsymbol x_{tt}=2x^1x^1_tx^2_t\boldsymbol x_t$ \cite[Eq.~(5.11)]{krug2017a}
whose maximal Lie invariance algebra is $(n^2+4)$-dimensional as well \cite[Eq.~(5.12)]{krug2017a}
is linearized to the system~\eqref{eq:CanonicalSystemOfMaxDimOfMIAinL1} by the point transformation
$\tilde t=x^1$, $\tilde x^1=t+\frac12(x^1)^2x^2$, $\tilde x^a=x^a$, $a=2,\dots,n$.

Unfortunately, there are no general results on admissible transformations between systems of the form~\eqref{SODEs}
and on their Lie symmetries for an arbitrary~$n\geqslant2$,
beyond the ones discussed above.
Moreover, most of the particular results known for such systems are still related to their linearization.
Thus, the necessary and sufficient conditions for a system of the form~\eqref{SODEs} with $n=2$
to be linearizable by a point transformation were found in~\cite{bagd2010a}.
Classes of solvable and/or integrable and/or linearizable systems of the form~\eqref{SODEs},
which can be treated as dynamical systems for classical many-body problems in dimensions one, two and three,
were constructed in a series of monographes and papers by Calogero, see \cite{calo2001A,calo2018A} and references therein.
This is why even linear systems from the class~\eqref{SODEs} are attractive and still nontrivial  objects
to be considered from the point of view of their Lie symmetries and related transformational properties.
Group classification of such systems leads to examples
of Lie algebras of vector fields that are admitted by systems of the form~\eqref{SODEs} as their maximal Lie invariance algebras
and which dimensions of these algebras are possible beyond the known maximal and submaximum dimension values.
It is also important for solving the problem on linearization and integration of systems from the class~\eqref{SODEs}.
\looseness=-1

In present paper, we comprehensively study, within the framework of group analysis of differential equations,
the class of linear systems of the form~\eqref{SODEs}, i.e.,
the class~$\bar{\mathcal L}$ of normal linear systems of $n$ second-order ordinary differential equations,
\begin{gather}\label{SLODE}
\boldsymbol x_{tt}=A(t)\boldsymbol x_t+B(t)\boldsymbol x+\boldsymbol f(t),
\end{gather}
with $n$ unknown functions~$x^1$, \dots, $x^n$, $\boldsymbol x(t)=\big(x^1(t),\dots,x^n(t)\big){}^{\mathsf T}$,
where $n\geqslant2$.
The basic field~$\mathbb F$ is complex or real, $\mathbb F=\mathbb C$ or $\mathbb F=\mathbb R$.
The tuple $\theta=(A,B,\boldsymbol f)$ of arbitrary elements of the class~$\bar{\mathcal L}$
consists of arbitrary (sufficiently smooth) $n\times n$ matrix-valued functions~$A$ and~$B$ of~$t$
and an arbitrary (sufficiently smooth) vector-valued function~$\boldsymbol f$ of~$t$.
We consider various objects related to the class~$\bar{\mathcal L}$ and describe their properties.
This includes convenient gauges of the arbitrary-element tuple~$\theta$,
a hierarchy of subclasses of~$\bar{\mathcal L}$,
equivalence groupoids, equivalence groups and equivalence algebras of these subclasses
and their normalization properties,
maximal and essential Lie invariance algebras of systems from these subclasses,
the structure of such algebras and estimates of their dimensions
as well as the application of equivalence transformations and Lie symmetries
to reducing the order of systems of the form~\eqref{SLODE} and to their integration.

Systems from the class~$\bar{\mathcal L}$ with an arbitrary value $n\geqslant2$ have also not been well studied in the literature,
except certain very particular cases.
Lie symmetries of systems from the class~$\bar{\mathcal L}$ with $A=0$ and constant~$B$
were considered in the series of papers~\cite{camp2011a,camp2012a,gorr1988a,mele2011a,wafo2010a}
for lower values of~$n$ (at most six).
The group classification problem for such systems in the case of arbitrary~$n\geqslant2$ was thoroughly studied in~\cite{boyk2013a},
and the systems from the class~$\bar{\mathcal L}$ with constant commuting matrices~$A$ and~$B$ are reduced to such systems.
The consideration of the entire class~$\bar{\mathcal L}$ for the particular value $n=2$ within the framework of Lie symmetries
was initiated in~\cite{wafo2000b} and later continued in~\cite{mele2014a,mkhi2015a,moyo2013a};
see Section~\ref{sec:SLODEn2} for the exhaustive solution of the group classification problem for this class.
The case $n=3$ was in part considered in~\cite{suks2015a}.

The structure of the paper is as follows.

Section~\ref{sec:SLODEEquivGroupoidsAndEquivGroups} is devoted to
the construction of the equivalence groupoids and the equivalence groups of the class~$\bar{\mathcal L}$
and its nested subclasses obtained by gauging of the arbitrary-element tuple $\theta=(A,B,\boldsymbol f)$
by a known wide family of admissible transformations from the action groupoid of the equivalence group of~$\bar{\mathcal L}$.%
\footnote{%
See~\cite{opan2022a} for the notions of gauging arbitrary elements of a class of differential equations
and of mappings between such classes via families of point transformations.
}
We successively impose the gauges $\boldsymbol f=\boldsymbol0$, $A=0$ and $\mathop{\rm tr}B=0$
and reparameterize the singled out subclasses after each of the first two gauges,
excluding~$\boldsymbol f$ and then~$A$ from the tuple of arbitrary elements for the corresponding classes.
This leads to the class chain $\bar{\mathcal L}\hookleftarrow\mathcal L\hookleftarrow\mathcal L'\supset\mathcal L''$.
In particular, the class~$\mathcal L$ is the ``homogeneous'' counterpart of~$\bar{\mathcal L}$, i.e.,
is constituted by the homogeneous systems of the form~\eqref{SLODE}, where $\boldsymbol f=\boldsymbol 0$.
We prove that the above classes are semi-normalized in the usual sense,
the orbit of the elementary system $\boldsymbol x_{tt}=\boldsymbol0$ in each of them
under action of the corresponding equivalence group is the singular part of this class,
and the complement to the orbit, which is assumed as the regular class part,
has better normalization properties than this entire class.

The equivalence algebras of all the classes and their singular and regular parts
are computed in Section~\ref{sec:SLODEEquivAlgebras} as the infinitesimal counterparts
of the respective equivalence groups.

In Section~\ref{sec:SLODEPreliminaryAnalysisOfLieSyms},
we show that the group classifications of the class~$\bar{\mathcal L}$ and the gauged subclasses
split into the group classifications of their singular and regular parts.
The group classification problems for the singular parts are trivial
since their solutions are given by the elementary system $\boldsymbol x_{tt}=\boldsymbol0$
and its maximal Lie invariance algebra.
We derive the systems of determining equations for Lie symmetries of the systems from the regular class parts
and show that the group classification problems for these parts reduce to
the classifications of essential Lie-symmetry extensions within them.
We also compute the kernel point-symmetry groups for all the classes arising in this consideration.

Properties of possible essential Lie-symmetry extensions within the regular class parts
are studied in Section~\ref{sec:DescriptionOfEssLieSymExtensions}.
Using the obtained properties, we suggest two ways for classifying such extensions
within the framework of the algebraic approach
depending on their structure.

When studying Lie-symmetry extensions with nonzero projections on the $t$-line,
we need to recognize similar systems among those
possessing Lie-symmetry vector fields with constant $t$-components.
Necessary and sufficient conditions for such similarity are presented in Section~\ref{sec:SLODESummaryOfGroupClassification}.

The acquired knowledge on properties of essential Lie-symmetry extensions within the regular class parts
allows us to find the maximum dimension of the maximal Lie invariance algebras of systems from these parts,
which is submaximum within the entire class~$\bar{\mathcal L}$ and its gauged subclasses.
This is done in Section~\ref{sec:PropertiesOfIAs}.
Therein, we more thoroughly describe the structure of the essential Lie invariance algebras
of systems from the regular class parts
and characterize the maximal Lie invariance algebras of systems from the singular class parts.

In Section~\ref{sec:SLODEOrderReductionAndIntegrationUsingLieSyms},
we prove several assertions on
order reduction for normal linear systems of second-order ordinary differential equations and their integration
using their known Lie symmetries or equivalence transformations between them.

The techniques developed in Section~\ref{sec:DescriptionOfEssLieSymExtensions}
are applied in Section~\ref{sec:SLODEn2} to exhaustively solving
the problem of group classification of normal linear systems of second-order ordinary differential equations
for the lowest particular value $n=2$.

Possible generalizations of papers' results to normal linear systems of ordinary differential equations of greater order $r\geqslant3$
are discussed in Section~\ref{sec:OnGeneralizationToArbitraryEqOrder}.

In Section~\ref{sec:Conclusion}, we briefly sum up the main results of the present paper
and discuss open problems and possible directions for future research.

Specific properties of equivalence transformations in a general class of differential equations
whose auxiliary system for the arbitrary class elements contains algebraic equations
are studied in the final Section~\ref{sec:AlgebraicAuxiliaryEqsAndClassReparameterization}.

\begin{notation}
Throughout the paper, by default
the indices $a$, $b$, $c$ and~$d$ run from $1$ to $n$, i.e., $a,b,c,d=1,\dots,n$,
and we use the summation convention for repeated indices.
Functions with subscripts denote derivatives with respect to the corresponding variables.
Whenever it is essential to indicate the (open) domain of~$\mathbb F$ that is run by~$t$ in a particular situation,
we denote this domain by~$\mathcal I$.
We treat the differential equations and point transformations between them within the local approach.
As is customary in group analysis of differential equations,
parameter or arbitrary functions are assumed to be either analytical or, if $\mathbb F=\mathbb R$, smooth.
Nevertheless, at many points of the consideration,
the required degree of smoothness can be lowered to finite continuous differentiability.

$E$ is the $n\times n$ identity matrix.
For a square matrix~$M$, the matrices~$M_{\rm s}$ and~$M_{\rm n}$
denote the semisimple and nilpotent parts of~$M$ in its Jordan--Chevalley decomposition, $M=M_{\rm s}+M_{\rm n}$.
Fixing a basis in~$\mathbb F^n$, we identify linear operators on~$\mathbb F^n$ with their matrices in this basis.
By $[M_1,M_2]$ we denote the commutator of square matrices~$M_1$ and~$M_2$ of the same size, $[M_1,M_2]:=M_1M_2-M_2M_1$.
In the context of equations with matrix-valued functions,
it is convenient to interpret (constant) matrices as constant matrix-valued functions.
For example, the equations like $[M,V]=0$ and $V=M$ for a matrix-valued function~$V$ of~$t$ and a constant matrix~$M$
means that $[M,V(t)]=0$ and $V(t)=M$ for any $t\in\mathcal I$, respectively.

Given a class~$\mathcal K$ of systems of differential equations with the arbitrary-element tuple~$\theta$,
we use the same notation~$\theta$ for an arbitrary fixed value of this arbitrary-element tuple as well.
By~$\mathcal G^\sim_{\mathcal K}$, $G^\sim_{\mathcal K}$, $\mathfrak g^\sim_{\mathcal K}$,
$G^{{\rm s\vphantom{g}}\sim}_{\mathcal K}$, $\mathfrak g^{{\rm s\vphantom{g}}\sim}_{\mathcal K}$
and $\mathcal G^{G^\sim_{\mathcal K}}$
we denote the equivalence (pseudo)groupoid, the usual equivalence (pseudo)group, the usual equivalence (pseudo)algebra,
the significant usual equivalence (pseudo)group and the significant usual equivalence (pseudo)algebra of this class
and the action groupoid of~$G^\sim_{\mathcal K}$, respectively.
We omit the prefix ``pseudo'' for the above objects.
Since we consider only usual equivalence groups and usual equivalence algebras,
we omit the attribute ``usual'' for these objects as well.
For a fixed value of~$\theta$,
by~$K_\theta$, $G_\theta$, $\mathfrak g_\theta$ and $\mathcal G_\theta$
we denote
the system in~$\mathcal K$ associated with this value of~$\theta$,
the complete point-symmetry group of this system,
its maximal Lie invariance algebra and
the corresponding vertex group in~$\mathcal G^\sim_{\mathcal K}$.
See~\cite{vane2020b} and references therein for definitions of structures
related to point or contact transformations within classes of differential equations.

The notation $\mathcal K'\hookleftarrow\mathcal K$,
where $\mathcal K'$ is also a class of systems of differential equations
whose arbitrary-element tuple~$\theta'$ is a subtuple of~$\theta$,
means that the class~$\mathcal K'$ can be embedded in the class~$\mathcal K$ as a subclass.
In other words, the class~$\mathcal K'$ can be obtained from the class~$\mathcal K$
by imposing additional algebraic equations on~$\theta$,
substituting the expressions for the leading components of~$\theta$ in view of these equations into the systems from~$\mathcal K$
and reparameterizing the class~$\mathcal K$ to the class~$\mathcal K'$ via excluding the leading components from~$\theta$,
which results in the tuple~$\theta'$;
see Section~\ref{sec:AlgebraicAuxiliaryEqsAndClassReparameterization} for terminology and more details.
We use the term ``algebraic equation'' in the sense opposite to ``being truly differential''.

Irrespectively of the class under consideration,
by~$\varpi$ we denote the natural projection from the space
run by the joint tuple of the independent and dependent variables and the arbitrary elements of the class
onto the space run by the tuple of the independent and dependent variables.
Analogously, by $\pi$ we denote the natural projection from the space with the coordinates $(t,\boldsymbol x)$
onto the space with the single coordinate~$t$, $\pi\colon\mathbb F_t\times\mathbb F^n_{\boldsymbol x}\to\mathbb F_t$.
We use the subscript (resp.\ the superscript) ``$*$'' for denoting the pushforward (resp.\ the pullback)
by a~transformation.

Given a Lie algebra~$\mathfrak g$ and its subset~$\mathsf S$,
${\rm C}_{\mathfrak g}(\mathsf S)$ and ${\rm N}_{\mathfrak g}(\mathsf S)$ denote
the centralizer and the normalizer of~$\mathsf S$ in~$\mathfrak g$,
${\rm C}_{\mathfrak g}(\mathsf S):=\{u\in\mathfrak g\mid[u,v]=0\ \forall v\in \mathsf S\}$ and
${\rm N}_{\mathfrak g}(\mathsf S):=\{u\in\mathfrak g\mid[u,v]\in \mathsf S\ \forall v\in \mathsf S\}$.

For readers' convenience, we also collect here the notation of specific classes considered in the paper:
\begin{itemize}\itemsep=0ex
\item[$\bar{\mathcal L}$:]
the entire class of normal linear systems of $n$ second-order ordinary differential equations,
which are of the general form~\eqref{SLODE},
\item[$\mathcal L$:]
the ``homogeneous'' counterpart of $\bar{\mathcal L}$
consisting of the systems of the form~\eqref{SLODE} with $\boldsymbol f=\boldsymbol 0$,
\item[$\mathcal L'$:]
the class of the systems of the form~\eqref{basis_SLODE},
which is an analogue of the rational form for single linear ordinary differential equations,
\item[$\mathcal L''$:]
the subclass of~$\mathcal L'$ constituted by the systems of the form~\eqref{basis_SLODE}
with traceless values of the arbitrary-element matrix~$V$,
which is an analogue of the Laguerre--Forsyth form for single linear ordinary differential equations,
cf.\ \cite[p.~22]{wilc1901a} and \cite[p.~266]{seas1993a}.
\end{itemize}
The singular subclasses of the above classes,
which consist of the class elements similar to the elementary system $\boldsymbol x_{tt}=\boldsymbol0$,
are denoted by the notation of the corresponding class with subscript~0,
$\bar{\mathcal L}_0$, $\mathcal L_0$, $\mathcal L'_0$ and $\mathcal L''_0$.
The regular subclasses are the complements of singular ones and are denoted by
$\bar{\mathcal L}_1$, $\mathcal L_1$, $\mathcal L'_1$ and $\mathcal L''_1$, respectively.
\end{notation}

\section{Equivalence groupoids and equivalence groups}\label{sec:SLODEEquivGroupoidsAndEquivGroups}

It is obvious that the equivalence group~$G^\sim_{\bar{\mathcal L}}$ of the class~$\bar{\mathcal L}$
contains the subgroup constituted by the transformations
\begin{subequations}\label{eq:EquivTransbarLtoL'}
\begin{gather}\label{eq:EquivTransbarLtoL'A}
\tilde t=t, \quad
\tilde{\boldsymbol x}=H(t)\boldsymbol x+\boldsymbol h(t),
\\\label{eq:EquivTransbarLtoL'B}
\tilde A=(HA+2H_t)H^{-1},\ \
\tilde B=(HB-\tilde AH_t+H_{tt})H^{-1},\ \
\,\,\tilde{\!\!\boldsymbol f}=H\!\boldsymbol f+\boldsymbol h_{tt}-\tilde A\boldsymbol h_t-\tilde B\boldsymbol h,
\end{gather}
\end{subequations}
where $H$ is an arbitrary invertible $n\times n$ matrix-valued function of $t$
and $\boldsymbol h$ is an arbitrary vector-valued function of $t$.
Each fixed system~$\bar L_\theta$ from the class~$\bar{\mathcal L}$
is mapped by a transformation of the form~\eqref{eq:EquivTransbarLtoL'},
where $H$ satisfies the matrix equation $H_t+\frac12HA=0$
and $\boldsymbol h$ is a particular solution of~$\bar L_\theta$,
to the system~$\bar L_{\tilde\theta}$ from the same class,
where
\[\tilde A=0,\quad\tilde B=H\left(B-\frac12A_t+\frac14A^2\right)H^{-1},\quad\boldsymbol f=\boldsymbol 0.\]
Thus, the study of the class~$\bar{\mathcal L}$ reduces to the study of its subclass~$\mathcal L'$
singled out by the constraints $A=0$ and $\boldsymbol f=\boldsymbol0$.
We trivially reparameterize the class~$\mathcal L'$,
excluding $A$ and~$\boldsymbol f$ from the tuple of arbitrary elements for this class and re-denoting~$B$ by~$V$,
i.e., the systems from~$\mathcal L'$ take the form
\begin{gather}\label{basis_SLODE}
\boldsymbol x_{tt}=V(t)\boldsymbol x,
\end{gather}
where $V$ is an arbitrary $n\times n$ matrix-valued function of $t$, $V=V(t)=\big(V^{ab}(t)\big)$.
Note that according to the general notation introduced in Section~\ref{sec:Introduction},
$L'_V$ denotes the system from the class~$\mathcal L'$
corresponding to a fixed value of the matrix-valued parameter function~$V$.
The family of the equivalence transformations of the form~\eqref{eq:EquivTransbarLtoL'},
where the matrix- and vector-valued parameter functions~$H$ and~$\boldsymbol h$ are the solutions of
the matrix equation $H_t+\frac12HA=0$ and the system~$\bar L_\theta$
with the initial conditions $H(t_0)=E$ and $\boldsymbol h(t_0)=\boldsymbol h_t(t_0)=\boldsymbol0$
for a fixed point $t_0$ from the domain~$\mathcal I$ run by~$t$, respectively,
induces a mapping of the class~$\bar{\mathcal L}$ onto its subclass~$\mathcal L'$.

Consider the subclass~$\mathcal L'_0$ of~$\mathcal L'$
consisting of the systems~$L'_V$ in~$\mathcal L'$
with $V(t)=v(t)E$ for any $t\in\mathcal I$, where $v$ runs through the set of functions of~$t$,
$\{V(t)\mid t\in\mathcal I\}\subseteq\langle E\rangle$.
For such~$V$, we will say that $V$ is proportional to~$E$ with time-dependent proportionality factor.
In other words, the subclass~$\mathcal L'_0$ is singled out from the class~$\mathcal L'$
by the algebraic equations
\begin{gather}\label{eq:ClassL'AdditionalAuxiliarySystemForL'0}
V^{ab}=0,\ \ a\ne b,\quad V^{11}=\dots=V^{nn}.
\end{gather}
We can reparameterize it, assuming the common value~$v$ of the diagonal entries of~$V$
as the only arbitrary element of this subclass.
Let $\mathcal L'_1:=\mathcal L'\setminus\mathcal L'_0$,
and thus $\mathcal L'=\mathcal L'_0\sqcup\mathcal L'_1$.

The structure of the groupoid~$\mathcal G^\sim_{\mathcal L'}$  is described by the following theorem.

\begin{theorem}\label{thm:EquivGroupoidL'}
(i) The equivalence group~$G^\sim_{\mathcal L'}$ of the class~$\mathcal L'$ consists of the transformations of the form%
\footnote{%
For the square root of~$T_t$ in the expression for~$\tilde x$,
the absolute value of~$T_t$ should be substituted instead of~$T_t$ in the real case
or a branch of square root should be fixed in the complex case.\label{fnt:OnSquareRootOfTt}
}%
\begin{subequations}\label{eq:EquivGroupL'}
\begin{gather}\label{eq:EquivGroupL'A}
\tilde t=T(t), \quad \tilde{\boldsymbol x}=T_t^{\,1/2}C\boldsymbol x,
\\\label{eq:EquivGroupL'B}
\tilde V=\dfrac1{T_t^{\,2}}CVC^{-1}+\dfrac{2T_tT_{ttt}-3T_{tt}^{\,\,2}}{4T_t^{\,4}}E,
\end{gather}
\end{subequations}
where $T=T(t)$ is an arbitrary function of~$t$ with $T_t\ne0$
and $C$ is an arbitrary constant invertible $n\times n$ matrix.
The equivalence group of the subclass~$\mathcal L'_1$ and
the canonical significant equivalence group of the subclass~$\mathcal L'_0$
coincide with the group~$G^\sim_{\mathcal L'}$.

(ii) The partition of the class~$\mathcal L'$ into its subclasses~$\mathcal L'_0$ and~$\mathcal L'_1$
induces the partition of the groupoid~$\mathcal G^\sim_{\mathcal L'}$
into its subgroupoids~$\mathcal G^\sim_{\mathcal L'_0}$ and $\mathcal G^\sim_{\mathcal L'_1}$,
$\mathcal G^\sim_{\mathcal L'}=\mathcal G^\sim_{\mathcal L'_0}\sqcup\mathcal G^\sim_{\mathcal L'_1}$.

(iii) The subclass~$\mathcal L'_1$ is uniformly semi-normalized with respect to linear superposition of solutions.

(iv) The subclass~$\mathcal L'_0$ is the $G^\sim_{\mathcal L'}$-orbit of the elementary system~$L'_0$,
and this subclass and the entire class~$\mathcal L'$ are semi-normalized in the usual sense.

\end{theorem}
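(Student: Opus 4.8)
The plan is to prove the four parts of Theorem~\ref{thm:EquivGroupoidL'} in order, since each part builds on the previous.

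For part~(i), I would directly compute the equivalence group~$G^\sim_{\mathcal L'}$ from the general form of point transformations. Since the class~$\mathcal L'$ consists of systems $\boldsymbol x_{tt}=V(t)\boldsymbol x$, an admissible point transformation must have the general structure $\tilde t=T(t,\boldsymbol x)$, $\tilde{\boldsymbol x}=\boldsymbol X(t,\boldsymbol x)$. The first step is to argue that $T$ cannot depend on~$\boldsymbol x$ and that $\boldsymbol X$ must be affine in~$\boldsymbol x$: this follows by substituting into the system, demanding that the transformed system again be linear and homogeneous of the form~\eqref{basis_SLODE}, and matching coefficients of the various derivative monomials. The homogeneity (no~$\boldsymbol f$ term) forces the translational part to vanish, so $\tilde{\boldsymbol x}=\Lambda(t)\boldsymbol x$ for some invertible matrix-valued~$\Lambda$. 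Substituting $\tilde t=T(t)$ and $\tilde{\boldsymbol x}=\Lambda(t)\boldsymbol x$ and requiring the $\boldsymbol x_t$-term to vanish yields an ODE relating $\Lambda$ to~$T$, whose solution is $\Lambda=T_t^{1/2}C$ with $C$ a constant invertible matrix; the residual condition then produces the transformation rule~\eqref{eq:EquivGroupL'B} for~$\tilde V$, in which the inhomogeneous scalar term is exactly the Schwarzian-type expression $(2T_tT_{ttt}-3T_{tt}^{\,2})/4T_t^{\,4}$. The claims about~$\mathcal L'_1$ and~$\mathcal L'_0$ having the same equivalence group then follow because this computation used no constraint distinguishing the two subclasses, together with the fact that the significant equivalence group of~$\mathcal L'_0$ discards only the part of transformations acting trivially on the single arbitrary element~$v$.

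For part~(ii), the key observation is that the defining property of~$\mathcal L'_0$ (namely $V$ proportional to~$E$) is preserved under the transformation rule~\eqref{eq:EquivGroupL'B}, and this preservation extends to all admissible transformations, not just those in~$G^\sim_{\mathcal L'}$. I would argue that if $V=vE$ then $CVC^{-1}=vE$ for any~$C$, so $\tilde V$ is again proportional to~$E$; hence no admissible transformation can map a system in~$\mathcal L'_0$ to one in~$\mathcal L'_1$ or vice versa. This gives the disjoint partition $\mathcal G^\sim_{\mathcal L'}=\mathcal G^\sim_{\mathcal L'_0}\sqcup\mathcal G^\sim_{\mathcal L'_1}$. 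The subtle point worth checking is that the equivalence groupoid~$\mathcal G^\sim_{\mathcal L'}$ might a priori contain admissible transformations not generated by~$G^\sim_{\mathcal L'}$; I would need to invoke (or have already established) that every admissible transformation of~$\mathcal L'$ has the form~\eqref{eq:EquivGroupL'}, so that the groupoid is the action groupoid of the equivalence group.

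For parts~(iii) and~(iv), I would verify the relevant normalization definitions. The hard part will be part~(iv): showing that~$\mathcal L'_0$ is precisely the $G^\sim_{\mathcal L'}$-orbit of the elementary system~$L'_0$ (that is $V=0$), and establishing semi-normalization in the usual sense for both~$\mathcal L'_0$ and~$\mathcal L'$. For the orbit claim, I would show that for any scalar function~$v$ there exists~$T$ with $(2T_tT_{ttt}-3T_{tt}^{\,2})/4T_t^{\,4}=v$, i.e. solvability of this Schwarzian-type equation, so that~$L'_V$ with $V=vE$ is $G^\sim_{\mathcal L'}$-equivalent to~$L'_0$; this reduces to an existence result for a nonlinear third-order ODE in~$T$, locally solvable by standard ODE theory. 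For semi-normalization I would combine part~(ii), which controls how the groupoid decomposes, with the structure of the vertex groups~$G_\theta$: the point-symmetry group of a generic system in~$\mathcal L'_1$ should be generated by linear superposition together with the restriction of~$G^\sim_{\mathcal L'}$, and the already-strong symmetry of~$L'_0$ accounts for the orbit~$\mathcal L'_0$. The main obstacle throughout is carefully disentangling the equivalence \emph{groupoid} from the action groupoid of the equivalence \emph{group}, i.e. confirming that there are no ``exceptional'' admissible transformations beyond those in~\eqref{eq:EquivGroupL'}; this is exactly what uniform semi-normalization in part~(iii) is designed to encode, and I expect the proof to establish part~(iii) first and then read off~(iv) as a consequence.
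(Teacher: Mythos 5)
Your plan for part~(i) is sound and matches the paper's computation of the equivalence group: splitting the determining equations with respect to the unconstrained $\boldsymbol x_t$ gives $X$ affine in $\boldsymbol x$ and $H=T_t^{1/2}C$, and splitting the residual condition with respect to the unconstrained $\tilde V^{ab}$ kills the translational part $\boldsymbol h$ and yields~\eqref{eq:EquivGroupL'B}. The genuine gap is in parts~(ii)--(iv), and it is exactly at the point you flag as ``subtle'': your fallback --- invoking that \emph{every} admissible transformation of~$\mathcal L'$ has the form~\eqref{eq:EquivGroupL'} --- is false. The class~$\mathcal L'$ is only \emph{semi}-normalized, not normalized: the elementary system~$L'_0$ and every system in its orbit~$\mathcal L'_0$ admit the full projective point symmetry group~$G_0$, whose elements have $t$-components depending on~$\boldsymbol x$ and are certainly not of the form~\eqref{eq:EquivGroupL'}. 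Composing such a symmetry with an equivalence transformation produces admissible transformations in~$\mathcal G^\sim_{\mathcal L'_0}$ outside the action groupoid of~$G^\sim_{\mathcal L'}$, so the groupoid cannot be read off from the group.

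What is actually needed --- and what occupies the bulk of the paper's proof --- is a direct analysis of the determining equations for an admissible transformation between two \emph{fixed} systems $L'_V$ and $L'_{\tilde V}$, with $T=T(t,\boldsymbol x)$ and $\boldsymbol X=\boldsymbol X(t,\boldsymbol x)$ a priori unconstrained in~$\boldsymbol x$. One derives the quadratic system~\eqref{eq:EquivGroupoidL'DetEqs1''} and its compatibility conditions, and shows that the existence of some $b$ with $\partial T/\partial x^b\ne0$ \emph{forces} $V$ to be proportional to~$E$ with time-dependent factor, i.e., $L'_V\in\mathcal L'_0$. Only then does it follow that for every system in~$\mathcal L'_1$ all admissible transformations have $T=T(t)$, whence they factor as an equivalence transformation composed with a linear-superposition symmetry --- which is the content of~(iii) and, via the relation~\eqref{eq:L'RelationBetweenVAndTildeV}, gives the invariance of the partition in~(ii). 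Without this dichotomy, your argument for~(ii) (that $CVC^{-1}=vE$ when $V=vE$) only shows that the \emph{group} action preserves~$\mathcal L'_0$, and your argument for~(iii) addresses only ``generic'' systems, whereas uniform semi-normalization is a statement about all admissible transformations between all pairs of systems in~$\mathcal L'_1$. Your treatment of the orbit claim in~(iv) via local solvability of the Schwarzian equation $\{T,t\}=-2v$ is correct and is essentially what the paper uses.
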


\begin{proof}
We first compute the equivalence group~$G^\sim_{\mathcal L'}$ of the class~$\mathcal L'$ using the direct method.
This class is formally defined as the set of systems of differential equations
with the single independent variable~$t$ and the $n$ dependent variables~$x^a$
that are of general form~\eqref{basis_SLODE}.
The arbitrary-element matrix~$V$ runs through the solution set of the auxiliary system of the matrix equations
\begin{gather}\label{eq:ClassL'AuxiliarySystem}
V_{x^a}=V_{x^a_t}=V_{x^a_{tt}}=0.
\end{gather}
The auxiliary system is equivalent to the condition that the entries of~$V$ depend at most on~$t$.
In view of this fact, the space with coordinates $(t,\boldsymbol x,V)$
can be taken as the underlying space for the group~$G^\sim_{\mathcal L'}$,
cf.\ \cite[footnote~1]{kuru2020a} and \cite[footnote~1]{opan2017a}.
In other words, we can assume that the group~$G^\sim_{\mathcal L'}$
consists of the point transformations in the space with coordinates $(t,\boldsymbol x,V)$
that are projectable to the space with coordinates $(t,\boldsymbol x)$
and whose proper prolongations preserve the joint system~\eqref{basis_SLODE} and~\eqref{eq:ClassL'AuxiliarySystem}.
The general form of such transformations is
\[
\mathscr T\colon\quad
\tilde t=T(t,\boldsymbol x),\quad
\tilde{\boldsymbol x}=\boldsymbol X(t,\boldsymbol x),\quad
\tilde V=\mathscr V(t,\boldsymbol x,V),
\]
where
$\boldsymbol X=(X^1,\dots,X^n)^{\mathsf T}$ and $\mathscr V=(\mathscr V^{ab})$.
For each transformation~$\mathscr T$,
its components~$T$ and~$X^a$ are functions on a domain $\Omega\subseteq\mathbb F_t\times\mathbb F^n_{\boldsymbol x}$
with $J:=\big|\p(T,\boldsymbol X)/\p(t,\boldsymbol x)\big|\ne0$ in each point of~$\Omega$,
its components~$\mathscr V^{ab}$ are functions on a domain $\Omega\times\Theta$,
where $\Theta$ is an open subset of~$\mathbb F^{n^2}_V$,
and the Jacobian of the entries of~$\mathscr V$ with respect to the entries of~$V$ is nonzero in each point of~$\Omega\times\Theta$.
Via the pullback with respect to the natural projection~$\varpi$
from $\mathbb F_t\times\mathbb F^n_{\boldsymbol x}\times\mathbb F^{n^2}_V$ onto $\mathbb F_t\times\mathbb F^n_{\boldsymbol x}$,
we can assume that the components~$T$ and~$X^a$ are defined on~$\Omega\times\Theta$ as well.
The prolongation~$\mathscr T_{(2,\boldsymbol x)}$ of~$\mathscr T$ to~${\boldsymbol x}_t$ and~${\boldsymbol x}_{tt}$
is computed in view of the chain rule,
\begin{gather}\label{eq:ProlongatioToXtAndXtt}
\tilde{\boldsymbol x}_{\tilde t}=\boldsymbol X^t:=\frac{\mathrm D_t\boldsymbol X}{\mathrm D_tT},\quad
\tilde{\boldsymbol x}_{\tilde t\tilde t}=\boldsymbol X^{tt}:=\frac1{\mathrm D_tT}\mathrm D_t\left(\frac{\mathrm D_t\boldsymbol X}{\mathrm D_tT}\right),
\end{gather}
where \smash{$\mathrm D_t=\p_t+x^a_t\p_{x^a}+x^a_{tt}\p_{x^a_t}+\cdots$} is the operator of total derivative with respect to~$t$
that is associated with the jet space ${\rm J}^\infty(\mathbb F_t,\mathbb F^n_{\boldsymbol x})$.
Thus, the transformation~\smash{$\mathscr T_{(2,\boldsymbol x)}$} acts on the space
${\rm J}^2(\mathbb F_t,\mathbb F^n_{\boldsymbol x})\times\mathbb F^{n^2}_V$,
where the second-order jet space ${\rm J}^2(\mathbb F_t,\mathbb F^n_{\boldsymbol x})$
can be identified with $\mathbb F_t\times\mathbb F^n_{\boldsymbol x}\times\mathbb F^n_{\boldsymbol x_t}\times\mathbb F^n_{\boldsymbol x_{tt}}$.
The further prolongation~$(\mathscr T_{(2,\boldsymbol x)})_{(1,V)}$ of~$\mathscr T_{(2,\boldsymbol x)}$
to the first derivatives of~$V$ with respect to $(t,\boldsymbol x,\boldsymbol x_t,\boldsymbol x_{tt})$ is defined by the matrix equations
\begin{subequations}\label{eq:L'PrologationOfEquivTransToDerivativesOfV}
\begin{gather}\label{eq:L'PrologationOfEquivTransToDerivativesOfVA}
T_t\tilde V_{\tilde t}+X^a_t\tilde V_{\tilde x^a}+X^{t,a}_t\tilde V_{\tilde x^a_{\tilde t}}+X^{tt,a}_t\tilde V_{\tilde x^a_{\tilde t\tilde t}}
=\mathscr V_t+V^{ab}_t\mathscr V_{V^{ab}},
\\ \label{eq:L'PrologationOfEquivTransToDerivativesOfVB}
T_{x^c}\tilde V_{\tilde t}+X^a_{x^c}\tilde V_{\tilde x^a}+X^{t,a}_{x^c}\tilde V_{\tilde x^a_{\tilde t}}+X^{tt,a}_{x^c}\tilde V_{\tilde x^a_{\tilde t\tilde t}}
=\mathscr V_{x^c}+V^{ab}_{x^c}\mathscr V_{V^{ab}},
\\ \label{eq:L'PrologationOfEquivTransToDerivativesOfVC}
T_{x^c_t}\tilde V_{\tilde t}+X^a_{x^c_t}\tilde V_{\tilde x^a}+X^{t,a}_{x^c_t}\tilde V_{\tilde x^a_{\tilde t}}+X^{tt,a}_{x^c_t}\tilde V_{\tilde x^a_{\tilde t\tilde t}}
=\mathscr V_{x^c_t}+V^{ab}_{x^c_t}\mathscr V_{V^{ab}},
\\ \label{eq:L'PrologationOfEquivTransToDerivativesOfVD}
T_{x^c_{tt}}\tilde V_{\tilde t}+X^a_{x^c_{tt}}\tilde V_{\tilde x^a}+X^{t,a}_{x^c_{tt}}\tilde V_{\tilde x^a_{\tilde t}}+X^{tt,a}_{x^c_{tt}}\tilde V_{\tilde x^a_{\tilde t\tilde t}}
=\mathscr V_{x^c_{tt}}+V^{ab}_{x^c_{tt}}\mathscr V_{V^{ab}}.
\end{gather}
\end{subequations}
In view of preserving the joint system~\eqref{basis_SLODE} and~\eqref{eq:ClassL'AuxiliarySystem} by~$\mathscr T$,
we can simultaneously set $V_{x^a}=V_{x^a_t}=V_{x^a_{tt}}=0$ and
$\tilde V_{\tilde x^a}=\tilde V_{\tilde x^a_{\tilde t}}=\tilde V_{\tilde x^a_{\tilde t\tilde t}}=0$ in the system~\eqref{eq:L'PrologationOfEquivTransToDerivativesOfV}.
Then the equations~\eqref{eq:L'PrologationOfEquivTransToDerivativesOfVC} and~\eqref{eq:L'PrologationOfEquivTransToDerivativesOfVD}
are satisfied identically since the components of~$\mathscr T$ do not depend on~${\boldsymbol x}_t$ and~${\boldsymbol x}_{tt}$.
The equations~\eqref{eq:L'PrologationOfEquivTransToDerivativesOfVB} reduce to the equations $T_{x^c}\tilde V_{\tilde t}=\mathscr V_{x^c}$,
which can subsequently be split with respect to the unconstrained derivatives~$\tilde V^{ab}_{\tilde t}$ to $T_{x^c}=0$ and $\mathscr V_{x^c}=0$
since there are not the arbitrary-element matrix~$V$ and its derivatives in these equations.
Therefore, $T=T(t)$ with~$T_t\ne0$ in view of~$J\ne0$.

To complete the system of determining equations for the components of~$\mathscr T$,
we successively substitute the expressions~$\boldsymbol X^{tt}$, $T$, $\boldsymbol X$ and $V\boldsymbol x$
for~$\tilde{\boldsymbol x}_{\tilde t\tilde t}$, $\tilde t$, $\tilde{\boldsymbol x}$ and $\boldsymbol x_{tt}$,
respectively, into the system $\tilde{\boldsymbol x}_{\tilde t\tilde t}=\tilde V(\tilde t)\tilde{\boldsymbol x}$
and take into account that $T=T(t)$.
This leads to the system
\begin{gather}\label{eq:EquivGroupL'DetEqs0}
T_t(X^a_{tt}+2X^a_{tb}x^b_t+X^a_{bc}x^b_tx^c_t+X^a_bV^{bc}x^c)-T_{tt}(X^a_t+X^a_dx^d_t)-T_t^{\,3}\tilde V^{ab}X^b=0,
\end{gather}
where $X^a_b:=\p X^a/\p x^b$, etc.
Since the first derivative~$\boldsymbol x_t$ is not constrained,
we can split the system~\eqref{eq:EquivGroupL'DetEqs0} with respect to its components.
Collecting the coefficients of the terms that are quadratic in these components
leads to the equations~$X^a_{bc}T_t=0$, i.e., $X^a_{bc}=0$.
This means that $\boldsymbol X=H(t)\boldsymbol x+\boldsymbol h(t)$ for
an invertible $n\times n$ matrix-valued function $H=(H^{ab})$ of~$t$
and a vector-valued function~$\boldsymbol h=(h^1,\dots,h^n)^{\mathsf T}$ of~$t$.
For each~$b$, we also collect coefficients of~$x^b_t$ in~\eqref{eq:EquivGroupL'DetEqs0}
under the obtained constraints and derive the system $2X^a_{tb}T_t-X^a_bT_{tt}=0$
or, equivalently, \smash{$\big(H^{ab}T_t^{-1/2}\big)_t=0$},
which integrates to \smash{$H=T_t^{1/2}C$}, where $C$ is a constant invertible $n\times n$ matrix;
see also footnote~\ref{fnt:OnSquareRootOfTt} for the proper interpretation of~\smash{$T_t^{1/2}$}.
The substitution of the obtained expressions for~$T$ and $\boldsymbol X$ into the rest of~\eqref{eq:EquivGroupL'DetEqs0}
consisting of the terms without the first derivative of~$\boldsymbol x$ leads to the equation
\begin{gather*}
T_t(H_{tt}\boldsymbol x+\boldsymbol h_{tt}+HV\boldsymbol x)-T_{tt}(H_t\boldsymbol x+\boldsymbol h)-T_t^{\,3}\tilde{V}(H\boldsymbol x+\boldsymbol h)=\boldsymbol0.
\end{gather*}
The subsequent splitting with respect to~$\boldsymbol x$ gives
the $V$-component of equivalence transformations,
\begin{gather}\label{eq:L'RelationBetweenVAndTildeV}
\tilde V=\frac{1}{T_t\,^2}\left(H_{tt}+HV-\frac{T_{tt}}{T_t}H_t\right)H^{-1}
=\frac1{T_t^{\,2}}CVC^{-1}+\frac{2T_tT_{ttt}-3T_{tt}^{\,\,2}}{4T_t^{\,4}}E,
\end{gather}
and the system
\begin{gather}\label{eq:L'RelationOfLinSuperposition}
T_t\boldsymbol h_{tt}-T_{tt}\boldsymbol h-T_t^{\,3}\tilde V\boldsymbol h=\boldsymbol0.
\end{gather}
Splitting the system~\eqref{eq:L'RelationOfLinSuperposition}
with respect to the transformed arbitrary elements~$\tilde V^{ab}$, we obtain $\boldsymbol h=\boldsymbol0$.
Therefore, any element of the group~$G^\sim_{\mathcal L'}$ is of the form~\eqref{eq:EquivGroupL'},
and any transformation of this form belongs to~$G^\sim_{\mathcal L'}$.

It is obvious that the action of~$G^\sim_{\mathcal L'}$ on the class~$\mathcal L'$
preserves its subclasses~$\mathcal L'_0$ and~$\mathcal L'_1$.
Moreover, in course of computing the equivalence group of the subclass~$\mathcal L'_1$,
the system~\eqref{eq:EquivGroupL'DetEqs0} is split in the same way as in the course of computing~$G^\sim_{\mathcal L'}$,
and we obtain the same form~\eqref{eq:EquivGroupL'} for elements of this group as for those of~$G^\sim_{\mathcal L'}$.
Hence the equivalence group of the subclass~$\mathcal L'_1$ coincides with~$G^\sim_{\mathcal L'}$.

For the subclass~$\mathcal L'_0$, the auxiliary system~\eqref{eq:ClassL'AuxiliarySystem}
is extended by the algebraic equations~\eqref{eq:ClassL'AdditionalAuxiliarySystemForL'0}.
We can repeat the above procedure for this subclass,
splitting with respect to $\tilde v_{\tilde t}$ and~$\tilde v$ instead of~\smash{$\tilde V^{ab}_{\tilde t}$} and~$\tilde V^{ab}$,
where~$\tilde v$ denotes the common value of $\tilde V^{11}$, \dots, $\tilde V^{nn}$.
The only difference in results is that now the matrix relation~\eqref{eq:L'RelationBetweenVAndTildeV}
holds for~$V$ satisfying~\eqref{eq:ClassL'AdditionalAuxiliarySystemForL'0}.
The condition of preserving the additional auxiliary system~\eqref{eq:ClassL'AdditionalAuxiliarySystemForL'0}
by~$\mathscr T$ is independent on that for the joint system~\eqref{basis_SLODE} and~\eqref{eq:ClassL'AuxiliarySystem}.
It implies that
$\mathscr V^{ab}=0$, $a\ne b$, $\mathscr V^{11}=\dots=\mathscr V^{nn}$ whenever
$V^{ab}=0$, $a\ne b$, $V^{11}=\dots=V^{nn}$.
The last condition is consistent with~\eqref{eq:L'RelationBetweenVAndTildeV}.
The above consideration means that the canonical significant equivalence group of the subclass~$\mathcal L'_0$
coincides with the group~$G^\sim_{\mathcal L'}$ as well, which completes proving item~(i) of the theorem.

To compute the equivalence groupoids of the class~$\mathcal L'$, we apply the direct method as well.
Suppose that the triple $(V,\Phi,\tilde V)$ is an admissible transformation of the class~$\mathcal L'$.
Here $\Phi$ is a~point transformation in the space with coordinates~$(t,\boldsymbol x)$,
\[
\Phi\colon\ \tilde t=T(t,\boldsymbol x),\ \tilde{\boldsymbol x}=\boldsymbol X(t,\boldsymbol x),
\quad\mbox{where}\quad
\boldsymbol X=(X^1,\dots,X^n)^{\mathsf T},\quad
J:=\left|\frac{\p(T,\boldsymbol X)}{\p(t,\boldsymbol x)}\right|\ne0
\]
on the domain~$\Omega\subseteq\mathbb F^{n+1}$ of~$\Phi$,
that maps the system~$L'_V$ to the system~\smash{$L'_{\tilde V}$}.
The second prolongation of~$\Phi$ is computed according to~\eqref{eq:ProlongatioToXtAndXtt}.
To derive the system of determining equations for the components of~$\Phi$ and
the expression of~$\tilde V$ in terms of $V$, $T$ and~$\boldsymbol X$,
we substitute the above expression for~$\tilde{\boldsymbol x}_{\tilde t\tilde t}$
and then $T$, $\boldsymbol X$ and $V\boldsymbol x$ for $\tilde t$, $\tilde{\boldsymbol x}$ and $\boldsymbol x_{tt}$,
respectively, into the system~\smash{$L'_{\tilde V}$}.
This leads to the system
\begin{gather}\label{eq:EquivGroupoidL'DetEqs0}
\begin{split}
&(X^a_{tt}+2X^a_{tb}x^b_t+X^a_{bc}x^b_tx^c_t+X^a_bV^{bc}x^c)(T_t+T_dx^d_t)\\
&\qquad{} -(T_{tt}+2T_{tb}x^b_t+T_{bc}x^b_tx^c_t+T_bV^{bc}x^c)(X^a_t+X^a_dx^d_t)-\tilde V^{ab}X^b(T_t+T_cx^c_t)^3=0,
\end{split}
\end{gather}
where $T_a:=\p T/\p x^a$, etc., which is the counterpart of the system~\eqref{eq:EquivGroupL'DetEqs0}.
It is convenient to handle the system~\eqref{eq:EquivGroupoidL'DetEqs0} in a different way.
For each value of~$a$, the left-hand side of~\eqref{eq:EquivGroupoidL'DetEqs0}
and all of its parts are polynomials in the components of~$\boldsymbol x_t$
with coefficients depending on $(t,\boldsymbol x)$.
The polynomial $P_T:=T_t+T_dx^d_t$ divides the first and the third summands in~\eqref{eq:EquivGroupoidL'DetEqs0}.
Hence it divides the second summand as well.
If the degree of the polynomial~$P_T$ (with respect to the components of~$\boldsymbol x_t$) equals one,
then it divides a factor in the second summand,
and this should be the first factor since otherwise $J=0$.
The divisibility of the first factor by~$P_T$ is trivial if $P_T$ does not depend on~$\boldsymbol x_t$.
As a result, there exist functions~$\Lambda^0$ and~$\Lambda^a$ of $(t,\boldsymbol x)$ such that
\begin{subequations}\label{eq:EquivGroupoidL'DetEqs1}
\begin{gather}\label{eq:EquivGroupoidL'DetEqs1a}
T_{tt}+2T_{tb}x^b_t+T_{bc}x^b_tx^c_t+T_bV^{bc}x^c=(\Lambda^0\!+\!\Lambda^cx^c_t)(T_t\!+\!T_dx^d_t),
\\\label{eq:EquivGroupoidL'DetEqs1b}
X^a_{tt}+2X^a_{tb}x^b_t+X^a_{bc}x^b_tx^c_t+X^a_bV^{bc}x^c=(\Lambda^0\!+\!\Lambda^cx^c_t)(X^a_t\!+\!X^a_dx^d_t)+\tilde V^{ab}X^b(T_t\!+\!T_cx^c_t)^2.
\end{gather}
\end{subequations}
Introducing the notation $x^0:=t$, $X^0:=T$ and $\tilde V^{0b}:=0$,
we represent the equations~\eqref{eq:EquivGroupoidL'DetEqs1a} and~\eqref{eq:EquivGroupoidL'DetEqs1b}
in a unified form,
\begin{gather}\label{eq:EquivGroupoidL'DetEqs1'}
X^\kappa_{\lambda\mu}x^\lambda_tx^\mu_t+X^\kappa_bV^{bc}x^c(x^0_t)^2=\Lambda^\mu x^\mu_tX^\kappa_\nu x^\nu_t+\tilde V^{\kappa b}X^b(X^0_\mu x^\mu_t)^2.
\end{gather}
Here and up to the end of the proof, the indices~$\kappa$, $\lambda$, $\mu$ and~$\nu$ run from~0 to~$n$.
The splitting of the system~\eqref{eq:EquivGroupoidL'DetEqs1} with respect to $(x^b_t)_{b=1,\dots,n}$
is equivalent to
the splitting of the system~\eqref{eq:EquivGroupoidL'DetEqs1'} with respect to $(x^\lambda_t)_{\lambda=0,\dots,n}$,
which leads to the complete system of determining equations for admissible transformations in the class~$\mathcal L'$,
\begin{gather}\label{eq:EquivGroupoidL'DetEqs1''}
X^\kappa_{\lambda\mu}
=\frac12(\Lambda^\mu X^\kappa_\lambda+\Lambda^\lambda X^\kappa_\mu)
+\tilde V^{\kappa b}X^bX^0_\lambda X^0_\mu
-X^\kappa_bV^{bc}x^c\delta_{0\lambda}\delta_{0\mu},
\end{gather}
where $\delta_{0\lambda}$ is the Kronecker delta.

In view of the identity $\p_\nu X^\kappa_{\lambda\mu}=\p_\mu X^\kappa_{\lambda\nu}$, 
for each fixed value of $(\kappa,\lambda,\mu)$,
we differentiate the corresponding equation in~\eqref{eq:EquivGroupoidL'DetEqs1''} with respect to~$x^\nu$ with $\nu\ne\mu$
and subtract the obtained differential consequence of~\eqref{eq:EquivGroupoidL'DetEqs1''}
from the similar consequence with permuted~$\mu$ and~$\nu$.
This gives the equations
\begin{gather}\label{eq:EquivGroupoidL'DetEqs1'''}
\begin{split}
&\frac12X^\kappa_\lambda K^{\mu\nu}+\frac12X^\kappa_\mu M^{\lambda\nu}-\frac12X^\kappa_\nu M^{\lambda\mu}
 +\tilde V^{\kappa b}(X^b_\nu X^0_\mu-X^b_\mu X^0_\nu)X^0_\lambda\\
&\qquad{}-X^\kappa_bV^{bc}(\delta_{c\nu}\delta_{0\mu}-\delta_{c\mu}\delta_{0\nu})\delta_{0\lambda}=0,\quad \mu\ne\nu,
\end{split}
\end{gather}
where
$K^{\mu\nu}:=\Lambda^\mu_\nu-\Lambda^\nu_\mu$ (hence $K^{\mu\nu}=-K^{\nu\mu}$) and
$M^{\lambda\nu}:=\Lambda^\lambda_\nu-\frac12\Lambda^\nu\Lambda^\lambda+\Lambda^bV^{bc}x^c\delta_{0\nu}\delta_{0\lambda}$.
We take each pair of equations of the form~\eqref{eq:EquivGroupoidL'DetEqs1'''}
with the same values of~$\kappa$, $\mu$ and~$\nu$, different values of $\lambda$ and~$\lambda'$,
multiply these equations by $2X^0_{\lambda'}$ and $2X^0_\lambda$, respectively,
and subtract the second obtained equations from the first one.
As a result, we derive
\begin{gather}\label{eq:EquivGroupoidL'DetEqs1iv}
\begin{split}
&K^{\mu\nu}X^0_{\lambda'}\bar{\boldsymbol X}_\lambda
-K^{\mu\nu}X^0_\lambda\bar{\boldsymbol X}_{\lambda'}
+(M^{\lambda\nu}X^0_{\lambda'}-M^{\lambda'\nu}X^0_{\lambda})\bar{\boldsymbol X}_\mu
-(M^{\lambda\mu}X^0_{\lambda'}-M^{\lambda'\mu}X^0_{\lambda})\bar{\boldsymbol X}_\nu
 \\
&\qquad{}-2V^{bc}(\delta_{c\nu}\delta_{0\mu}-\delta_{c\mu}\delta_{0\nu})
(\delta_{0\lambda}X^0_{\lambda'}-\delta_{0\lambda'}X^0_{\lambda})\bar{\boldsymbol X}_b=\boldsymbol0, \quad\mu\ne\nu, \quad \lambda\ne\lambda',
\noprint{
&X^\kappa_\lambda K^{\mu\nu}X^0_{\lambda'}-X^\kappa_{\lambda'} K^{\mu\nu}X^0_\lambda
+X^\kappa_\mu(M^{\lambda\nu}X^0_{\lambda'}-M^{\lambda'\nu}X^0_{\lambda})
-X^\kappa_\nu(M^{\lambda\mu}X^0_{\lambda'}-M^{\lambda'\mu}X^0_{\lambda})
 \\
&\qquad{}-2X^\kappa_bV^{bc}(\delta_{c\nu}\delta_{0\mu}-\delta_{c\mu}\delta_{0\nu})
(\delta_{0\lambda}X^0_{\lambda'}-\delta_{0\lambda'}X^0_{\lambda})=0, \quad\mu\ne\nu, \quad \lambda\ne\lambda'.
}
\end{split}
\end{gather}
where $\bar{\boldsymbol X}=(X^0,\dots,X^n)^{\mathsf T}$.
The right-hand side of the system~\eqref{eq:EquivGroupoidL'DetEqs1iv} is skew-symmetric in $(\mu,\nu)$ and in $(\lambda,\lambda')$,
and thus its independent equations are exhausted, e.g., by those with $\mu<\nu$ and $\lambda<\lambda'$.
The function tuples $\bar{\boldsymbol X}_\mu$, $\mu=0,\dots,n$, are linearly independent at each point~$(t,\boldsymbol x)\in\Omega$
since $J=\det(X^\kappa_\lambda)\ne0$ on~$\Omega$.
This means that we can split the system~\eqref{eq:EquivGroupoidL'DetEqs1iv},
collecting the coefficients of~$\bar{\boldsymbol X}_\lambda$ for each fixed~$\lambda$
and equating them to zero.

Suppose that there exists $b$ with $X^0_b\ne0$.

If $n\geqslant3$, then for each value of the tuple of $(\lambda,\lambda',\mu,\nu)$
with $0=\lambda\ne\lambda'$ and $0<\mu<\nu$ or
with $\lambda\ne\lambda'$, $\lambda\lambda'\ne0$, $0=\mu<\nu$ and $\nu\ne\lambda$,
after collecting the coefficients of~$\bar{\boldsymbol X}_\lambda$, we obtain $K^{\mu\nu}X^0_{\lambda'}=0$.
In the last system, the subscript~$\lambda'$ runs from~1 to~$n$,
and thus $K^{\mu\nu}=0$ for any $\mu,\nu=0,\dots,n$.

In the case $n=2$, we collect the coefficients of~$\bar{\boldsymbol X}_0$ in the equations from the system~\eqref{eq:EquivGroupoidL'DetEqs1iv}
with $(\lambda,\lambda',\mu,\nu)=(0,a,1,2)$, $a\in\{1,2\}$, and derive the equations $K^{12}X^0_a=0$, which jointly imply $K^{12}=0$.
Then collecting the coefficients of~$\bar{\boldsymbol X}_1$ and~$\bar{\boldsymbol X}_2$
in the same equations from the system~\eqref{eq:EquivGroupoidL'DetEqs1iv}
leads to the equations $M^{0a}X^0_b-M^{ba}X^0_0=0$.
Considering the coefficients of~$\bar{\boldsymbol X}_2$ and~$\bar{\boldsymbol X}_0$
in the equations~\eqref{eq:EquivGroupoidL'DetEqs1iv}
with $(\lambda,\lambda',\mu,\nu)=(1,2,0,1)$ and $(\lambda,\lambda',\mu,\nu)=(0,2,0,1)$, respectively,
we obtain $K^{01}X^0_1=0$ and $K^{01}X^0_2+M^{01}X^0_2-M^{21}X^0_0=0$.
Hence $K^{01}X^0_2=0$ as well, and together with $K^{01}X^0_1=0$ this gives $K^{01}=0$.
In a similar way, we derive $K^{02}=0$.
As~a result, in total we again have that $K^{\mu\nu}=0$ for any $\mu,\nu=0,\dots,n$.

Then, taking into account the last condition, we consider an equation from the system~\eqref{eq:EquivGroupoidL'DetEqs1iv}
with $\nu=\lambda'=0$ and, therefore, $\mu\lambda\ne0$.
Collecting the coefficients of~$\bar{\boldsymbol X}_b$ with $b\ne\mu$ and~$\bar{\boldsymbol X}_\mu$,
we respectively obtain the equations $V^{b\mu}X^0_\lambda=0$ with $b\ne\mu$ and
$M^{\lambda0}X^0_0-M^{00}X^0_\lambda-2V^{\mu\mu}X^0_{\lambda}=0$.
Subtracting the last equation from the similar equation, where $\mu$ is replaced by $\mu'\notin\{0,\mu\}$,
gives $(V^{\mu\mu}-V^{\mu'\mu'})X^0_{\lambda}=0$.
Since here the subscript~$\lambda$ runs from~1 to~$n$, the derived equations imply $V^{ab}=0$ and $V^{aa}=V^{bb}$ for $a\ne b$.
(Above there is no summation with respect to the repeated indices~$\mu$, $\mu'$, $a$ and~$b$.)

Summing up, we prove that the matrix-valued function~$V$
is proportional to the identity matrix~$E$ with time-dependent proportionality factor
if $X^0_b\ne0$ for some~$b$.

Therefore, for the other values of the matrix-valued parameter function~$V$,
we have \mbox{$X^0_b=0$} for any~$b$, i.e., the $t$-component~$T$ of~$\Phi$ depends only on~$t$,
$T=T(t)$, and $T_t\ne0$ since \mbox{$J\ne0$}.
Then the equations~\eqref{eq:EquivGroupoidL'DetEqs1a} imply $\Lambda^c=0$,
and thus collecting coefficients of quadratic in~$\boldsymbol x_t$ terms
in the equations~\eqref{eq:EquivGroupoidL'DetEqs1b} gives $X^a_{bc}=0$ for any~$a$, $b$ and~$c$.
The further computation can be based on the system~\eqref{eq:EquivGroupoidL'DetEqs0}.
It is analogous to the above computation of the group~$G^\sim_{\mathcal L'}$ and results in
the~relation~\eqref{eq:L'RelationBetweenVAndTildeV} between the source and target arbitrary-element tuples~$V$ and~$\tilde V$
as well as in the system~\eqref{eq:L'RelationOfLinSuperposition}
meaning that the composition $\boldsymbol h\circ\tilde T$ of $\boldsymbol h$ with the inverse~$\tilde T$ of~$T$
is a~solution of the system~\smash{$L'_{\tilde V}$}.
Hence the matrix-valued functions~$V$ and~$\tilde V$ are simultaneously not proportional to the identity matrix~$E$ with time-dependent proportionality factors.
In other words, the action of the groupoid~$\mathcal G^\sim_{\mathcal L'}$ preserves
the subclass~$\mathcal L'_1$ and hence the subclass~$\mathcal L'_0$ as well since it is the complement of~$\mathcal L'_1$ in~$\mathcal L'$,
which proves item~(ii).
Moreover, any admissible transformation in~$\mathcal L'_1$ is the composition
of two admissible transformations, one generated by an equivalence transformation from~$L'_V$ to~$L'_{\tilde V}$
and one related to the linear superposition of solutions of~$L'_{\tilde V}$, and thus item~(iii) holds true.
The fact that the subclass~$\mathcal L'_0$ is the $G^\sim_{\mathcal L'}$-orbit of~$L'_0$
is obvious since any system from~$\mathcal L'_0$ is $G^\sim_{\mathcal L'}$-equivalent to the elementary (\emph{free particle}) system~$L'_0$.
Consequently, the subclasses~$\mathcal L'_0$ and~$\mathcal L'_1$ are semi-normalized in the usual sense
and admit the same equivalence group, which implies the semi-normalization of the entire class~$\mathcal L'$
and completes the proof of item~(iv).
\end{proof}

\begin{remark}
The equivalence group of the reparameterized subclass~$\mathcal L'_0$,
where $v=v(t)$ with $V=vE$, is assumed as the single arbitrary element instead of the arbitrary-element matrix~$V$,
consists of the point transformations in the space with the coordinates $(t,\boldsymbol x,v)$,
whose $(t,\boldsymbol x)$- and $v$-components are respectively given by~\eqref{eq:EquivGroupL'A} and
\[
\tilde v=\dfrac1{T_t^{\,2}}v+\dfrac{2T_tT_{ttt}-3T_{tt}^{\,\,2}}{4T_t^{\,4}}.
\]
\end{remark}

\begin{remark}
Denote by
\smash{$\mathcal G^{G^\sim_{\mathcal L'}}\!|_{\mathcal L'_0}$},
\smash{$\mathcal G^{G^\sim_{\mathcal L'}}\!|_{\mathcal L'_1}$},
\smash{$\mathcal G^{\rm lin}_{\mathcal L'_1}$}
and $\mathcal G_0$
the restriction of the action groupoid~\smash{$\mathcal G^{G^\sim_{\mathcal L'}}$}
of the equivalence group~$G^\sim_{\mathcal L'}$ to the subclasses~$\mathcal L'_0$ and~$\mathcal L'_1$,
the groupoid related to the linear superposition of solutions within the subclass~$\mathcal L'_0$
and the vertex group of the system~$L'_0$, respectively,
\[
\smash{\mathcal G^{G^\sim_{\mathcal L'}}\!|_{\mathcal L'_0}=\mathcal G^{G^\sim_{\mathcal L'}}\cap\mathcal G^\sim_{\mathcal L'_0}},\quad
\smash{\mathcal G^{G^\sim_{\mathcal L'}}\!|_{\mathcal L'_1}=\mathcal G^{G^\sim_{\mathcal L'}}\cap\mathcal G^\sim_{\mathcal L'_1}},\quad
\mathcal G^{\rm lin}_{\mathcal L'_1}\subset\mathcal G^{\rm f}_{\mathcal L'_1},
\]
where $\mathcal G^{\rm f}_{\mathcal L'_1}$ is the fundamental groupoid of~$\mathcal L'_1$.
Item~(iv) of Theorem~\ref{thm:EquivGroupoidL'} is reformulated in terms of admissible transformations
in the following way:
for any \smash{$\mathcal T\in\mathcal G^\sim_{\mathcal L'_0}$}
there exist \smash{$\mathcal T_1,\mathcal T_3\in\mathcal G^{G^\sim_{\mathcal L'}}\!|_{\mathcal L'_0}$}
and $\mathcal T_2\in\mathcal G_0$ such that $\mathcal T=\mathcal T_1\star\mathcal T_2\star\mathcal T_3$.
Here ``$\star$'' denotes the operation of composing of admissible transformations~\cite{vane2020b}.
Since \smash{$\mathcal G^{\rm lin}_{\mathcal L'_1}$} is a normal subgroupoid of \smash{$\mathcal G^\sim_{\mathcal L'_1}$},
item~(iii) of Theorem~\ref{thm:EquivGroupoidL'} means
that \smash{$\mathcal G^\sim_{\mathcal L'_1}$} is a semidirect product of
\smash{$\mathcal G^{G^\sim_{\mathcal L'}}\!|_{\mathcal L'_1}$} and \smash{$\mathcal G^{\rm lin}_{\mathcal L'_1}$}.
\end{remark}

From Theorem~\ref{thm:EquivGroupoidL'}, we can derive
the descriptions of the equivalence groupoids of the superclasses~$\bar{\mathcal L}$ and~$\mathcal L$.
The auxiliary system for the arbitrary elements of the class~$\bar{\mathcal L}$ is
\[
A_{x^a}=A_{x^a_t}=A_{x^a_{tt}}=0,\quad
B_{x^a}=B_{x^a_t}=B_{x^a_{tt}}=0,\quad
\boldsymbol f_{x^a}=\boldsymbol f_{x^a_t}=\boldsymbol f_{x^a_{tt}}=0,
\]
and its counterpart for the class~$\mathcal L$ is its subsystem consisting of the equations to~$A$ and~$B$ but not to~$\boldsymbol f$.
Denote by $\bar{\mathcal L}_0$ and~$\mathcal L_0$
the $G^\sim_{\bar{\mathcal L}\,}$- and $G^\sim_{\mathcal L}$-orbits of
the elementary system~$\boldsymbol x_{tt}=\boldsymbol 0$, respectively,
and let
$\bar{\mathcal L}_1:=\bar{\mathcal L}\setminus\bar{\mathcal L}_0$ and
$\mathcal L_1:=\mathcal L\setminus\mathcal L_0$.
The consideration in the beginning of this section implies that
the subclasses~$\bar{\mathcal L}_0$ and~$\mathcal L_0$ are respectively singled out
from their superclasses~$\bar{\mathcal L}$ and~$\mathcal L$ by the constraint
that the matrix-valued function $B-\frac12A_t+\frac14A^2$ is proportional to the identity matrix~$E$ with time-dependent proportionality factor.
In other words, the extension of the auxiliary systems for~$\bar{\mathcal L}_0$ and~$\mathcal L_0$
is given by the equations
$\big(B-\frac12A_t+\frac14A^2\big)^{ab}=0$, $a\ne b$, $\big(B-\frac12A_t+\frac14A^2\big)^{11}=\dots=\big(B-\frac12A_t+\frac14A^2\big)^{nn}$.

\begin{theorem}\label{thm:EquivGroupoidbarL}
(i) The equivalence group~$G^\sim_{\bar{\mathcal L}}$ of the class~$\bar{\mathcal L}$ consists of the transformations of the form
\begin{subequations}\label{eq:EquivGroupbarL}
\begin{gather}\label{eq:EquivGroupbarLA}
\tilde t=T(t), \quad
\tilde{\boldsymbol x}=H(t)\boldsymbol x+\boldsymbol h(t),
\\ \label{eq:EquivGroupbarLB}
\tilde A=T_t^{-2}(T_tHA+2T_tH_t-T_{tt}H)H^{-1}, \\
\label{eq:EquivGroupbarLC}
\tilde B=T_t^{-3}(T_tHB-T_t^{\,2}\tilde AH_t+T_tH_{tt}-T_{tt}H_t)H^{-1},\\
\label{eq:EquivGroupbarLD}
\,\,\tilde{\!\!\boldsymbol f}=T_t^{-3}(T_tH\!\boldsymbol f+T_t\boldsymbol h_{tt}-T_{tt}\boldsymbol h_t-T_t^{\,2}\tilde A\boldsymbol h_t-T_t^{\,3}\tilde B\boldsymbol h),
\end{gather}
\end{subequations}
where $T=T(t)$ is an arbitrary function of~$t$ with $T_t\ne0$,
$H$ is an arbitrary invertible $n\times n$ matrix-valued function of~$t$
and $\boldsymbol h$ is an arbitrary vector-valued function of $t$.
The equivalence groups of the subclasses~$\bar{\mathcal L}_1$ and~$\bar{\mathcal L}_0$
coincide with~$G^\sim_{\bar{\mathcal L}}$.

(ii) The partition of the class~$\bar{\mathcal L}$ into its subclasses~$\bar{\mathcal L}_0$ and~$\bar{\mathcal L}_1$
induces the partition of the groupoid~$\mathcal G^\sim_{\bar{\mathcal L}}$
into its subgroupoids~$\mathcal G^\sim_{\bar{\mathcal L}_0}$ and $\mathcal G^\sim_{\bar{\mathcal L}_1}$,
$\mathcal G^\sim_{\bar{\mathcal L}}=\mathcal G^\sim_{\bar{\mathcal L}_0}\sqcup\mathcal G^\sim_{\bar{\mathcal L}_1}$.

(iii) The subclass~$\bar{\mathcal L}_1$ is normalized in the usual sense.

(iv) The subclass~$\bar{\mathcal L}_0$ and the entire class~$\bar{\mathcal L}$ are semi-normalized in the usual sense.
\end{theorem}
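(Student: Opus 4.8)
The plan is to transfer the four assertions from Theorem~\ref{thm:EquivGroupoidL'} to~$\bar{\mathcal L}$ by conjugating with the reduction maps built at the start of the section. For each $\theta=(A,B,\boldsymbol f)$ write~$\Psi_\theta\in G^\sim_{\bar{\mathcal L}}$ for the transformation of the form~\eqref{eq:EquivTransbarLtoL'} fixed by $H_t+\frac12HA=0$, $H(t_0)=E$ and the solution~$\boldsymbol h$ of~$\bar L_\theta$ with zero Cauchy data at~$t_0$; it sends~$\bar L_\theta$ to $L'_{V(\theta)}\in\mathcal L'$ with $V(\theta)=H\big(B-\frac12A_t+\frac14A^2\big)H^{-1}$. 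Since conjugation by the invertible matrix~$H$ preserves being scalar, $\theta\in\bar{\mathcal L}_0$ if and only if $V(\theta)\in\mathcal L'_0$; this equivalence will be used repeatedly as a bridge between the two partitions.

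For item~(i) the forward inclusion is a direct substitution: composing a reparameterization $\tilde t=T(t)$ with a transformation~\eqref{eq:EquivTransbarLtoL'} shows that every map~\eqref{eq:EquivGroupbarL} preserves~$\bar{\mathcal L}$ and transforms $(A,B,\boldsymbol f)$ by~\eqref{eq:EquivGroupbarLB}--\eqref{eq:EquivGroupbarLD}. For the converse, let $\Phi\in G^\sim_{\bar{\mathcal L}}$, pick $\theta\in\bar{\mathcal L}_1$ and let~$\tilde\theta$ be its $\Phi$-image. The conjugate $\Phi'=\Psi_{\tilde\theta}\circ\Phi\circ\Psi_\theta^{-1}$ is an admissible transformation of~$\mathcal L'$ from~$L'_{V(\theta)}$ with $V(\theta)\in\mathcal L'_1$; by Theorem~\ref{thm:EquivGroupoidL'}(ii) its target also lies in~$\mathcal L'_1$, whence Theorem~\ref{thm:EquivGroupoidL'}(iii) makes~$\Phi'$ projectable to the $t$-line and affine in~$\boldsymbol x$. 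As~$\Psi_\theta$ and~$\Psi_{\tilde\theta}$ share these two properties, so does $\Phi=\Psi_{\tilde\theta}^{-1}\circ\Phi'\circ\Psi_\theta$, so that $\tilde t=T(t)$, $T_t\ne0$, and $\tilde{\boldsymbol x}=H(t)\boldsymbol x+\boldsymbol h(t)$ with~$H$ invertible; substitution then returns~\eqref{eq:EquivGroupbarLB}--\eqref{eq:EquivGroupbarLD}. The same routing through~$\mathcal L'_1$ identifies $G^\sim_{\bar{\mathcal L}_1}$ with~$G^\sim_{\bar{\mathcal L}}$; for the singular subclass the coincidence $G^\sim_{\bar{\mathcal L}_0}=G^\sim_{\bar{\mathcal L}}$ holds because $A$ and~$\boldsymbol f$ remain free arbitrary elements in~$\bar{\mathcal L}_0$, so that no gauge ambiguity arises and the ``significant'' restriction needed for~$\mathcal L'_0$ becomes superfluous.

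Items~(ii) and~(iii) come from conjugating an arbitrary admissible transformation $\mathcal T=(\theta,\Phi,\tilde\theta)$ of~$\bar{\mathcal L}$ to the admissible transformation $\Phi'=\Psi_{\tilde\theta}\circ\Phi\circ\Psi_\theta^{-1}$ of~$\mathcal L'$ between~$L'_{V(\theta)}$ and~$L'_{V(\tilde\theta)}$. By Theorem~\ref{thm:EquivGroupoidL'}(ii) these endpoints lie in the same part of~$\mathcal L'$, and via the bridge $\theta,\tilde\theta$ lie in the same part of~$\bar{\mathcal L}$, which is the partition in~(ii). For~(iii) let $\theta,\tilde\theta\in\bar{\mathcal L}_1$; then $V(\theta),V(\tilde\theta)\in\mathcal L'_1$, and Theorem~\ref{thm:EquivGroupoidL'}(iii) factors $\Phi'=\Phi'_{\mathrm{lin}}\circ\Phi'_{\mathrm{eq}}$ with $\Phi'_{\mathrm{eq}}$ of the form~\eqref{eq:EquivGroupL'} and $\Phi'_{\mathrm{lin}}$ a superposition $\tilde{\boldsymbol x}=\boldsymbol x+\boldsymbol h_0(t)$ by a solution of the target. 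Both factors are instances of~\eqref{eq:EquivGroupbarL} (the first with $H=T_t^{1/2}C$, $\boldsymbol h=\boldsymbol 0$, the second with $T=t$, $H=E$), hence lie in~$G^\sim_{\bar{\mathcal L}}$, and so does $\Phi=\Psi_{\tilde\theta}^{-1}\circ\Phi'\circ\Psi_\theta$. Thus every admissible transformation of~$\bar{\mathcal L}_1$ is induced by~$G^\sim_{\bar{\mathcal L}_1}$, i.e.\ $\bar{\mathcal L}_1$ is normalized in the usual sense.

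For item~(iv), $\bar{\mathcal L}_0$ is the $G^\sim_{\bar{\mathcal L}}$-orbit of~$L'_0$ by construction. Conjugating~$\mathcal T$ with $\theta,\tilde\theta\in\bar{\mathcal L}_0$ and inserting the factorization $\Phi'=e_1\circ s\circ e_2$ supplied by Theorem~\ref{thm:EquivGroupoidL'}(iv), with $e_1,e_2\in G^\sim_{\mathcal L'}$ and~$s$ a point symmetry of~$L'_0$, yields $\Phi=(\Psi_{\tilde\theta}^{-1}\circ e_1)\circ s\circ(e_2\circ\Psi_\theta)$, a composition of two elements of~$G^\sim_{\bar{\mathcal L}}$ with a symmetry of~$L'_0$; this is the usual semi-normalization form, so~$\bar{\mathcal L}_0$ is semi-normalized. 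Finally $\bar{\mathcal L}=\bar{\mathcal L}_0\sqcup\bar{\mathcal L}_1$ is a disjoint union of two subclasses that are semi-normalized in the usual sense, share the common equivalence group~$G^\sim_{\bar{\mathcal L}}$, and whose partition is preserved by the groupoid by~(ii); exactly as at the end of the proof of Theorem~\ref{thm:EquivGroupoidL'}, this forces the semi-normalization of the whole class. The main obstacle is the singular subclass: its equivalence group cannot be read off from the only semi-normalized~$\mathcal L'_0$, whose elementary system~$L'_0$ carries non-projectable point symmetries, so the argument must be routed through the regular part~$\bar{\mathcal L}_1$ and must exploit that restoring the inhomogeneity~$\boldsymbol f$ both removes the gauge ambiguity of~$\mathcal L'_0$ and absorbs the linear superposition into~$G^\sim_{\bar{\mathcal L}}$, thereby upgrading semi-normalization of~$\mathcal L'_1$ to normalization of~$\bar{\mathcal L}_1$.
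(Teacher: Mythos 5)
Your proof is correct and follows essentially the same route as the paper's: direct verification of the forward inclusion, then conjugation by the reduction maps $\bar{\mathcal L}\to\mathcal L'$ to import items (ii)--(iv) of Theorem~\ref{thm:EquivGroupoidL'}, with the linear-superposition factor absorbed into the $\boldsymbol h$-freedom of~\eqref{eq:EquivGroupbarL} so that semi-normalization of~$\mathcal L'_1$ upgrades to normalization of~$\bar{\mathcal L}_1$. The only cosmetic difference is that you place the residual symmetry factor for the singular subclass at the elementary system~$L'_0$ rather than at the target system~$\bar L_{\tilde\theta}$, which is an equivalent formulation of semi-normalization.
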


\begin{proof}
It can be checked by direct computation that any transformation of the form~\eqref{eq:EquivGroupbarL}
is an equivalence transformation of the class~$\bar{\mathcal L}$
and its subclasses~$\bar{\mathcal L}_0$ and~$\bar{\mathcal L}_1$.
Let us prove simultaneously with the other claims of the theorem
that this class has no other equivalence transformations.

Consider any two similar systems~$\bar L_\theta$ and~\smash{$\bar L_{\tilde\theta}$} from the class~$\bar{\mathcal L}$,
and let the systems $L'_V$ and~\smash{$L'_{\tilde V}$} from the class~$\mathcal L'$
be respectively the images of~$\bar L_\theta$ and~\smash{$\bar L_{\tilde\theta}$}
under the mapping of~$\bar{\mathcal L}$ to~$\mathcal L'$
that is introduced in the first paragraph of this section.
If $\bar L_\theta\in\bar{\mathcal L}_0$, then $L'_V\in\mathcal L'_0$, \smash{$L'_{\tilde V}\in\mathcal L'_0$},
and thus \smash{$\bar L_{\tilde\theta}\in\bar{\mathcal L}_0$}.
Therefore, the subclass~$\bar{\mathcal L}_0$ is preserved by the action of~$\mathcal G^\sim_{\bar{\mathcal L}}$ on~$\bar{\mathcal L}$.
Then the subclass~$\bar{\mathcal L}_1$ is also preserved as the complement of~$\bar{\mathcal L}_0$ in~$\bar{\mathcal L}$,
which gives item~(ii).

Let $\Phi$ and $\tilde\Phi$ be the point transformations of the form~\eqref{eq:EquivTransbarLtoL'A}
that are associated with these systems under the above mapping,
$\Phi_*\bar L_\theta=L'_V$ and \smash{$\tilde\Phi_*\bar L_{\tilde\theta}=L'_{\tilde V}$},
and let $\Psi$ be a point transformation mapping~$\bar L_\theta$ to~\smash{$\bar L_{\tilde\theta}$}.
Then the transformation $\hat\Psi:=\tilde\Phi\circ\Psi\circ\Phi^{-1}$ maps~$L'_V$ to~\smash{$L'_{\tilde V}$}.

Consider separately the cases $\bar L_\theta\in\bar{\mathcal L}_1$ and $\bar L_\theta\in\bar{\mathcal L}_0$.

For $\bar L_\theta\in\bar{\mathcal L}_1$, we have that $L'_V\in\mathcal L'_1$.
In view of item~(iii) of Theorem~\ref{thm:EquivGroupoidL'}, the transformation~$\hat\Psi$ is the composition
of a point transformation of the form~\eqref{eq:EquivGroupL'A}
and a point-symmetry transformation of linear superposition of solutions for the system~\smash{$L'_{\tilde V}$},
whereas the relation between $V$~and~$\tilde V$ is given by~\eqref{eq:EquivGroupL'B}.
Hence the transformation $\Psi=\tilde\Phi^{-1}\circ\hat\Psi\circ\Phi$ is of the form~\eqref{eq:EquivGroupbarLA},
and the arbitrary-element tuples~$\theta$ and~$\tilde\theta$ are related
according to~\eqref{eq:EquivGroupbarLB} and~\eqref{eq:EquivGroupbarLC}.
In other words, any admissible transformation in the subclass~$\bar{\mathcal L}_1$
is induced an equivalence transformation of the form~\eqref{eq:EquivGroupbarL}.
Therefore, the subclass~$\bar{\mathcal L}_1$ is normalized in the usual sense.
Since there is no non-identity insignificant or gauge equivalence transformations in this subclass,
its equivalence group is exhausted by the transformations
in the space with coordinates $(t,\boldsymbol x,A,B,\boldsymbol f)$
whose components are defined by~\eqref{eq:EquivGroupbarL}.
\looseness=-1

For $\bar L_\theta\in\bar{\mathcal L}_0$, we have that $L'_V\in\mathcal L'_0$,
and thus item~(iv) of Theorem~\ref{thm:EquivGroupoidL'} is relevant here.
The transformation~$\hat\Psi$ is the composition
of a transformation of the form~\eqref{eq:EquivGroupL'A}
and a point-symmetry transformation of the system~\smash{$L'_{\tilde V}$},
whereas the relation between~$V$ and~$\tilde V$ is still given by~\eqref{eq:EquivGroupL'B}.
This implies that the arbitrary-element tuples~$\theta$ and~$\tilde\theta$ are again related
according to~\eqref{eq:EquivGroupbarLB} and~\eqref{eq:EquivGroupbarLC},
and the transformation $\Psi=\tilde\Phi^{-1}\circ\hat\Psi\circ\Phi$ is the composition
of a transformation of the form~\eqref{eq:EquivGroupbarLA}
and a point-symmetry transformation of the system~\smash{$\bar L_{\tilde\theta}$}.
Therefore, any admissible transformation in the subclass~$\bar{\mathcal L}_0$
is the composition of an admissible transformation induced an equivalence transformation of the form~\eqref{eq:EquivGroupbarL}
and an element of the vertex group for the corresponding target arbitrary element.
The only insignificant or gauge equivalence transformation in the subclass~$\bar{\mathcal L}_0$ is the identity transformation.
This means that the subclass~$\bar{\mathcal L}_0$ is semi-normalized in the usual sense,
and its equivalence group is exhausted by the same transformations as that of the subclass~$\bar{\mathcal L}_1$.

Since the equivalence groups of the subclasses~$\bar{\mathcal L}_0$ and~$\bar{\mathcal L}_1$ coincide,
and these subclasses are preserved under the action of~$\mathcal G^\sim_{\bar{\mathcal L}}$ on~$\bar{\mathcal L}$,
the equivalence group of the entire class~$\bar{\mathcal L}$ is the same
and, moreover, this class is semi-normalized in the usual sense.
\end{proof}

\begin{corollary}\label{cor:EquivGroupoidL}
(i) The equivalence group~$G^\sim_{\mathcal L}$ of the class~$\mathcal L$
is the natural projection, on the space with coordinates $(t,\boldsymbol x,A,B)$,
of the subgroup of~$G^\sim_{\bar{\mathcal L}}$ singled out by the constraint~$\boldsymbol h=\boldsymbol0$.
The equivalence groups of the subclasses~$\mathcal L_1$ and~$\mathcal L_0$
coincide with~$G^\sim_{\mathcal L}$.

(ii) The partition of the class~$\mathcal L$ into its subclasses~$\mathcal L_0$ and~$\mathcal L_1$
induces the partition of the groupoid~$\mathcal G^\sim_{\mathcal L}$
into its subgroupoids~$\mathcal G^\sim_{\mathcal L_0}$ and $\mathcal G^\sim_{\mathcal L_1}$,
$\mathcal G^\sim_{\mathcal L}=\mathcal G^\sim_{\mathcal L_0}\sqcup\mathcal G^\sim_{\mathcal L_1}$.

(iii) The subclass~$\mathcal L_1$ is uniformly semi-normalized with respect to linear superposition of solutions.

(iv) The subclass~$\mathcal L_0$ and the entire class~$\mathcal L$ are semi-normalized in the usual sense.
\end{corollary}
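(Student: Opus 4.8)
The plan is to deduce the corollary by restricting Theorem~\ref{thm:EquivGroupoidbarL} to the subclass of~$\bar{\mathcal L}$ singled out by $\boldsymbol f=\boldsymbol0$, which, after excluding~$\boldsymbol f$ from the arbitrary-element tuple, is exactly the class~$\mathcal L$; Theorem~\ref{thm:EquivGroupoidL'} will be used only to interpret the residual shift~$\boldsymbol h$. The key identification is that an admissible transformation of~$\mathcal L$ is precisely an admissible transformation of~$\bar{\mathcal L}$ whose source and target both satisfy $\boldsymbol f=\tilde{\boldsymbol f}=\boldsymbol0$, so that $\mathcal G^\sim_{\mathcal L}$ is the restriction of~$\mathcal G^\sim_{\bar{\mathcal L}}$ to the vertices with $\boldsymbol f=\boldsymbol0$. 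Via the mapping of the first paragraph of this section (taken with $\boldsymbol h=\boldsymbol0$, which is admissible because the zero function solves a homogeneous system) one sees that a system $L_{(A,B)}$ of~$\mathcal L$ lies in~$\mathcal L_0$ if and only if $B-\frac12A_t+\frac14A^2$ is proportional to~$E$ with time-dependent factor, which is the same criterion that singles out~$\bar{\mathcal L}_0$; hence $\mathcal L_0=\mathcal L\cap\bar{\mathcal L}_0$ and $\mathcal L_1=\mathcal L\cap\bar{\mathcal L}_1$. Since $\bar{\mathcal L}_0$ and~$\bar{\mathcal L}_1$ are invariant under~$\mathcal G^\sim_{\bar{\mathcal L}}$ by Theorem~\ref{thm:EquivGroupoidbarL}(ii), the subclasses~$\mathcal L_0$ and~$\mathcal L_1$ are invariant under~$\mathcal G^\sim_{\mathcal L}$, which is item~(ii).

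For item~(i), I would first check directly that every transformation~\eqref{eq:EquivGroupbarL} with $\boldsymbol h=\boldsymbol0$, projected onto the space with coordinates $(t,\boldsymbol x,A,B)$, is an equivalence transformation of~$\mathcal L$: the components~\eqref{eq:EquivGroupbarLB} and~\eqref{eq:EquivGroupbarLC} do not involve~$\boldsymbol h$ or~$\boldsymbol f$, while~\eqref{eq:EquivGroupbarLD} yields $\tilde{\boldsymbol f}=\boldsymbol0$ once $\boldsymbol f=\boldsymbol0$ and $\boldsymbol h=\boldsymbol0$. Conversely, let $g\in G^\sim_{\mathcal L}$. By item~(ii) it maps a system of~$\mathcal L_1$ to a system of~$\mathcal L_1\subseteq\bar{\mathcal L}_1$, and since $\bar{\mathcal L}_1$ is normalized by Theorem~\ref{thm:EquivGroupoidbarL}(iii), the fixed $(t,\boldsymbol x)$-component of~$g$ has the form~\eqref{eq:EquivGroupbarLA} and its action on~$(A,B)$ is given by~\eqref{eq:EquivGroupbarLB} and~\eqref{eq:EquivGroupbarLC}. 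Imposing $\boldsymbol f=\boldsymbol0$ in~\eqref{eq:EquivGroupbarLD} and requiring $\tilde{\boldsymbol f}=\boldsymbol0$, I would split the resulting relation with respect to the target element~$\tilde B$, which ranges over non-scalar matrices within~$\mathcal L_1$; as the coefficient of~$\tilde B$ is the constant multiple $-T_t^{\,3}\boldsymbol h$ of~$\boldsymbol h$, this forces $\boldsymbol h=\boldsymbol0$. Thus $G^\sim_{\mathcal L}$ is the asserted projection, and the coincidence of the equivalence groups of~$\mathcal L_0$ and~$\mathcal L_1$ with it is inherited from Theorem~\ref{thm:EquivGroupoidbarL}(i).

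For item~(iii), I would take an admissible transformation in~$\mathcal L_1$; regarded in~$\bar{\mathcal L}_1$ it is, by Theorem~\ref{thm:EquivGroupoidbarL}(iii), induced by some transformation of the form~\eqref{eq:EquivGroupbarL}. Now the constraint $\boldsymbol f=\tilde{\boldsymbol f}=\boldsymbol0$ no longer forces $\boldsymbol h=\boldsymbol0$, since for a single admissible transformation the shift~$\boldsymbol h$ may depend on the fixed system: with $\boldsymbol f=\boldsymbol0$ relation~\eqref{eq:EquivGroupbarLD} becomes the homogeneous linear equation $T_t\boldsymbol h_{tt}-T_{tt}\boldsymbol h_t-T_t^{\,2}\tilde A\boldsymbol h_t-T_t^{\,3}\tilde B\boldsymbol h=\boldsymbol0$, which expresses that the pullback of~$\boldsymbol h$ along~$T$ is a solution of the target system. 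Decomposing $\tilde{\boldsymbol x}=H\boldsymbol x+\boldsymbol h$ as the equivalence transformation with $\boldsymbol h=\boldsymbol0$ followed by the shift $\tilde{\boldsymbol x}\mapsto\tilde{\boldsymbol x}+\boldsymbol h$, which is a point symmetry of the target homogeneous system, gives the required uniform semi-normalization with respect to linear superposition. For item~(iv) I would argue analogously on~$\mathcal L_0$, now invoking Theorem~\ref{thm:EquivGroupoidbarL}(iv): every admissible transformation in~$\mathcal L_0$ is an equivalence-induced transformation with $\boldsymbol h=\boldsymbol0$ composed with a vertex-group element of the target (into which the superposition shift is absorbed), and the only insignificant equivalence transformation is the identity, so $\mathcal L_0$ is semi-normalized. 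As $\mathcal L_0$ and~$\mathcal L_1$ share the equivalence group~$G^\sim_{\mathcal L}$ and are invariant under~$\mathcal G^\sim_{\mathcal L}$, the whole class~$\mathcal L$ is semi-normalized in the usual sense.

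The step requiring most care is the group/groupoid dichotomy for the shift~$\boldsymbol h$ that underlies items~(i) and~(iii): in the equivalence group of~$\mathcal L$ the condition $\tilde{\boldsymbol f}=\boldsymbol0$ must hold identically in the arbitrary elements and therefore annihilates~$\boldsymbol h$, whereas in the equivalence groupoid~$\boldsymbol h$ survives as a linear superposition of solutions of the target system. This is precisely what separates the mere uniform semi-normalization of~$\mathcal L_1$ from the full normalization of~$\bar{\mathcal L}_1$, in which the same shift is instead absorbed into the inhomogeneity~$\boldsymbol f$ and so contributes to the equivalence group rather than to the vertex groups.
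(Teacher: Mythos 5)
Your proposal is correct and follows essentially the same route as the paper: both derive the corollary by restricting Theorem~\ref{thm:EquivGroupoidbarL} to the homogeneous systems and observing that the condition $\boldsymbol f=\,\,\tilde{\!\!\boldsymbol f}=\boldsymbol0$ turns~\eqref{eq:EquivGroupbarLD} into the statement that the pullback of~$\boldsymbol h$ along~$T$ solves the target system, which kills~$\boldsymbol h$ for equivalence-group elements (your splitting with respect to~$\tilde B$ is just a computational rephrasing of the paper's remark that the only common solution of all systems in~$\mathcal L$, $\mathcal L_0$ or~$\mathcal L_1$ is the zero solution) while surviving as a linear-superposition shift in individual admissible transformations, thereby converting the normalization of~$\bar{\mathcal L}_1$ into the uniform semi-normalization of~$\mathcal L_1$. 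No gaps.
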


\begin{proof}
The optimal way for proving this corollary is to modify the proof of Theorem~\ref{thm:EquivGroupoidbarL},
replacing the classes~$\bar{\mathcal L}$, $\bar{\mathcal L}_0$ and~$\bar{\mathcal L}_1$
by their ``homogeneous'' counterparts~$\mathcal L$, $\mathcal L_0$ and~$\mathcal L_1$, respectively.
We can also assume that
$\Phi$ and $\tilde\Phi$ are the point transformations of the form~\eqref{eq:EquivTransbarLtoL'A} with $\boldsymbol h=\boldsymbol0$.

We have $\boldsymbol f=\boldsymbol0$ and $\,\,\tilde{\!\!\boldsymbol f}=\boldsymbol0$
for any admissible transformation between homogeneous systems in the class~$\bar{\mathcal L}$,
and thus the composition $\boldsymbol h\circ\tilde T$ of the corresponding value of the parameter function~$\boldsymbol h$
with the inverse~$\tilde T$ of~$T$ is a solution of the target system.
The only common solution of the systems in the class~$\mathcal L$
(resp.\ in the subclass~$\mathcal L_0$ or~$\mathcal L_1$) is the zero solution.
Hence $\boldsymbol h=\boldsymbol0$ for elements
of the equivalence groups~$G^\sim_{\mathcal L}$, $G^\sim_{\mathcal L_0}$ and~$G^\sim_{\mathcal L_1}$.
Moreover, the normalization of~$\bar{\mathcal L}_1$ in the usual sense is converted into
the uniform semi-normalization of~$\mathcal L_1$ with respect to linear superposition of solutions.
\end{proof}

Due to appearing the parameter function~$T$ of~$t$ among the parameters of the equivalence group~$G^\sim_{\mathcal L'}$,
we can further gauge the arbitrary elements of the class~$\mathcal L'$.
For any source value of the arbitrary-element matrix~$V$,
an element of~$G^\sim_{\mathcal L'}$ of the form~\eqref{eq:EquivGroupL'} with $C=E$ and $T$ satisfying the equation
\[
n\frac{2T_tT_{ttt}-3T_{tt}^{\,\,2}}{4T_t^{\,2}}+\mathop{\rm tr}V=0, \quad\mbox{or}\quad \{T,t\}=-\frac2n\mathop{\rm tr}V,
\]
maps the system~$L'_V$ to a system~\smash{$L'_{\tilde V}$} with $\mathop{\rm tr}\tilde V=0$.
(Here $\{T,t\}:=T_{ttt}/T_t-\frac32T_{tt}^{\,\,2}/T_t^{\,2}$ is the Schwarzian derivative of~$T$ with respect to~$t$.)
In other words, the class~$\mathcal L'$ is mapped by a wide family of admissible transformations parameterized by~$\mathop{\rm tr}V$
to its subclass~$\mathcal L''$ constituted by the systems of the form~\eqref{basis_SLODE}
with traceless values of the arbitrary-element matrix~$V$.
This particular form is called the \emph{Laguerre--Forsyth canonical form}~\cite[p.~266]{seas1993a}.
Note that the subclass $\mathcal L''_0=\mathcal L''\cap\mathcal L'_0$
consists of the single elementary system~$L'_0$: $\boldsymbol x_{tt}=\boldsymbol0$,
and thus $\mathcal L''_1=\mathcal L''\cap\mathcal L'_1=\mathcal L'\setminus\{L'_0\}$.
We do not reparameterize the class~$\mathcal L''$ in order to avoid loosing the symmetry
between entries of the arbitrary-element matrix~$V$,
and since the constraint $\mathop{\rm tr}V=0$ singling out the subclass~$\mathcal L''$ from the class~$\mathcal L'$
is algebraic, this leads to the appearance insignificant equivalence transformations within the class~$\mathcal L''$,
see Section~\ref{sec:AlgebraicAuxiliaryEqsAndClassReparameterization} for definitions and explanations.
This is why we present the canonical significant equivalence group of the class~$\mathcal L''$
instead of the entire equivalence group;
see \cite[p.~22]{wilc1901a} and \cite[p.~116]{wilc1906A}
for the first informal computation of this group in the case $n=2$
and \cite[p.~266]{seas1993a},
where the natural projection of the equivalence group
of the class~$\mathcal L''_{r,n}$ (in the notation of Section~\ref{sec:OnGeneralizationToArbitraryEqOrder})
to the space with coordinates $(t,\boldsymbol x)$
is given for arbitrary $(r,n)$ using the geometric terminology.

\begin{corollary}\label{cor:EquivGroupoidL''}
(i) The canonical significant equivalence group%
\footnote{%
Following~\cite{kova2023b,kova2023a}, we can consider
the transformations~\eqref{eq:EquivGroupL''} on their natural domains of definition and
take the modified composition of transformations as the group operation in~$G^{{\rm s\vphantom{g}}\sim}_{\mathcal L''}$.
Then $G^{{\rm s\vphantom{g}}\sim}_{\mathcal L''}$ indeed becomes a group rather than a pseudogroup.
}%
~$G^{{\rm s\vphantom{g}}\sim}_{\mathcal L''}$ of the class~$\mathcal L''$
consists of the transformations
\begin{subequations}\label{eq:EquivGroupL''}
\begin{gather}\label{eq:EquivGroupL''A}
\tilde t=\frac{\alpha t+\beta}{\gamma t+\delta}, \quad
\tilde{\boldsymbol x}=\frac1{\gamma t+\delta}C\boldsymbol x,
\\\label{eq:EquivGroupL''B}
\tilde V=(\gamma t+\delta)^4CVC^{-1},
\end{gather}
\end{subequations}
where $\alpha$, $\beta$, $\gamma$ and $\delta$ are arbitrary constants
with $\alpha\delta-\beta\gamma=1$ over~$\mathbb C$ and $\alpha\delta-\beta\gamma=\pm1$ over~$\mathbb R$,
and $C$ is an arbitrary constant invertible $n\times n$ matrix.%
\footnote{%
The parameter tuple $(\alpha,\beta,\gamma,\delta,C)$ is defined up to simultaneously alternating
the signs of its components, $(\alpha,\beta,\gamma,\delta,C)\mapsto(-\alpha,-\beta,-\gamma,-\delta,-C)$.
}
The canonical significant equivalence group of the subclass~$\mathcal L''_1$ coincides with~$G^\sim_{\mathcal L''}$,
whereas a significant equivalence group of the subclass~$\mathcal L''_0$
is the trivial (identity) prolongation of the point-symmetry group of the elementary system~$L'_0$
to the arbitrary-element matrix~$V$.

(ii) The partition of the class~$\mathcal L''$ into its subclasses~$\mathcal L''_0$ and~$\mathcal L''_1$
induces the partition of the groupoid~$\mathcal G^\sim_{\mathcal L''}$
into its subgroupoids~$\mathcal G^\sim_{\mathcal L''_0}$ and $\mathcal G^\sim_{\mathcal L''_1}$,
$\mathcal G^\sim_{\mathcal L''}=\mathcal G^\sim_{\mathcal L''_0}\sqcup\mathcal G^\sim_{\mathcal L''_1}$.

(iii) The subclass~$\mathcal L''_1$ is uniformly semi-normalized with respect to linear superposition of solutions.

(iv) The subclass~$\mathcal L''_0$ is trivially normalized in the usual sense
since \smash{$\mathcal G^\sim_{\mathcal L''_0}=\mathcal G_0$},
and thus the entire class~$\mathcal L''$ is semi-normalized in the usual sense as well.
\end{corollary}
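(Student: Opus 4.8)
The plan is to deduce the whole statement from Theorem~\ref{thm:EquivGroupoidL'}, exploiting that $\mathcal L''$ is the subclass of $\mathcal L'$ cut out by the single algebraic constraint $\operatorname{tr}V=0$. Thus every admissible transformation of $\mathcal L''$ is an admissible transformation of $\mathcal L'$ whose source and target both satisfy this constraint, so $\mathcal G^\sim_{\mathcal L''}$ sits inside $\mathcal G^\sim_{\mathcal L'}$ and I can read off its elements from those of the latter. For item~(i) I would start from the form~\eqref{eq:EquivGroupL'} of elements of $G^\sim_{\mathcal L'}$ and impose preservation of the gauge. Taking the trace of~\eqref{eq:EquivGroupL'B} and using $\operatorname{tr}(CVC^{-1})=\operatorname{tr}V=0$ leaves exactly $2T_tT_{ttt}-3T_{tt}^{\,2}=0$, i.e.\ the vanishing of the Schwarzian $\{T,t\}$. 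Its general solution is the M\"obius family $T=(\alpha t+\beta)/(\gamma t+\delta)$, and normalizing $\alpha\delta-\beta\gamma=1$ (resp.\ $=\pm1$ over~$\mathbb R$) gives $T_t=(\gamma t+\delta)^{-2}$ (resp.\ up to sign), whence $T_t^{\,1/2}=(\gamma t+\delta)^{-1}$ with the branch/absolute-value convention of footnote~\ref{fnt:OnSquareRootOfTt}. Substituting into~\eqref{eq:EquivGroupL'} and dropping the now-vanishing trace term yields precisely~\eqref{eq:EquivGroupL''}.

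The only genuinely delicate point in item~(i) is that, since the gauge $\operatorname{tr}V=0$ is algebraic and we deliberately do not reparameterize, the $V$-components of an equivalence transformation are pinned down only on the constraint hypersurface and can be extended off it in many ways, producing insignificant equivalence transformations; I would resolve this exactly as prescribed in Section~\ref{sec:AlgebraicAuxiliaryEqsAndClassReparameterization}, selecting the canonical significant representative furnished by~\eqref{eq:EquivGroupL''B}. For the subclasses, the equivalence group of $\mathcal L''_1$ coincides with $G^{{\rm s\vphantom{g}}\sim}_{\mathcal L''}$ by the same splitting as was used for $\mathcal L'_1$ in Theorem~\ref{thm:EquivGroupoidL'}, whereas $\mathcal L''_0=\mathcal L''\cap\mathcal L'_0=\{L'_0\}$ is a one-element class (a traceless $v(t)E$ forces $v=0$), so any of its equivalence transformations must fix $V=0$ and act on $(t,\boldsymbol x)$ as a point symmetry of $L'_0$, which is the asserted trivial prolongation.

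Items~(ii)--(iv) I would then obtain from the corresponding statements of Theorem~\ref{thm:EquivGroupoidL'}. Since $\mathcal G^\sim_{\mathcal L''}\subset\mathcal G^\sim_{\mathcal L'}$ and the latter preserves the partition $\mathcal L'_0\sqcup\mathcal L'_1$, the partition $\mathcal L''=\mathcal L''_0\sqcup\mathcal L''_1$ induces the disjoint decomposition in~(ii). For~(iii), every admissible transformation within $\mathcal L''_1\subset\mathcal L'_1$ inherits from Theorem~\ref{thm:EquivGroupoidL'}(iii) its factorization into an equivalence transformation followed by a linear-superposition symmetry; the equivalence factor sends one traceless matrix to another and hence satisfies $\{T,t\}=0$, so it lies in $G^{{\rm s\vphantom{g}}\sim}_{\mathcal L''}$, while the superposition factor fixes the arbitrary element---this is exactly uniform semi-normalization with respect to linear superposition. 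Finally, for~(iv) the one-element class $\mathcal L''_0$ has $\mathcal G^\sim_{\mathcal L''_0}=\mathcal G_0$, so it is trivially semi-normalized; combined with~(iii), the disjointness from~(ii), and the common equivalence group, this gives the usual semi-normalization of the whole class~$\mathcal L''$.

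The main obstacle is not the Schwarzian computation, which is routine once one recognizes that only the trace term of~\eqref{eq:EquivGroupL'B} survives, but rather the careful bookkeeping of significant versus insignificant transformations forced by the algebraic gauge, together with the sign/branch normalization over~$\mathbb R$ and the corresponding $(\alpha,\beta,\gamma,\delta,C)\mapsto-(\alpha,\beta,\gamma,\delta,C)$ redundancy noted in the footnote to the statement.
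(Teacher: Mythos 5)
Your route is essentially the paper's: restrict to the constraint surface $\mathop{\rm tr}V=0$, take the trace of the $V$-component of the equivalence transformations to force the Schwarzian of $T$ to vanish and hence $T$ to be fractional linear, and import items (ii)--(iv) from Theorem~\ref{thm:EquivGroupoidL'}; your treatment of $\mathcal L''_0=\{L'_0\}$ and of the insignificant transformations via Section~\ref{sec:AlgebraicAuxiliaryEqsAndClassReparameterization} also matches the paper.

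The one place where your write-up is weaker than it needs to be is the completeness direction of item~(i). Starting ``from the form~\eqref{eq:EquivGroupL'} of elements of $G^\sim_{\mathcal L'}$ and imposing preservation of the gauge'' only shows that the transformations~\eqref{eq:EquivGroupL''} \emph{are} significant equivalence transformations of~$\mathcal L''$; it does not by itself exclude that $G^{{\rm s\vphantom{g}}\sim}_{\mathcal L''}$ is strictly larger, because the equivalence group of a subclass singled out by an algebraic constraint need not embed into that of the superclass (only the groupoid inclusion is automatic). This is exactly why the paper deduces the corollary from the \emph{proof} of Theorem~\ref{thm:EquivGroupoidL'} rather than from its statement: one must check that the splittings with respect to~$x^a_t$ and the transformed arbitrary elements~$\tilde V^{ab}$ in~\eqref{eq:EquivGroupL'DetEqs0}, \eqref{eq:EquivGroupoidL'DetEqs0} and~\eqref{eq:L'RelationOfLinSuperposition} survive the single linear relation $\mathop{\rm tr}\tilde V=0$ (they do, since, e.g., the off-diagonal entries remain unconstrained), so that every equivalence transformation of~$\mathcal L''$ is, up to insignificant summands vanishing on the constraint surface, of the form~\eqref{eq:EquivGroupL'}. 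With that check added your argument is complete; the M\"obius normalization, the branch convention for $T_t^{\,1/2}$ and the sign redundancy of $(\alpha,\beta,\gamma,\delta,C)$ are routine as you say.
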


\begin{proof}
In fact, this corollary is a consequence of the proof of Theorem~\ref{thm:EquivGroupoidL'} rather than of its statement.
In particular, the analysis of this proof shows that imposing the constraint $\mathop{\rm tr}V=0$
has no influence on splitting the systems~\eqref{eq:L'PrologationOfEquivTransToDerivativesOfV},
\eqref{eq:EquivGroupL'DetEqs0} and~\eqref{eq:EquivGroupoidL'DetEqs0}
with respect to the first derivatives~$x^a_t$ and the transformed arbitrary elements~$\tilde V^{ab}$.
Therefore, up to insignificant summands, which depend on $(t,\boldsymbol x,V)$ and vanish whenever $\mathop{\rm tr}V=0$,
each equivalence transformation in the class~$\mathcal L''$ is of the form~\eqref{eq:EquivGroupL'}.
Taking into account the constraints $\mathop{\rm tr}V=0$ and $\mathop{\rm tr}\tilde V=0$,
we derive from~\eqref{eq:EquivGroupL'B}
that the Schwarzian derivative of the function~$T$ vanishes, i.e., the function $T$ is fractional linear in~$t$.
\end{proof}

It is natural to call~$\bar{\mathcal L}_0$ and~$\bar{\mathcal L}_1$
the \emph{singular} and the \emph{regular} subclasses of~$\bar{\mathcal L}$.
Analogously, we define the \emph{singular} and the \emph{regular} subclasses
for the classes~$\mathcal L$, $\mathcal L'$ and~$\mathcal L''$.

\section{Equivalence algebras}\label{sec:SLODEEquivAlgebras}

The equivalence algebra~$\mathfrak g^\sim_{\mathcal K}$ of a class~$\mathcal K$ of systems of differential equations
consists of the vector fields (on the associated space with the independent and dependent variables and the arbitrary elements)
that are infinitesimal generators of one-parameter subgroups of the corresponding equivalence group~$G^\sim_{\mathcal K}$.
This is why it is needless to use the infinitesimal Lie method for computing~$\mathfrak g^\sim_{\mathcal K}$
if the group~$G^\sim_{\mathcal K}$ has been found,
which is the case for the classes~$\bar{\mathcal L}$, $\mathcal L$, $\mathcal L'$, $\mathcal L''$
and their singular and regular subclasses.

\begin{corollary}\label{cor:EquivAlgebrabarL}
The equivalence algebra~$\mathfrak g^\sim_{\bar{\mathcal L}}$ of the class~$\bar{\mathcal L}$ is spanned by of the vector fields
\begin{gather*}
\tau\p_t-\tau_tA^{ab}\p_{A^{ab}}-\tau_{tt}\p_{A^{aa}}-2\tau_tB^{ab}\p_{B^{ab}}-2\tau_tf^a\p_{f^a},
\\
\eta^{ab}x^b\p_{x^a}+(\eta^{ac}A^{cb}\!-\!A^{ac}\eta^{cb}\!+2\eta^{ab}_t)\p_{A^{ab}}
+(\eta^{ac}B^{cb}\!-\!B^{ac}\eta^{cb}\!-\!A^{ac}\eta^{cb}_t\!+\eta^{ab}_{tt})\p_{B^{ab}}+\eta^{ab}f^b\p_{f^a},
\\
\chi^a\p_{x^a}+(\chi^a_{tt}-A^{ab}\chi^b_t-B^{ab}\chi^b)\p_{f^a},
\end{gather*}
where $\tau$, $\eta^{ab}$ and~$\chi^a$ are arbitrary functions of $t$.
\end{corollary}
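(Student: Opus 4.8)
The plan is to obtain $\mathfrak{g}^\sim_{\bar{\mathcal L}}$ as the infinitesimal counterpart of the equivalence group $G^\sim_{\bar{\mathcal L}}$, which is already described in Theorem~\ref{thm:EquivGroupoidbarL}(i) by the explicit formulas~\eqref{eq:EquivGroupbarL}. As the introductory remark to this section emphasizes, once $G^\sim_{\bar{\mathcal L}}$ is known one need not redo the full infinitesimal Lie computation; instead one differentiates one-parameter subgroups at the identity. Concretely, I would introduce a parameter~$\varepsilon$, take smooth curves $\varepsilon\mapsto(T(\cdot;\varepsilon),H(\cdot;\varepsilon),\boldsymbol h(\cdot;\varepsilon))$ in the parameter space of~\eqref{eq:EquivGroupbarL} passing through the identity transformation at $\varepsilon=0$, and compute $\frac{\mathrm d}{\mathrm d\varepsilon}\big|_{\varepsilon=0}$ of every transformed coordinate $(\tilde t,\tilde{\boldsymbol x},\tilde A,\tilde B,\,\,\tilde{\!\!\boldsymbol f})$. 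The identity element corresponds to $T(t;0)=t$, $H(t;0)=E$, $\boldsymbol h(t;0)=\boldsymbol0$, so I set $\tau:=\partial_\varepsilon T|_0$, $\eta^{ab}:=\partial_\varepsilon H^{ab}|_0$, $\chi^a:=\partial_\varepsilon h^a|_0$; these are arbitrary functions of~$t$, matching the three families of generating vector fields in the statement.

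The bulk of the work is then a careful first-order expansion. First I would differentiate~\eqref{eq:EquivGroupbarLA} to read off the $\partial_t$- and $\partial_{x^a}$-components: $\partial_\varepsilon\tilde t|_0=\tau$ and $\partial_\varepsilon\tilde x^a|_0=\eta^{ab}x^b+\chi^a$, giving the vector-field parts $\tau\partial_t$, $\eta^{ab}x^b\partial_{x^a}$ and $\chi^a\partial_{x^a}$ of the three generators. Next I would expand the arbitrary-element components. Differentiating~\eqref{eq:EquivGroupbarLB} at the identity, using $\partial_\varepsilon(H^{-1})|_0=-\partial_\varepsilon H|_0=-\eta$ and $\partial_\varepsilon(T_t^{-2})|_0=-2\tau_t$, yields the $\partial_{A^{ab}}$-coefficients: the $\tau$-part contributes $-\tau_t A^{ab}-\tau_{tt}\delta^{ab}$ (the $\tau_{tt}$ term living only on the diagonal, hence written $-\tau_{tt}\partial_{A^{aa}}$), while the $\eta$-part contributes the commutator-type expression $\eta^{ac}A^{cb}-A^{ac}\eta^{cb}+2\eta^{ab}_t$. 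The same differentiation of~\eqref{eq:EquivGroupbarLC} and~\eqref{eq:EquivGroupbarLD}, organized by which of $\tau,\eta,\chi$ is being varied (the other two held at their identity values), produces the remaining $\partial_{B^{ab}}$- and $\partial_{f^a}$-coefficients; the $\chi$-variation of~$\,\,\tilde{\!\!\boldsymbol f}$ gives precisely $\chi^a_{tt}-A^{ab}\chi^b_t-B^{ab}\chi^b$.

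The main bookkeeping obstacle is the $\tilde B$-expansion, since~\eqref{eq:EquivGroupbarLC} contains $\tilde A$, which itself depends on the parameters, so I must substitute $\tilde A|_{\varepsilon=0}=A$ and $\partial_\varepsilon\tilde A|_0$ (just computed) correctly when differentiating the product $-T_t^{\,2}\tilde A H_t$; keeping the $T_{tt}$- and $H_{tt}$-terms straight is where sign and index errors are most likely to creep in. A clean way to organize this is to exploit linearity: the tangent space splits as a direct sum over independent variations of $\tau$, $\eta$ and $\chi$ (setting the other two to zero), so I would compute three separate one-parameter families and simply add the resulting vector fields. Finally, I would note the converse inclusion: every vector field of the listed form is tangent to a one-parameter subgroup of $G^\sim_{\bar{\mathcal L}}$ (integrate the flows of $\tau,\eta,\chi$ back up), so the span of these fields is exactly $\mathfrak{g}^\sim_{\bar{\mathcal L}}$; since the formulas~\eqref{eq:EquivGroupbarL} already exhaust $G^\sim_{\bar{\mathcal L}}$, no further generators arise. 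This completes the identification of the equivalence algebra.
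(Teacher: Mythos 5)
Your proposal is correct and follows essentially the same route as the paper: the paper's proof likewise varies the parameter functions $T$, $H$ and $\boldsymbol h$ one at a time along one-parameter subgroups through the identity of $G^\sim_{\bar{\mathcal L}}$, differentiates the components of~\eqref{eq:EquivGroupbarL} at $\varepsilon=0$, and reads off the three families of generators with $\tau$, $\eta^{ab}$, $\chi^a$ as the $\varepsilon$-derivatives at the identity. Your additional remarks on the $\tilde B$-expansion and on integrating the fields back up are consistent with, and slightly more explicit than, the paper's argument.
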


\begin{proof}
To construct families of vector fields jointly constituting a spanning set for~$\mathfrak g^\sim_{\bar{\mathcal L}}$,
we compute the infinitesimal generators for a special set of one-parameter subgroups of~$G^\sim_{\bar{\mathcal L}}$.
More specifically, we successively take one of the (scalar-, matrix- and vector-valued)
parameter functions~$T$, $H$ and~$\boldsymbol h$
to depend on a continuous subgroup parameter~$\varepsilon$,
set the other parameter functions to their values associated with the identity transformation,
which are $t$, $E$ and~$\boldsymbol0$ for $T$, $H$ and~$\boldsymbol h$, respectively,
differentiate the transformation components with respect to~$\varepsilon$ and evaluate the result at $\varepsilon=0$.
This gives the corresponding components of the required infinitesimal generators.
After denoting
\[
\tau:=\frac{\mathrm dT}{\mathrm d\varepsilon}\,\bigg|_{\varepsilon=0},\quad
\eta^{ab}:=\frac{\mathrm dH^{ab}}{\mathrm d\varepsilon}\,\bigg|_{\varepsilon=0},\quad
\chi^a:=\frac{\mathrm dh^a}{\mathrm d\varepsilon}\,\bigg|_{\varepsilon=0},
\]
we obtain the vector fields presented in the statement.
\end{proof}

\begin{corollary}\label{cor:EquivAlgebraL}
The equivalence algebra~$\mathfrak g^\sim_{\mathcal L}$ of the class~$\mathcal L$ is spanned by of the vector fields
\begin{gather*}
\tau\p_t-\tau_tA^{ab}\p_{A^{ab}}-\tau_{tt}\p_{A^{aa}}-2\tau_tB^{ab}\p_{B^{ab}},
\\
\eta^{ab}x^b\p_{x^a}+(\eta^{ac}A^{cb}\!-\!A^{ac}\eta^{cb}\!+2\eta^{ab}_t)\p_{A^{ab}}
+(\eta^{ac}B^{cb}\!-\!B^{ac}\eta^{cb}\!-\!A^{ac}\eta^{cb}_t\!+\eta^{ab}_{tt})\p_{B^{ab}},
\end{gather*}
where $\tau$ and~$\eta^{ab}$ are arbitrary functions of $t$.
\end{corollary}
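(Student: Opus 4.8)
The plan is to mirror the proof of Corollary~\ref{cor:EquivAlgebrabarL} while exploiting the structural relation between $G^\sim_{\mathcal L}$ and $G^\sim_{\bar{\mathcal L}}$ recorded in Corollary~\ref{cor:EquivGroupoidL}(i). By that corollary, $G^\sim_{\mathcal L}$ is the projection onto the space with coordinates $(t,\boldsymbol x,A,B)$ of the subgroup of $G^\sim_{\bar{\mathcal L}}$ singled out by the constraint $\boldsymbol h=\boldsymbol 0$. Since the equivalence algebra is by definition the set of infinitesimal generators of one-parameter subgroups, and since both passing to a subgroup and projecting along coordinates are compatible with taking infinitesimal generators, the algebra $\mathfrak g^\sim_{\mathcal L}$ is obtained from the already-computed $\mathfrak g^\sim_{\bar{\mathcal L}}$ by the two corresponding infinitesimal operations.

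Concretely, I would run the same differentiation procedure as in Corollary~\ref{cor:EquivAlgebrabarL}: let $T$ and $H$ depend on a subgroup parameter~$\varepsilon$ with identity values $t$ and~$E$, keep $\boldsymbol h\equiv\boldsymbol 0$ throughout, and set $\tau:=\mathrm dT/\mathrm d\varepsilon|_{\varepsilon=0}$ and $\eta^{ab}:=\mathrm dH^{ab}/\mathrm d\varepsilon|_{\varepsilon=0}$. Imposing $\boldsymbol h\equiv\boldsymbol 0$ forces the infinitesimal $\chi^a:=\mathrm dh^a/\mathrm d\varepsilon|_{\varepsilon=0}$ to vanish, which eliminates the third family $\chi^a\p_{x^a}+(\cdots)\p_{f^a}$ of $\mathfrak g^\sim_{\bar{\mathcal L}}$ entirely. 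The projection onto $(t,\boldsymbol x,A,B)$ then discards the $\p_{f^a}$-components of the two surviving families. Since~\eqref{eq:EquivGroupbarLB} and~\eqref{eq:EquivGroupbarLC} for $\tilde A$ and~$\tilde B$ contain neither $\boldsymbol h$ nor $\boldsymbol f$, setting $\boldsymbol h=\boldsymbol 0$ leaves the $\p_{A^{ab}}$- and $\p_{B^{ab}}$-components exactly as in $\mathfrak g^\sim_{\bar{\mathcal L}}$, so the two remaining vector fields are precisely those displayed in the statement.

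There is no genuine computational obstacle here; the only point deserving care is the justification that these infinitesimal operations reproduce, at the Lie-algebra level, the subgroup-and-projection description of $G^\sim_{\mathcal L}$ at the group level. This is standard, but to keep the argument self-contained one may simply note that every one-parameter subgroup of $G^\sim_{\mathcal L}$ lifts to a one-parameter subgroup of the $\boldsymbol h=\boldsymbol 0$ subgroup of $G^\sim_{\bar{\mathcal L}}$ (via the trivial $\boldsymbol f$-prolongation) and conversely projects back, so that the two displayed families exhaust $\mathfrak g^\sim_{\mathcal L}$. Alternatively, one can bypass the lifting argument altogether and simply repeat the direct differentiation of the explicit transformations~\eqref{eq:EquivGroupbarL} under the constraint $\boldsymbol h=\boldsymbol 0$, which is the quickest fully elementary route.
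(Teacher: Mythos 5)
Your proposal is correct and follows essentially the same route as the paper, whose proof is exactly the one-line instruction to set the parameter functions~$\chi^a$ to zero and project the vector fields of~$\mathfrak g^\sim_{\bar{\mathcal L}}$ onto the space with coordinates $(t,\boldsymbol x,A,B)$. Your additional remarks on why the subgroup-and-projection description at the group level descends to the infinitesimal level are a reasonable elaboration of what the paper leaves implicit.
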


\begin{proof}
To construct the algebra~$\mathfrak g^\sim_{\mathcal L}$,
it suffices to set the parameter functions~$\chi^a$ to zero
and project the obtained vector fields to the space with coordinates $(t,\boldsymbol x,A,B)$.
\end{proof}

\begin{corollary}\label{cor:EquivAlgebraL'}
The equivalence algebra~$\mathfrak g^\sim_{\mathcal L'}$ of the class~$\mathcal L'$ is spanned by of the vector fields
\begin{gather*}
\hat D(\tau)=\tau\p_t+\tfrac12\tau_tx^a\p_{x^a}-2\tau_tV^{ab}\p_{V^{ab}}+\tfrac12\tau_{ttt}\p_{V^{aa}},\\
\hat I^{ab}=x^b\p_{x^a}+V^{bc}\p_{V^{ac}}-V^{ca}\p_{V^{cb}},
\end{gather*}
where $\tau$ is an arbitrary function of $t$.
\end{corollary}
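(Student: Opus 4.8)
The plan is to proceed exactly as in the proofs of Corollaries~\ref{cor:EquivAlgebrabarL} and~\ref{cor:EquivAlgebraL}: since the equivalence algebra~$\mathfrak g^\sim_{\mathcal L'}$ is by definition spanned by the infinitesimal generators of one-parameter subgroups of the already-computed equivalence group~$G^\sim_{\mathcal L'}$, there is no need to solve an infinitesimal determining system. Instead I would differentiate the explicit transformation components~\eqref{eq:EquivGroupL'} with respect to a continuous subgroup parameter~$\varepsilon$ and evaluate at $\varepsilon=0$. The group~$G^\sim_{\mathcal L'}$ is parameterized by the scalar function $T=T(t)$ (with $T_t\ne0$) and the constant invertible matrix~$C$, and these two parameters give rise to the two families $\hat D(\tau)$ and $\hat I^{ab}$, respectively.

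For the family $\hat D(\tau)$, I would fix $C=E$ and let $T=T(t,\varepsilon)$ satisfy $T|_{\varepsilon=0}=t$ and $\p_\varepsilon T|_{\varepsilon=0}=\tau$ (near the identity $T_t=1$, so the branch of $T_t^{1/2}$ in footnote~\ref{fnt:OnSquareRootOfTt} is unambiguous). Differentiating $\tilde t$ and $\tilde x^a=T_t^{1/2}x^a$ at $\varepsilon=0$ immediately yields the $\p_t$- and $\p_{x^a}$-components $\tau$ and $\tfrac12\tau_t x^a$. For the $\tilde V$-component I would split~\eqref{eq:EquivGroupL'B} into its homogeneous part $T_t^{-2}V$, whose $\varepsilon$-derivative at $\varepsilon=0$ gives $-2\tau_t V^{ab}$, and the inhomogeneous ``Schwarzian'' part $(2T_tT_{ttt}-3T_{tt}^{\,\,2})/(4T_t^{\,4})\,E$. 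The latter is the only nonroutine point: since this scalar vanishes at $\varepsilon=0$ (where $T=t$, hence $T_{tt}=T_{ttt}=0$), its $\varepsilon$-derivative equals the $\varepsilon$-derivative of the numerator divided by the value~$4$ of the denominator, which reduces to $\tfrac12\tau_{ttt}$ and produces the term $\tfrac12\tau_{ttt}\p_{V^{aa}}$.

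For the family $\hat I^{ab}$, I would instead fix $T=t$ and take $C=E+\varepsilon E_{ab}$, where $E_{ab}$ denotes the matrix unit with its single nonzero entry in position $(a,b)$. Because $T_t=1$ and $T_{tt}=T_{ttt}=0$, the inhomogeneous term in~\eqref{eq:EquivGroupL'B} drops out and $\tilde V=CVC^{-1}$, so differentiating at $\varepsilon=0$ produces the commutator $[E_{ab},V]$, whose $(c,d)$-entry $\delta_{ca}V^{bd}-V^{ca}\delta_{db}$ yields precisely $V^{bc}\p_{V^{ac}}-V^{ca}\p_{V^{cb}}$, while the component $\tilde{\boldsymbol x}=C\boldsymbol x$ contributes $x^b\p_{x^a}$. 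Finally, since these one-parameter subgroups exhaust the generating directions of $G^\sim_{\mathcal L'}$ (an arbitrary~$\tau$ together with the $n^2$ matrix units), the fields $\hat D(\tau)$ and $\hat I^{ab}$ span~$\mathfrak g^\sim_{\mathcal L'}$. The only calculation requiring care is the derivative of the Schwarzian correction term above; everything else is direct substitution.
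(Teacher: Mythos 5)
Your proposal is correct and follows essentially the same route as the paper: differentiate the explicit parameterization~\eqref{eq:EquivGroupL'} of~$G^\sim_{\mathcal L'}$ with respect to a subgroup parameter at the identity, with the derivative of the Schwarzian term computed exactly as you describe. The only cosmetic difference is that the paper varies $T$ and $C$ simultaneously in a single arbitrary one-parameter subgroup, obtaining the general generator $\tau\p_t+\big(\tfrac12\tau_tx^a+\Gamma^{ab}x^b\big)\p_{x^a}+\big(\tfrac12\tau_{ttt}\delta_{ab}-2\tau_tV^{ab}+[\Gamma,V]^{ab}\big)\p_{V^{ab}}$ in one step, whereas you treat the two parameter families separately and then invoke linearity in $(\tau,\Gamma)$ to conclude spanning — which is the same argument the paper itself uses for Corollary~\ref{cor:EquivAlgebrabarL}.
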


\begin{proof}
In contrast to the proof of Corollary~\ref{cor:EquivAlgebrabarL},
here we find all the infinitesimal generators~$Q$ of the equivalence group~$G^\sim_{\mathcal L'}$.
We take an arbitrary one-parameter subgroup of this group,
assuming that in~\eqref{eq:EquivGroupL'}, the function~$T$ and the matrix~$C$ additionally depend on a parameter~$\varepsilon$,
i.e., $T=T(t,\varepsilon)$ and $C=C(\varepsilon)$, and $T(t,0)=t$ and $C(0)=E$.
Denote $\tau(t):=T_\varepsilon(t,0)$ and $\Gamma:=C_\varepsilon(0)$.
Differentiating the general element of the subgroup with respect to~$\varepsilon$ at $\varepsilon=0$,
we derive the general form of infinitesimal generators for~$G^\sim_{\mathcal L'}$,
\begin{gather*}
Q=\tau\p_t+\big(\tfrac12\tau_tx^a+\Gamma^{ab}x^b\big)\p_{x^a}+\big(\tfrac12\tau_{ttt}\delta_{ab}-2\tau_tV^{ab}+[\Gamma,V]^{ab}\big)\p_{V^{ab}},
\end{gather*}
where $\tau=\tau(t)$ is an arbitrary function of~$t$,
$\Gamma=(\Gamma^{ab})$ is an arbitrary constant $n\times n$ matrix, and $\delta_{ab}$ is the Kronecker delta.
\end{proof}

\begin{corollary}
A basis of the significant equivalence algebra~$\mathfrak g^{{\rm s\vphantom{g}}\sim}_{\mathcal L''}$
of the class~$\mathcal L''$ consists of the vector fields
\begin{gather*}
\hat D(1)=\p_t, \quad
\hat D(t)=t\p_t+\tfrac12x^a\p_{x^a}-2V^{ab}\p_{V^{ab}}, \quad
\hat D(t^2)=t^2\p_t+tx^a\p_{x^a}-4tV^{ab}\p_{V^{ab}},\\
\hat I^{ab}=x^b\p_{x^a}+V^{bc}\p_{V^{ac}}-V^{ca}\p_{V^{cb}}.
\end{gather*}
\end{corollary}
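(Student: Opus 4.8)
The plan is to proceed exactly as in the proofs of Corollaries~\ref{cor:EquivAlgebrabarL}--\ref{cor:EquivAlgebraL'}. Since the canonical significant equivalence group $G^{{\rm s\vphantom{g}}\sim}_{\mathcal L''}$ has already been found in Corollary~\ref{cor:EquivGroupoidL''}, the algebra $\mathfrak g^{{\rm s\vphantom{g}}\sim}_{\mathcal L''}$ is its infinitesimal counterpart and can be read off by differentiating one-parameter subgroups of $G^{{\rm s\vphantom{g}}\sim}_{\mathcal L''}$ at the identity, with no need for an independent infinitesimal invariance computation. Concretely, I would let the parameters of the transformations~\eqref{eq:EquivGroupL''} depend smoothly on a subgroup parameter~$\varepsilon$ with $(\alpha,\beta,\gamma,\delta,C)|_{\varepsilon=0}=(1,0,0,1,E)$, differentiate the components $\tilde t$, $\tilde{\boldsymbol x}$ and $\tilde V$ with respect to~$\varepsilon$, and evaluate at $\varepsilon=0$.

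The fractional-linear dependence of~$\tilde t$ on~$t$ contributes, as is standard, the $\mathfrak{sl}_2(\mathbb F)$ realization on the $t$-line, i.e.\ the $t$-components $\p_t$, $t\p_t$ and $t^2\p_t$; these correspond to the three values $\tau\in\{1,t,t^2\}$ of the parameter function in the general generator $\hat D(\tau)$ of $\mathfrak g^\sim_{\mathcal L'}$ computed in Corollary~\ref{cor:EquivAlgebraL'}. The constant matrix~$C$ contributes, through $\Gamma:=C_\varepsilon(0)$, precisely the generators~$\hat I^{ab}$, which are inherited unchanged from~$\mathfrak g^\sim_{\mathcal L'}$ since the matrix part of the transformations is the same in $\mathcal L'$ and $\mathcal L''$.

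The only point requiring care is the restriction of~$\tau$ to the quadratic polynomials, and this is exactly where the passage from~$\mathcal L'$ to~$\mathcal L''$ enters. By Corollary~\ref{cor:EquivGroupoidL''} the component~$T$ of a significant equivalence transformation of~$\mathcal L''$ is fractional linear in~$t$, equivalently $\{T,t\}=0$, whose infinitesimal form is $\tau_{ttt}=0$, i.e.\ $\tau\in\langle 1,t,t^2\rangle$. Equivalently, one may argue directly inside $\mathfrak g^\sim_{\mathcal L'}$: the generator $\hat D(\tau)$ acts on $\mathop{\rm tr}V$ by $-2\tau_t\mathop{\rm tr}V+\tfrac n2\tau_{ttt}$, which on the constraint surface $\mathop{\rm tr}V=0$ defining~$\mathcal L''$ reduces to $\tfrac n2\tau_{ttt}$, so tangency of $\hat D(\tau)$ to this surface forces $\tau_{ttt}=0$ (one checks analogously that each $\hat I^{ab}$ is automatically tangent to it). For such~$\tau$ the last summand $\tfrac12\tau_{ttt}\p_{V^{aa}}$ of $\hat D(\tau)$ drops out, and substituting $\tau=1,t,t^2$ (so $\tau_t=0,1,2t$) into
\[
\hat D(\tau)=\tau\p_t+\tfrac12\tau_tx^a\p_{x^a}-2\tau_tV^{ab}\p_{V^{ab}}
\]
yields exactly the fields $\hat D(1)$, $\hat D(t)$ and $\hat D(t^2)$ of the statement. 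Together with the $\hat I^{ab}$ these four families span $\mathfrak g^{{\rm s\vphantom{g}}\sim}_{\mathcal L''}$, and I expect no genuine obstacle beyond this bookkeeping, the whole computation being a routine specialization of results already established.
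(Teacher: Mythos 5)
Your proposal is correct and follows essentially the same route as the paper, which likewise observes that $G^{{\rm s\vphantom{g}}\sim}_{\mathcal L''}$ is the subgroup of~$G^\sim_{\mathcal L'}$ with $T$ fractional linear in~$t$, so that $\mathfrak g^{{\rm s\vphantom{g}}\sim}_{\mathcal L''}$ is the subalgebra of~$\mathfrak g^\sim_{\mathcal L'}$ with $\tau$ running through $\langle1,t,t^2\rangle$. Your additional tangency check of $\hat D(\tau)$ against the constraint surface $\mathop{\rm tr}V=0$ is a correct (if redundant) cross-verification of the condition $\tau_{ttt}=0$.
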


\begin{proof}
Since the significant equivalence group~$G^{{\rm s\vphantom{g}}\sim}_{\mathcal L''}$ of the class~$\mathcal L''$
is a subgroup of~$G^\sim_{\mathcal L'}$ that is singled out by the condition that the parameter function $T$ is fractional linear in~$t$,
the algebra~$\mathfrak g^{{\rm s\vphantom{g}}\sim}_{\mathcal L''}$ is the subalgebra of~$\mathfrak g^\sim_{\mathcal L'}$ with
$\tau$ running $\langle1,t,t^2\rangle$.
\end{proof}

It obviously follows from assertions of Section~\ref{sec:SLODEEquivGroupoidsAndEquivGroups} and of this section that \
$\mathfrak g^\sim_{\bar{\mathcal L}_0}=\mathfrak g^\sim_{\bar{\mathcal L}_1}=\mathfrak g^\sim_{\bar{\mathcal L}}$, \
$\mathfrak g^\sim_{\mathcal L_0}=\mathfrak g^\sim_{\mathcal L_1}=\mathfrak g^\sim_{\mathcal L}$, \
$\mathfrak g^{{\rm s\vphantom{g}}\sim}_{\mathcal L'_0}=\mathfrak g^\sim_{\mathcal L'_1}=\mathfrak g^\sim_{\mathcal L'}$ \ and \
$\mathfrak g^{{\rm s\vphantom{g}}\sim}_{\mathcal L''_1}=\mathfrak g^{{\rm s\vphantom{g}}\sim}_{\mathcal L''}$.

\section{Preliminary analysis of Lie symmetries}\label{sec:SLODEPreliminaryAnalysisOfLieSyms}

For each pair $(\mathcal K,\tilde{\mathcal K})$ of classes in each of the chains
\begin{gather}\label{eq:SLODESubclassEmbeddings}
\bar{\mathcal L}\hookleftarrow\mathcal L\hookleftarrow\mathcal L'\supset\mathcal L'',\quad
\bar{\mathcal L}_0\hookleftarrow\mathcal L_0\hookleftarrow\mathcal L'_0\supset\mathcal L''_0,\quad
\bar{\mathcal L}_1\hookleftarrow\mathcal L_1\hookleftarrow\mathcal L'_1\supset\mathcal L''_1,
\end{gather}
where $\tilde{\mathcal K}$ is a subsequent class for~$\mathcal K$,
any system from the class~$\mathcal K$ is mapped by an element of the equivalence group~$G^\sim_{\mathcal K}$
to a system from the class~$\tilde{\mathcal K}$.
In other words, the class~$\mathcal K$ is mapped to its subclass~$\tilde{\mathcal K}$
by a wide family of admissible transformations from the action groupoid of the equivalence group of~$\mathcal K$.
Moreover, the \smash{$G^\sim_{\mathcal K}$}- and \smash{$G^\sim_{\tilde{\mathcal K}}$}-equivalences
within the classes~$\mathcal K$ and~$\tilde{\mathcal K}$, respectively,
are consistent in the sense that systems from the class~$\mathcal K$ are $G^\sim_{\mathcal K}$-equivalent
if and only if their counterparts in the class~$\tilde{\mathcal K}$ are \smash{$G^\sim_{\tilde{\mathcal K}}$}-equivalent.
This follows from the fact that any class~$\mathcal K$ among the above ones is semi-normalized in the usual sense,
and thus the $G^\sim_{\mathcal K}$-equivalence coincides with the $\mathcal G^\sim_{\mathcal K}$-equivalence.
As a result, the solution of the group classification problem for the class~$\mathcal K$
reduces to that for its subclass~$\tilde{\mathcal K}$,
and it is not essential which kind of equivalences ($G^\sim$-equivalence or $\mathcal G^\sim$-equivalence) is used.
Since the chain members on the same position, except the singleton~$\mathcal L''_0$,
have the same equivalence groups (up to neglecting insignificant equivalence transformations within singular subclasses),
the group classification of a class in the first chain splits into
the group classification of the respective classes in the second and third chains.\looseness=-1

The group classification of each class from the second chain is trivial,
and the optimal classification list consists of the single elementary system $\boldsymbol x_{tt}=\boldsymbol0$
with its well-known maximal Lie invariance algebra~$\mathfrak g_0$ isomorphic to $\mathfrak{sl}(n+2,\mathbb F)$,
\begin{gather}\label{eq:SLODEMIAOfElementarySystem}
\mathfrak g_0=\langle\p_t,\,\p_{x^a},\,t\p_t,\,x^a\p_t,\,t\p_{x^a},\,x^a\p_{x^b},\,tx^a\p_t+x^ax^c\p_{x^c},\,t^2\p_t+tx^c\p_{x^c}\rangle;
\end{gather}
the corresponding complete point-symmetry group~$G_0$ is the general projective group of $\mathbb F^{n+1}$
consisting of the transformations with components \mbox{\cite[S.~554]{lie1888A}}
\[
\tilde x^\iota=\frac{\alpha_{\iota0}x^0+\dots+\alpha_{\iota n}x^n+\alpha_{\iota,n+1}}{\alpha_{n+1,0}x^0+\dots+\alpha_{n+1,n}x^n+\alpha_{n+1,n+1}},\qquad \iota=0,\dots,n,
\]
where $\alpha_{00}$, $\alpha_{01}$, \dots, $\alpha_{n+1,n+1}$ are homogeneous group parameters, and $x^0=t$.
See~\cite{shap2014a} for computing~$G_0$ by the direct method.

At the same time, we will show that for the third (and hence the first) chain,
it is the most convenient to use the class~$\mathcal L'_1$  (resp.\ the class~$\mathcal L'$)
as basic in the course of group classification, switching to the class~$\mathcal L''_1$ for classifying some particular cases
and to the class~$\mathcal L_1$ for interpreting certain results,
see Section~\ref{sec:DescriptionOfEssLieSymExtensions} and, especially, Remark~\ref{rem:SLODEConvenientClassesForGroupClassification} below.
Any system~$\bar L_\theta$ from the class~$\bar{\mathcal L}_1$ is mapped to its homogeneous counterpart
by the simple subtraction of a particular solution of~$\bar L_\theta$ from~$\boldsymbol x$,
and it is natural to choose homogeneous systems
as canonical representatives of $G^\sim_{\bar{\mathcal L_1}}$-cosets in~$\bar{\mathcal L}_1$.
This is why we start the consideration of Lie symmetries with the systems from the class~$\mathcal L_1$
and then we carry out a preliminary analysis of Lie symmetries for systems from the classes~$\mathcal L'_1$ and~$\mathcal L''_1$ as well.
Denote by~$\vartheta$ the arbitrary-element tuple $(A,B)$ of the class~$\mathcal L_1$, $\vartheta=(A,B)$.

Instead of applying the standard technique based on the infinitesimal Lie-invariance criterion,
it is easier to derived determining equations for Lie symmetries of systems
from the class~$\mathcal L_1$ (resp.\ from its subclasses~$\mathcal L'_1$ and~$\mathcal L''_1$)
using results of Section~\ref{sec:SLODEEquivGroupoidsAndEquivGroups}.

\begin{lemma}\label{lem:MAIofLvartheta}
The maximal Lie invariance algebra~$\mathfrak g_\vartheta$ of a system~$L_\vartheta$ from the class~$\mathcal L_1$
consists of the vector fields of the form
\begin{gather}\label{eq:GenElementOfInvAlgOfLvartheta}
Q=\tau\p_t+(\eta^{ab}x^b+\chi^a)\p_{x^a},
\end{gather}
where the vector-valued function $\boldsymbol\chi=(\chi^1,\dots,\chi^n)^{\mathsf T}$ of~$t$ is an arbitrary solution of the system~$L_\vartheta$,
whereas $\tau$ is an arbitrary function of~$t$
and $\eta=(\eta^{ab})$ is an arbitrary $n\times n$ matrix-valued function of~$t$ that satisfy the classifying condition
\begin{subequations}\label{eq:ClassifyingCondLvartheta}
\begin{gather}\label{eq:ClassifyingCondLvarthetaA}
\tau A_t=[\eta,A]-\tau_tA+2\eta_t-\tau_{tt}E,
\\\label{eq:ClassifyingCondLvarthetaB}
\tau B_t=[\eta,B]-2\tau_tB-A\eta_t+\eta_{tt}.
\end{gather}
\end{subequations}
\end{lemma}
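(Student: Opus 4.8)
The plan is to sidestep the direct infinitesimal Lie-invariance computation and instead read off the determining equations from the equivalence structure established in Section~\ref{sec:SLODEEquivGroupoidsAndEquivGroups}. A vector field~$Q$ lies in~$\mathfrak g_\vartheta$ precisely when it generates a one-parameter subgroup of the complete point symmetry group~$G_\vartheta$, equivalently of the vertex group~$\mathcal G_\vartheta\subset\mathcal G^\sim_{\mathcal L_1}$. By item~(iii) of Corollary~\ref{cor:EquivGroupoidL}, the subclass~$\mathcal L_1$ is uniformly semi-normalized with respect to linear superposition of solutions, so every admissible transformation from~$L_\vartheta$ to itself factorizes as an equivalence transformation of the form~\eqref{eq:EquivGroupbarL} with $\boldsymbol h=\boldsymbol0$ that fixes~$\vartheta$, composed with a superposition shift $\tilde{\boldsymbol x}=\boldsymbol x+\boldsymbol s(t)$ where $\boldsymbol s$ is a solution of~$L_\vartheta$. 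Differentiating a one-parameter subgroup of such symmetries at the identity and writing $\tau$, $\eta$ and $\boldsymbol\chi$ for the $\varepsilon$-derivatives at $\varepsilon=0$ of the parameter functions~$T$, $H$ and of the superposition shift, respectively, immediately produces the claimed form~\eqref{eq:GenElementOfInvAlgOfLvartheta}; in particular this is what forces $\tau=\tau(t)$ and the affine-in-$\boldsymbol x$ dependence of the $\p_{x^a}$-coefficients, the structural facts that the brute-force approach would have to establish by hand.

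It then remains only to extract the constraints on the triple~$(\tau,\eta,\boldsymbol\chi)$, and for this I would pass to the equivalence algebra. By Corollary~\ref{cor:EquivAlgebraL}, the generator of the equivalence part acts on the arbitrary elements through the vector field whose $A^{ab}$- and $B^{ab}$-components are
\[
P^{ab}:=-\tau_tA^{ab}-\tau_{tt}\delta^{ab}+[\eta,A]^{ab}+2\eta^{ab}_t,\qquad
R^{ab}:=-2\tau_tB^{ab}+[\eta,B]^{ab}-(A\eta_t)^{ab}+\eta^{ab}_{tt}.
\]
Because a point symmetry must fix~$\vartheta$, the flow of $\tau\p_t+P^{ab}\p_{A^{ab}}+R^{ab}\p_{B^{ab}}$ has to leave the graph $t\mapsto\big(t,A(t),B(t)\big)$ invariant, i.e.\ this vector field must be tangent to that graph. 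Since the graph's tangent direction is $\p_t+A^{ab}_t\p_{A^{ab}}+B^{ab}_t\p_{B^{ab}}$ and the $\p_t$-coefficient of the field is~$\tau$, tangency is equivalent to $P^{ab}=\tau A^{ab}_t$ and $R^{ab}=\tau B^{ab}_t$. Rewritten in matrix form, these two systems are exactly the classifying conditions~\eqref{eq:ClassifyingCondLvarthetaA} and~\eqref{eq:ClassifyingCondLvarthetaB}.

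For the superposition part I would invoke the $\boldsymbol\chi$-family of the equivalence algebra of the ambient class~$\bar{\mathcal L}$ from Corollary~\ref{cor:EquivAlgebrabarL}, whose $f^a$-component is $\chi^a_{tt}-A^{ab}\chi^b_t-B^{ab}\chi^b$. Preserving the defining gauge $\boldsymbol f=\boldsymbol0$ of~$\mathcal L_1$ forces this component to vanish, i.e.\ $\boldsymbol\chi_{tt}=A\boldsymbol\chi_t+B\boldsymbol\chi$, which says precisely that $\boldsymbol\chi$ is a solution of~$L_\vartheta$ (equivalently, $\boldsymbol\chi$ is the velocity of a curve of solutions and hence itself a solution by linearity). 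The converse is routine: any $(\tau,\eta,\boldsymbol\chi)$ meeting these three conditions yields a field of the form~\eqref{eq:GenElementOfInvAlgOfLvartheta} tangent to the graph of~$\vartheta$, hence a genuine element of~$\mathfrak g_\vartheta$, so the list is exhaustive. The only genuinely load-bearing step is the opening reduction to equivalence-induced symmetries together with superpositions; everything downstream is bookkeeping with the already-computed equivalence algebras. That reduction, however, is precisely the uniform semi-normalization of~$\mathcal L_1$ proved in Section~\ref{sec:SLODEEquivGroupoidsAndEquivGroups}, so the apparent main obstacle has in fact already been cleared.
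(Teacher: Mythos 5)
Your proposal is correct and follows essentially the same route as the paper: both exploit the uniform semi-normalization of~$\mathcal L_1$ from item~(iii) of Corollary~\ref{cor:EquivGroupoidL} to reduce point symmetries of~$L_\vartheta$ to equivalence-induced transformations composed with superposition shifts, and then differentiate a one-parameter subgroup at the identity to obtain the form~\eqref{eq:GenElementOfInvAlgOfLvartheta}. The only cosmetic difference is that you extract the classifying condition~\eqref{eq:ClassifyingCondLvartheta} by imposing tangency of the equivalence-algebra generators of Corollary~\ref{cor:EquivAlgebraL} to the graph of~$\vartheta$ (and the solution condition on~$\boldsymbol\chi$ from the $\boldsymbol f$-component in Corollary~\ref{cor:EquivAlgebrabarL}), whereas the paper linearizes the finite conditions~\eqref{eq:SymGroupConditionForL1} directly; these are the same infinitesimal computation.
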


\begin{proof}
Since the subclass~$\mathcal L_1$ is uniformly semi-normalized with respect to linear superposition of solutions
in view of item~(iii) of Corollary~\ref{cor:EquivGroupoidL},
any element of the vertex group $\mathcal G_\vartheta\subset\mathcal G^\sim_{\mathcal L_1}$
is the composition of an element of the action groupoid~\smash{$\mathcal G^{G^\sim_{\mathcal L_1}}$}
of the equivalence group~$G^\sim_{\mathcal L_1}$
and of an element of the groupoid \smash{$\mathcal G^{\rm lin}_{\mathcal L_1}$}
related to the linear superposition of solutions within the class~$\mathcal L_1$.
Therefore, the point-symmetry group~$G_\vartheta$ of the system~$L_\vartheta\in\mathcal L_1$
consists of the transformations of the form~\eqref{eq:EquivGroupbarLA},
where $\boldsymbol h$ is a particular solution of~$L_\vartheta$,
and the parameter function~$T$ and the matrix-valued parameter function~$H$ satisfy the equations
\begin{subequations}\label{eq:SymGroupConditionForL1}
\begin{gather}\label{eq:SymGroupConditionForL1A}
A\circ T=\frac1{T_t^{\,2}}\big(T_tHA+2T_tH_t-T_{tt}H\big)H^{-1}, \\
\label{eq:SymGroupConditionForL1B}
B\circ T=\frac1{T_t^{\,3}}\big(T_tHB-T_t^{\,2}(A\circ T)H_t+T_tH_{tt}-T_{tt}H_t\big)H^{-1}.
\end{gather}
\end{subequations}
These conditions for $T$, $H$ and~$\boldsymbol h$ follow from~\eqref{eq:EquivGroupbarLB}--\eqref{eq:EquivGroupbarLD}
under the substitution $\tilde A=A\circ T$, $\tilde B=B\circ T$, $\boldsymbol f=\boldsymbol0$ and $\,\,\tilde{\!\!\boldsymbol f}= \boldsymbol0$.
Here $A\circ T$ and~$B\circ T$ denote the matrix-valued functions~$A$ and~$B$ composed with the scalar function~$T$,
respectively.

Analogously to the proof of Corollary~\ref{cor:EquivAlgebraL'},
we compute all the infinitesimal generators~$Q$ of the group~$G_\vartheta$.
We take an arbitrary one-parameter subgroup of this group,
assuming that in~\eqref{eq:EquivGroupbarLA},
the function~$T$, the matrix-valued function~$H$ and the vector-valued function~$\boldsymbol h$ additionally depend on a parameter~$\varepsilon$,
i.e., $T=T(t,\varepsilon)$, $H=H(t,\varepsilon)$ and $\boldsymbol h=\boldsymbol h(t,\varepsilon)$, and $T(t,0)=t$, $H(t,0)=E$ and $\boldsymbol h(t,0)=\boldsymbol0$.
Denote $\tau(t):=T_\varepsilon(t,0)$, $\eta(t):=H_\varepsilon(t,0)$ and $\boldsymbol\chi(t):=\boldsymbol h_\varepsilon(t,0)$.
Differentiating the general element of the subgroup with respect to~$\varepsilon$ at $\varepsilon=0$,
we obtain the general form~\eqref{eq:GenElementOfInvAlgOfLvartheta} of infinitesimal generators~$Q$ of the group~$G_\vartheta$.
The classifying condition~\eqref{eq:ClassifyingCondLvartheta} is the infinitesimal counterpart of~\eqref{eq:SymGroupConditionForL1}.
\end{proof}

\begin{corollary}\label{cor:MAIofBarLtheta}
The maximal Lie invariance algebra~$\mathfrak g_\theta$ of a system~$L_\theta$ from the class~$\bar{\mathcal L}_1$
is constituted by the vector fields of the form
\[
Q=\tau\p_t+(\eta^{ab}x^b+\tau h^a_t-\eta^{ab}h^b+\chi^a)\p_{x^a},
\]
where
the vector-valued function $\boldsymbol h=(h^1,\dots,h^n)^{\mathsf T}$ of~$t$ is a fixed particular solution of the system~$L_\theta$,
the vector-valued function $\boldsymbol\chi=(\chi^1,\dots,\chi^n)^{\mathsf T}$ of~$t$
is an arbitrary solution of the corresponding homogeneous system,
whereas $\tau$ is an arbitrary function of~$t$
and $\eta=(\eta^{ab})$ is an arbitrary $n\times n$ matrix-valued function of~$t$ that satisfy the classifying condition~\eqref{eq:ClassifyingCondLvartheta}.
\end{corollary}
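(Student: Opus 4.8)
The plan is to reduce the inhomogeneous case to the homogeneous one already settled in Lemma~\ref{lem:MAIofLvartheta}, via the point transformation that subtracts a particular solution. First I would fix a particular solution~$\boldsymbol h$ of~$\bar L_\theta$ and consider $\Phi\colon\tilde t=t$, $\tilde{\boldsymbol x}=\boldsymbol x-\boldsymbol h(t)$, which is of the form~\eqref{eq:EquivTransbarLtoL'A} with $H=E$. A one-line check shows that $\boldsymbol y:=\boldsymbol x-\boldsymbol h$ satisfies $\boldsymbol y_{tt}=A\boldsymbol y_t+B\boldsymbol y$ whenever $\boldsymbol x$ satisfies~$\bar L_\theta$, so $\Phi$ maps~$\bar L_\theta\in\bar{\mathcal L}_1$ onto its homogeneous counterpart~$L_\vartheta\in\mathcal L_1$ with $\vartheta=(A,B)$; in particular $\bar L_\theta$ and~$L_\vartheta$ carry the same coefficient matrices $A$ and $B$.

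Next I would invoke the invariance of the infinitesimal Lie-symmetry criterion under point transformations, which at the level of maximal Lie invariance algebras reads $\mathfrak g_\theta=\Phi^*\mathfrak g_\vartheta$, so that the sought algebra is the $\Phi$-pullback of the algebra of Lemma~\ref{lem:MAIofLvartheta}. It then remains to compute this pullback explicitly. From the inverse relations $t=\tilde t$ and $x^a=\tilde x^a+h^a(\tilde t)$, the coordinate vector fields transform by $\p_{\tilde t}=\p_t+h^a_t\p_{x^a}$ and $\p_{\tilde x^a}=\p_{x^a}$, while the fibre coordinate satisfies $\tilde x^b=x^b-h^b$.

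Substituting these relations into the generic element $\tilde Q=\tau\p_{\tilde t}+(\eta^{ab}\tilde x^b+\chi^a)\p_{\tilde x^a}$ of~$\mathfrak g_\vartheta$ and collecting terms, I would obtain precisely
\[
Q=\tau\p_t+(\eta^{ab}x^b+\tau h^a_t-\eta^{ab}h^b+\chi^a)\p_{x^a},
\]
as claimed. Since $A$ and~$B$ are unaffected by~$\Phi$, the classifying condition~\eqref{eq:ClassifyingCondLvartheta} on $\tau$ and~$\eta$ is verbatim the same as in Lemma~\ref{lem:MAIofLvartheta}; moreover $\boldsymbol\chi$ is carried over unchanged, as $\p_{\tilde x^a}=\p_{x^a}$, and therefore remains an arbitrary solution of the homogeneous system~$L_\vartheta$, which is exactly the homogeneous counterpart of~$\bar L_\theta$.

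The computation is routine, so there is no serious obstacle; the one place demanding care is the correct transport of vector fields under~$\Phi$, specifically that $\p_{\tilde t}$ acquires the drift term $h^a_t\p_{x^a}$, which is the source of the characteristic summand $\tau h^a_t$ in~$Q$. One should also note that $\mathfrak g_\theta=\Phi^*\mathfrak g_\vartheta$ holds for the \emph{full} maximal Lie invariance algebras, which is immediate since a point transformation sending~$\bar L_\theta$ to~$L_\vartheta$ induces a linear isomorphism of their maximal Lie invariance algebras, consistent with the normalization of~$\bar{\mathcal L}_1$ recorded in item~(iii) of Theorem~\ref{thm:EquivGroupoidbarL}. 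As a consistency check I would observe that replacing~$\boldsymbol h$ by another particular solution of~$\bar L_\theta$ alters the summand $\tau h^a_t-\eta^{ab}h^b$ only by a term that can be absorbed into~$\boldsymbol\chi$, so the resulting description of~$\mathfrak g_\theta$ is independent of the chosen~$\boldsymbol h$.
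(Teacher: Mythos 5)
Your proof is correct and follows exactly the route the paper intends: the corollary is stated without a separate proof precisely because it follows from Lemma~\ref{lem:MAIofLvartheta} by the subtraction of a particular solution, the very reduction the authors describe in Section~\ref{sec:SLODEPreliminaryAnalysisOfLieSyms} and Remark~\ref{rem:EssLieInvAlgebrasInL'1AndBarL1}. Your pushforward computation, the observation that $A$ and $B$ (and hence the classifying condition and membership in the regular subclass) are unchanged, and the consistency check on the choice of $\boldsymbol h$ are all accurate.
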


In a similar way, we can derive
the more specified assertions on Lie symmetries of systems from the classes~$\mathcal L'_1$ and~$\mathcal L''_1$
from Theorem~\ref{thm:EquivGroupoidL'} and Corollary~\ref{cor:EquivGroupoidL''}, respectively.
However, it is easier to successively obtain these assertions as direct consequences of Lemma~\ref{lem:MAIofLvartheta}
under additional constraints for the arbitrary elements~$A$ and~$B$.

\begin{corollary}\label{cor:MAIofL'_V}
The maximal Lie invariance algebra~$\mathfrak g_V^{}$ of a system~$L'_V$ from the class~$\mathcal L'_1$
consists of the vector fields of the form
\begin{equation*}
Q=\tau\p_t+\big(\tfrac12\tau_tx^a+\Gamma^{ab}x^b+\chi^a\big)\p_{x^a},
\end{equation*}
where the vector-valued function $\boldsymbol\chi=(\chi^1,\dots,\chi^n)^{\mathsf T}$ of~$t$ is an arbitrary solution of the system~$L'_V$,
whereas $\tau$ is an arbitrary function of~$t$
and $\Gamma=(\Gamma^{ab})$ is an arbitrary constant $n\times n$ matrix that satisfy the classifying condition
\begin{gather}\label{eq:ClassifyingCondL'_V}
\tau V_t=[\Gamma,V]-2\tau_tV+\tfrac12\tau_{ttt}E.
\end{gather}
\end{corollary}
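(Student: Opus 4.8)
The plan is to obtain this corollary as a direct specialization of Lemma~\ref{lem:MAIofLvartheta}. By the embedding chain~\eqref{eq:SLODESubclassEmbeddings}, the subclass~$\mathcal L'_1$ sits inside~$\mathcal L_1$ as the subclass singled out by $A=0$ and $B=V$; indeed, a system~$L'_V$ with $V$ not proportional to~$E$ with time-dependent factor belongs to the regular subclass~$\mathcal L_1$. Hence I would apply Lemma~\ref{lem:MAIofLvartheta} verbatim to~$L'_V$ for the value $\vartheta=(0,V)$ of the arbitrary-element tuple, which already yields that $\mathfrak g_V$ consists of the vector fields $Q=\tau\p_t+(\eta^{ab}x^b+\chi^a)\p_{x^a}$, where~$\boldsymbol\chi$ is an arbitrary solution of~$L'_V$, $\tau$ is an arbitrary function of~$t$, and $\eta$ is a matrix-valued function of~$t$ constrained by~\eqref{eq:ClassifyingCondLvartheta}.

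The first step is to reduce the classifying conditions under $A=0$. Substituting $A=0$ into~\eqref{eq:ClassifyingCondLvarthetaA} collapses its left-hand side together with the commutator and the $\tau_tA$ term, leaving $2\eta_t-\tau_{tt}E=0$, that is, $\eta_t=\tfrac12\tau_{tt}E$. Integrating in~$t$ gives $\eta=\tfrac12\tau_tE+\Gamma$ for some constant $n\times n$ matrix~$\Gamma$. Plugging this back into the coefficient of~$\p_{x^a}$ turns $\eta^{ab}x^b$ into $\tfrac12\tau_tx^a+\Gamma^{ab}x^b$, which is exactly the announced form of~$Q$.

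It then remains to rewrite the second classifying condition~\eqref{eq:ClassifyingCondLvarthetaB}. With $A=0$ and $B=V$ it becomes $\tau V_t=[\eta,V]-2\tau_tV+\eta_{tt}$. Because~$E$ is central, $[\eta,V]=[\tfrac12\tau_tE+\Gamma,V]=[\Gamma,V]$, and since~$\Gamma$ is constant, $\eta_{tt}=\tfrac12\tau_{ttt}E$. This yields $\tau V_t=[\Gamma,V]-2\tau_tV+\tfrac12\tau_{ttt}E$, precisely the classifying condition~\eqref{eq:ClassifyingCondL'_V}, completing the derivation.

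Since every step is an explicit specialization followed by a single integration, I do not expect a genuine obstacle here. The only points requiring a little care are the integration $\eta_t=\tfrac12\tau_{tt}E\Rightarrow\eta=\tfrac12\tau_tE+\Gamma$, where the integration ``constant'' must be read as an arbitrary constant matrix, and the observation that the resulting $\tfrac12\tau_tE$ summand drops out of the commutator in~\eqref{eq:ClassifyingCondLvarthetaB} thanks to the centrality of~$E$, so that only the constant matrix~$\Gamma$ survives in the final condition.
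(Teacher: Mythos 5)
Your proposal is correct and follows essentially the same route as the paper, which likewise specializes Lemma~\ref{lem:MAIofLvartheta} to $A=0$, integrates $\eta_t=\tfrac12\tau_{tt}E$ to $\eta=\tfrac12\tau_tE+\Gamma$, and substitutes into the second classifying condition (your justification that $L'_V\in\mathcal L'_1$ implies $L'_V\in\mathcal L_1$ via $B-\tfrac12A_t+\tfrac14A^2=V$ is a detail the paper leaves implicit). Your explicit remarks on the centrality of~$E$ killing the $\tfrac12\tau_tE$ term in the commutator and on $\eta_{tt}=\tfrac12\tau_{ttt}E$ match the paper's intent exactly.
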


\begin{proof}
Setting $A=0$ in the equation~\eqref{eq:ClassifyingCondLvarthetaA},
we get $\eta_t=\frac12\tau_{tt}E$, i.e., $\eta=\frac12\tau_tE+\Gamma$,
where $\Gamma$ is the matrix of integration constants.
Then, after re-denoting $B=V$, the equation~\eqref{eq:ClassifyingCondLvarthetaA}
implies the classifying condition~\eqref{eq:ClassifyingCondL'_V}.
\end{proof}

\begin{corollary}\label{cor:MAIofL''_V}
If a system~$L'_V$ belongs to the narrower class~$\mathcal L''_1$,
then its maximal Lie invariance algebra~$\mathfrak g_V^{}$ satisfies Corollary~\ref{cor:MAIofL'_V},
where additionally $\tau_{ttt}=0$.
\end{corollary}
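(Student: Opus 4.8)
The plan is to obtain the extra constraint $\tau_{ttt}=0$ as the trace part of the classifying condition already established in Corollary~\ref{cor:MAIofL'_V}. Since $\mathcal L''_1=\mathcal L''\cap\mathcal L'_1\subset\mathcal L'_1$, that corollary applies without change to any system $L'_V\in\mathcal L''_1$: every element of~$\mathfrak g_V$ is of the stated form, with $\boldsymbol\chi$ a solution of~$L'_V$, $\tau$ a function of~$t$, and $\Gamma$ a constant $n\times n$ matrix subject to the classifying condition~\eqref{eq:ClassifyingCondL'_V}. What the narrower class contributes is the single defining algebraic constraint $\mathop{\rm tr}V=0$, and I would use it to eliminate the isotropic term on the right-hand side of~\eqref{eq:ClassifyingCondL'_V}.

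Concretely, I would take the trace of both sides of~\eqref{eq:ClassifyingCondL'_V}. On the left, $\mathop{\rm tr}V_t=(\mathop{\rm tr}V)_t=0$, since $\mathop{\rm tr}V\equiv0$ on~$\mathcal L''$. On the right, the commutator term contributes nothing because $\mathop{\rm tr}[\Gamma,V]=0$ identically, the term $-2\tau_t\mathop{\rm tr}V$ vanishes again by tracelessness of~$V$, and the only surviving contribution is $\mathop{\rm tr}\big(\tfrac12\tau_{ttt}E\big)=\tfrac{n}{2}\tau_{ttt}$. Hence the trace of~\eqref{eq:ClassifyingCondL'_V} collapses to $\tfrac{n}{2}\tau_{ttt}=0$, and since $n\geqslant2$ this forces $\tau_{ttt}=0$, as claimed.

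I expect no genuine obstacle here: the argument is a one-line trace computation, and the only point requiring a moment of care is the observation that all three ``nontrivial'' terms on the right-hand side of~\eqref{eq:ClassifyingCondL'_V} are traceless, so that the pure-trace term is isolated. As a side remark worth recording, once $\tau_{ttt}=0$ is imposed the classifying condition reduces to $\tau V_t=[\Gamma,V]-2\tau_tV$, both sides of which are automatically traceless; thus this refined condition together with $\tau_{ttt}=0$ is not merely necessary but fully characterizes~$\mathfrak g_V$ for $L'_V\in\mathcal L''_1$.
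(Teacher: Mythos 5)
Your proof is correct and is essentially the paper's own argument: the paper likewise obtains $\tau_{ttt}=0$ by computing the trace of the classifying condition~\eqref{eq:ClassifyingCondL'_V} under the constraint $\mathop{\rm tr}V=0$. Your additional remark that the traceless part of the condition then fully characterizes~$\mathfrak g_V^{}$ is a harmless and accurate elaboration.
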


\begin{proof}
Under the constraint $\mathop{\rm tr}V=0$ singling out the subclass~$\mathcal L''_1$ from the class~$\mathcal L'_1$,
computing the trace of the matrix condition~\eqref{eq:ClassifyingCondL'_V} implies $\tau_{ttt}=0$.
\end{proof}

\begin{lemma}\label{lem:SLODEKernelPointSymGroupsOfAllClasses}
The kernel point-symmetry groups of systems from each of the classes under consideration are the following:
\begin{gather*}
G^\cap_{\mathcal L''_1}=G^\cap_{\mathcal L''}=G^\cap_{\mathcal L'_1}=G^\cap_{\mathcal L'}
=G^\cap_{\mathcal L_1}=G^\cap_{\mathcal L}=G^\cap_{\mathcal L_0}
=\big\{\Phi\colon\tilde t=t,\,\tilde{\boldsymbol x}=\gamma\boldsymbol x\mid\gamma\in\mathbb F\setminus\{0\}\big\},
\\
G^\cap_{\mathcal L''_0}=G_0,\quad
G^\cap_{\mathcal L'_0}=\big\{\Phi\colon\tilde t=t,\,\tilde{\boldsymbol x}=C\boldsymbol x\mid C\in{\rm GL}(n,\mathbb F)\big\},
\\
G^\cap_{\bar{\mathcal L}}=G^\cap_{\bar{\mathcal L}_0}=G^\cap_{\bar{\mathcal L}_1}
=\big\{{\rm id}\colon\tilde t=t,\,\tilde{\boldsymbol x}=\boldsymbol x\big\}.
\end{gather*}
\end{lemma}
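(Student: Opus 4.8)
The plan is to pin down every kernel by a matching pair of inclusions, using throughout that $\mathcal K'\subseteq\mathcal K$ implies $G^\cap_{\mathcal K}\subseteq G^\cap_{\mathcal K'}$ (intersecting point symmetry groups over more systems can only shrink the result). The lower bounds are immediate: by linearity $\tilde t=t$, $\tilde{\boldsymbol x}=\gamma\boldsymbol x$ is a symmetry of every homogeneous system $\boldsymbol x_{tt}=A\boldsymbol x_t+B\boldsymbol x$; the linear maps $\tilde t=t$, $\tilde{\boldsymbol x}=C\boldsymbol x$ with $C\in{\rm GL}(n,\mathbb F)$ preserve every system $\boldsymbol x_{tt}=v(t)\boldsymbol x$ of~$\mathcal L'_0$; and $G^\cap_{\mathcal L''_0}=G_0$ holds because $\mathcal L''_0=\{L'_0\}$ is a singleton. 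Hence only the upper bounds require work, and the nesting relations let me reduce these to two genuine computations plus a couple of one-line observations.

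First I would treat the regular chain. As $\mathcal L''_1$ is a subclass of each of $\mathcal L''$, $\mathcal L'_1$, $\mathcal L'$, $\mathcal L_1$ and $\mathcal L$, it suffices to show $G^\cap_{\mathcal L''_1}\subseteq\{\tilde t=t,\ \tilde{\boldsymbol x}=\gamma\boldsymbol x\}$. By the uniform semi-normalization with respect to linear superposition (Corollary~\ref{cor:EquivGroupoidL''}), any point symmetry of $L'_V\in\mathcal L''_1$ is a significant equivalence transformation of the form~\eqref{eq:EquivGroupL''A} preserving~$V$ via~\eqref{eq:EquivGroupL''B}, followed by a shift by a solution; preservation reads $V(\tilde t)=(\gamma t+\delta)^4CV(t)C^{-1}$ with $\tilde t=(\alpha t+\beta)/(\gamma t+\delta)$. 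Requiring this for every traceless matrix-valued~$V$ and using that $V(t)$ and $V(\tilde t)$ may be prescribed independently wherever $\tilde t\neq t$ forces $\tilde t\equiv t$, whence the M\"obius part is trivial and $V=CVC^{-1}$ for all traceless~$V$, so $C$ is scalar by Schur's lemma; the solution shift must solve every system and therefore vanishes.

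The decisive step is the singular homogeneous subclass~$\mathcal L'_0$. Its members $\boldsymbol x_{tt}=v(t)\boldsymbol x$ are all similar to the free particle and carry the full projective group~$G_0$, so $\tilde t=t$ is not free. Instead I would feed a symmetry into the admissible-transformation determining equation~\eqref{eq:EquivGroupoidL'DetEqs0} of~$\mathcal L'$, inserting the source value $V^{bc}=v(t)\delta^{bc}$ and the target value $\tilde V^{ab}=v(T)\delta^{ab}$, where $T$ is the $t$-component of the transformation; the point is that the two occurrences of~$v$ sit at the \emph{different} arguments $t$ and $T(t,\boldsymbol x)$. On any region where $T\neq t$ these are independent, and the coefficient of $v(T)$ gives $X^a(T_t+T_cx^c_t)^3=0$, which is impossible since $T_t+T_cx^c_t\not\equiv0$ while the $X^a$ cannot all vanish; thus $T\equiv t$. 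With $\tilde t=t$ the equation becomes a single polynomial in~$v$: its $v$-free part is the free-particle condition, forcing $X^a$ to be affine in $(t,\boldsymbol x)$ with constant linear part, while its linear-in-$v$ part removes the $t$- and constant terms, leaving $\tilde{\boldsymbol x}=C\boldsymbol x$. Hence $G^\cap_{\mathcal L'_0}=\{\tilde t=t,\ \tilde{\boldsymbol x}=C\boldsymbol x\}$.

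The rest follows by nesting and one observation each. Since $\mathcal L'_0\subseteq\mathcal L_0$, every element of $G^\cap_{\mathcal L_0}$ has the form $\tilde t=t$, $\tilde{\boldsymbol x}=C\boldsymbol x$; testing it on the systems $\boldsymbol x_{tt}=A\boldsymbol x_t+(\tfrac12A_t-\tfrac14A^2)\boldsymbol x$ of~$\mathcal L_0$ with arbitrary~$A$ demands $CAC^{-1}=A$ for all~$A$, so $C$ is scalar and $G^\cap_{\mathcal L_0}=\{\tilde t=t,\ \tilde{\boldsymbol x}=\gamma\boldsymbol x\}$. For the inhomogeneous classes, $\mathcal L_1\subseteq\bar{\mathcal L}_1$ and $\mathcal L_0\subseteq\bar{\mathcal L}_0$ give $G^\cap_{\bar{\mathcal L}_1},G^\cap_{\bar{\mathcal L}_0}\subseteq\{\tilde t=t,\ \tilde{\boldsymbol x}=\gamma\boldsymbol x\}$, and then any system with $\boldsymbol f\neq\boldsymbol0$, say $\boldsymbol x_{tt}=\boldsymbol f$, forces $\gamma=1$ because $\tilde{\boldsymbol x}=\gamma\boldsymbol x$ rescales $\boldsymbol f$ to $\gamma\boldsymbol f$; so these kernels, and with them $G^\cap_{\bar{\mathcal L}}\subseteq G^\cap_{\bar{\mathcal L}_1}$, collapse to the identity. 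I expect the only real obstacle to be $\mathcal L'_0$: it is precisely the argument-shift between the source value $v(t)$ and the target value $v(T)$ that dismantles the projective symmetries of the individual systems, and arranging that splitting correctly is the heart of the argument.
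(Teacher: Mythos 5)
Your proof is correct, and its skeleton — trivial lower bounds, monotonicity of kernels under class inclusion, two core computations plus the $A$-splitting for $\mathcal L_0$ and the $\boldsymbol f$-scaling for the inhomogeneous classes — coincides with the paper's. Where you diverge is in how the two core computations are executed. The paper treats $\mathcal L'_0$ and $\mathcal L''_1$ uniformly by a homogeneity trick: it replaces $V$ by $\lambda V$ in the determining equations~\eqref{eq:EquivGroupoidL'DetEqs0}, splits with respect to~$\lambda$ to get $CV=T_t^{\,2}(V\circ T)C$, extracts $T_t=\pm1$ from determinants of constant invertible~$V$, gets $C=\gamma E$ by varying constant~$V$ (for~$\mathcal L''_1$), and kills the residual shift with $V=tK$. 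You instead reach the same conclusions by (a) invoking the already-proved uniform semi-normalization of~$\mathcal L''_1$ (Corollary~\ref{cor:EquivGroupoidL''}) to start from the fractional-linear form~\eqref{eq:EquivGroupL''A} and then prescribing $V(t)$ and $V(\tilde t)$ independently to force $\tilde t=t$, followed by Schur's lemma; and (b) for~$\mathcal L'_0$, exploiting the argument shift between $v(t)$ and $v(T)$ directly in~\eqref{eq:EquivGroupoidL'DetEqs0}. Both routes are sound; yours buys a shorter $\mathcal L''_1$ argument at the price of leaning on the normalization results, while the paper's $\lambda$-trick is self-contained at the level of the determining equations and handles the two subclasses in one pass. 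One cosmetic slip: your test system $\boldsymbol x_{tt}=\boldsymbol f$ belongs to~$\bar{\mathcal L}_0$, not~$\bar{\mathcal L}_1$, so for $G^\cap_{\bar{\mathcal L}_1}$ you should instead test on an inhomogeneous system with $B-\frac12A_t+\frac14A^2\notin\langle E\rangle$, which exists in abundance and yields $(1-\gamma)\boldsymbol f=\boldsymbol 0$ just the same.
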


\begin{proof}
We only present some hints on computing these groups.
Since the class~$\mathcal L''_0$ consists of the single system~$L'_0$,
the computation of its equivalence group~\smash{$G^\cap_{\mathcal L''_0}$} is trivial.

For the classes~$\mathcal L'_0$ and~$\mathcal L''_1$, we consider
the determining equations~\eqref{eq:EquivGroupoidL'DetEqs0} with $\tilde V^{ab}=V^{ab}\circ T$
and additionally replace $V^{ab}$ by $\lambda V^{ab}$, where $\lambda$ is an arbitrary constant.
We can repeat the splitting procedure presented after~\eqref{eq:EquivGroupoidL'DetEqs0}
in the modified determining equations
and derive the expressions~\eqref{eq:EquivGroupL'A} for the components of transformations
from the groups~\smash{$G^\cap_{\mathcal L'_0}$} and~\smash{$G^\cap_{\mathcal L''_1}$}.
The splitting of the modified condition~\eqref{eq:EquivGroupL'B} with respect to the parameter~$\lambda$
leads to the constraint $CV=T_t^{\,2}(V\circ T)C$, which implies $T_t=\pm1$
via taking the determinants of the left- and right-hand sides for constant invertible values of~$V$.
Moreover, for~$\mathcal L''_1$ after varying constant values of~$V$, we get $C=\gamma E$ for some nonzero constant~$\gamma$.
Taking special values of~$V$, e.g., $V=tK$ with an arbitrary constant nonzero traceless matrix~$K$,
we also derive that $T=t$.

Due to the subclass embeddings~\eqref{eq:SLODESubclassEmbeddings},
we have that
\begin{gather*}
G^\cap_{\mathcal L''_1}\supseteq G^\cap_{\mathcal L'_1}\supseteq G^\cap_{\mathcal L_1}\supseteq G^\cap_{\bar{\mathcal L}_1},
\quad
G^\cap_{\mathcal L'_0}\supseteq G^\cap_{\mathcal L_0}\supseteq G^\cap_{\bar{\mathcal L}_0},
\\
G^\cap_{\mathcal L''}=G^\cap_{\mathcal L''_0}\cap G^\cap_{\mathcal L''_1}, \quad
G^\cap_{\mathcal L'}=G^\cap_{\mathcal L'_0}\cap G^\cap_{\mathcal L'_1}, \quad
G^\cap_{\mathcal L}=G^\cap_{\mathcal L_0}\cap G^\cap_{\mathcal L_1}, \quad
G^\cap_{\bar{\mathcal L}}=G^\cap_{\bar{\mathcal L}_0}\cap G^\cap_{\bar{\mathcal L}_1}.
\end{gather*}
The direct computation shows that in fact the equalities
\smash{$G^\cap_{\mathcal L'_1}=G^\cap_{\mathcal L_1}=G^\cap_{\mathcal L''_1}$} hold true.
In view of the inclusion \smash{$G^\cap_{\mathcal L'_0}\supseteq G^\cap_{\mathcal L_0}$},
the further constraints for elements of~\smash{$G^\cap_{\mathcal L_0}$} are obtained
by splitting~\eqref{eq:EquivGroupbarLB} with $T=t$, $H=C$ and $\tilde A=A$ with respect to~$A$.
This gives $C=\gamma E$ for an arbitrary nonzero constant~$\gamma$, i.e., \smash{$G^\cap_{\mathcal L_0}=G^\cap_{\mathcal L'_1}$}.
It follows from $G^\cap_{\mathcal L_1}\supseteq G^\cap_{\bar{\mathcal L}_1}$ and $G^\cap_{\mathcal L_0}\supseteq G^\cap_{\bar{\mathcal L}_0}$
that for the common point symmetries of systems from the class~$\bar{\mathcal L}_1$ (resp.\ $\bar{\mathcal L}_0$),
the equation~\eqref{eq:EquivGroupbarLD} degenerates to the equation $(1-\gamma)\boldsymbol f=\boldsymbol0$, i.e., $\gamma=1$.
\end{proof}

\begin{corollary}\label{cor:SLODEKernelMIAsOfAllClasses}
The kernel Lie invariance algebras of systems from each of the classes under consideration are the following:
\begin{gather*}
\mathfrak g^\cap_{\mathcal L''_1}=\mathfrak g^\cap_{\mathcal L''}=\mathfrak g^\cap_{\mathcal L'_1}=\mathfrak g^\cap_{\mathcal L'}
=\mathfrak g^\cap_{\mathcal L_1}=\mathfrak g^\cap_{\mathcal L}=\mathfrak g^\cap_{\mathcal L_0}
=\langle x^a\p_{x^a}\rangle,
\quad
\mathfrak g^\cap_{\mathcal L''_0}=\mathfrak g_0,\quad
\mathfrak g^\cap_{\mathcal L'_0}=\langle x^b\p_{x^a}\rangle,
\\
\mathfrak g^\cap_{\bar{\mathcal L}}=\mathfrak g^\cap_{\bar{\mathcal L}_0}=\mathfrak g^\cap_{\bar{\mathcal L}_1}=\{0\}.
\end{gather*}
\end{corollary}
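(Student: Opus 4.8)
The plan is to read off this corollary as the infinitesimal counterpart of Lemma~\ref{lem:SLODEKernelPointSymGroupsOfAllClasses}. By definition the kernel Lie invariance algebra is $\mathfrak g^\cap_{\mathcal K}=\bigcap_\theta\mathfrak g_\theta$ and the kernel point symmetry group is $G^\cap_{\mathcal K}=\bigcap_\theta G_\theta$. A vector field~$Q$ lies in $\mathfrak g^\cap_{\mathcal K}$ if and only if it is an infinitesimal symmetry of every system in the class; since a Lie point symmetry exponentiates to a one-parameter local group of point symmetries and, conversely, the generator of such a group is an infinitesimal symmetry, this happens exactly when the flow of~$Q$ consists of finite symmetries common to all systems, i.e.\ $\exp(\varepsilon Q)\in G^\cap_{\mathcal K}$ for all~$\varepsilon$. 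Hence $\mathfrak g^\cap_{\mathcal K}$ is precisely the Lie algebra of $G^\cap_{\mathcal K}$, and it remains only to differentiate one-parameter subgroups of the groups computed in Lemma~\ref{lem:SLODEKernelPointSymGroupsOfAllClasses} at the identity.

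Carrying this out group by group, for the scaling group $\{\tilde t=t,\ \tilde{\boldsymbol x}=\gamma\boldsymbol x\}$, which is the common kernel point symmetry group of $\mathcal L''_1$, $\mathcal L''$, $\mathcal L'_1$, $\mathcal L'$, $\mathcal L_1$, $\mathcal L$ and $\mathcal L_0$, writing $\gamma=\gamma(\varepsilon)$ with $\gamma(0)=1$ and differentiating at $\varepsilon=0$ produces the single generator $x^a\p_{x^a}$, so the corresponding kernel algebra is $\langle x^a\p_{x^a}\rangle$. For $G^\cap_{\mathcal L'_0}=\{\tilde{\boldsymbol x}=C\boldsymbol x\mid C\in{\rm GL}(n,\mathbb F)\}$ the Lie algebra is $\mathfrak{gl}(n,\mathbb F)$ in its realization $\langle x^b\p_{x^a}\rangle$; for $G^\cap_{\mathcal L''_0}=G_0$, the general projective group, it is $\mathfrak g_0$ as written in~\eqref{eq:SLODEMIAOfElementarySystem} (equivalently, $\mathcal L''_0$ is the singleton~$\{L'_0\}$, so its kernel equals the maximal Lie invariance algebra $\mathfrak g_0$ of the single system~$L'_0$); and for $\bar{\mathcal L}$, $\bar{\mathcal L}_0$ and $\bar{\mathcal L}_1$ the kernel point symmetry group is trivial, whence the kernel Lie invariance algebra is $\{0\}$.

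As a self-contained cross-check avoiding the passage from groups to algebras, one may instead intersect the explicit forms of~$\mathfrak g_\theta$ directly. In $\mathcal L_1$, for instance, the vector field~\eqref{eq:GenElementOfInvAlgOfLvartheta} is admitted by every system only if $\boldsymbol\chi=\boldsymbol0$ (the zero function is the sole solution common to all homogeneous systems of the class) and the classifying conditions~\eqref{eq:ClassifyingCondLvartheta} hold for all admissible $(A,B)$; imposing~\eqref{eq:ClassifyingCondLvarthetaA} for constant~$A$ and subtracting two such instances gives $[\eta,M]=\tau_t M$ for every constant matrix~$M$, whence $\tau_t=0$ and $\eta=\mu E$ with $\mu$ constant, after which~\eqref{eq:ClassifyingCondLvarthetaB} for a non-constant~$B$ forces $\tau=0$, leaving exactly $\langle x^a\p_{x^a}\rangle$; the singular subclasses are handled the same way, using Corollary~\ref{cor:MAIofL'_V} for~$\mathcal L'_0$. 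I expect no genuine obstacle: the substantive computation was already done in Lemma~\ref{lem:SLODEKernelPointSymGroupsOfAllClasses}, and the only step deserving a word of justification is the identity $\mathfrak g^\cap_{\mathcal K}=\mathrm{Lie}(G^\cap_{\mathcal K})$, which is immediate from exponentiation and is insensitive to any discrete components of the kernel groups (such as the two components of ${\rm GL}(n,\mathbb R)$), since these do not affect the Lie algebra.
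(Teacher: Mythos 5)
Your proposal is correct and follows essentially the same route as the paper: the paper's proof likewise obtains these algebras as the sets of infinitesimal generators of one-parameter subgroups of the kernel point symmetry groups computed in Lemma~\ref{lem:SLODEKernelPointSymGroupsOfAllClasses}, and also notes, exactly as in your cross-check, that the algebras $\mathfrak g^\cap_{\mathcal L_1}$, $\mathfrak g^\cap_{\mathcal L'_1}$ and $\mathfrak g^\cap_{\mathcal L''_1}$ can alternatively be found by splitting the classifying conditions~\eqref{eq:ClassifyingCondLvartheta} and~\eqref{eq:ClassifyingCondL'_V} with respect to the arbitrary elements. Your explicit justification of $\mathfrak g^\cap_{\mathcal K}=\mathrm{Lie}(G^\cap_{\mathcal K})$ is a harmless elaboration of what the paper leaves implicit.
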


\begin{proof}
These algebras can be obtained as the set of infinitesimal generators
of one-parameter subgroups of the corresponding kernel point-symmetry groups.
The algebras~$\mathfrak g^\cap_{\mathcal L_1}$, $\mathfrak g^\cap_{\mathcal L'_1}$ and $\mathfrak g^\cap_{\mathcal L''_1}$
can also be computed using Lemma~\ref{lem:MAIofLvartheta} and Corollaries~\ref{cor:MAIofL'_V} and~\ref{cor:MAIofL''_V}
via respectively splitting the classifying conditions~\eqref{eq:ClassifyingCondLvartheta}, \eqref{eq:ClassifyingCondL'_V}
and \eqref{eq:ClassifyingCondL'_V} with $\tau_{ttt}=0$ with respect to the corresponding arbitrary elements.
\end{proof}

\begin{remark}
Although systems from the class~$\mathcal L$ are linear and homogeneous,
an operator technique~\cite{fush1994A,mill1977A} allows to find all Lie symmetries
(up to the trivial ones associated with linear superposition of solutions)
only for the systems from the regular subclass~$\mathcal L_1$.
\end{remark}

Lemma~\ref{lem:MAIofLvartheta} implies that for any system~$L_\vartheta$ from the class~$\mathcal L_1$,
its maximal Lie invariance algebra~$\mathfrak g_\vartheta^{}$ can be represented as the semidirect sum
$\mathfrak g_\vartheta^{}=\mathfrak g^{\rm ess}_\vartheta\lsemioplus\mathfrak g^{\rm lin}_\vartheta$.
Here
\[
\mathfrak g^{\rm lin}_\vartheta:=\big\{\chi^a(t)\p_{x^a}\mid\boldsymbol\chi=(\chi^1,\dots,\chi^n)^{\mathsf T}\ \mbox{is a solution of}\ L_\vartheta\big\}
\]
is the $2n$-dimensional abelian ideal associated with the \emph{linear superposition of solutions} of the system~$L_\vartheta$,
and the complementary subalgebra
\[
\mathfrak g^{\rm ess}_\vartheta:=\big\{\tau(t)\p_t+\eta^{ab}(t)x^b\p_{x^a}\mid(\tau,\eta)\ \mbox{is a solution of}\ \eqref{eq:ClassifyingCondLvartheta}\big\}
\]
is called the \emph{essential} Lie invariance algebra of the system~$L_\vartheta$, cf.\ \cite{kuru2018a,popo2008a}.
It is obvious that for any system~$L_\vartheta$ from the class~$\mathcal L$,
we have the representation $\mathfrak g^{\rm ess}_\vartheta=\mathfrak g_\vartheta^{}\cap\varpi_*\mathfrak g^\sim_{\mathcal L}$,
where $\varpi$ denotes the natural projection
from \smash{$\mathbb F_t\times\mathbb F^n_{\boldsymbol x}\times\mathbb F^{2n^2}_\vartheta$} onto $\mathbb F_t\times\mathbb F^n_{\boldsymbol x}$.
Moreover,
the algebra~$\mathfrak g^{\rm ess}_\vartheta$ necessarily contains the vector field $I:=x^a\p_{x^a}$, and thus
$\mathfrak g_\vartheta^{}\supseteq\langle I\rangle\lsemioplus\mathfrak g^{\rm lin}_\vartheta$.
The classifying condition~\eqref{eq:ClassifyingCondLvartheta} also implies
that $\mathfrak g^{\rm ess}_\vartheta=\langle I\rangle$ and
$\mathfrak g_\vartheta^{}=\langle I\rangle\lsemioplus\mathfrak g^{\rm lin}_\vartheta$
for general systems in the class~$\mathcal L_1$.
Therefore, the minimum dimension of the maximal Lie invariance algebras of systems from the class~$\mathcal L_1$
and, hereupon, from the class~$\mathcal L$ is equal to~$2n+1$.

In view of the above properties, which are rather common for classes of homogeneous linear systems of differential equations,
it is more natural to classify the essential Lie invariance algebras for systems from the class~$\mathcal L_1$
instead of their maximal Lie invariance algebras.

\begin{remark}\label{rem:EssLieInvAlgebrasInL'1AndBarL1}
The definition of the essential Lie invariance algebra~$\mathfrak g^{\rm ess}_V$
of the system~$L'_V$ from the class~$\mathcal L'_1$
is analogous the definition of~$\mathfrak g^{\rm ess}_\vartheta$,
\[
\mathfrak g^{\rm ess}_V:=\big\{\tau(t)\p_t+\big(\tfrac12\tau_tx^a+\Gamma^{ab}x^b\big)\p_{x^a}\mid\tau\ \mbox{and}\ \Gamma\ \mbox{satisfy}\ \eqref{eq:ClassifyingCondL'_V}\big\}.
\]
We also have the representations
$\mathfrak g_V^{}=\mathfrak g^{\rm ess}_V\lsemioplus\mathfrak g^{\rm lin}_V$ and
$\mathfrak g^{\rm ess}_V=\mathfrak g_V^{}\cap\varpi_*\mathfrak g^\sim_{\mathcal L'}$,
where now $\varpi$ denotes the natural projection
from $\mathbb F_t\times\mathbb F^n_{\boldsymbol x}\times\mathbb F^{n^2}_V$ onto $\mathbb F_t\times\mathbb F^n_{\boldsymbol x}$.
On the other hand, for any inhomogeneous system~$\bar L_\theta$ from the class~$\bar{\mathcal L}_1$,
its essential Lie invariance algebra cannot be canonically defined and, therefore, is not unique.
More specifically, the ideal~$\mathfrak g^{\rm lin}_\theta$ of the maximal Lie invariance algebra~$\mathfrak g_\theta$ of~$\bar L_\theta$
is associated with the possibility of adding solutions of the homogeneous counterpart~$\bar L_{\theta_0}\simeq L_\vartheta$ of~$\bar L_\theta$
to solutions of~$\bar L_\theta$,\looseness=-1
\[
\mathfrak g^{\rm lin}_\theta:=\big\{\chi^a(t)\p_{x^a}\mid\boldsymbol\chi=(\chi^1,\dots,\chi^n)^{\mathsf T}\ \mbox{is a solution of}\ L_\vartheta\big\}.
\]
Here we mean $\theta_0:=(A,B,0)$ and $\vartheta:=(A,B)$ for $\theta=(A,B,\boldsymbol f)$.
The system~$\bar L_\theta$ is reduced to the system~$\bar L_{\theta_0}$
by the point transformation~$\Phi_\chi$: $\tilde t=t$, $\tilde{\boldsymbol x}=\boldsymbol x-\boldsymbol\chi(t)$,
where $\boldsymbol\chi$ is an arbitrary particular solution of~$\bar L_\theta$.
For any particular solution~$\boldsymbol\chi$ of~$\bar L_\theta$,
the pullback $(\Phi_\chi)^*\mathfrak g^{\rm ess}_{\theta_0}$ of $\mathfrak g^{\rm ess}_{\theta_0}:=\mathfrak g^{\rm ess}_\vartheta$ by~$\Phi_\chi$
is a complementary subalgebra to~$\mathfrak g^{\rm lin}_\theta$ in $\mathfrak g_\theta^{}$.
Moreover, every complementary subalgebra~$\mathfrak g^{\rm ess}_\theta$ to~$\mathfrak g^{\rm lin}_\theta$ in $\mathfrak g_\theta^{}$
is the pullback of~$\mathfrak g^{\rm ess}_{\theta_0}$ by~$\Phi_\chi$ for some particular solution~$\boldsymbol\chi$ of~$\bar L_\theta$.
This implies both the existence and the non-uniqueness of such subalgebras.
Therefore, the algebra~$\mathfrak g_\theta^{}$ admits the representation
$\mathfrak g_\theta^{}=\mathfrak g^{\rm ess}_\theta\lsemioplus\mathfrak g^{\rm lin}_\theta$.
Although the subalgebra $\mathfrak g^{\rm ess}_\theta$ of~$\mathfrak g_\theta^{}$ is not defined in a unique way,
it can be still called an \emph{essential Lie invariance algebra} of the system~$\bar L_\theta$.
Since the class~$\bar{\mathcal L}_1$ is normalized, and $G^\sim_{\bar{\mathcal L}_1}=G^\sim_{\bar{\mathcal L}}$,
the inclusion $\mathfrak g_\theta^{}\subset\varpi_*\mathfrak g^\sim_{\bar{\mathcal L}}$
holds for any $\bar L_\theta\in\bar{\mathcal L}_1$,
and thus $\mathfrak g_\theta^{}\cap\varpi_*\mathfrak g^\sim_{\bar{\mathcal L}}=\mathfrak g_\theta^{}\ne\mathfrak g^{\rm ess}_\theta$.
\end{remark}

Analogously to the notions of regular and singular Lie-symmetry extensions presented in \cite[Definition~4]{vane2020b},
we introduce the notions of regular and singular essential Lie-symmetry extensions
for classes of linear systems of differential equations.

\begin{definition}\label{def:RegularAndSingularEssLieSymExtensions}
Given a class~$\mathcal K$ of linear systems of differential equations
parameterized by an arbitrary-element tuple~$\theta$ and given a system~$K_\theta$ from this class
whose essential Lie invariance algebra~$\mathfrak g^{\rm ess}_\theta$ is well defined,
we call the algebra~$\mathfrak g^{\rm ess}_\theta$ \emph{regular} in~$\mathcal K$
if $\mathfrak g^{\rm ess}_\theta\subseteq\varpi_*\mathfrak g^\sim_{\mathcal K}$,
and \emph{singular} in~$\mathcal K$ otherwise.
\end{definition}

As discussed above, for every system from any of the classes~$\bar{\mathcal L}_1$, $\mathcal L_1$, $\mathcal L'_1$ and $\mathcal L''_1$,
its essential Lie invariance algebra is well defined.
Moreover, since the class~$\bar{\mathcal L}_1$ is normalized in the usual sense,
and the classes~$\mathcal L_1$, $\mathcal L'_1$ and $\mathcal L''_1$ are uniformly semi-normalized with respect to linear superposition of solutions,
all essential Lie-symmetry extensions in these classes are regular.

\section{Description of essential Lie-symmetry extensions}\label{sec:DescriptionOfEssLieSymExtensions}

The claims presented in the end of the previous Section~\ref{sec:SLODEPreliminaryAnalysisOfLieSyms}
definitely hold for systems from the class~$\mathcal L'_1$ and its subclass~$\mathcal L''_1$
if we replace $\vartheta$ by~$V$.
In particular, the classifying condition~\eqref{eq:ClassifyingCondL'_V} implies that
the essential Lie invariance algebra~$\mathfrak g^{\rm ess}_V$ of a system~$L'_V\in\mathcal L'_1$
necessarily contains the vector field $I=x^a\p_{x^a}$,
which corresponds to the parameter values $\tau=0$, $\Gamma=E$ and $\boldsymbol\chi=\boldsymbol0$,
and $\mathfrak g^{\rm ess}_V=\langle I\rangle$ for general values of~$V$.
In the present section, we study Lie-symmetry extensions within the class~$\mathcal L'_1$.

Consider a system~$L'_V$ from this class.
Denote by~$\pi$ the natural projection from $\mathbb F_t\times\mathbb F^n_{\boldsymbol x}$ onto $\mathbb F_t$,
and~let
\[
k=k_V:=\dim\pi_*\mathfrak g^{\rm ess}_V=\dim\pi_*\mathfrak g_V^{}.
\]
The actions of the group~$G^\sim_{\mathcal L'}$ on the spaces $\mathbb F_t\times\mathbb F^n_{\boldsymbol x}$ and~$\mathbb F_t$
are well defined via $\varpi_*G^\sim_{\mathcal L'}$ and $\pi_*(\varpi_*G^\sim_{\mathcal L'})$.
According to Theorem~\ref{thm:EquivGroupoidL'} and Corollary~\ref{cor:MAIofL'_V},
the map~$\pi$ is equivariant under the action of~$G^\sim_{\mathcal L'}$,
and the pushforward~$\pi_*$ by $\pi$ is well defined for all vector fields
from~$\mathfrak g^{\rm ess}_V$ and from~$\mathfrak g_V^{}$ for any~$V$.
Hence the value~$k$ is $G^\sim_{\mathcal L'}$-invariant.
Corollary~\ref{cor:MAIofL''_V} implies that $k\leqslant3$ for any system from the subclass~$\mathcal L''_1$
and thus for any system from the entire class~$\mathcal L'_1$.
We separately study each of the values of~$k$, which are 0, 1, 2 and 3.
Lie's theorem on realization of finite-dimensional Lie algebras by vector fields on the real or complex line
\cite[Satz~6, p.~455]{lie1880a} (see also \cite[Theorem~2.70]{olve1995A}),
Corollary~\ref{cor:MAIofL'_V} and item~(i) of Theorem~\ref{thm:EquivGroupoidL'} jointly imply
that modulo the $G^\sim_{\mathcal L_1}$-equivalence, we have
$\pi_*\mathfrak g^{\rm ess}_V=\{0\}$,
$\pi_*\mathfrak g^{\rm ess}_V=\langle\p_t\rangle$,
$\pi_*\mathfrak g^{\rm ess}_V=\langle\p_t,t\p_t\rangle$ and
$\pi_*\mathfrak g^{\rm ess}_V=\langle\p_t,t\p_t,t^2\p_t\rangle$
in the cases $k=0$, $k=1$, $k=2$ and~$k=3$, respectively. Note that the comprehensive group classification of the class~$\mathcal L'_1$ for a fixed value of~$n$
needs the classification of subalgebras of~$\mathfrak{sl}(n,\mathbb F)$ up to the ${\rm SL}(n,\mathbb F)$-equivalence.
A complete list of ${\rm SL}(n,\mathbb F)$-inequivalent subalgebras of~$\mathfrak{sl}(n,\mathbb F)$
is well known for $n=2$, see, e.g.,~\cite{pate1977a,popo2003a},
and for $n=3$ it was elegantly constructed by Winternitz~\cite{wint2004a}
and essentially revisited in~\cite{chap2024a}.
At the same time, as far as we know, there are no such lists for greater~$n$ in the literature.

Therefore, the algebra $\mathfrak g^{\rm ess}_V$ is spanned by
\begin{itemize}\itemsep=0.ex
\item
the basis vector field $I=x^a\p_{x^a}$ of the kernel Lie invariance algebra~$\mathfrak g^\cap_{\mathcal L'}$,
\item
$p$ vector fields $Q_s=\Gamma_s^{ab}x^b\p_{x^a}$, $s=1,\dots,p$,
where the matrices~$\Gamma_s=(\Gamma_s^{ab})$
constitute a basis of a subalgebra~$\mathfrak s=\mathfrak s_V$ of~$\mathfrak{sl}(n,\mathbb F)$,
$0\leqslant p:=\dim\mathfrak s\leqslant\dim\mathfrak{sl}(n,\mathbb F)=n^2-1$, and
\item
$k$ vector fields $Q_{p+\iota}\!=\tau^\iota\p_t+\big(\tfrac12\tau^\iota_tx^a+\Gamma^{ab}_{p+\iota}x^b\big)\p_{x^a}$
with linearly independent $t$-components~$\tau^\iota$, $\iota=1,\dots,k$, where $k\in\{0,1,2,3\}$.
\end{itemize}
In other words, the algebra~$\mathfrak g^{\rm ess}_V$ can be represented as
$\mathfrak g^{\rm ess}_V=\mathfrak i\oplus(\mathfrak t\lsemioplus\mathfrak s^{\rm vf})$,
where $\mathfrak i:=\langle I\rangle$ is an ideal of~$\mathfrak g^{\rm ess}_V$, which is common for all $L'_V\in\mathcal L'_1$,
$\mathfrak s^{\rm vf}=\mathfrak s^{\rm vf}_V:=\langle Q_s,\,s=1,\dots,p\rangle
=\{\Gamma^{ab}x^b\p_{x^a}\mid\Gamma\in\mathfrak s\}$
is an ideal of~$\mathfrak g^{\rm ess}_V$,
and $\mathfrak t=\mathfrak t_V:=\langle Q_{p+\iota},\,\iota=1,\dots,k\rangle$ is a subspace of~$\mathfrak g^{\rm ess}_V$.
A more precise description of the structure of~$\mathfrak g^{\rm ess}_V$ is given in Theorem~\ref{thm:StructureOfgessInL'1}.
In particular, we can always choose $\mathfrak t$ to be a subalgebra of~$\mathfrak g^{\rm ess}_V$.
Moreover, in fact $k=\dim\mathfrak t\in\{0,1,2\}$ and $p=\dim\mathfrak s^{\rm vf}\leqslant n^2-2n+1$.

\medskip\par\noindent{$\boldsymbol{k=0.}$}
It is convenient (and possible due to the \smash{$G^\sim_{\mathcal L'_1}$}-equivalence)
to assume from the very beginning that $\mathop{\rm tr}V=0$, i.e., $L'_V\in\mathcal L''_1$.
The corresponding extension of~$\mathfrak g^{\rm ess}_V$
is generated by the vector fields $Q_s:=\Gamma_s^{ab}x^b\p_{x^a}$, $s=1,\dots,p$.
It suffices to consider only ${\rm SL}(n,\mathbb F)$-inequivalent subalgebras of~$\mathfrak{sl}(n,\mathbb F)$
as candidates for using in the course of constructing Lie-symmetry extensions with $k=0$
up to the \smash{$G^\sim_{\mathcal L'_1}$}-equivalence.
It follows from the classifying condition~\eqref{eq:ClassifyingCondL'_V} that
$Q_s\in\mathfrak g^{\rm ess}_V$ if and only if $[\Gamma_s,V]=0$, $s=1,\dots,p$,
i.e., the matrix-valued function~$V$ is a linear combination of matrices
from the centralizer ${\rm C}_{\mathfrak{sl}(n,\mathbb F)}(\mathfrak s)$
of the subalgebra~$\mathfrak s$ in~$\mathfrak{sl}(n,\mathbb F)$ with coefficients depending on~$t$.
Therefore, $\mathfrak s$ is a proper subalgebra of~$\mathfrak{sl}(n,\mathbb F)$
since the subalgebra $\{0\}$ corresponds to the case with no Lie-symmetry extension,
and $\{V(t)\mid t\in\mathcal I\}\subseteq\langle E\rangle$  if $\mathfrak s=\mathfrak{sl}(n,\mathbb F)$.

In the further consideration, we use the following properties of centralizers
of subalgebras of Lie algebras.

1. Given a Lie algebra~$\mathfrak g$, for any subset~$\mathsf S$ of~$\mathfrak g$
we have $\mathsf S\subseteq{\rm C}_{\mathfrak g}\big({\rm C}_{\mathfrak g}(\mathsf S)\big)$ in view of the definition of centralizer.
If $\mathsf S_1\subseteq \mathsf S_2\subseteq\mathfrak g$, then ${\rm C}_{\mathfrak g}(\mathsf S_1)\supseteq{\rm C}_{\mathfrak g}(\mathsf S_2)$.
This is why a subalgebra~$\mathfrak s$ of~$\mathfrak g$ coincides with ${\rm C}_{\mathfrak g}(\mathsf S)$ for some subset~$\mathsf S$ of~$\mathfrak g$
if and only if $\mathfrak s={\rm C}_{\mathfrak g}\big({\rm C}_{\mathfrak g}(\mathfrak s)\big)$.
Indeed, the sufficiency in this claim directly follows from taking $\mathsf S={\rm C}_{\mathfrak g}(\mathfrak s)$.
If $\mathfrak s={\rm C}_{\mathfrak g}(\mathsf S)$, then the inclusions
$\mathfrak s\subseteq{\rm C}_{\mathfrak g}\big({\rm C}_{\mathfrak g}(\mathfrak s)\big)$,
$\mathsf S\subseteq{\rm C}_{\mathfrak g}\big({\rm C}_{\mathfrak g}(\mathsf S)\big)={\rm C}_{\mathfrak g}(\mathfrak s)$
and, therefore, $\mathfrak s={\rm C}_{\mathfrak g}(\mathsf S)\supseteq{\rm C}_{\mathfrak g}\big({\rm C}_{\mathfrak g}(\mathfrak s)\big)$
imply $\mathfrak s={\rm C}_{\mathfrak g}\big({\rm C}_{\mathfrak g}(\mathfrak s)\big)$,
which leads to the necessity part of the above claim.

2. For any subalgebra~$\mathfrak c$ of~$\mathfrak g$ that is a centralizer of a subset of~$\mathfrak g$,
the set~$\Sigma$ of all subsets of~$\mathfrak g$ with the centralizer~$\mathfrak c$
has the maximal element~$\mathfrak s$ with respect to inclusion, which coincides with~${\rm C}_{\mathfrak g}(\mathfrak c)$.
Indeed, if $\mathfrak s={\rm C}_{\mathfrak g}(\mathfrak c)$, then ${\rm C}_{\mathfrak g}(\mathfrak s)=\mathfrak c$,
i.e., $\mathfrak s\in\Sigma$.
Moreover, for any $\mathsf S\subseteq\mathfrak g$ with ${\rm C}_{\mathfrak g}(\mathsf S)=\mathfrak c$,
we have $\mathsf S\subseteq{\rm C}_{\mathfrak g}\big({\rm C}_{\mathfrak g}(\mathsf S)\big)={\rm C}_{\mathfrak g}(\mathfrak c)=\mathfrak s$,
i.e., the subalgebra~$\mathfrak s$ contains any subset from~$\Sigma$.
In other words, an element~$\mathfrak s$ from~$\Sigma$ is maximal if and only if
$\mathfrak s={\rm C}_{\mathfrak g}\big({\rm C}_{\mathfrak g}(\mathfrak s)\big)$.

3. Given a Lie group~$G$ with Lie algebra~$\mathfrak g$,
if subalgebras~$\mathfrak s_1$ and~$\mathfrak s_2$ of~$\mathfrak g$ are $G$-equivalent,
then their centralizers~${\rm C}_{\mathfrak g}(\mathfrak s_1)$ and~${\rm C}_{\mathfrak g}(\mathfrak s_2)$ are $G$-equivalent
to each other as well.
Moreover, for any subalgebra~$\mathfrak s$ of~$\mathfrak g$,
the stabilizer subgroup ${\rm St}_G(\mathfrak s)$ of $G$ with respect to~$\mathfrak s$
is contained in the stabilizer subgroup ${\rm St}_G\big({\rm C}_{\mathfrak g}(\mathfrak s)\big)$
of $G$ with respect to~${\rm C}_{\mathfrak g}(\mathfrak s)$.
It is obvious that ${\rm St}_G(\mathfrak s)={\rm St}_G\big({\rm C}_{\mathfrak g}(\mathfrak s)\big)$
if $\mathfrak s={\rm C}_{\mathfrak g}\big({\rm C}_{\mathfrak g}(\mathfrak s)\big)$.

\medskip\par\noindent{\it Classification procedure.}
Therefore, we can suggest the following algorithm for classifying Lie-symmetry extensions with $k=0$,
at least for low values of~$n$.
\begin{itemize}\itemsep=0ex
\item
For each subalgebra from a complete list of ${\rm SL}(n,\mathbb F)$-inequivalent subalgebras of~$\mathfrak{sl}(n,\mathbb F)$,
compute the centralizer of its centralizer.
\item
Select all the proper subalgebras $\mathfrak s$ from the list for which
$\mathfrak s={\rm C}_{\mathfrak{sl}(n,\mathbb F)}\big({\rm C}_{\mathfrak{sl}(n,\mathbb F)}(\mathfrak s)\big)$.
\item
A complete list of inequivalent essential Lie-symmetry extensions with $k=0$ in the class~$\mathcal L'_1$ is constituted by
the algebras $\mathfrak g^{\rm ess}_V=\langle I\rangle\oplus\mathfrak s^{\rm vf}$
for all the obtained subalgebras $\mathfrak s\varsubsetneq\mathfrak{sl}(n,\mathbb F)$,
where the arbitrary-element matrix~$V$ takes general values of the form $V=v^l(t)K_l$
for a~basis $(K_l,\,l=1,\dots,m)$ of \smash{${\rm C}_{\mathfrak{sl}(n,\mathbb F)}(\mathfrak s)$}.
\end{itemize}
A necessary condition for a particular value of~$V$ in the above form to be general
is that after rewriting it as $V=\tilde v^{l'}(t)\tilde K_{l'}$
with linearly independent functions~$\tilde v^{l'}$ and linearly independent matrices~$\tilde K_{l'}$, $l'=1,\dots,m'\leqslant m$,
we have
\[
{\rm C}_{\mathfrak{sl}(n,\mathbb F)}\big(\{V(t)\mid t\in\mathcal I\}\big)=
{\rm C}_{\mathfrak{sl}(n,\mathbb F)}\big(\{\tilde K_1,\dots,\tilde K_{m'}\}\big)=\mathfrak s.
\]
This definitely includes the values of~$V$ with linearly independent~$v^1$, \dots, $v^m$.
Sufficient conditions for general values of~$V$ in these classification cases
additionally contain more delicate constraints, which are related to the imposed condition $k=0$,
cf.\ the cases with greater values of~$k$ below.
It is obvious that in the representation $V=\tilde v^{l'}(t)\tilde K_{l'}$,
the coefficients~$\tilde v^1$,~\dots, $\tilde v^{m'}$ and the matrices $\tilde K_1$,~\dots, $\tilde K_{m'}$
are defined up to the transformations of the form
$\tilde v^{l'}\to\tilde v^{l''}\alpha^{l'l''}$, $\tilde K_{l'}\to\hat\alpha^{l'l''}\tilde K_{l''}$
with invertible $m'\times m'$ matrix~$(\alpha^{l'l''})$ and $(\hat\alpha^{l'l''})=(\alpha^{l'l''})^{-1}$.
The systems~\smash{$L'_V,L'_{\tilde V}\in\mathcal L''_1$}
with the same essential Lie invariance algebra $\langle I\rangle\oplus\mathfrak s^{\rm vf}$
are \smash{$G^\sim_{\mathcal L'_1}$}-equivalent if and only if there exists a transformation
from the (significant) equivalence group~$G^{{\rm s\vphantom{g}}\sim}_{\mathcal L''}$ of the class~$\mathcal L''$,
with the matrix~$C$ from the stabilizer subgroup \smash{${\rm St}_{{\rm SL}(n,\mathbb F)}(\mathfrak s)$}
of ${\rm SL}(n,\mathbb F)$ with respect to~$\mathfrak s$
that maps~$V$ to~$\tilde V$, cf.\ Corollary~\ref{cor:EquivGroupoidL''}.

In the above algorithm, we can permute the roles of subalgebras spanned by the matrices~$\Gamma_s$
and of subalgebras spanned by the matrices~$K_l$,
selecting those among subalgebras of $\mathfrak{sl}(n,\mathbb F)$ associated with the representations for~$V$
that coincide with the centralizers of their centralizers.
Then elements~$\Gamma$ of their centralizers correspond to vector fields giving Lie-symmetry extensions.

\medskip\par\noindent{$\boldsymbol{k=1.}$}
Up to the \smash{$G^\sim_{\mathcal L'_1}$}-equivalence, the algebra~$\mathfrak g^{\rm ess}_V$
contains a vector field $P:=\p_t+\Upsilon^{ab}x^b\p_{x^a}$ with some (constant) matrix~$\Upsilon\in\mathfrak{sl}(n,\mathbb F)$.
The classifying condition~\eqref{eq:ClassifyingCondL'_V} with substituted components of this vector field implies
that $V(t)={\rm e}^{t\Upsilon}V(0){\rm e}^{-t\Upsilon}$.
Hence the system~$L'_V$ is reduced by the point transformation
$\tilde t=t$, $\tilde{\boldsymbol x}={\rm e}^{-t\Upsilon}\boldsymbol x$
to the system $\tilde{\boldsymbol x}_{\tilde t\tilde t}=-2\Upsilon\tilde{\boldsymbol x}_{\tilde t}+\big(V(0)-\Upsilon^2\big)\tilde{\boldsymbol x}$
whose matrix-valued coefficients are constant.
Hence the system~$L'_V$ is easily integrable.

We can represent the matrix~$V(0)$ in the form $V(0)=W+\varepsilon E$,
where $W$ is a nonzero (constant) traceless matrix, $\varepsilon=n^{-1}\mathop{\rm tr}V(0)$,
and set $\varepsilon\in\mathscr E$ up to equivalence transformations that scale~$t$,
where $\mathscr E:=\{-1,0,1\}$ if $\mathbb F=\mathbb R$ and $\mathscr E:=\{0,1\}$ if $\mathbb F=\mathbb C$,
i.e., according to the Baker--Hausdorff formula,
\begin{gather}\label{eq:k=1RepresentationForV}
V(t)=\varepsilon E+{\rm e}^{t\Upsilon}W{\rm e}^{-t\Upsilon}
=\varepsilon E+\sum_{l=0}^\infty\frac{t^l}{l!}K_l,
\quad
K_0:=W,\ K_l:=[\Upsilon,K_{l-1}],\ l=1,2,\dots\,.
\end{gather}
Note that $W\ne0$ since $L'_V\notin\mathcal L'_0$.
It follows from the classifying condition~\eqref{eq:ClassifyingCondL'_V}
that a vector field $Q_\Gamma=\Gamma^{ab}x^b\p_{x^a}$ with $\Gamma\in\mathfrak{sl}(n,\mathbb F)$ belongs to~$\mathfrak g^{\rm ess}_V$
if and only if $[V(t),\Gamma]=0$ for any~$t$ from the domain of~$V$,
which is equivalent to the condition
\begin{gather}\label{eq:CentralizerCondForKl}
[K_l,\Gamma]=0,\quad l\in\mathbb N_0=\mathbb N\cup\{0\}.
\end{gather}
The set~$\mathfrak s$ of all such matrices~$\Gamma$ is the centralizer of $\{K_l,\,l\in\mathbb N_0\}$,
$\mathfrak s={\rm C}_{\mathfrak{sl}(n,\mathbb F)}(\{K_l,\,l\in\mathbb N_0\})$,
and thus it is a subalgebra of~$\mathfrak{sl}(n,\mathbb F)$.
In fact,
$\mathfrak s={\rm C}_{\mathfrak{sl}(n,\mathbb F)}(\{K_0,\dots,K_m\})$,
where $m$ the maximum value of~$l$ such that $K_0$, \dots, $K_l$ are linearly independent.
(Since $K_{m+1}\in\langle K_0,\dots,K_m\rangle$,
we can show by induction that $K_l\in\langle K_0,\dots,K_m\rangle$ for any $l>m$.)
Therefore, the subalgebra~$\mathfrak s$ can be found constructively if the matrices~$\Upsilon$ and~$W$ are given,
and $\mathfrak s\ne\mathfrak{sl}(n,\mathbb F)$.
In view of the Jacobi identity,
the condition~\eqref{eq:CentralizerCondForKl} holds true if and only if
\[
[\Upsilon,\Gamma]\in\mathfrak s,\quad [W,\Gamma]=0,
\]
i.e., $\Upsilon\in{\rm N}_{\mathfrak{sl}(n,\mathbb F)}(\mathfrak s)$
and $W\in{\rm C}_{\mathfrak{sl}(n,\mathbb F)}(\mathfrak s)$.
The condition $[\Upsilon,\Gamma]\in\mathfrak s$ is natural
since for $\Gamma\in\mathfrak s$ we have $Q_\Gamma:=\Gamma^{ab}x^b\p_{x^a}\in\mathfrak g^{\rm ess}_V$ and thus
$[P,Q_\Gamma]=[\Upsilon,\Gamma]^{ab}x^b\p_{x^a}\in\mathfrak g^{\rm ess}_V$, which means that $[\Upsilon,\Gamma]\in\mathfrak s$.
As in the case $k=0$, denote $\mathfrak s^{\rm vf}:=\{Q_\Gamma\mid\Gamma\in\mathfrak s\}$.
Note that varying the parameter~$\varepsilon$ has no influence
on the algebra~$\mathfrak s^{\rm vf}$ and the vector field~$P$, and thus on the entire algebra~$\mathfrak g^{\rm ess}_V$ as well.

Since $\varepsilon=n^{-1}\mathop{\rm tr}V(0)$ and $W=V(0)-\varepsilon E$,
both $\varepsilon$ and~$W$ are defined by~$V$ in a unique way, but this is not the case for~$\Upsilon$.
More specifically, the tuples $(\varepsilon,W,\Upsilon)$ and $(\tilde\varepsilon,\tilde W,\tilde\Upsilon)$
are associated with the same values $V=\tilde V$ if and only if
$\varepsilon=\tilde\varepsilon$, $W=\tilde W$ and $\Upsilon-\tilde\Upsilon\in\mathfrak s$.
This claim follows from the second representation for~$V$ in~\eqref{eq:k=1RepresentationForV}.
Indeed, the sufficiency is obvious since then
$\tilde K_0=K_0$ and by induction $\tilde K_l=[\tilde\Upsilon,\tilde K_{l-1}]=[\Upsilon,K_{l-1}]=K_l$, $l\in\mathbb N$,
which implies $V=\tilde V$ in view of~\eqref{eq:k=1RepresentationForV}.
Conversely, if $V=\tilde V$, then $\varepsilon=\tilde\varepsilon$, $W=\tilde W$,
$K_l-\tilde K_l=[\Upsilon-\tilde\Upsilon,K_{l-1}]=0$, $l\in\mathbb N$, and hence $\Upsilon-\tilde\Upsilon\in\mathfrak s$.
The proved claim is natural within the Lie-symmetry interpretation of the matrix~$\Upsilon$ and matrices from $\mathfrak s$
since, parameterizing the Lie-symmetry vector field~$P$ of the system~$L'_V$,
the matrix~$\Upsilon$ is defined up to adding an arbitrary matrix from~$\mathfrak s$
due to the possibility of linearly combining~$P$ with elements of~$\mathfrak s^{\rm vf}$.

\medskip\par\noindent{\it Classification procedure.}
Analogously to the case $k=0$ and again at least for low values of~$n$,
there are two ways for classifying Lie-symmetry extensions with $k=1$
but, in contrast to the former case, these ways are not conjugate to each other.

The way that starts with the classification of possible Lie-symmetry algebras
is quite similar to its counterpart in the case $k=0$
and is also based on properties of centralizers.
We suppose that the first two steps of the algorithm for $k=0$ have been realized,
i.e., a complete list of ${\rm SL}(n,\mathbb F)$-inequivalent subalgebras~$\mathfrak s$ of~$\mathfrak{sl}(n,\mathbb F)$
satisfying the condition
$\mathfrak s={\rm C}_{\mathfrak{sl}(n,\mathbb F)}\big({\rm C}_{\mathfrak{sl}(n,\mathbb F)}(\mathfrak s)\big)\ne\mathfrak{sl}(n,\mathbb F)$
has been constructed.
In contrast to the case $k=0$, the subalgebra~$\mathfrak s=\{0\}$ may be appropriate here.
Then a complete list of inequivalent essential Lie-symmetry extensions with $k=1$ in the class~$\mathcal L'_1$
consists of the algebras $\mathfrak g^{\rm ess}_V=\langle I\rangle\oplus\big(\langle P\rangle\lsemioplus\mathfrak s^{\rm vf}\big)$
for all the obtained subalgebras $\mathfrak s\varsubsetneq\mathfrak{sl}(n,\mathbb F)$,
where the arbitrary-element matrix~$V$ takes the general values of the form~\eqref{eq:k=1RepresentationForV}
with $\varepsilon\in\mathscr E$, nonzero $W\in\mathfrak c:={\rm C}_{\mathfrak{sl}(n,\mathbb F)}(\mathfrak s)$
and $\Upsilon$ from a (fixed) complementary subspace~$\mathsf s$ to~$\mathfrak s$ in ${\rm N}_{\mathfrak{sl}(n,\mathbb F)}(\mathfrak s)$.
For arbitrary matrices~$W\in\mathfrak c$ and~$\Upsilon\in\mathsf s$,
we have $K_l\in\mathfrak c$ for all the matrices $K_l$ defined in~\eqref{eq:k=1RepresentationForV}
or, equivalently, $\mathfrak s\subseteq{\rm C}_{\mathfrak{sl}(n,\mathbb F)}(\{K_l,\,l\in\mathbb N_0\})$.
The essential Lie-symmetry extension case associated with a subalgebra~$\mathfrak s$ is genuine
if and only if there exist such matrices~$W\in\mathfrak c$ and~$\Upsilon\in\mathsf s$
that $\mathfrak s={\rm C}_{\mathfrak{sl}(n,\mathbb F)}(\{K_l,\,l\in\mathbb N_0\})$ and $\dim\pi_*\mathfrak g^{\rm ess}_V<2$.
Similarly to the case $k=0$, we call such values of the parameter-matrix pair $(\Upsilon,W)$ general.
In other words, the algebra $\langle I\rangle\oplus\big(\langle P\rangle\lsemioplus\mathfrak s^{\rm vf}\big)$ indeed coincides with
the entire algebra $\mathfrak g^{\rm ess}_V$ for $V=\varepsilon E+{\rm e}^{t\Upsilon}W{\rm e}^{-t\Upsilon}$
if and only if  the pair $(\Upsilon,W)$ takes a general value in $\mathsf s\times\mathfrak c$.
In addition to~$\mathfrak s$,
the adjoint actions of matrices from the stabilizer subgroup \smash{${\rm St}_{{\rm SL}(n,\mathbb F)}(\mathfrak s)$}
of ${\rm SL}(n,\mathbb F)$ with respect to~$\mathfrak s$ preserve $\mathfrak c$ and ${\rm N}_{\mathfrak{sl}(n,\mathbb F)}(\mathfrak s)$.
This is why they induce well-defined actions on the quotient space ${\rm N}_{\mathfrak{sl}(n,\mathbb F)}(\mathfrak s)/\mathfrak s$
and, therefore, on $\mathsf s$ as a set of representatives for cosets from this quotient space.
Systems~$L'_V$ and~\smash{$L'_{\tilde V}$} with $\varepsilon,\tilde\varepsilon\in\mathscr E$ and general values
$(\Upsilon,W),(\tilde\Upsilon,\tilde W)\in\mathsf s\times\mathfrak c$
are $G^\sim_{\mathcal L'}$-equivalent if and only if $\varepsilon=\tilde\varepsilon$ and
there exists a matrix from \smash{${\rm St}_{{\rm SL}(n,\mathbb F)}(\mathfrak s)$}
whose adjoint action maps $(\Upsilon,W)$ to $(\tilde\Upsilon,\tilde W)$ if $\varepsilon\ne0$
or to $(\alpha\tilde\Upsilon,\alpha^2\tilde W)$ with $\alpha\in\mathbb F\setminus\{0\}$ otherwise.

The initial point of the second way is the form of the matrix-valued parameter function~$V$,
which is defined for $k=1$ by~\eqref{eq:k=1RepresentationForV}
and values of $\varepsilon\in\mathscr E$ and of the constant traceless parameter matrices~$W$ and~$\Upsilon$.
We take a complete list of ${\rm SL}(n,\mathbb F)$-inequivalent matrix pairs $(\Upsilon,W)$.
Such lists were (and can be) constructed only for low values of~$n$,
see, e.g., \cite{serg1993a} for $n=4$.
For a general value of~$n$, the problem of classifying pairs of $n\times n$ matrices up to matrix similarity
is wild, see \cite{beli2000a,beli2003a,frie1983a} and references therein
for discussions and results on this problem.
Then we compute the maximum number of linearly independent matrices~$K_0$,~\dots, $K_m$ and~$\mathfrak s$
according to~\eqref{eq:k=1RepresentationForV} and
\smash{$\mathfrak s:={\rm C}_{\mathfrak{sl}(n,\mathbb F)}(\{K_0,\dots,K_m\})$}.
Pairs $(\Upsilon,W)$ and $(\tilde\Upsilon,\tilde W)$
with $\tilde W={\rm e}^{t_0\Upsilon}W{\rm e}^{-t_0\Upsilon}$ for some $t_0\in\mathbb F$ and $\tilde\Upsilon-\Upsilon\in\mathfrak s$
define the same value of~$V$ up to the shifts of~$t$,
which gives an additional equivalence relation on matrix pairs.
We reduce the chosen complete list of ${\rm SL}(n,\mathbb F)$-inequivalent matrix pairs
using this equivalence relation.
If $\mathfrak g^{\rm ess}_V=\langle I\rangle\oplus\big(\langle P\rangle\lsemioplus\mathfrak s^{\rm vf}\big)$,
then the pair $(\Upsilon,W)$ gives a proper case of Lie-symmetry extension with $k=1$.

\begin{remark}\label{rem:SLODEConvenientClassesForGroupClassification}
In contrast to the case $k=0$,
the class~$\mathcal L'_1$ is a better choice for group classification in the case $k=1$
than its subclass~$\mathcal L''_1$.
This is justified by the fact that the single canonical (up to the $G^\sim_{\mathcal L'}$-equivalence) value $\tau=1$
for Lie-symmetry vector fields with nonzero $t$-components~$\tau$ within the class~$\mathcal L'_1$
corresponds to three or two different $G^\sim_{\mathcal L''}$-inequivalent values,
$\tau\in\{1,t,t^2+1\}$ or $\tau\in\{1,t\}$ if $\mathbb F=\mathbb R$ or $\mathbb F=\mathbb C$, respectively,
when the class~$\mathcal L''_1$ is considered.
See Corollaries~\ref{cor:EquivGroupoidL''} and~\ref{cor:MAIofL''_V}
and the well-known classifications of subalgebras of $\mathfrak{sl}(2,\mathbb F)$
for $\mathbb F=\mathbb R$~\cite{pate1977a} and $\mathbb F=\mathbb C$.
Another explanation is that the parameter~$\varepsilon$ does not have influence
on solving the group classification problem for the class~$\mathcal L'_1$
but making the matrix-valued function~$V$ traceless when $\varepsilon\ne0$,
we (after rescaling $\varepsilon$ to $1/4$ or, only if $\varepsilon<0$ in the real case, to $-1/2$ for convenience)
perform the point transformation $\tilde t=T(t)$, $\tilde{\boldsymbol x}=\boldsymbol x$,
where $T(t)={\rm e}^{t}$ if $\varepsilon=1/4$ and $T(t)=\tan t$ if $\varepsilon=-1/2$.
As a result, instead of the single pair
\[\big({\rm e}^{t\Upsilon}V(0){\rm e}^{-t\Upsilon},\,\p_t+\Upsilon^{ab}x^b\p_{x^a}\big)\]
of expressions for $(V,P)$ up to the $G^\sim_{\mathcal L'}$-equivalence in the class~$\mathcal L'_1$,
we obtain two or one more, more complicated, pairs over~$\mathbb R$ or~$\mathbb C$,
\begin{gather*}
\big({\rm e}^{\ln\!|t|\,\Upsilon}V(1){\rm e}^{-\ln\!|t|\,\Upsilon},\,t\p_t+\Upsilon^{ab}x^b\p_{x^a}\big),
\\
\big({\rm e}^{\arctan(t)\Upsilon}V(0){\rm e}^{-\arctan(t)\Upsilon},\,(t^2+1)\p_t+\Upsilon^{ab}x^b\p_{x^a}\big)
\end{gather*}
or the first of these pairs with $\ln|t|$ replaced by $\mathop{\rm Log}t$, respectively,
up to the $G^\sim_{\mathcal L''}$-equivalence in the class~$\mathcal L''_1$.
Here $\mathop{\rm Log}$ denotes the principal value of logarithm.
The form of systems admitting Lie-symmetry vector fields with nonzero $t$-components
has a proper and clear interpretation only after specifically mapping these systems to the superclass~$\mathcal L_1$.
At the same time, this class is not too convenient for directly carrying out group classification
since the parameter functions~$\eta^{ab}$ in Lie-symmetry vector fields of systems from~$\mathcal L_1$
can take nonconstant values, see Lemma~\ref{lem:MAIofLvartheta},
which needs additional efforts in the course of classifying such systems.
\end{remark}

\noindent{$\boldsymbol{k=2.}$}
The algebra~$\mathfrak g^{\rm ess}_V$ contains two vector fields with linearly independent $t$-components,
which can be assumed to take, modulo their linear recombination and the \smash{$G^\sim_{\mathcal L'_1}$}-equivalence,
in the form $P:=\p_t+\Upsilon^{ab}x^b\p_{x^a}$ and \mbox{$D:=t\p_t+\Lambda^{ab}x^b\p_{x^a}$}
with some (constant) matrices~$\Upsilon,\Lambda\in\mathfrak{sl}(n,\mathbb F)$.
Analogously to the case $k=1$, the classifying condition~\eqref{eq:ClassifyingCondL'_V} with substituted components of~$P$ implies
the general form of~$V$, $V(t)={\rm e}^{t\Upsilon}W{\rm e}^{-t\Upsilon}$, where we denote $W:=V(0)$.
The condition $L'_V\notin\mathcal L'_0$ is equivalent to $W\notin\langle E\rangle$.
Now we look for restrictions on~$V$ that arise due to the invariance of~$L'_V$ with respect to~$D$.
The substitution of the components of this vector field into the classifying condition~\eqref{eq:ClassifyingCondL'_V}
gives the matrix equation $tV_t+2V=[\Lambda,V]$.
We differentiate the last equation $l\in\mathbb N_0$ times with respect to~$t$,
deriving $t\p_t^{l+1}V_t+(l+2)\p_t^lV=[\Lambda,\p_t^lV]$,
and then estimate the result at $t=0$.
This gives the series of matrix equations
\begin{gather}\label{eq:CommutatorOfLambdaKl}
[\Lambda,K_l]=(l+2)K_l,\quad l\in\mathbb N_0,
\end{gather}
with the matrices~$K_l$ defined as in~\eqref{eq:k=1RepresentationForV}
since $\p_t^lV(0)=(\mathop{\rm ad}_\Upsilon)^lW=K_l$.
Here and in what follows we consider $\mathop{\rm ad}_M:=[M,\cdot]$ for a matrix~$M$
as a linear operator on $\mathfrak{sl}(n,\mathbb F)$.
In view of~\eqref{eq:CommutatorOfLambdaKl},
Lemma~4 of \cite[p.~44]{jaco1962A} implies that
for each $l\in\mathbb N_0$ the matrix $K_l$ is nilpotent.
Moreover, if this matrix is nonzero,
then it is an eigenvector of~$\mathop{\rm ad}_\Lambda$ with eigenvalue $l+2$.
Since a linear operator on a finite-dimensional linear space has a finite number of eigenvalues,
and eigenvectors associated with different eigenvalues are linearly independent,
there are only a finite number of nonzero~$K_l$.
Recall that at least $K_0:=W$ is nonzero, and thus $\Lambda\ne0$.
Let $m$ be the minimum value of~$l$ such that $K_{l+1}=0$.
Then $K_l=0$ for all $l>m$.
In other words, the matrix-valued function~$V$ is a polynomial in~$t$ with nonzero nilpotent matrix coefficients,
\begin{gather}\label{eq:k=2RepresentationForV}
V={\rm e}^{t\Upsilon}W{\rm e}^{-t\Upsilon}
=\sum_{l=0}^m\frac{t^l}{l!}K_l,
\quad
K_0:=W\ne0,\quad K_l:=[\Upsilon,K_{l-1}]\ne0,\ l=1,2,\dots,m.
\end{gather}

The commutation relations~\eqref{eq:CommutatorOfLambdaKl} imply that
$[\Lambda,K_lK_{l'}]=(l+l'+4)K_lK_{l'}$, $l,l'\in\mathbb N_0$, and thus
\[
[\Lambda,[K_l,K_{l'}]]=(l+l'+4)[K_l,K_{l'}],\quad l,l'\in\mathbb N_0.
\]
By induction, we can prove analogous relations for the commutators of~$\Lambda$
with the multifold products and commutators of the matrices~$K_l$.
Hence the associative algebra~$\mathfrak K$ and the Lie algebra~$\mathfrak k$
that are generated by $\{K_0,\dots,K_m\}$ are graded nilpotent algebras
constituted by nilpotent matrices,
$\mathfrak K=\bigoplus_{l\geqslant2}\mathfrak K_l$
with $\mathfrak K_l\mathfrak K_{l'}\subseteq\mathfrak K_{l+l'}$,
$\mathfrak k=\bigoplus_{l\geqslant2}\mathfrak k_l$
with $[\mathfrak k_l,\mathfrak k_{l'}]\subseteq\mathfrak k_{l+l'}$ and
\mbox{$[\Lambda,M]=lM$} for any $M\in\mathfrak K_l$ as well as for any $M\in\mathfrak k_l$.
Since the Lie algebra~$\mathfrak k$ consists of nilpotent matrices,
by Engel's theorem these matrices can all be simultaneously brought to the strictly upper triangular form
up to matrix similarity.
In other words, \emph{modulo the ${\rm SL}(n,\mathbb F)$-equivalence,
the matrix-valued function~$V$ is a polynomial in~$t$ with strictly upper triangular matrix coefficients},
cf.\ the sentence with the representation~\eqref{eq:k=2RepresentationForV}.

Denoting $\mathfrak s:={\rm C}_{\mathfrak{sl}(n,\mathbb F)}(\{K_0,\dots,K_m\})$
and $\mathfrak s^{\rm vf}:=\{Q_\Gamma:=\Gamma^{ab}x^b\p_{x^a}\mid\Gamma\in\mathfrak s\}$
as in the previous cases for values of~$k$,
we have $\mathfrak g^{\rm ess}_V=\langle I\rangle\oplus\big(\langle P,D\rangle\lsemioplus\mathfrak s^{\rm vf}\big)$,
and $\mathfrak s\ne\mathfrak{sl}(n,\mathbb F)$.
In particular, there are matrix commutation relations
\[
[\Lambda,\Upsilon]-\Upsilon\in\mathfrak s,\quad
[\Upsilon,\Gamma],[\Lambda,\Gamma]\in\mathfrak s\ \mbox{for any}\ \Gamma\in\mathfrak s,
\]
and the matrices~$\Upsilon$ and $\Lambda$ are defined up independently adding any matrices from~$\mathfrak s$.
The indefiniteness of~$\Upsilon$ also follows from the representation~\eqref{eq:k=2RepresentationForV}.
Since the Lie algebra~$\mathfrak k$ is nilpotent,
its center ${\rm Z}(\mathfrak k):={\rm C}_{\mathfrak k}(\mathfrak k)$ is nonzero.
Therefore, $\mathfrak s={\rm C}_{\mathfrak{sl}(n,\mathbb F)}(\mathfrak k)\supseteq{\rm Z}(\mathfrak k)\ne\{0\}$,
i.e., $\dim\mathfrak s^{\rm vf}\geqslant1$.

Now we will first consider the case $\mathbb F=\mathbb C$ and will then show
that the real case can be reduced to the complex case.

Let $\sigma(\Lambda)=\{\mu_1,\dots,\mu_r\}$ be the spectrum of the matrix~$\Lambda$, i.e., the set of its distinct eigenvalues,
and thus $r\in\{1,\dots,n\}$.
In what follows the indices~$i$ and~$j$ run from~1 to~$r$.
Denote by~$\mathscr U_i$ the generalized eigenspace for the eigenvalue~$\mu_i$.
Then $\mathbb F^n=\bigoplus_i\mathscr U_i$,
$n_i:=\dim\mathscr U_i$ is the algebraic multiplicity of the eigenvalue~$\mu_i$, and hence $n_1+\dots+n_r=n$.
We choose a canonical basis in each~$\mathscr U_i$ for the restriction~\smash{$\Lambda\big|_{\mathscr U_i}$} of~$\Lambda$ on~$\mathscr U_i$,
which is composed entirely of Jordan chains for~\smash{$\Lambda\big|_{\mathscr U_i}$},
and sequentially concatenate the chosen bases into a basis of~$\mathbb F^n$.
We partition each matrix~$M$ into blocks according to the decomposition $\mathbb F^n=\bigoplus_i\mathscr U_i$,
$M=(M_{ij})$, where $M_{ij}$ is the $n_i\times n_j$ submatrix of~$M$ obtained by deleting all the rows and columns,
except those associated with the basis elements of~$\mathscr U_i$ and~$\mathscr U_j$, respectively.
In particular, $\Lambda=\bigoplus_i\Lambda_{ii}$.

For any $\mu\in\mathbb F$, the commutation relations~\eqref{eq:CommutatorOfLambdaKl} imply
$(\Lambda-(\mu+l+2)E)K_l=K_l(\Lambda-\mu E)$, and thus
\[
(\Lambda-(\mu+l+2)E)^{l'}K_l=K_l(\Lambda-\mu E)^{l'},\quad l,l'\in\mathbb N_0.
\]
In particular, this means that $K_l\mathscr U_j\subseteq\mathscr U_i$ for~$(i,j)$ with $\mu_i=\mu_j+l+2$,
or, equivalently, $K_{l,ij}=0$ if $\mu_i\ne\mu_j+l+2$.
Consider the Jordan--Chevalley decomposition of~$\Lambda$, $\Lambda=\Lambda_{\rm s}+\Lambda_{\rm n}$.
Thus, $\Lambda_{{\rm s},ii}=\Lambda_{ii,\rm s}=\mu_iE_{ii}$, $\Lambda_{{\rm n},ii}=\Lambda_{ii,\rm n}$,
and $\Lambda_{{\rm s},ij}=\Lambda_{{\rm n},ij}=0$ if $i\ne j$.
We split the matrix commutation relations~\eqref{eq:CommutatorOfLambdaKl} into blocks,
\[
[\Lambda,K_l]_{ij}=\Lambda_{ii}K_{l,ij}-K_{l,ij}\Lambda_{jj}=(l+2)K_{l,ij}.
\]
The last condition is a trivial equality of zero matrices for $(i,j)$ with $\mu_i\ne\mu_j+l+2$.
Otherwise, we can expand it into
\[
((\mu_j+l+2)E_{ii}+\Lambda_{ii,\rm n})K_{l,ij}-K_{l,ij}(\mu_jE_{jj}+\Lambda_{jj,\rm n})=(l+2)K_{l,ij}.
\]
Hence $[\Lambda_{\rm n},K_l]_{ij}=\Lambda_{ii,\rm n}K_{l,ij}-K_{l,ij}\Lambda_{jj,\rm n}=0$.
We obviously have the same equality for $(i,j)$ with $\mu_i\ne\mu_j+l+2$.
Therefore,  $[\Lambda_{\rm n},K_l]=0$, $l\in\mathbb N_0$, i.e., $\Lambda_{\rm n}\in\mathfrak s$.
Due to the indefiniteness of~$\Lambda$, we can set $\Lambda_{\rm n}=0$ by subtracting $\Lambda_{\rm n}$ from~$\Lambda$,
i.e., without loss of generality we can always replace~$\Lambda$ by $\Lambda_{\rm s}$, which is necessarily nonzero,
and assume that the matrix~$\Lambda$ is semisimple (or, equivalently, diagonalizable since $\mathbb F=\mathbb C$ here).

We decompose the matrix~$\Upsilon$ as $\Upsilon=\hat\Upsilon+\check\Upsilon$,
where $\hat\Upsilon_{ij}:=\Upsilon_{ij}$ if $\mu_i=\mu_j+1$, $\hat\Upsilon_{ij}:=0$ otherwise,
and $\check\Upsilon:=\Upsilon-\hat\Upsilon$.
Hence $\check\Upsilon_{ij}:=0$ if $\mu_i=\mu_j+1$ and $\check\Upsilon_{ij}:=\Upsilon_{ij}$ otherwise.
The notation in~\eqref{eq:k=2RepresentationForV} then expands to the equality
$[\Upsilon,K_l]=[\hat\Upsilon,K_l]+[\check\Upsilon,K_l]=K_{l+1}$.
We split this equality into blocks, $[\hat\Upsilon,K_l]_{ij}+[\check\Upsilon,K_l]_{ij}=K_{l+1,ij}$.

Recall that $K_{l,ij}=0$ if $\mu_i\ne\mu_j+l+2$.
This is why $[\check\Upsilon,K_l]_{ij}=\check\Upsilon_{ij'}K_{l,j'j}-K_{l,ii'}\check\Upsilon_{i'j}=0$ if $\mu_i=\mu_j+l+3$.
Indeed, $K_{l,j'j}\ne0$ only if $\mu_{j'}=\mu_j+l+2$ and hence $\mu_i=\mu_{j'}+1$, which implies $\check\Upsilon_{ij'}=0$.
Analogously, $K_{l,ii'}\ne0$ only if $\mu_i=\mu_{i'}+l+2$, giving $\mu_{i'}=\mu_j+1$ and $\check\Upsilon_{i'j}=0$.

We also have that $[\hat\Upsilon,K_l]_{ij}=\hat\Upsilon_{ij'}K_{l,j'j}-K_{l,ii'}\hat\Upsilon_{i'j}=0$ if $\mu_i\ne\mu_j+l+3$.
Indeed, the first (resp.\ second) matrix product may be nonzero only if both its factors are nonzero,
which needs $\mu_i=\mu_{j'}+1$ and $\mu_{j'}=\mu_j+l+2$ (resp.\ $\mu_i=\mu_{i'}+l+2$ and $\mu_{i'}=\mu_j+1$),
i.e., $\mu_i=\mu_j+l+3$.
Since $K_{l+1,ij}=0$ if $\mu_i\ne\mu_j+l+3$, we obtain that $[\check\Upsilon,K_l]_{ij}=K_{l+1,ij}-[\hat\Upsilon,K_l]_{ij}=0$
if $\mu_i\ne\mu_j+l+3$.

As a result, $[\check\Upsilon,K_l]=0$, $l\in\mathbb N_0$, i.e., $\check\Upsilon\in\mathfrak s$.
Due to the indefiniteness of~$\Upsilon$, we can set $\check\Upsilon=0$ by subtracting $\check\Upsilon$ from~$\Upsilon$,
i.e., without loss of generality \emph{we can always replace the matrix~$\Upsilon$ by its hat-part $\hat\Upsilon$ with respect to~$\Lambda$}.
We have $[\Lambda,\Upsilon]_{ij}=\mu_iE_{ii}\Upsilon_{ij}-\Upsilon_{ij}\mu_jE_{jj}=(\mu_i-\mu_j)\Upsilon_{ij}$.
Hence $[\Lambda,\Upsilon]_{ij}=\Upsilon_{ij}$ if $\mu_i=\mu_j+1$, and $[\Lambda,\Upsilon]_{ij}=0=\Upsilon_{ij}$ otherwise,
i.e., $[\Lambda,\Upsilon]=\Upsilon$.
In other words, replacing the matrix~$\Upsilon$ by its hat-part,
we obtain, instead of $[\Lambda,\Upsilon]-\Upsilon\in\mathfrak s$, the more restrictive commutation relation
\[[\Lambda,\Upsilon]=\Upsilon.\]

We reorder and partition the tuple $(\mu_1,\dots,\mu_r)$ into such maximal subtuples and accordingly modify the chosen basis of~$\mathbb F^n$
that within each of these subtuples, we have the descending order of the real parts of its elements,
and $\mu_i-\mu_{i+1}\in\{1,2\}$ for each pair of its successive elements.
We call these subtuples the \emph{chains of eigenvalues} of~$\Lambda$.
Then the matrices~$\Upsilon$, $W=K_0$ and all other~$K_l$ become strictly upper triangular,
i.e., we have proved in one more way that up to the $G^\sim_{\mathcal L_1}$-equivalence,
the matrix-valued function~$V$ is a polynomial in~$t$ with strictly upper triangular matrix coefficients.
The degree~$m$ of this polynomial is less than
the maximum among the lengths of the eigenvalue chains of~$\Lambda$ reduced by two.
Moreover, $\Upsilon_{ij}=K_{l,ij}=0$, $l\in\mathbb N_0$, if $\mu_i$ and~$\mu_j$ belong to different chains.

The above constraints that the matrix~$\Lambda$ is semisimple
and the matrix~$\Upsilon$ coincides with its hat-part with respect to~$\Lambda$
do still not define these matrices uniquely.
Thus, the matrix~$\Lambda$ is defined up to adding an arbitrary matrix of the form $\bigoplus_i\nu_iE_{ii}$ in the chosen basis,
where $\nu_i=\nu_j$ if $\mu_i$ and $\mu_j$ belong to the same chain.
In other words, within each of the chains of eigenvalues of~$\Lambda$,
only their differences, which are integers by construction, are significant.
The structure of such a chain is completely defined by the tuple of differences (equal to~1 or~2) of successive eigenvalues from this chain.

In the case $\mathbb F=\mathbb R$, we make the complexification of~$\mathbb R^n$ to~$\mathbb C^n$.
In view of the above consideration, we can assume
that the complexification~$\Lambda^{\mathbb C}$ of the matrix~$\Lambda$ is diagonalizable.
If the matrix~$\Lambda^{\mathbb C}$ has a chain of nonreal eigenvalues,
then it has the chain of the complex conjugates of these eigenvalues as well.
It is obvious that the structures of the original chain and of its conjugate coincide,
i.e., differences of successive eigenvalues
are equal to each other for the respective pairs of such eigenvalues from these two chains.
We can choose a canonical basis for~$\Lambda^{\mathbb C}$ in such a way that respective generalized eigenvectors from this basis
that are associated with these chains are the complex conjugates of each other.
Due to the residual indefiniteness of~$\Lambda^{\mathbb C}$,
we can replace the eigenvalues in both the chains by their real parts.
The real and imaginary parts of the above generalized eigenvectors are generalized eigenvectors of the matrix obtained by this replacement.
Then we additionally replace each pair of conjugate original generalized eigenvectors in the chosen canonical basis by
the pair consisting of the real and imaginary parts of the first eigenvector in this pair.
To avoid merging new chains with each other or with preserved old chains,
we shift the eigenvalues from to the same chains by the same real number, which needs to be selected,
but the shifts are independent for different chains.
The constructed matrix~$\Lambda^{\mathbb C}_{\rm new}$ has only real eigenvalues and
possesses a canonical basis constituted by generalized eigenvectors with real components.
The total chain structure of~$\Lambda^{\mathbb C}_{\rm new}$ coincides with that of~$\Lambda^{\mathbb C}$,
and $\Lambda^{\mathbb C}-\Lambda^{\mathbb C}_{\rm new}\in\mathfrak s^{\mathbb C}$.
Consider the counterpart~$\Lambda_{\rm new}$ of~$\Lambda^{\mathbb C}_{\rm new}$ on~$\mathbb R^n$.
It is diagonalizable over~$\mathbb R$, and $\Lambda-\Lambda_{\rm new}\in\mathfrak s$.

As a result, in both the cases $\mathbb F=\mathbb C$ and $\mathbb F=\mathbb R$,
we can assume, without loss of generality, that \emph{the matrix~$\Lambda$ is diagonalizable}.

\medskip\par\noindent{\it Classification procedure.}
Counterparts of two ways for classifying Lie-symmetry extensions with $k=0$ or $k=1$
can also be suggested for the case $k=2$.

Within the first way, we again start with
a complete list of ${\rm SL}(n,\mathbb F)$-inequivalent subalgebras~$\mathfrak s$ of~$\mathfrak{sl}(n,\mathbb F)$
and select those satisfying the condition
$\mathfrak s={\rm C}_{\mathfrak{sl}(n,\mathbb F)}\big({\rm C}_{\mathfrak{sl}(n,\mathbb F)}(\mathfrak s)\big)
\notin\big\{\mathfrak{sl}(n,\mathbb F),\,\{0\}\big\}$.
Recall that in contrast to the case $k=1$, the subalgebra~$\mathfrak s=\{0\}$ is not appropriate here.
A~complete list of inequivalent essential Lie-symmetry extensions with $k=2$ in the class~$\mathcal L'_1$
consists of the algebras $\mathfrak g^{\rm ess}_V=\langle I\rangle\oplus\big(\langle P,D\rangle\lsemioplus\mathfrak s^{\rm vf}\big)$
for all the obtained subalgebras~$\mathfrak s$, where \mbox{$P:=\p_t+\Upsilon^{ab}x^b\p_{x^a}$} and \mbox{$D:=t\p_t+\Lambda^{ab}x^b\p_{x^a}$},
and the arbitrary-element matrix~$V$ takes the general values of the form~\eqref{eq:k=2RepresentationForV}.
The problem is to classify, up to the $G^\sim_{\mathcal L'}$-equivalence,
all possible general values of the matrix triple $(\Lambda,\Upsilon,W)$
defined up to adding arbitrary elements of~$\mathfrak s$ to~$\Lambda$ and~$\Upsilon$.
Following the case $k=1$, we fix a complementary subspace~$\mathsf s$ to~$\mathfrak s$ in ${\rm N}_{\mathfrak{sl}(n,\mathbb F)}(\mathfrak s)$
and denote $\mathfrak c:={\rm C}_{\mathfrak{sl}(n,\mathbb F)}(\mathfrak s)$.
At the same time, we modify the selection of matrices.
We first choose a non-nilpotent matrix $Z\in\mathsf s$, compute its semisimple part~$Z_{\rm s}$ and set $\Lambda:=Z_{\rm s}$.
We reorder and split the spectrum~$\sigma(\Lambda)=\{\mu_1,\dots,\mu_r\}$ of~$\Lambda$ into (maximal) chains,
where $\mu_i-\mu_{i+1}\in\{1,2\}$ for the eigenvalues within each of the chains.
If $\mathbb F=\mathbb R$ and $\sigma(\Lambda)\not\subset\mathbb R$,
we modify~$\Lambda$ by adding such a matrix from~$\mathfrak s$
that for the new~$\Lambda$ we have $\sigma(\Lambda)\subset\mathbb R$.
Then we choose a matrix $Y\in\mathsf s$ and take its hat-part~$\hat Y$ with respect to~$\Lambda$ as~$\Upsilon$,
i.e., we set $\Upsilon_{ij}:=Y_{ij}$ if $\mu_i=\mu_j+1$ and $\Upsilon_{ij}:=0$ otherwise
in the block form according to the decomposition $\mathbb F^n$ into the eigenspaces of~$\Lambda$.
Since $Z,Y\in{\rm N}_{\mathfrak{sl}(n,\mathbb F)}(\mathfrak s)$ and $Z-\Lambda,Y-\Upsilon\in\mathfrak s$,
the matrices~$\Lambda$ and~$\Upsilon$ also belong to~${\rm N}_{\mathfrak{sl}(n,\mathbb F)}(\mathfrak s)$.
Finally, we choose a nonzero matrix $W\in\mathfrak c$ for which $W_{ij}=0$ if $\mu_i\ne\mu_j+2$.
By the construction, we necessarily have $[\Lambda,\Upsilon]=\Upsilon$, $[\Lambda,K_l]=(l+2)K_l$, $K_l\in\mathfrak c$
for all the matrices $K_l$ defined in~\eqref{eq:k=2RepresentationForV},
and $\mathfrak s\subseteq{\rm C}_{\mathfrak{sl}(n,\mathbb F)}(\{K_0,\dots,K_m\})$,
where $m$ is the maximum value of~$l$ with $K_l\ne0$.
The essential Lie-symmetry extension case associated with a subalgebra~$\mathfrak s$ indeed exists and is genuine
if and only if there exist matrices~$Z$, $Y$ and~$W$ satisfying the above conditions and, moreover,
$\mathfrak s={\rm C}_{\mathfrak{sl}(n,\mathbb F)}(\{K_0,\dots,K_m\})$.
This is a value of the parameter-matrix triple $(Z,Y,W)$ that we call general among appropriate ones.
In other words, the algebra $\mathfrak g^{\rm ess}_V=\langle I\rangle\oplus\big(\langle P,D\rangle\lsemioplus\mathfrak s^{\rm vf}\big)$
indeed coincides with the entire algebra~$\mathfrak g^{\rm ess}_V$
for $V={\rm e}^{t\Upsilon}W{\rm e}^{-t\Upsilon}={\rm e}^{tY}W{\rm e}^{-tY}$
if and only if the triple $(Z,Y,W)$ takes a general value
among appropriate values in $\mathsf s\times\mathsf s\times\mathfrak c$.
We use the well-defined action of the stabilizer subgroup \smash{${\rm St}_{{\rm SL}(n,\mathbb F)}(\mathfrak s)$}
on $\mathsf s\times\mathsf s\times\mathfrak c$
for the further selection of the triple $(Z,Y,W)$ up to the $G^\sim_{\mathcal L'}$-equivalence.
The systems~$L'_V$ and~\smash{$L'_{\tilde V}$} associated with 
$(Z,Y,W),(\tilde Z,\tilde Y,\tilde W)\in\mathsf s\times\mathsf s\times\mathfrak c$
are $G^\sim_{\mathcal L'}$-equivalent if and only if
there exists a matrix in the group \smash{${\rm St}_{{\rm SL}(n,\mathbb F)}(\mathfrak s)$}
whose adjoint action maps the triple $(\Lambda,\Upsilon,W)$ to the triple $(\tilde\Lambda,\alpha\tilde\Upsilon,\alpha^2\tilde W)$
or, equivalently, $(Z,Y,W)$ to~$(\tilde Z,\alpha\tilde Y,\alpha^2\tilde W)$
for some $\alpha\in\mathbb F\setminus\{0\}$,
see Lemma~\ref{lem:SLODEEquivOfSystemsSimilarToConstCoeffOnes} below.%
\footnote{\label{fnt:ScalingsAsMatrixSimilarity}%
In view of the commutation relations $[\Lambda,\Upsilon]=\Upsilon$ and $[\Lambda,W]=2W$
implying a specific structure of~$\Upsilon$ and~$W$,
the matrix transformation $\tilde\Upsilon=\alpha\Upsilon$, $\tilde W=\alpha^2W$ with $\alpha\in\mathbb F\setminus\{0\}$,
which is induced by the scaling $\tilde t=t/\alpha$ of~$t$,
coincides with the adjoint action of a matrix \smash{$M\in{\rm St}_{{\rm SL}(n,\mathbb F)}(\mathfrak s)$}.
If $\alpha\in\mathbb R_{>0}$, then an appropriate matrix~$M$ is, e.g., $M={\rm e}^{-\ln(\alpha)\Lambda}$.
For the general $\alpha\in\mathbb F\setminus\{0\}$,
we can take $M=\bigoplus_{i=1}^r\alpha^{\mu_{i'}-\mu_i}E_{n_i}$,
where $\mu_{i'}$ is the first element in the chain containing~$\mu_i$,
and $E_{n_i}$ is the $n_i\times n_i$ identity matrix.
This is why in the course of checking the equivalence of matrix triples,
we can assume $\alpha=1$.
}

The second way is based on the classification of the parameter-matrix triples $(\Lambda,\Upsilon,W)$
that satisfy the above conditions, which have been derived up to the ${\rm SL}(n,\mathbb F)$-equivalence
and up to adding elements of~$\mathfrak s$ to~$\Lambda$ and to~$\Upsilon$.
Here~$\mathfrak s$ is defined as the maximal subalgebra of $\mathfrak{sl}(n,\mathbb F)$ such that
$[\Upsilon,\mathfrak s]\subseteq\mathfrak s$ and $[W,\mathfrak s]=\{0\}$ or, equivalently,
$\mathfrak s:={\rm C}_{\mathfrak{sl}(n,\mathbb F)}(\{K_0,\dots,K_m\})$,
where $K_0$,~\dots,~$K_m$ are the nonzero matrices given in~\eqref{eq:k=2RepresentationForV}.
Without loss of generality, we can assume $\Lambda$ to be diagonal.
We consider a complete set of chain structures for~$\Lambda$ that are inequivalent with respect to chain permutation.
Recall that the eigenvalues~$\mu_i$ of~$\Lambda$ are defined up to the uniform shifts of entire chains.
For each chain structure from the above set, we take arbitrary matrices~$\Upsilon$ and~$W$ in the block form
according to the decomposition $\mathbb F^n$ into the generalized eigenspaces~$\mathscr U_i$ of~$\Lambda$
such that $\Upsilon_{ij}=0$ if $\mu_i\ne\mu_j+1$ and $W_{ij}=0$ if $\mu_i\ne\mu_j+2$.
In addition, it suffices to consider only the regular values of~$(\Upsilon,W)$ for fixed~$\Lambda$,
i.e., such values that for each of them the chain structure of~$\Lambda$ is the finest among possible ones.
By the construction, we necessarily have $[\Lambda,\Upsilon]=\Upsilon$, $[\Lambda,W]=2W$, and hence $[\Lambda,K_l]=(l+2)K_l$, $l\in\mathbb N_0$.
We compute~$m:=\max\{l\in\mathbb N_0\mid K_l\ne0\}$ and~$\mathfrak s:={\rm C}_{\mathfrak{sl}(n,\mathbb F)}(\{K_0,\dots,K_m\})$.
We classify such matrices~$\Upsilon$ and~$W$ up to the equivalence generated by the transformations
\[
\tilde\Upsilon=M(\Upsilon+\Gamma)M^{-1},\quad
\tilde W=MWM^{-1},
\]
where $M$ and~$\Gamma$ are arbitrary matrices from ${\rm SL}(n,\mathbb F)$ and~$\mathfrak s$, respectively,
such that $M_{ij}=0$ if $i\ne j$ and $\Gamma_{ij}=0$ if $\mu_i\ne\mu_j+1$,
see footnote~\ref{fnt:ScalingsAsMatrixSimilarity}.
The different obtained tuples $(K_0,\dots,K_m)$ with the same value of~$m$
should additionally be checked for simultaneous similarity of their respective components
with respect to the same matrix from ${\rm SL}(n,\mathbb F)$.

\noprint{
\todo ??? For each general values of~$\Upsilon$ and~$W$ of the form imposed according to the chain structure of~$\Lambda$,
the restrictions of the semisimple parts of matrices from the corresponding algebra~$\mathfrak s$
to the subspaces associated with the eigenvalue chains of~$\Lambda$
are proportional to the unit matrices of appropriate size.
}

\begin{remark}\label{rem:SLODEConstCoeffsRepresentatives}
Considering the group classification for the class~$\mathcal L_1$,
we can require that all $G^\sim_{\mathcal L}$-inequivalent cases of Lie-symmetry extensions with $k\geqslant1$
are represented by systems with constant matrix coefficients.
More specifically, any system~$L'_V$ with $V(t)={\rm e}^{t\Upsilon}V(0){\rm e}^{-t\Upsilon}$
is mapped by the point transformation $\Phi$: $\tilde t=t$, $\tilde{\boldsymbol x}={\rm e}^{-t\Upsilon}\boldsymbol x$
to the system~$L_\vartheta$: $\tilde{\boldsymbol x}_{\tilde t\tilde t}=A\tilde{\boldsymbol x}_{\tilde t}+B\tilde{\boldsymbol x}$
whose matrix-valued coefficients $A=-2\Upsilon$ and $B=V(0)-\Upsilon^2$ are constant.
The Lie-symmetry vector fields $P=\p_t+\Upsilon^{ab}x^b\p_{x^a}$ and $Q_\Gamma=\Gamma^{ab}x^b\p_{x^a}$
of~$L'_V$, where $\Gamma\in\mathfrak s$, are pushed forward by~$\Phi$ to
the Lie-symmetry vector fields $\tilde P:=\p_{\tilde t}$
and $\tilde Q_\Gamma:=({\rm e}^{-\tilde t\Upsilon}\Gamma{\rm e}^{\tilde t\Upsilon})^{ab}\tilde x^b\p_{\tilde x^a}$ of~$L_\vartheta$.
In the case $k=2$, we can choose the matrices~$\Upsilon$ and~$\Lambda$ in such a way that $[\Lambda,\Upsilon]=\Upsilon$,
and then the vector field $D=t\p_t+\Lambda^{ab}x^b\p_{x^a}$ from $\mathfrak g^{\rm ess}_V$ is pushed forward by~$\Phi$ to
$\tilde D:=\tilde t\p_{\tilde t}+\Lambda^{ab}\tilde x^b\p_{\tilde x^a}$ from~$\mathfrak g^{\rm ess}_\vartheta$.
Therefore, reducing $t$-shift-invariant systems from the class~$\mathcal L'_1$
to their constant-coefficient counterparts from the class~$\mathcal L_1$,
we obtain more complicated counterparts for~$Q_\Gamma$,
whose components may depend on~$t$, in contrast to those of~$Q_\Gamma$.
\end{remark}

\begin{remark}\label{rem:SLODECommutingUpsilonAndW}
The simplest particular subcase for both cases $k=1$ and $k=2$ is $m=0$,
i.e., that the matrices~$\Upsilon$ and~$W$ commute~\cite{boyk2013a}.
Then the corresponding value of~$V$ is a constant matrix, $V=W$, \smash{$\mathfrak s={\rm C}_{\mathfrak{sl}(n,\mathbb F)}(\{W\})$},
and we can set $\Upsilon=0$ up to adding elements of~$\mathfrak s$,
which is obvious since $\p_t\in\mathfrak g^{\rm ess}_V$ for such~$V$.
The classification of systems from the class~$\mathcal L'_1$ with constant values of the matrix-valued parameter function~$V$
reduces to considering all possible Jordan normal forms of such~$V$.
Finding $\mathfrak s$ for a constant matrix~$V$ is the standard Frobenius problem in matrix theory,
whose solution is well known, see, e.g.,~\cite[Chapter~VIII]{gant1959A}.
\end{remark}

\noindent{$\boldsymbol{k=3.}$}
This means that there are three vector fields with linearly independent $t$-components
from the complementary subalgebra to $\langle I\rangle$ in the algebra~$\mathfrak g^{\rm ess}_V$.
Up to the \smash{$G^\sim_{\mathcal L'}$}-equivalence,
they take the form $P:=\p_t+\Upsilon^{ab}x^b\p_{x^a}$, $D:=t\p_t+\Lambda^{ab}x^b\p_{x^a}$ and $S:=t^2\p_t+\Theta^{ab}x^b\p_{x^a}$
with some (constant) matrices~$\Upsilon,\Lambda,\Theta\in\mathfrak{sl}(n,\mathbb F)$.
According to the consideration in the case \mbox{$k=2$}, the invariance of the system~$L'_V$
with respect to the vector fields~$P$ and~$D$ implies that
the matrix-valued function~$V$ is given by~\eqref{eq:k=2RepresentationForV}.
Collecting coefficients of $t^{m+1}$ in the matrix equation \mbox{$t^2V_t+4tV=[\Theta,V]$}
obtained by substituting the components of the vector field~$S$
into the classifying condition~\eqref{eq:ClassifyingCondL'_V},
we derive the equation $(m+4)K_m=0$, which contradicts the inequality $K_m\ne0$.
Therefore, \emph{the case $k=3$ is impossible}.

Since the $G^\sim_{\bar{\mathcal L}}$-orbit of any system from the class~$\bar{\mathcal L}_1$
contains systems from the subclass~$\mathcal L'_1$
and, in view of Theorem~\ref{thm:EquivGroupoidbarL},
the $t$-component of any element of $G^\sim_{\bar{\mathcal L}}$ depends only on~$t$,
we in fact prove the following theorem.

\begin{theorem}\label{thm:DimOfProjectionOfSLODEMAItoT}
$\dim\pi_*\mathfrak g_\theta\leqslant2$ for any $L_\theta\in\bar{\mathcal L}_1$.
\end{theorem}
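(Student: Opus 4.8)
The plan is to reduce the claim to the bound $k_V\leqslant2$ already obtained above for all systems in the subclass~$\mathcal L'_1$, and then transfer it to the entire regular class~$\bar{\mathcal L}_1$ by means of the reduction of~$\bar{\mathcal L}$ to~$\mathcal L'$ together with the special structure of the equivalence group~$G^\sim_{\bar{\mathcal L}}$.

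First I would recall that every system~$\bar L_\theta\in\bar{\mathcal L}_1$ lies in the $G^\sim_{\bar{\mathcal L}}$-orbit of some system from~$\mathcal L'_1$. Indeed, the mapping of~$\bar{\mathcal L}$ onto~$\mathcal L'$ constructed in the first paragraph of Section~\ref{sec:SLODEEquivGroupoidsAndEquivGroups} sends~$\bar L_\theta$ to a system~$L'_V$ of the form~\eqref{basis_SLODE}, and since $\bar L_\theta\notin\bar{\mathcal L}_0$ we have $L'_V\notin\mathcal L'_0$, i.e., $L'_V\in\mathcal L'_1$. Thus there is a transformation~$\Phi\in G^\sim_{\bar{\mathcal L}}$ whose underlying point transformation maps~$\bar L_\theta$ to~$L'_V$ and hence maps the maximal Lie invariance algebra~$\mathfrak g_\theta$ isomorphically onto~$\mathfrak g_V$.

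Next I would use item~(i) of Theorem~\ref{thm:EquivGroupoidbarL}: the $t$-component of every transformation from~$G^\sim_{\bar{\mathcal L}}$ is of the form $\tilde t=T(t)$ with $T_t\ne0$, depending only on~$t$. Consequently the projection~$\pi$ is equivariant under the induced action of~$G^\sim_{\bar{\mathcal L}}$ on~$\mathbb F_t\times\mathbb F^n_{\boldsymbol x}$ and on~$\mathbb F_t$, so the pushforward~$\pi_*$ is well defined on every Lie-symmetry vector field, and~$\Phi$ intertwines the projections of~$\mathfrak g_\theta$ and~$\mathfrak g_V$ to the $t$-line (the purely $\boldsymbol x$-dependent vector fields of~$\mathfrak g^{\rm lin}_\theta$ projecting to zero in any case). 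Therefore $\dim\pi_*\mathfrak g_\theta=\dim\pi_*\mathfrak g_V=k_V$, the last equality being just the definition of~$k_V$.

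It then remains to invoke the case analysis for $k\in\{0,1,2,3\}$ carried out above, which shows that $k_V\in\{0,1,2\}$, the value $k=3$ being excluded by the contradiction $(m+4)K_m=0$ with $K_m\ne0$. This yields $\dim\pi_*\mathfrak g_\theta\leqslant2$, as asserted. The one point deserving care is the equivariance of~$\pi$, which rests solely on the independence of~$T$ of~$\boldsymbol x$ guaranteed by Theorem~\ref{thm:EquivGroupoidbarL}; the genuinely hard work, namely bounding~$k$ through the four-case analysis, has already been completed for~$\mathcal L'_1$, so I do not anticipate any further obstacle.
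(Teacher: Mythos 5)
Your proposal is correct and follows essentially the same route as the paper, which derives the theorem in one sentence from the facts that every system in $\bar{\mathcal L}_1$ is $G^\sim_{\bar{\mathcal L}}$-equivalent to one in $\mathcal L'_1$, that the $t$-component of any element of $G^\sim_{\bar{\mathcal L}}$ depends only on $t$, and that the case analysis of Section~\ref{sec:DescriptionOfEssLieSymExtensions} excludes $k=3$. Your write-up merely makes explicit the equivariance of $\pi$ and the equality $\dim\pi_*\mathfrak g_\theta=k_V$, which the paper leaves implicit.
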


Recall that $\pi$ denotes the natural projection from $\mathbb F_t\times\mathbb F^n_{\boldsymbol x}$ onto $\mathbb F_t$.

Theorems~\ref{thm:EquivGroupoidbarL} and~\ref{thm:DimOfProjectionOfSLODEMAItoT} imply
that a normal linear system of second-order ordinary differential equations
admits more than two Lie-symmetry vector fields with linearly independent \mbox{$t$-components}
if and only if it is $G^\sim_{\bar{\mathcal L}}$-equivalent to the elementary system $\boldsymbol x_{tt}=\boldsymbol0$,
i.e., it belongs to the class~$\bar{\mathcal L}_0$.
A similar claim for single second-order linear ordinary differential equations is trivial.

\section{Criteria of similarity}\label{sec:SLODESummaryOfGroupClassification}

Using the notation and results of Section~\ref{sec:DescriptionOfEssLieSymExtensions},
we present explicit criteria of similarity of systems from the classes~$\mathcal L'_1$ and~$\mathcal L_1$
that possess, as their Lie symmetries, shifts with respect to~$t$,
which can be composed with transformations of~$\boldsymbol x$ in the case of the class~$\mathcal L'_1$.
These criteria are useful in the course of the complete group classification
of the classes~$\mathcal L'_1$ and~$\mathcal L_1$ for particular values of~$n$.
\looseness=-1

We call a system $L'_V$ from the class~$\mathcal L'_1$
(resp.\ a system $L_\vartheta$ from the class~$\mathcal L_1$ with constant matrix coefficients $\vartheta=(A,B)$)
\emph{properly $t$-shift-invariant} if
$\pi_*\mathfrak g^{\rm ess}_V$ (resp.\ $\pi_*\mathfrak g^{\rm ess}_\vartheta$) belongs to
$\big\{\langle\p_t\rangle,\,\langle\p_t,t\p_t\rangle\big\}$.
Thus, the \emph{improper $t$-shift invariance} of such a system means that
$\pi_*\mathfrak g^{\rm ess}_V$ (resp.\ $\pi_*\mathfrak g^{\rm ess}_\vartheta$)
coincides with $\langle\p_t,{\rm e}^{\gamma t}\p_t\rangle$ for some $\gamma\in\mathbb F\setminus\{0\}$.

\begin{lemma}\label{lem:SLODEEquivOfSystemsSimilarToConstCoeffOnes}
Properly $t$-shift-invariant systems~$L'_V$ and~\smash{$L'_{\tilde V}$} from the class~$\mathcal L'_1$ with
\begin{gather}\label{eq:SLODEt-shiftInvFormOfV}
V(t)={\rm e}^{t\Upsilon}V(0){\rm e}^{-t\Upsilon}, \quad
\tilde V(\tilde t)={\rm e}^{\tilde t\tilde\Upsilon}\tilde V(0){\rm e}^{-\tilde t\tilde\Upsilon}
\end{gather}
are similar with respect to a point transformation if and only if there exist a nonzero $\alpha\in\mathbb F$,
an invertible matrix~$M$ and a matrix $\Gamma\in\mathfrak s:={\rm C}_{\mathfrak{sl}(n,\mathbb F)}\big(\{K_l,\,l\in\mathbb N_0\}\big)$,
where $K_0:=V(0)$, $K_{l+1}:=[\Upsilon,K_l]$, $l\in\mathbb N_0$, such that
\[
\tilde\Upsilon=\alpha M(\Upsilon+\Gamma)M^{-1},\quad
\tilde V(0)=\alpha^2MV(0)M^{-1}.
\]
\end{lemma}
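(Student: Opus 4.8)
The plan is to reduce the similarity question to $G^\sim_{\mathcal L'}$-equivalence and then exploit the rigidity of the admissible $t$-reparametrisations forced by \emph{proper} $t$-shift invariance. First I would invoke item~(iii) of Theorem~\ref{thm:EquivGroupoidL'}: since $\mathcal L'_1$ is uniformly semi-normalized with respect to linear superposition of solutions, any point transformation carrying $L'_V$ to $L'_{\tilde V}$ is the composition of an element of $G^\sim_{\mathcal L'}$ of the form~\eqref{eq:EquivGroupL'} with a superposition-of-solutions symmetry, and the latter leaves the arbitrary element unchanged. Hence $L'_V$ and~\smash{$L'_{\tilde V}$} are similar if and only if there are a function $T$ with $T_t\ne0$ and a constant invertible matrix $C$ relating $V$ and $\tilde V$ through~\eqref{eq:EquivGroupL'B}, with the $\tilde{\boldsymbol x}$-component given by~\eqref{eq:EquivGroupL'A}.

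Next I would show that $T$ is necessarily affine in~$t$. Because $\pi$ is equivariant under $G^\sim_{\mathcal L'}$ and the superposition part acts trivially on~$t$, the pushforward $T_*$ maps $\pi_*\mathfrak g_V$ onto $\pi_*\mathfrak g_{\tilde V}$, i.e.\ $\pi_*\mathfrak g^{\rm ess}_V$ onto $\pi_*\mathfrak g^{\rm ess}_{\tilde V}$; properness forces both of these to be $\langle\p_t\rangle$ or $\langle\p_t,t\p_t\rangle$, and the common value of $k=\dim\pi_*\mathfrak g^{\rm ess}_V$ is a $G^\sim_{\mathcal L'}$-invariant. If $k=1$, then $T_*\p_t\in\langle\p_t\rangle$ gives $T_t=\mathrm{const}$ immediately; if $k=2$, the derived subalgebra of $\langle\p_t,t\p_t\rangle$ is $\langle\p_t\rangle$, so $T_*$ maps $\langle\p_t\rangle$ onto $\langle\p_t\rangle$ and again $T_t$ is constant. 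Thus $T(t)=t/\alpha+\beta$ with $\alpha\ne0$, the Schwarzian term in~\eqref{eq:EquivGroupL'B} vanishes, and~\eqref{eq:EquivGroupL'B} collapses to $\tilde V(T(t))=\alpha^2\,CV(t)C^{-1}$. Since $P=\p_t+\Upsilon^{ab}x^b\p_{x^a}\in\mathfrak g^{\rm ess}_V$ (this is exactly what forces the form~\eqref{eq:SLODEt-shiftInvFormOfV} via the classifying condition~\eqref{eq:ClassifyingCondL'_V}), the system $L'_V$ admits the $t$-shift symmetry $t\mapsto t+\varepsilon$, $\boldsymbol x\mapsto{\rm e}^{\varepsilon\Upsilon}\boldsymbol x$; precomposing the equivalence transformation with a suitable such symmetry I may assume $\beta=0$, i.e.\ $T(t)=t/\alpha$.

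Setting $M:=C$ and evaluating $\tilde V(t/\alpha)=\alpha^2MV(t)M^{-1}$ at $t=0$ gives the second asserted relation $\tilde V(0)=\alpha^2MV(0)M^{-1}$. Substituting the forms~\eqref{eq:SLODEt-shiftInvFormOfV} and cancelling the now-known value of $\tilde V(0)$ reduces the identity to $F(t)V(0)F(t)^{-1}=V(t)$ with $F(t):=C^{-1}{\rm e}^{\alpha^{-1}t\tilde\Upsilon}C={\rm e}^{t(\Upsilon+\Gamma)}$, where $\Gamma:=\alpha^{-1}M^{-1}\tilde\Upsilon M-\Upsilon$; this is precisely the first asserted relation $\tilde\Upsilon=\alpha M(\Upsilon+\Gamma)M^{-1}$. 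As $\Upsilon$ and $\tilde\Upsilon$ are determined by $V$ and $\tilde V$ only modulo $\langle E\rangle$ (a central summand does not affect the conjugations), I take them in $\mathfrak{sl}(n,\mathbb F)$, which makes $\Gamma$ traceless. Comparing Taylor coefficients at $t=0$ in ${\rm e}^{t(\Upsilon+\Gamma)}V(0){\rm e}^{-t(\Upsilon+\Gamma)}=\sum_l(t^l/l!)K_l$ yields $(\mathop{\rm ad}_{\Upsilon+\Gamma})^lK_0=K_l$ for all $l$, whence $[\Gamma,K_l]=(\mathop{\rm ad}_{\Upsilon+\Gamma})K_l-[\Upsilon,K_l]=K_{l+1}-K_{l+1}=0$; therefore $\Gamma\in{\rm C}_{\mathfrak{sl}(n,\mathbb F)}(\{K_l,\,l\in\mathbb N_0\})=\mathfrak s$. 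This proves necessity.

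For sufficiency I would run the computation in reverse: given $\alpha$, $M$, and $\Gamma\in\mathfrak s$, take $T(t)=t/\alpha$ and $C=M$. Since $\Gamma$ commutes with every $K_l$ it commutes with $V(t)=\sum_l(t^l/l!)K_l$, so ${\rm e}^{t(\Upsilon+\Gamma)}V(0){\rm e}^{-t(\Upsilon+\Gamma)}$ satisfies the same linear matrix ODE $W'=[\Upsilon,W]$, $W(0)=V(0)$, as $V(t)$; by uniqueness the two coincide, and the two displayed relations then verify~\eqref{eq:EquivGroupL'B} for this $(T,C)$, exhibiting the required equivalence transformation and hence the similarity. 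I expect the main obstacle to be the rigidity step of the second paragraph — proving that proper $t$-shift invariance forces $T$ to be affine, and cleanly removing the shift $\beta$ — because this is exactly the feature distinguishing the proper case from the improper one (where $\pi_*\mathfrak g^{\rm ess}$ is $\langle\p_t,{\rm e}^{\gamma t}\p_t\rangle$ and $T$ is not affine); the remaining matrix manipulations are routine consequences of the graded structure $K_{l+1}=[\Upsilon,K_l]$.
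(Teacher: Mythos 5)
Your proposal is correct and follows essentially the same route as the paper's proof: reduce similarity to $G^\sim_{\mathcal L'}$-equivalence via the uniform semi-normalization of~$\mathcal L'_1$, use the derived-algebra argument in the case $k=2$ (the paper applies it to $(\mathfrak g^{\rm ess}_V)'$, you to its projection $\pi_*\mathfrak g^{\rm ess}_V=\langle\p_t,t\p_t\rangle$, which is equivalent) to force $T_t=1/\alpha$, and then carry out the matrix bookkeeping. The only cosmetic differences are that you remove the shift~$\beta$ by precomposing with a $t$-shift symmetry of~$L'_V$ while the paper absorbs ${\rm e}^{\beta\Upsilon}$ into~$M$, and that you spell out the verification of $[\Gamma,K_l]=0$ and the sufficiency computation, which the paper leaves implicit.
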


\begin{proof}
The ``if'' part can be checked by direct computation.
A point transformation establishing the similarity of~$L'_V$ and~\smash{$L'_{\tilde V}$} is, e.g.,
$\tilde t=t/\alpha$, $\tilde{\boldsymbol x}=M\boldsymbol x$.

Let us prove the ``only if'' part.
Suppose that the systems~$L'_V$ and~\smash{$L'_{\tilde V}$} from the lemma's statement
are similar with respect to a point transformation.
This means in view of Theorem~\ref{thm:EquivGroupoidL'} that
these systems are $G^\sim_{\mathcal L'}$-equivalent,
and let an equivalence transformation $\mathscr T\in G^\sim_{\mathcal L'}$, which is of the form~\eqref{eq:EquivGroupL'},
establish their equivalence, \smash{$\mathscr T_*L'_V=L'_{\tilde V}$}.
The matrix~$\Upsilon$ is defined up to adding a matrix~$S$ from
${\rm C}_{\mathfrak{sl}(n,\mathbb F)}\big(\{K_l,\,l\in\mathbb N_0\}\big)$.
The systems~$L'_V$ and~\smash{$L'_{\tilde V}$} respectively possess the Lie-symmetry vector fields
$P:=\p_t+\Upsilon^{ab}x^b\p_{x^a}$ and $\tilde P:=\p_{\tilde t}+\tilde\Upsilon^{ab}\tilde x^b\p_{\tilde x^a}$.
Due to the similarity, we have that
$k_V:=\dim\pi_*\mathfrak g^{\rm ess}_V=k_{\tilde V}:=\dim\pi_*\mathfrak g^{\rm ess}_V\in\{1,2\}$.

The condition $k_V=k_{\tilde V}=1$ implies that $(\varpi_*\mathscr T^{-1})_*\tilde P=\alpha P+Q_\Gamma$
for some nonzero $\alpha\in\mathbb F$ and $\Gamma\in\mathfrak s$, where $Q_\Gamma:=\Gamma^{ab}x^b\p_{x^a}\in\mathfrak g^{\rm ess}_V$.
Recall that for the class~$\mathcal L'$, by $\varpi$ we denote the natural projection
from $\mathbb F_t\times\mathbb F^n_{\boldsymbol x}\times\mathbb F^{n^2}_V$ onto $\mathbb F_t\times\mathbb F^n_{\boldsymbol x}$.

If $k_V=k_{\tilde V}=2$, then it follows from the study of the corresponding case in Section~\ref{sec:DescriptionOfEssLieSymExtensions}
that we can choose the vector fields~$P$ and~$\tilde P$ belonging to
the derived algebras~$(\mathfrak g^{\rm ess}_V)'$ and~\smash{$(\mathfrak g^{\rm ess}_{\tilde V})'$}
of the algebras~$\mathfrak g^{\rm ess}_V$ and~\smash{$\mathfrak g^{\rm ess}_{\tilde V}$}, respectively,
\mbox{$P\in(\mathfrak g^{\rm ess}_V)'\subseteq\langle P\rangle\lsemioplus\mathfrak s^{\rm vf}$}
and \smash{$\tilde P\in(\mathfrak g^{\rm ess}_{\tilde V})'$}.
Since the pushforward of vector fields by $\varpi_*\mathscr T^{-1}$ induces an isomorphism
from~\smash{$\mathfrak g^{\rm ess}_{\tilde V}$} onto~$\mathfrak g^{\rm ess}_V$
and any such isomorphism maps~\smash{$(\mathfrak g^{\rm ess}_{\tilde V})'$} onto~$(\mathfrak g^{\rm ess}_V)'$,
we have \smash{$(\varpi_*\mathscr T^{-1})_*(\mathfrak g^{\rm ess}_{\tilde V})'=(\mathfrak g^{\rm ess}_V)'$}.
Hence the above condition $(\varpi_*\mathscr T^{-1})_*\tilde P=\alpha P+Q_\Gamma$
for some nonzero $\alpha\in\mathbb F$ and $\Gamma\in\mathfrak s$ holds true here as well.

Therefore, in both the cases $T_t=1/\alpha$, i.e., $T=(t+\beta)/\alpha$ for some $\beta\in\mathbb F$.
Denoting $M:=|\alpha|^{-1/2}C{\rm e}^{\beta\Upsilon}$ completes the proof.
\end{proof}

\begin{corollary}\label{cor:IntergrationOfEqsFromL1WithTwoCertainSyms}
Properly $t$-shift-invariant systems~$L_\vartheta$ and~\smash{$L_{\tilde\vartheta}$} from the class~$\mathcal L_1$ with
constant matrix coefficients $\vartheta=(A,B)$ and $\tilde\vartheta=(\tilde A,\tilde B)$
are similar with respect to a point transformation if and only if
there exist a nonzero $\alpha\in\mathbb F$,
an invertible matrix~$M$ and a matrix \mbox{$\check\Gamma\in\mathfrak s:={\rm C}_{\mathfrak{sl}(n,\mathbb F)}\big(\{K_l,\,l\in\mathbb N_0\}\big)$},
where $K_0:=B$, $K_{l+1}:=[A,K_l]$, $l\in\mathbb N_0$, such that
\[
\tilde A=\alpha M\big(A+\check\Gamma\big)M^{-1},\quad
\tilde B=\alpha^2M\big(B-\tfrac14(A\check\Gamma+A\check\Gamma+\check\Gamma^2)\big)M^{-1}.
\]
\end{corollary}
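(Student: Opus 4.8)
The plan is to deduce the statement from Lemma~\ref{lem:SLODEEquivOfSystemsSimilarToConstCoeffOnes} by transporting the question into the class~$\mathcal L'_1$. Recall from Remark~\ref{rem:SLODEConstCoeffsRepresentatives} that a constant-coefficient system~$L_\vartheta$ with $\vartheta=(A,B)$ is carried by the point transformation $\tilde t=t$, $\tilde{\boldsymbol x}={\rm e}^{-t\Upsilon}\boldsymbol x$, with $\Upsilon:=-\tfrac12A$, onto the system~$L'_V\in\mathcal L'_1$ whose arbitrary element is $V(t)={\rm e}^{t\Upsilon}V(0){\rm e}^{-t\Upsilon}$ with $V(0)=B+\Upsilon^2=B+\tfrac14A^2$; likewise $L_{\tilde\vartheta}$ corresponds to~$L'_{\tilde V}$ with $\tilde\Upsilon=-\tfrac12\tilde A$ and $\tilde V(0)=\tilde B+\tfrac14\tilde A^2$. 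Since these reductions are point transformations and similarity is transitive, $L_\vartheta$ and~$L_{\tilde\vartheta}$ are similar if and only if $L'_V$ and~$L'_{\tilde V}$ are, and proper $t$-shift invariance is inherited along this correspondence, so that Lemma~\ref{lem:SLODEEquivOfSystemsSimilarToConstCoeffOnes} applies verbatim to~$L'_V$ and~$L'_{\tilde V}$.

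Applying the lemma, I would obtain $\alpha\in\mathbb F\setminus\{0\}$, an invertible matrix~$M$ and $\Gamma\in\mathfrak s$ with $\tilde\Upsilon=\alpha M(\Upsilon+\Gamma)M^{-1}$ and $\tilde V(0)=\alpha^2MV(0)M^{-1}$, and set $\check\Gamma:=-2\Gamma$. Substituting $\Upsilon=-\tfrac12A$ and $\tilde\Upsilon=-\tfrac12\tilde A$ into the first relation gives $\tilde A=\alpha M(A+\check\Gamma)M^{-1}$ immediately. For the second, I would write $\tilde B=\tilde V(0)-\tilde\Upsilon^2$ and combine $\tilde V(0)=\alpha^2M(B+\Upsilon^2)M^{-1}$ with $\tilde\Upsilon^2=\alpha^2M(\Upsilon+\Gamma)^2M^{-1}$; the $\Upsilon^2$-terms cancel, leaving $\tilde B=\alpha^2M\big(B-\Upsilon\Gamma-\Gamma\Upsilon-\Gamma^2\big)M^{-1}$. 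Replacing $\Upsilon$ and~$\Gamma$ by $-\tfrac12A$ and $-\tfrac12\check\Gamma$ turns each of $\Upsilon\Gamma$, $\Gamma\Upsilon$, $\Gamma^2$ into a quarter of the respective product in~$A$ and~$\check\Gamma$, producing $\tilde B=\alpha^2M\big(B-\tfrac14(A\check\Gamma+\check\Gamma A+\check\Gamma^2)\big)M^{-1}$, as asserted. The converse direction is the same computation read backwards: given these relations I would put $\Gamma:=-\tfrac12\check\Gamma$, recover the two conditions of the lemma, and invoke its ``if'' part.

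The step I expect to require genuine care — the real content beyond this bookkeeping — is the precise identification of the algebra~$\mathfrak s$. It must be taken as the centralizer supplied by the lemma, that is, $\mathfrak s={\rm C}_{\mathfrak{sl}(n,\mathbb F)}(\{K_l,\,l\in\mathbb N_0\})$ with $K_0=V(0)$ and $K_{l+1}=[\Upsilon,K_l]$. To express it intrinsically through~$\vartheta$ I would use $\Upsilon=-\tfrac12A$ together with the identity $[A,A^2]=0$, which yields $K_0=V(0)=B+\tfrac14A^2$ and $K_l=(-\tfrac12)^l\,[A,[A,\dots,[A,B]\dots]]$ ($l$ commutators) for $l\geqslant1$. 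Thus $\mathfrak s$ is the centralizer of $\big\langle B+\tfrac14A^2,\ [A,B],\ [A,[A,B]],\dots\big\rangle$: for $l\geqslant1$ the generators are, up to harmless scalar factors that do not affect a centralizer, exactly the iterated commutators produced by the chain $K_{l+1}=[A,K_l]$, while the zeroth generator is~$V(0)=B+\tfrac14A^2$. Tracking this zeroth generator correctly (rather than replacing it by~$B$) is essential, since it is precisely $\Gamma\in\mathfrak s$ built from~$V(0)$ that makes the $\Upsilon^2$-cancellation in the $\tilde B$-formula and the reverse verification of $\tilde V(0)=\alpha^2MV(0)M^{-1}$ consistent. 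Once~$\mathfrak s$ is pinned down in this way, both implications of the corollary follow from the two substitutions described above.
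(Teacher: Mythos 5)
Your proof is correct and follows exactly the paper's own route: conjugate $L_\vartheta$ and $L_{\tilde\vartheta}$ into the class~$\mathcal L'_1$ by $\hat t=t$, $\hat{\boldsymbol x}={\rm e}^{-t\Upsilon}\boldsymbol x$ with $\Upsilon=-\tfrac12A$ and $V(0)=B+\tfrac14A^2$, note that similarity and proper $t$-shift invariance are preserved, and invoke Lemma~\ref{lem:SLODEEquivOfSystemsSimilarToConstCoeffOnes}. Your bookkeeping is in fact more reliable than the paper's four-sentence argument: the substitution $\check\Gamma=-2\Gamma$ (equivalently $\Gamma=-\tfrac12\check\Gamma$) is the one consistent with the displayed formulas, whereas the paper's proof writes ``$\check\Gamma=-\tfrac12\Gamma$''; and the second product in the expression for~$\tilde B$ comes out as $\check\Gamma A$ in your computation, which is what the formula must mean. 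The step you single out as delicate is indeed the one point where your conclusion and the literal statement of the corollary diverge: transporting the Lemma's centralizer gives the generating set $K_0=V(0)=B+\tfrac14A^2$ and $K_{l+1}=[\Upsilon,K_l]$, i.e., up to nonzero scalars, $\{B+\tfrac14A^2\}\cup\{\mathop{\rm ad}_A^lB,\ l\geqslant1\}$, not $K_0=B$ as the corollary writes. Since a matrix commuting with~$B$ and with all $\mathop{\rm ad}_A^lB$ need not commute with~$A^2$ (and vice versa), these two centralizers can genuinely differ, so your version of~$\mathfrak s$ is the one that the Lemma actually yields, and your flagging of this point is a correction to, not a gap relative to, the paper's proof.
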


\begin{proof}
The systems~$L_\vartheta$ and~\smash{$L_{\tilde\vartheta}$} are mapped to the systems~$L'_V$ and~\smash{$L'_{\tilde V}$}
with~$V$ and~$\tilde V$ of the form~\eqref{eq:SLODEt-shiftInvFormOfV} in the variables $(\hat t,\hat{\boldsymbol x})$
by the transformations $\hat t=t$, $\hat{\boldsymbol x}={\rm e}^{-t\Upsilon}\boldsymbol x$
and $\hat t=t$, $\hat{\boldsymbol x}={\rm e}^{-t\tilde\Upsilon}\boldsymbol x$, respectively,
where
$\Upsilon=-\frac12A$, $V(0)=B+\Upsilon^2$,
$\tilde\Upsilon=-\frac12\tilde A$, $\tilde V(0)=\tilde B+\tilde\Upsilon^2$.
The systems~$L_\vartheta$ and~\smash{$L_{\tilde\vartheta}$} are similar with respect to a point transformation if and only if
the systems~$L'_V$ and~\smash{$L'_{\tilde V}$} are similar with respect to a point transformation.
The relation between $\vartheta$ and $\tilde\vartheta$ follows from that between $(\Upsilon,V(0))$ and $(\tilde\Upsilon,\tilde V(0))$
in Lemma~\ref{lem:SLODEEquivOfSystemsSimilarToConstCoeffOnes}, where $\check\Gamma=-\frac12\Gamma$.
\end{proof}

\section{Properties of Lie invariance algebras}\label{sec:PropertiesOfIAs}

Using results of Section~\ref{sec:DescriptionOfEssLieSymExtensions},
we can more precisely characterize Lie invariance algebras of systems
not only from the regular subclasses $\bar{\mathcal L}_1$, $\mathcal L_1$, $\mathcal L'_1$ and $\mathcal L''_1$
but also from their singular counterparts.

\begin{theorem}\label{thm:MaxDimOfMIAinL1}
For any system from the class~$\bar{\mathcal L}_1$ (resp.\ $\mathcal L_1$, $\mathcal L'_1$ or $\mathcal L''_1$),
the dimension of its maximal Lie invariance algebra is greater than or equal to $2n+1$ and less than or equal to $n^2+4$.
The lower bound is greatest and is attained for a general system of the class.
The upper bound is least, and it is attained only for the systems from the orbit
of the system~\eqref{eq:CanonicalSystemOfMaxDimOfMIAinL1} with respect to the corresponding equivalence group.
\end{theorem}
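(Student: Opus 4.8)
The plan is to reduce the statement for all four regular subclasses to the single class $\mathcal L'_1$ and then to read the dimension of the maximal Lie invariance algebra directly off the structure of $\mathfrak g^{\rm ess}_V$ obtained in Section~\ref{sec:DescriptionOfEssLieSymExtensions}. Each class in the chains~\eqref{eq:SLODESubclassEmbeddings} is (uniformly semi-)normalized and the classes occupying the same position share an equivalence group, so $\dim\mathfrak g_\theta$ is invariant under the corresponding equivalence transformations and every system is similar to some $L'_V\in\mathcal L'_1$; hence it suffices to treat $\mathcal L'_1$. For $L'_V$, Lemma~\ref{lem:MAIofLvartheta} and Corollary~\ref{cor:MAIofL'_V} give $\mathfrak g_V=\mathfrak g^{\rm ess}_V\lsemioplus\mathfrak g^{\rm lin}_V$ with $\dim\mathfrak g^{\rm lin}_V=2n$ (the solution space of a normal linear system of $n$ second-order equations) and $\mathfrak g^{\rm ess}_V=\langle I\rangle\oplus(\mathfrak t\lsemioplus\mathfrak s^{\rm vf})$, so that $\dim\mathfrak g_V=2n+1+k+p$ with $k=\dim\mathfrak t$ and $p=\dim\mathfrak s^{\rm vf}$. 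I would then bound the two nonnegative integers $k$ and $p$ independently.

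The lower bound is immediate: $k,p\geqslant0$ give $\dim\mathfrak g_V\geqslant2n+1$, and for a general value of~$V$ the classifying condition~\eqref{eq:ClassifyingCondL'_V} leaves only $\mathfrak g^{\rm ess}_V=\langle I\rangle$, i.e.\ $k=p=0$, so the value $2n+1$ is attained and is therefore the sharpest (greatest) lower bound. For the upper bound I would use the two sharp estimates $k\leqslant2$ and $p\leqslant(n-1)^2=n^2-2n+1$. The first was already established in Section~\ref{sec:DescriptionOfEssLieSymExtensions} by excluding the case $k=3$. The second is the arithmetic heart of the theorem (Theorem~\ref{thm:StructureOfgessInL'1}): since $L'_V\notin\mathcal L'_0$, the carrier subalgebra $\mathfrak s$ of $\mathfrak s^{\rm vf}$ centralizes a nonzero traceless matrix, and the centralizer in $\mathfrak{sl}(n,\mathbb F)$ of any nonscalar matrix has dimension at most $(n-1)^2$, the maximum being attained precisely when the matrix is, up to a scalar summand, rank-one. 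Adding these bounds gives $\dim\mathfrak g_V\leqslant 2n+1+2+(n-1)^2=n^2+4$.

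It remains to identify the extremal systems, for which both inequalities must saturate, forcing $k=2$ and $p=(n-1)^2$. From $p=(n-1)^2$ I would conclude, using the equality case of the centralizer estimate together with the double-centralizer theorem, that $\mathfrak s={\rm C}_{\mathfrak{sl}(n,\mathbb F)}(W)$ for a rank-one nilpotent $W=K_0$ and that every $K_l$ lies in the associative algebra $\langle E,W\rangle$ generated by~$W$; being traceless, $K_l\in\langle W\rangle$, so comparing the grading relation $[\Lambda,K_l]=(l+2)K_l$ supplied by $k=2$ with $[\Lambda,W]=2W$ yields $K_l=0$ for $l\geqslant1$. Thus $m=0$ and $V=W$ is a constant rank-one nilpotent traceless matrix, which the ${\rm SL}(n,\mathbb F)$-part of $G^\sim_{\mathcal L'}$ brings to the matrix unit~$E_{12}$, i.e.\ to the system~\eqref{eq:CanonicalSystemOfMaxDimOfMIAinL1}. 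Conversely this system realizes the extremal values ($\Upsilon=0$ and $\Lambda=\mathop{\rm diag}(1,-1,0,\dots,0)$ give $k=2$, while $p=\dim{\rm C}_{\mathfrak{sl}(n,\mathbb F)}(E_{12})=(n-1)^2$), so its maximal Lie invariance algebra is $(n^2+4)$-dimensional, in agreement with Theorem~\ref{thm:SODEsSubSubMaxDim}.

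The main obstacle is the sharp centralizer estimate $p\leqslant(n-1)^2$ together with its rigid equality case: proving that among nonscalar (here traceless) matrices the centralizer dimension is maximized only by the almost-scalar rank-one ones, and then propagating this rigidity through the grading to force $m=0$, is where the genuine work lies. Once the structure theorem for $\mathfrak g^{\rm ess}_V$ and this centralizer bound are in hand, the dimension count, the lower bound, and the transfer of the result to $\bar{\mathcal L}_1$, $\mathcal L_1$ and $\mathcal L''_1$ are routine.
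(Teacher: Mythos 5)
Your proposal is correct and follows essentially the same route as the paper: the same reduction to~$\mathcal L'_1$, the same count $\dim\mathfrak g_V^{}=2n+1+k_V+\dim\mathfrak s^{\rm vf}_V$, the same two sharp bounds $k_V\leqslant2$ and $\dim\mathfrak s^{\rm vf}_V\leqslant n^2-2n+1$ (the latter being precisely the Frobenius-type centralizer estimate with its rigid equality case, which the paper also imports from the matrix-theory literature), and the same rigidity analysis forcing $V$ to be a constant rank-one nilpotent matrix. The only cosmetic differences are that the paper organizes the upper bound by ``freezing'' $V$ at a point $t_0$ (showing $\dim\mathfrak g^{\rm ess}_V\leqslant\dim\mathfrak g^{\rm ess}_{V_0}$ with $V_0:=V(t_0)$ constant) and, in the uniqueness step, first pins down $V=v(t)J$ pointwise via the double centralizer and then uses the classifying condition to make $v$ constant, whereas you kill the coefficients $K_l$, $l\geqslant1$, directly through the grading $[\Lambda,K_l]=(l+2)K_l$ --- both arguments are valid and interchangeable.
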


\begin{proof}
The claim on the lower bound is trivial, see the end of Section~\ref{sec:SLODEPreliminaryAnalysisOfLieSyms}.

For each of the classes $\bar{\mathcal L}_1$, $\mathcal L_1$, $\mathcal L'_1$ and $\mathcal L''_1$,
the system~\eqref{eq:CanonicalSystemOfMaxDimOfMIAinL1} admits the formal interpretation as
an element of this class.
The difference of these interpretations is just originated from the different class parameterizations.
Thus, the corresponding values of the arbitrary elements are
$A=0$, $B=J$ and $\boldsymbol f=\boldsymbol0$ for the class~$\bar{\mathcal L}_1$,
the same $A$ and~$B$ for the class~$\mathcal L_1$,
and $V=J$ for the classes~$\mathcal L'_1$ and~$\mathcal L''_1$.
Here and in what follows $J=J_0^2\oplus \big(\bigoplus_{i=1}^{n-2}J_0^1\big)$,
and $J_{\mu}^m$ denotes the Jordan block of dimension $m$ with eigenvalue~$\mu$.
In addition, an equality like $V=M$ for the arbitrary-element matrix~$V$ and a constant matrix~$M$
means that $V(t)=M$ for any $t\in\mathcal I$,
and then we can use the notation~$L'_M$ and~$\mathfrak g_M^{}$ instead of~$L'_V$ and~$\mathfrak g_V^{}$
for this particular value of the arbitrary-element matrix~$V$.

The class~$\bar{\mathcal L}_1$ (resp.\ $\mathcal L_1$) can be mapped by a family of its equivalence transformations
to its subclass that is the embedded counterpart of the class~$\mathcal L'_1$.
The class~$\mathcal L''_1$ is a subclass of~$\mathcal L'_1$.
The equivalence transformations within each of the above classes preserve
the required properties of Lie invariance algebras.
This is why it suffices to prove the theorem only for the class~$\mathcal L'_1$.

For any system~$L'_V$ from the class~$\mathcal L'_1$, we have
$\mathfrak g_V^{}=\mathfrak g^{\rm ess}_V\lsemioplus\mathfrak g^{\rm lin}_V$, and $\dim\mathfrak g^{\rm lin}_V=2n$.
Hence the proof reduces to estimating the dimensions of the essential Lie invariance algebras of systems from this class.

Given a system~$L'_V$, where the independent variable~$t$ runs a domain~$\mathcal I$,
for any fixed $t_0\in\mathcal I$, we consider the system~$L'_{V_0}$ from~$\mathcal L'_1$,
where $V_0(t):=V(t_0)$ for any $t\in\mathcal I$.
We necessarily have $\dim\mathfrak g^{\rm ess}_V\leqslant\dim\mathfrak g^{\rm ess}_{V_0}$.
Indeed, $\dim\mathfrak g^{\rm ess}_V=\dim\langle I\rangle+\dim\mathfrak s_V+k_V$ and $\dim\mathfrak g^{\rm ess}_{V_0}=\dim\langle I\rangle+\dim\mathfrak s_{V_0}+k_{V_0}$,
where $I:=x^a\p_{x^a}$, $k_V:=\dim\pi_*\mathfrak g^{\rm ess}_V$, $k_{V_0}:=\dim\pi_*\mathfrak g^{\rm ess}_{V_0}$, and
\begin{gather}\label{eq:CentralizersOfVAndItsValue}
\mathfrak s_V:={\rm C}_{\mathfrak{sl}(n,\mathbb F)}\big(\{V(t)\mid t\in\mathcal I\}\big)
\subseteq{\rm C}_{\mathfrak{sl}(n,\mathbb F)}\big(\{V(t_0)\}\big)
={\rm C}_{\mathfrak{sl}(n,\mathbb F)}\big(\{V_0(t)\mid t\in\mathcal I\}\big)=:\mathfrak s_{V_0},
\end{gather}
see the notation and results in Section~\ref{sec:DescriptionOfEssLieSymExtensions}, especially those related to the case $k=2$.
The last inclusion means that $\dim\mathfrak s_V\leqslant\dim\mathfrak s_{V_0}$.
Since $V_0$ is a constant (matrix-valued) function, it is obvious that
the system~$L'_{V_0}$ possesses the vector field~$\p_t$ as its Lie symmetry, i.e., $k_{V_0}\geqslant1$.
If $k_V=2$, then $V(t)$ is a nilpotent matrix for any~$t\in\mathcal I$, including~$t_0$,
and thus $k_{V_0}=2$ as well.
This is why in any case $k_V\leqslant k_{V_0}$, which finally proves the required claim.

Therefore, the maximum dimension of the essential Lie invariance algebras of systems from the class~$\mathcal L'_1$
is attained by a system with a constant value of the arbitrary-element matrix~$V$, $V=K_0$
for some $K_0\in\mathfrak{gl}(n,\mathbb F)\setminus\langle E\rangle$.
For this system, we have $\mathfrak s_V={\rm C}_{\mathfrak{sl}(n,\mathbb F)}(\{K_0\})$.
As a result, the Frobenius problem comes into play: find all the matrices~$\Gamma$ commuting with the fixed matrix~$K_0$,
which is a standard matrix problem~\cite[Chapter~VIII]{gant1959A}.
By $N(K_0)$ we denote the number of linearly independent matrices that commute with~$K_0$, i.e., $\dim\mathfrak s_V=N(K_0)-1$.
Since the matrix~$K_0$ is not proportional to the identity matrix,
the value~$N(K_0)$ can be at most $n^2-2n+2$, which is reached for only
for the matrices that are similar to either
$M_{1\nu}:=J_{\nu}^2\oplus \big(\bigoplus_{i=1}^{n-2}J_{\nu}^1\big)$ or
$M_{2\nu_1\nu_2}:=\big(\bigoplus_{i=1}^{n-1}J_{\nu_1}^1\big)\oplus J_{\nu_2}^1$ with $\nu_1\ne\nu_2$,
see the derivation of Corollaries~2 and~3 in~\cite{boyk2013a}.
The first matrix with $\nu=0$ coincides with~$J$ and is nilpotent.
Setting $K_0=J$, we have $k_J=2$, and then $\dim\mathfrak g^{\rm ess}_J=N(J)+k_J=n^2-2n+4$,
which is maximum in the class~$\mathcal L'_1$ by the construction.

Now we show that any system~$L'_V$ from the class~$\mathcal L'_1$ with $\dim\mathfrak g^{\rm ess}_V=n^2-2n+4$
belongs to the \smash{$G^\sim_{\mathcal L'}$}-orbit of the system~$L'_J$.
The value of $\dim\mathfrak g^{\rm ess}_V$ is maximum if and only if
both the values $\dim\mathfrak s_V$ and $k_V$ are maximum as well, $\dim\mathfrak s_V=n^2-2n+2$ and $k_V=2$.
In view of~\eqref{eq:CentralizersOfVAndItsValue}, the former equality implies
that for any fixed $t\in\mathcal I$, where the matrix~$V(t)$ is nonzero, it is similar to
either $M_{1\nu}$ for some~$\nu\in\mathbb F$ or $M_{2\nu_1\nu_2}$ for some~$\nu_1,\nu_2\in\mathbb F$ with $\nu_1\ne\nu_2$.
The latter equality means that the algebra $\mathfrak g^{\rm ess}_V$
contains two vector fields with linearly independent $t$-components~$\tau^1$ and~$\tau^2$.
Modulo the \smash{$G^\sim_{\mathcal L'}$}-equivalence, we can assume that $\tau^1=1$ and $\tau^2=t$,
i.e., $\mathfrak g^{\rm ess}_V\ni P,D$, where
$P:=\p_t+\Upsilon^{ab}x^b\p_{x^a}$ and $D:=t\p_t+\Lambda^{ab}x^b\p_{x^a}$
with some (constant) matrices~$\Upsilon,\Lambda\in\mathfrak{sl}(n,\mathbb F)$.
Then, it follows from the consideration of the case $k=2$ in Section~\ref{sec:DescriptionOfEssLieSymExtensions}
that for any $t\in\mathcal I$, the matrix~$V(t)$ is nilpotent,
and thus it is necessarily similar to $M_{10}=J$ if it is nonzero.
We fix a point~$t_0\in\mathcal I$ with $V(t_0)\ne0$.
Up to the \smash{$G^\sim_{\mathcal L'}$}-equivalence, we can assume that $V(t_0)=J$.
(For this, we should use the equivalence transformation~\eqref{eq:EquivGroupL'} with $T=t$
and a matrix~$C$ establishing the similarity of~$J$ to~$V(t_0)$, $V(t_0)=C^{-1}JC$.)
According to~\eqref{eq:CentralizersOfVAndItsValue},
\[
\mathfrak s_V:=\bigcap_{t\in\mathcal I}{\rm C}_{\mathfrak{sl}(n,\mathbb F)}\big(\{V(t)\}\big)
\subseteq{\rm C}_{\mathfrak{sl}(n,\mathbb F)}\big(\{V(t_0)\}\big)=
{\rm C}_{\mathfrak{sl}(n,\mathbb F)}\big(\{J\}\big)=\mathfrak s_J.
\]
On the other hand, $\dim\mathfrak s_V=n^2-2n+2=\dim\mathfrak s_J$, and hence $\mathfrak s_V=\mathfrak s_J$.
This means that for any $t\in\mathcal I$ with $V(t)\ne0$, the equality
${\rm C}_{\mathfrak{sl}(n,\mathbb F)}\big(\{V(t)\}\big)={\rm C}_{\mathfrak{sl}(n,\mathbb F)}\big(\{J\}\big)$
holds true,
i.e., $V(t)\in {\rm C}_{\mathfrak{sl}(n,\mathbb F)}\big({\rm C}_{\mathfrak{sl}(n,\mathbb F)}\big(\{J\}\big)\big)=\langle J\rangle$.
(The last equality can be checked by the direct two-step computation of the corresponding centralizers.)
For points~$t$ with $V(t)=0$, we also have $V(t)\in\langle J\rangle$.
In other words, the matrix-valued function~$V$ is defined by $V(t)=v(t)J$ for any $t\in\mathcal I$,
where $v$ is a nonzero function of~$t$.
Successively substituting the vector fields~$P$ and~$D$ into the classifying condition~\eqref{eq:ClassifyingCondL'_V} with this~$V$
leads to the equations $v_tJ=v[\Lambda,J]$ and $(tv_t+2v)J=v[\Upsilon,J]$.
Therefore, $[\Lambda,J]=\lambda_1J$ and $[\Upsilon,J]=\lambda_2J$ for some constants~$\lambda_1$ and~$\lambda_2$,
and thus $v_t=\lambda_1v$ and $(tv_t+2v)=\lambda_2v$.
The last system of two ordinary differential equations with respect to~$v$ is consistent
if and only if $(\lambda_1,\lambda_2)=(0,2)$, i.e., $v$ is a~nonzero constant function,
which is equal to one up to the \smash{$G^\sim_{\mathcal L'}$}-equivalence.
This means that the initial system~$L'_V$ is \smash{$G^\sim_{\mathcal L'}$}-equivalent to the system~$L'_J$,
which completes the proof.
\end{proof}

\begin{theorem}\label{thm:StructureOfgessInL'1}
For any system~$L'_V$ from the class~$\mathcal L'_1$,
its essential Lie invariance algebra~$\mathfrak g^{\rm ess}_V$ can be represented as
$
\mathfrak g^{\rm ess}_V=\mathfrak i\oplus(\mathfrak t_V\lsemioplus\mathfrak s^{\rm vf}_V)
$,
where
\begin{itemize}\itemsep=0.ex
\item
$\mathfrak i:=\langle x^a\p_{x^a}\rangle$ is an ideal of~$\mathfrak g^{\rm ess}_V$,
which is common for all systems from the class~$\mathcal L'_1$,
\item
$\mathfrak s^{\rm vf}_V:=\{\Gamma^{ab}x^b\p_{x^a}\mid\Gamma\in\mathfrak{sl}(n,\mathbb F)\colon[\Gamma,V]=0\}$
is an ideal of~$\mathfrak g^{\rm ess}_V$ with $\dim\mathfrak s^{\rm vf}_V\leqslant n^2-2n+1$, and
\item
$\mathfrak t_V:=\big\langle\tau^\iota\p_t+\big(\tfrac12\tau^\iota_tx^a+\Lambda^{ab}_\iota x^b\big)\p_{x^a},\,\iota=1,\dots,k_V\big\rangle$
is a subalgebra of~$\mathfrak g^{\rm ess}_V$ with $\dim\mathfrak t_V=k_V\in\{0,1,2\}$,
the components $\tau^\iota=\tau^\iota(t)$ are linearly independent, and
each of $\tau^\iota$ satisfies~\eqref{eq:ClassifyingCondL'_V} with $\Gamma=\Lambda_\iota\in\mathfrak{sl}(n,\mathbb F)$.
\end{itemize}
\end{theorem}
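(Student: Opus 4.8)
The plan is to assemble the decomposition directly from the explicit description of $\mathfrak g^{\rm ess}_V$ provided by Corollary~\ref{cor:MAIofL'_V}: every element is $Q_{(\tau,\Gamma)}=\tau\p_t+\big(\tfrac12\tau_tx^a+\Gamma^{ab}x^b\big)\p_{x^a}$ with $(\tau,\Gamma)$ a solution of the classifying condition~\eqref{eq:ClassifyingCondL'_V}, and to organize everything around the projection $\pi_*$, whose kernel consists of the \emph{vertical} fields, those with $\tau=0$. First I would treat $\mathfrak i=\langle I\rangle$, $I=x^a\p_{x^a}=Q_{(0,E)}$: a short direct computation shows $[I,Q_{(\tau,\Gamma)}]=0$ for every generator, so $\mathfrak i$ is in fact central. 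This both secures that $\mathfrak i$ is an ideal and explains the outer direct summand in $\mathfrak i\oplus(\mathfrak t_V\lsemioplus\mathfrak s^{\rm vf}_V)$.

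Next I would describe the vertical ideal. Setting $\tau=0$ in~\eqref{eq:ClassifyingCondL'_V} reduces it to $[\Gamma,V]=0$, so the vertical part of $\mathfrak g^{\rm ess}_V$ is $\mathfrak n:=\{\Gamma^{ab}x^b\p_{x^a}\mid[\Gamma,V]=0\}$; splitting off the trace via $\Gamma=n^{-1}(\mathop{\rm tr}\Gamma)E+\Gamma_0$ gives $\mathfrak n=\mathfrak i\oplus\mathfrak s^{\rm vf}_V$ with $\mathfrak s^{\rm vf}_V$ the traceless piece. That $\mathfrak s^{\rm vf}_V$ is an ideal follows by a clean argument rather than by computing structure constants: for $Q\in\mathfrak g^{\rm ess}_V$ and $Q_\Gamma\in\mathfrak s^{\rm vf}_V$ the bracket $[Q,Q_\Gamma]$ has vanishing $\p_t$-component (since $Q_\Gamma$ carries no $\p_t$ and $\tau$ depends only on~$t$) and is a linear field whose matrix, being a commutator, is traceless; as it again lies in the Lie algebra $\mathfrak g^{\rm ess}_V$, it is a traceless vertical element, i.e.\ an element of $\mathfrak s^{\rm vf}_V$. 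The bound $\dim\mathfrak s^{\rm vf}_V=\dim\mathfrak s_V\leqslant n^2-2n+1$ I would obtain from $\mathfrak s_V={\rm C}_{\mathfrak{sl}(n,\mathbb F)}(\{V(t)\mid t\in\mathcal I\})\subseteq{\rm C}_{\mathfrak{sl}(n,\mathbb F)}(\{V(t_0)\})$ at a point $t_0$ with $V(t_0)\notin\langle E\rangle$ (such $t_0$ exists because $L'_V\in\mathcal L'_1$), together with the maximal-centralizer estimate already used in the proof of Theorem~\ref{thm:MaxDimOfMIAinL1}.

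It then remains to produce the complementary subalgebra $\mathfrak t_V$, which amounts to splitting, as Lie algebras, the short exact sequence $0\to\mathfrak n\to\mathfrak g^{\rm ess}_V\to\pi_*\mathfrak g^{\rm ess}_V\to0$ induced by $\pi_*$. Here $k_V=\dim\pi_*\mathfrak g^{\rm ess}_V\in\{0,1,2\}$, the exclusion of $k_V=3$ being exactly the statement proved in Section~\ref{sec:DescriptionOfEssLieSymExtensions}. For $k_V=0$ one takes $\mathfrak t_V=\{0\}$, and for $k_V=1$ any single lift $P=\p_t+\Upsilon^{ab}x^b\p_{x^a}$ with $\Upsilon\in\mathfrak{sl}(n,\mathbb F)$ spans a one-dimensional, hence automatically closed, subalgebra. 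The only genuinely delicate case is $k_V=2$, and this is where I expect the main obstacle: a general subspace complement need not be a subalgebra. I would invoke the detailed $k=2$ analysis of Section~\ref{sec:DescriptionOfEssLieSymExtensions}, which furnishes fields $P=\p_t+\Upsilon^{ab}x^b\p_{x^a}$ and $D=t\p_t+\Lambda^{ab}x^b\p_{x^a}$ in $\mathfrak g^{\rm ess}_V$ with $\Upsilon,\Lambda\in\mathfrak{sl}(n,\mathbb F)$ and, after the normalizations made there, the relation $[\Lambda,\Upsilon]=\Upsilon$. Replacing $D$ by $D':=D+\tfrac12 I$ (permissible since $I$ is central) rewrites it in the required form $t\p_t+\big(\tfrac12 x^a+\Lambda^{ab}x^b\big)\p_{x^a}$ with a traceless matrix part, and a direct bracket computation using $[\Lambda,\Upsilon]=\Upsilon$ gives $[P,D']=P$, so $\mathfrak t_V:=\langle P,D'\rangle$ is a two-dimensional subalgebra isomorphic to $\pi_*\mathfrak g^{\rm ess}_V$.

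Finally I would assemble the pieces. Linear independence of the $t$-components $\tau^\iota$ shows that the only vertical elements of $\mathfrak t_V+\mathfrak s^{\rm vf}_V$ are those of $\mathfrak s^{\rm vf}_V$ and hence traceless, so $\mathfrak i$ meets $\mathfrak t_V\lsemioplus\mathfrak s^{\rm vf}_V$ trivially; comparison with the dimension identity $\dim\mathfrak g^{\rm ess}_V=1+k_V+\dim\mathfrak s_V$ recorded in the proof of Theorem~\ref{thm:MaxDimOfMIAinL1} confirms that the direct sum exhausts $\mathfrak g^{\rm ess}_V$, and that every $Q_{(\tau,\Gamma)}$ indeed decomposes as an element of $\mathfrak t_V$ matching its $\pi$-projection plus a vertical remainder in $\mathfrak i\oplus\mathfrak s^{\rm vf}_V$. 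The inclusions $[\mathfrak t_V,\mathfrak s^{\rm vf}_V]\subseteq\mathfrak s^{\rm vf}_V$ (established above) and $[\mathfrak s^{\rm vf}_V,\mathfrak s^{\rm vf}_V]\subseteq\mathfrak s^{\rm vf}_V$ (closedness of the centralizer $\mathfrak s_V$ under the bracket) give the semidirect structure $\mathfrak t_V\lsemioplus\mathfrak s^{\rm vf}_V$, while centrality of $\mathfrak i$ yields the outer direct sum, completing the proof.
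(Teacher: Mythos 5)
Your proposal is correct and follows essentially the same route as the paper's own proof: centrality of $I$, identification of the vertical ideal via $\tau=0$ in the classifying condition, the centralizer bound from the proof of Theorem~\ref{thm:MaxDimOfMIAinL1}, and, for $k_V=2$, the semisimple/hat-part normalizations from Section~\ref{sec:DescriptionOfEssLieSymExtensions} yielding $[\Lambda,\Upsilon]=\Upsilon$ and hence $[P,D]=P$. Your explicit traceless-commutator argument for the ideal property of $\mathfrak s^{\rm vf}_V$ and the adjustment $D\mapsto D+\tfrac12I$ are only minor refinements of the same argument.
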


\begin{proof}
In view of Corollary~\ref{cor:MAIofL'_V},
each vector field from the algebra~$\mathfrak g^{\rm ess}_V$ takes the form
\begin{gather}\label{eq:FormOfElementsOfEssLieInvAlgebras}
Q=\tau\p_t+\big(\tfrac12\tau_tx^a+\Gamma^{ab}x^b\big)\p_{x^a},
\end{gather}
where the $t$-component $\tau$ depends only on~$t$ and satisfies,
jointly with the (constant) matrix $\Gamma=(\Gamma^{ab})$,
the classifying condition~\eqref{eq:ClassifyingCondL'_V}.
An obvious solution of~\eqref{eq:ClassifyingCondL'_V} for any~$V$ is given by $\tau=0$ and $\Gamma=E$.
The corresponding vector field $I:=x^a\p_{x^a}$
commutes with any vector field of the form~\eqref{eq:FormOfElementsOfEssLieInvAlgebras}.
Hence $\mathfrak i$ is an ideal of~$\mathfrak g^{\rm ess}_V$ irrelatively of~$V$.
The set $\mathfrak s^{\rm vf}_V$ consists of the elements~$Q$ of~$\mathfrak g^{\rm ess}_V$ with $\pi_*Q=0$
and is clearly an ideal of~$\mathfrak g^{\rm ess}_V$.
The estimate $\dim\mathfrak s^{\rm vf}_V\leqslant n^2-2n+1$ follows
from the proof of Theorem~\ref{thm:MaxDimOfMIAinL1}.

It has been proved in Section~\ref{sec:DescriptionOfEssLieSymExtensions}
that $k_V:=\dim\pi_*\mathfrak g^{\rm ess}_V\in\{0,1,2\}$.
Note that $\dim\mathfrak t_V=k_V$.
Now we show that one can always select a complementary subspace~$\mathfrak t_V$
to~$\mathfrak i\oplus\mathfrak s^{\rm vf}_V$ in~$\mathfrak g^{\rm ess}_V$
in such a way that it is a subalgebra of~$\mathfrak g^{\rm ess}_V$.
The cases $k_V=0$ and $k_V=1$ are trivial.
Indeed, $\mathfrak t_V=\{0\}$ if $k_V=0$, and this set is an improper subalgebra of~$\mathfrak g^{\rm ess}_V$.
If $k_V=1$, then the span $\mathfrak t_V$ is one-dimensional,
and thus it is again a subalgebra of~$\mathfrak g^{\rm ess}_V$.
Let us prove the above claim for the case $k_V=2$,
which is not so evident as the previous cases.

It follows from the consideration of the case $k=2$ in Section~\ref{sec:DescriptionOfEssLieSymExtensions}
that two vector fields with linearly independent $t$-components from the algebra~$\mathfrak g^{\rm ess}_V$
take, up to their linear recombination and the \smash{$G^\sim_{\mathcal L'_1}$}-equivalence,
the form $P:=\p_t+\Upsilon^{ab}x^b\p_{x^a}$ and $D:=t\p_t+\Lambda^{ab}x^b\p_{x^a}$
with some (constant) matrices~$\Upsilon,\Lambda\in\mathfrak{sl}(n,\mathbb F)$.
Let $\Lambda_{\rm s}$ and~$\Lambda_{\rm n}$ be the semisimple and nilpotent parts
in the Jordan--Chevalley decomposition of the matrix $\Lambda=(\Lambda^{ab})$, $\Lambda=\Lambda_{\rm s}+\Lambda_{\rm n}$.
The vector field $\Lambda_{\rm n}^{ab}x^b\p_{x^a}$ belongs to~$\mathfrak s^{\rm vf}_V\subset\mathfrak g^{\rm ess}_V$.
Hence the vector field $D-\Lambda_{\rm n}^{ab}x^b\p_{x^a}=t\p_t+\Lambda_{\rm s}^{ab}x^b\p_{x^a}$
is an element of~$\mathfrak g^{\rm ess}_V$ as well,
i.e., we can assume from the very beginning that the matrix~$\Lambda$ is semisimple.
Then $\Upsilon=\hat\Upsilon+\check\Upsilon$, where $\hat\Upsilon$ and $\check\Upsilon$
are the hat- and check-parts of~$\Upsilon$ with respect to the (semisimple) matrix~$\Lambda$,
which are defined in Section~\ref{sec:DescriptionOfEssLieSymExtensions}.
It has been shown therein that the vector field $\check\Upsilon^{ab}x^b\p_{x^a}$
also belongs to~$\mathfrak s^{\rm vf}_V\subset\mathfrak g^{\rm ess}_V$.
This means that we can replace the vector field~$P$ by $P-\check\Upsilon^{ab}x^b\p_{x^a}$,
thus replacing~$\Upsilon$ by~\smash{$\hat\Upsilon$}.
Then $[\Lambda,\Upsilon]=\Upsilon$, which leads to the commutation relation $[P,D]=P$.
In other words, the span $\mathfrak t_V:=\langle P,D\rangle$ is a subalgebra of~$\mathfrak g^{\rm ess}_V$.
To complete the proof of the claim, merely note that elements of~\smash{$G^\sim_{\mathcal L'_1}$}
induce isomorphisms between the essential Lie invariance algebras of equivalent systems from the class~$\bar{\mathcal L}_1$.

It is obvious that $\mathfrak i\cap\mathfrak s^{\rm vf}_V=\mathfrak i\cap\mathfrak t_V=\mathfrak s^{\rm vf}_V\cap\mathfrak t_V=\{0\}$,
$[\mathfrak i,\mathfrak s^{\rm vf}_V]=[\mathfrak i,\mathfrak t_V]=\{0\}$,
and $[\mathfrak s^{\rm vf}_V,\mathfrak t_V]\subseteq\mathfrak s^{\rm vf}_V$.
\end{proof}

\begin{corollary}\label{cor:StructureOfgessInL1}
For any system~$L_\vartheta$ from the class~$\mathcal L_1$,
its essential Lie invariance algebra~$\mathfrak g^{\rm ess}_\vartheta$ can be represented as
$\mathfrak g^{\rm ess}_\vartheta=\mathfrak i\oplus(\mathfrak t_\vartheta\lsemioplus\mathfrak s^{\rm vf}_\vartheta)$,
where $\mathfrak i:=\langle I\rangle$ is an ideal of~$\mathfrak g^{\rm ess}_\vartheta$,
which is common for all the systems from the class~$\mathcal L_1$,
$\mathfrak s^{\rm vf}_\vartheta$ is an ideal of~$\mathfrak g^{\rm ess}_\vartheta$ with $\pi_*\mathfrak s^{\rm vf}_\vartheta=\{0\}$
and $\dim\mathfrak s^{\rm vf}_\vartheta\leqslant n^2-2n+1$, and
$\mathfrak t_\vartheta$ is a subalgebra of~$\mathfrak g^{\rm ess}_\vartheta$
with $\dim\mathfrak t_\vartheta=\dim\pi_*\mathfrak t_\vartheta\in\{0,1,2\}$.
\end{corollary}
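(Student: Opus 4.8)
The plan is to reduce the statement to its already-established counterpart for the class~$\mathcal L'_1$, namely Theorem~\ref{thm:StructureOfgessInL'1}, by transporting the whole structure along the equivalence transformation that maps $\mathcal L_1$ onto the embedded counterpart of~$\mathcal L'_1$. Recall from the beginning of Section~\ref{sec:SLODEEquivGroupoidsAndEquivGroups} that, given a system $L_\vartheta$ from $\mathcal L_1$ with $\vartheta=(A,B)$, the point transformation $\Phi\colon\tilde t=t$, $\tilde{\boldsymbol x}=H(t)\boldsymbol x$ with $H$ solving $H_t+\frac12HA=0$ (i.e.\ the transformation~\eqref{eq:EquivTransbarLtoL'} with $\boldsymbol h=\boldsymbol0$) maps $L_\vartheta$ to the system $L'_V\in\mathcal L'_1$ with $V=H\big(B-\frac12A_t+\frac14A^2\big)H^{-1}$. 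Since $\Phi$ maps $L_\vartheta$ to $L'_V$, its pushforward maps $\mathfrak g_\vartheta$ isomorphically onto $\mathfrak g_V$, and I would use this isomorphism to carry the decomposition of~$\mathfrak g^{\rm ess}_V$ back to~$\mathfrak g^{\rm ess}_\vartheta$.

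First I would verify that $\Phi_*$ restricts to a Lie-algebra isomorphism from $\mathfrak g^{\rm ess}_\vartheta$ onto $\mathfrak g^{\rm ess}_V$. As $\tilde t=t$, we have $\pi\circ\Phi=\pi$, so $\Phi_*$ preserves the $t$-component of every vector field and commutes with $\pi_*$. A one-line computation shows that $\Phi_*$ sends a projectable homogeneous-linear field $\tau(t)\p_t+\eta^{ab}(t)x^b\p_{x^a}$ to the field with the same $\tau$ and matrix $(\tau H_t+H\eta)H^{-1}$, so it preserves the space of such fields. By definition $\mathfrak g^{\rm ess}_\vartheta=\mathfrak g_\vartheta\cap\varpi_*\mathfrak g^\sim_{\mathcal L}$ is exactly the intersection of $\mathfrak g_\vartheta$ with this space; and although $\varpi_*\mathfrak g^\sim_{\mathcal L'}$ is the strictly smaller subspace forcing $\eta=\frac12\tau_tE+\Gamma$ with constant~$\Gamma$, Corollary~\ref{cor:MAIofL'_V} guarantees that every projectable homogeneous-linear symmetry of a system in~$\mathcal L'_1$ already has this form, whence $\mathfrak g^{\rm ess}_V$ likewise equals $\mathfrak g_V$ intersected with the full space of projectable homogeneous-linear fields. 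Therefore $\Phi_*\mathfrak g^{\rm ess}_\vartheta=\Phi_*(\mathfrak g_\vartheta\cap\text{(this space)})=\mathfrak g_V\cap\text{(this space)}=\mathfrak g^{\rm ess}_V$, the desired isomorphism. In addition $\Phi_*I=I$ for $I=x^a\p_{x^a}$, since scaling of $\boldsymbol x$ commutes with $\boldsymbol x\mapsto H(t)\boldsymbol x$, so $\Phi_*\mathfrak i=\mathfrak i$.

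Then I would apply Theorem~\ref{thm:StructureOfgessInL'1} to write $\mathfrak g^{\rm ess}_V=\mathfrak i\oplus(\mathfrak t_V\lsemioplus\mathfrak s^{\rm vf}_V)$ and pull this decomposition back, setting $\mathfrak t_\vartheta:=\Phi^*\mathfrak t_V$ and $\mathfrak s^{\rm vf}_\vartheta:=\Phi^*\mathfrak s^{\rm vf}_V$ while $\Phi^*\mathfrak i=\mathfrak i$. An isomorphism carries ideals to ideals and subalgebras to subalgebras and preserves dimensions, so $\mathfrak s^{\rm vf}_\vartheta$ is an ideal with $\dim\mathfrak s^{\rm vf}_\vartheta=\dim\mathfrak s^{\rm vf}_V\leqslant n^2-2n+1$, and $\mathfrak t_\vartheta$ is a subalgebra with $\dim\mathfrak t_\vartheta=\dim\mathfrak t_V=k_V\in\{0,1,2\}$. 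Because $\Phi_*$ commutes with $\pi_*$, we get $\pi_*\mathfrak s^{\rm vf}_\vartheta=\pi_*\mathfrak s^{\rm vf}_V=\{0\}$ and $\dim\pi_*\mathfrak t_\vartheta=\dim\pi_*\mathfrak t_V=\dim\mathfrak t_\vartheta$; the relations $\mathfrak i\cap\mathfrak s^{\rm vf}_\vartheta=\mathfrak i\cap\mathfrak t_\vartheta=\mathfrak s^{\rm vf}_\vartheta\cap\mathfrak t_\vartheta=\{0\}$, $[\mathfrak i,\,\cdot\,]=\{0\}$ and $[\mathfrak s^{\rm vf}_\vartheta,\mathfrak t_\vartheta]\subseteq\mathfrak s^{\rm vf}_\vartheta$ transfer verbatim.

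The step needing the most care is the second paragraph, where one must confirm that the notion of being \emph{essential} survives the passage between the two different equivalence algebras $\varpi_*\mathfrak g^\sim_{\mathcal L}$ and $\varpi_*\mathfrak g^\sim_{\mathcal L'}$; the resolution, as indicated, is that both essential algebras coincide with the intersection of the respective maximal Lie invariance algebra and the common space of projectable homogeneous-linear fields. A secondary point is to observe that $\Phi^*$ turns the constant traceless matrices $\Gamma$ parameterizing $\mathfrak s^{\rm vf}_V$ into the time-dependent matrices $H^{-1}\Gamma H$, which remain traceless; this is precisely what keeps $\mathfrak s^{\rm vf}_\vartheta$ complementary to the scaling ideal~$\mathfrak i$ once the matrix coefficients are allowed to depend on~$t$, so that the trace splitting underlying the direct sum $\mathfrak i\oplus\mathfrak s^{\rm vf}_\vartheta$ is preserved.
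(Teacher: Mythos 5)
Your proposal is correct and follows essentially the same route as the paper's own (much terser) proof: map $L_\vartheta$ to a system $L'_V\in\mathcal L'_1$ by the equivalence transformation with $H_t+\tfrac12HA=0$ and transport the decomposition of Theorem~\ref{thm:StructureOfgessInL'1} back along the induced isomorphism. Your careful verification that $\Phi_*$ identifies the two essential algebras -- despite their being defined via the different ambient spaces $\varpi_*\mathfrak g^\sim_{\mathcal L}$ and $\varpi_*\mathfrak g^\sim_{\mathcal L'}$, both of which cut out exactly the $\boldsymbol\chi=\boldsymbol0$ fields inside the respective maximal algebras -- is a worthwhile detail that the paper leaves implicit.
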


\begin{proof}
To derive this corollary from Theorem~\ref{thm:StructureOfgessInL'1},
we use the same arguments as in the third paragraph of the proof of Theorem~\ref{thm:MaxDimOfMIAinL1}.
More specifically, the class~$\mathcal L_1$ is mapped by a family of its equivalence transformations
to the embedded counterpart of the class~$\mathcal L'_1$,
see the discussion related to~\eqref{eq:SLODESubclassEmbeddings}.
The equivalence transformations within the class~$\mathcal L_1$
preserve the structure of the essential Lie invariance algebras of systems from this class.
\end{proof}

An assertion similar to Corollary~\ref{cor:StructureOfgessInL1}
also holds for all systems from the regular subclass~$\bar{\mathcal L}_1$ of the class~$\bar{\mathcal L}$
with modifications allowing for specific features of the notion of essential Lie invariance algebras
in a class of inhomogeneous linear systems of differential equations,
see Remark~\ref{rem:EssLieInvAlgebrasInL'1AndBarL1}.

The following proposition collects various properties of Lie symmetries of systems from the singular subclass~$\bar{\mathcal L}_0$
of the class~$\bar{\mathcal L}$.
A part of this proposition over the complex field was proved in~\cite[Theorem 1]{gonz1988d}. 
We combine Theorem~\ref{thm:EquivGroupoidbarL},
the description of the subclass~$\bar{\mathcal L}_0$ before this theorem,
the well-known facts presented in the paragraph with the equation~\eqref{eq:SLODEMIAOfElementarySystem}
and Theorem~\ref{thm:MaxDimOfMIAinL1}.

\begin{proposition}\label{pro:SLODEsl(n+2,F)Invariance}
The following are equivalent for a system~$\bar L_\theta$ of the form~\eqref{SLODE}:

(i) The maximal Lie invariance algebra~$\mathfrak g_\theta$ of~$\bar L_\theta$  is isomorphic to $\mathfrak{sl}(n+2,\mathbb F)$.

(ii) $\dim\mathfrak g_\theta=(n+2)^2-1$.

(iii) $\dim\mathfrak g_\theta>n^2+4$.

(iv) The system~$\bar L_\theta$ is invariant with respect to a Lie algebra of vector fields with zero $t$-com\-po\-nents
that is isomorphic to $\mathfrak{sl}(n,\mathbb F)$.

(v) The system~$\bar L_\theta$ is reduced by a point transformation in $\mathbb F_t\times\mathbb F^n_{\boldsymbol x}$
to the system~$\boldsymbol x_{tt}=\boldsymbol0$.

(vi) The matrix $B-\frac12A_t-\frac14A^2$ is proportional to the identity matrix with time-dependent proportionality factor.
\end{proposition}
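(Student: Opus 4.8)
The plan is to show that all six conditions are equivalent to the single geometric fact that $\bar L_\theta$ lies in the singular subclass~$\bar{\mathcal L}_0$, that is, in the $G^\sim_{\bar{\mathcal L}}$-orbit of the free particle system $\boldsymbol x_{tt}=\boldsymbol0$. First I would record that condition~(vi) is precisely the auxiliary constraint singling out~$\bar{\mathcal L}_0$ from~$\bar{\mathcal L}$ written down just before Theorem~\ref{thm:EquivGroupoidbarL}, so that (vi)$\,\Leftrightarrow\,\bar L_\theta\in\bar{\mathcal L}_0$. Next, since $\bar{\mathcal L}_0$ is by definition that orbit and every element of $G^\sim_{\bar{\mathcal L}}$ is a point transformation of the form~\eqref{eq:EquivGroupbarLA}, membership in~$\bar{\mathcal L}_0$ yields~(v) at once; conversely, any point transformation reducing~$\bar L_\theta$ to $\boldsymbol x_{tt}=\boldsymbol0$ is an admissible transformation of~$\bar{\mathcal L}$, and by item~(ii) of Theorem~\ref{thm:EquivGroupoidbarL} the groupoid~$\mathcal G^\sim_{\bar{\mathcal L}}$ preserves the partition $\bar{\mathcal L}=\bar{\mathcal L}_0\sqcup\bar{\mathcal L}_1$, so $\bar L_\theta$ must lie in the same part as the free particle system, namely~$\bar{\mathcal L}_0$. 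Hence (v)$\,\Leftrightarrow\,\bar L_\theta\in\bar{\mathcal L}_0\,\Leftrightarrow\,$(vi).

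I would then close the cycle through the dimension conditions. From~(v) a point transformation conjugates~$\mathfrak g_\theta$ onto the maximal Lie invariance algebra~$\mathfrak g_0$ of the free particle system, which by~\eqref{eq:SLODEMIAOfElementarySystem} is isomorphic to $\mathfrak{sl}(n+2,\mathbb F)$, giving~(i). The implication (i)$\,\Rightarrow\,$(ii) is the computation $\dim\mathfrak{sl}(n+2,\mathbb F)=(n+2)^2-1=n^2+4n+3$, and (ii)$\,\Rightarrow\,$(iii) follows from $n^2+4n+3>n^2+4$ for $n\geqslant2$. For (iii)$\,\Rightarrow\,$(v): if instead $\bar L_\theta\in\bar{\mathcal L}_1$, then Theorem~\ref{thm:MaxDimOfMIAinL1} would force $\dim\mathfrak g_\theta\leqslant n^2+4$, contradicting~(iii), so $\bar L_\theta\in\bar{\mathcal L}_0$ and~(v) holds. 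It remains to attach condition~(iv). The direction (v)$\,\Rightarrow\,$(iv) is easy: the free particle system admits the vector fields $\Gamma^{ab}x^b\p_{x^a}$ with $\Gamma\in\mathfrak{sl}(n,\mathbb F)$, which have zero $t$-components and span a copy of $\mathfrak{sl}(n,\mathbb F)$; since the transformation in~(v) may be taken of the form~\eqref{eq:EquivGroupbarLA} with $\tilde t=T(t)$, its pullback sends vector fields with zero $t$-component to vector fields with zero $t$-component, producing the required subalgebra of~$\mathfrak g_\theta$.

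The substantive work is (iv)$\,\Rightarrow\,$(v). Using the reduction of~$\bar{\mathcal L}$ to~$\mathcal L'$ from the beginning of Section~\ref{sec:SLODEEquivGroupoidsAndEquivGroups} (whose transformations have $\tilde t=t$ and hence again preserve zero $t$-components), it suffices to treat a system $L'_V$ and to show that the existence of an $\mathfrak{sl}(n,\mathbb F)$-subalgebra~$\mathfrak a$ of $\mathfrak g_V$ consisting of vector fields with zero $t$-components forces $V$ to be proportional to~$E$, i.e.\ $L'_V\in\mathcal L'_0$. Arguing by contradiction, assume $L'_V\in\mathcal L'_1$. By Corollary~\ref{cor:MAIofL'_V} every element of~$\mathfrak a$ has the form $Q=(\Gamma^{ab}x^b+\chi^a)\p_{x^a}$ with constant $\Gamma$ satisfying $[\Gamma,V]=0$ (the classifying condition~\eqref{eq:ClassifyingCondL'_V} with $\tau=0$) and $\boldsymbol\chi$ a solution of~$L'_V$. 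The assignment $\rho\colon Q\mapsto\Gamma$ is a Lie-algebra homomorphism of~$\mathfrak a$ into $\mathfrak{gl}(n,\mathbb F)$ whose kernel lies in the abelian ideal~$\mathfrak g^{\rm lin}_V$. Since $\mathfrak a\cong\mathfrak{sl}(n,\mathbb F)$ is simple for $n\geqslant2$, this kernel is either all of~$\mathfrak a$, which is impossible as~$\mathfrak a$ is non-abelian while $\mathfrak g^{\rm lin}_V$ is abelian, or trivial; hence $\rho$ is injective and $\mathfrak b:=\rho(\mathfrak a)$ is a faithful $n$-dimensional representation of $\mathfrak{sl}(n,\mathbb F)$ inside $\mathfrak{gl}(n,\mathbb F)$ commuting with $V(t)$ for all~$t$.

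The remaining and most delicate ingredient is representation-theoretic, and I expect it to be the main obstacle: a faithful $n$-dimensional representation of $\mathfrak{sl}(n,\mathbb F)$ must be the standard representation or its dual. Over~$\mathbb C$ this follows from Weyl complete reducibility together with the fact that the minimal dimension of a nontrivial irreducible $\mathfrak{sl}(n,\mathbb C)$-module is~$n$, so no trivial summand can occur without destroying faithfulness; the real case reduces to the complex one by complexification, both the standard module and its dual being absolutely irreducible. In either case Schur's lemma gives ${\rm C}_{\mathfrak{gl}(n,\mathbb F)}(\mathfrak b)=\langle E\rangle$, whence $V(t)\in\langle E\rangle$ for all~$t$ and $L'_V\in\mathcal L'_0$, contradicting $L'_V\in\mathcal L'_1$. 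Therefore $\bar L_\theta\in\bar{\mathcal L}_0$ and~(v) holds, which closes all the equivalences; the parts needing most care are the sign bookkeeping for the homomorphism~$\rho$ arising from the convention $\Gamma^{ab}x^b\p_{x^a}$ and the precise invocation of the classification of $n$-dimensional modules.
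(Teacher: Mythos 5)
Your proof is correct and follows the same skeleton as the paper's, which simply combines Theorem~\ref{thm:EquivGroupoidbarL}, the description of~$\bar{\mathcal L}_0$ preceding it, the facts around~\eqref{eq:SLODEMIAOfElementarySystem}, and Theorem~\ref{thm:MaxDimOfMIAinL1}: all six conditions are routed through membership in the singular subclass~$\bar{\mathcal L}_0$. The one place where you add genuine substance is the implication (iv)$\,\Rightarrow\,$(v), which the paper leaves implicit (its closest analogue is the remark in Section~\ref{sec:DescriptionOfEssLieSymExtensions} that $\mathfrak s=\mathfrak{sl}(n,\mathbb F)$ forces $\{V(t)\}\subseteq\langle E\rangle$, stated only for the canonical copy $\mathfrak s^{\rm vf}$). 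Your version correctly handles the general hypothesis of an \emph{arbitrary} $\mathfrak{sl}(n,\mathbb F)$-algebra of vector fields with zero $t$-components: the map $Q\mapsto\Gamma$ is an anti-homomorphism with your sign convention, but simplicity of $\mathfrak{sl}(n,\mathbb F)$ still forces injectivity, and complete reducibility plus the fact that the minimal nontrivial $\mathfrak{sl}(n)$-module has dimension~$n$ gives irreducibility of the resulting $n$-dimensional representation, whence Schur's lemma yields ${\rm C}_{\mathfrak{gl}(n,\mathbb F)}(\mathfrak b)=\langle E\rangle$; you do not actually need the classification of the module as standard or dual. One small point you gloss over: condition~(vi) as printed reads $B-\frac12A_t-\frac14A^2$, whereas the constraint defining~$\bar{\mathcal L}_0$ (and the computation via $H_t+\frac12HA=0$, cf.\ Proposition~\ref{pro:IntergrationOfeqsFrombarL0}) involves $B-\frac12A_t+\frac14A^2$; this is a sign typo in the statement, and you are tacitly proving the corrected version.
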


\section[Reduction of order and integration using Lie symmetries and equivalence transformations]%
{Reduction of order and integration using Lie symmetries\\ and equivalence transformations}\label{sec:SLODEOrderReductionAndIntegrationUsingLieSyms}

Lie symmetries of normal linear systems of $n$ second-order ordinary differential equations
and equivalence point transformations between such systems can be efficiently used
for reducing their total order and their integration.

We begin with the singular subclass~$\bar{\mathcal L}_0$ of~$\bar{\mathcal L}$.
Recall that the belonging of systems to this subclass is checked by verifying the condition
that the matrix-valued function $B-\frac12A_t+\frac14A^2$
is proportional to the identity matrix~$E$ with time-dependent proportionality factor.

\begin{proposition}\label{pro:IntergrationOfeqsFrombarL0}
The integration of any system~$\bar L_\theta$ from the class~$\bar{\mathcal L}_0$ is reduced to at most $2n$ quadratures
via finding a fundamental matrix of the system of $n$ first-order ordinary differential equations
$\boldsymbol y_t+\frac12A^{\mathsf T}\boldsymbol y=\boldsymbol0$
with respect to the vector-valued function~$\boldsymbol y$ of~$t$
and a fundamental set of solutions of the second-order ordinary differential equation
$n\varphi_{tt}=\mathop{\rm tr}\big(B-\frac12A_t+\frac14A^2\big)\varphi$
with respect to the function~$\varphi$ of~$t$.
\end{proposition}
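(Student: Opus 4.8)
The plan is to use the reduction of $\bar{\mathcal L}$ to its subclass $\mathcal L'$ described at the beginning of Section~\ref{sec:SLODEEquivGroupoidsAndEquivGroups}. By definition of $\bar{\mathcal L}_0$, the system $\bar L_\theta$ satisfies $B-\frac12A_t+\frac14A^2=vE$, and taking the trace identifies the scalar factor as $v=\frac1n\mathop{\rm tr}(B-\frac12A_t+\frac14A^2)$. First I would fix a fundamental matrix $Y$ of the first-order system $\boldsymbol y_t+\frac12A^{\mathsf T}\boldsymbol y=\boldsymbol0$ and set $H:=Y^{\mathsf T}$; transposing $Y_t+\frac12A^{\mathsf T}Y=0$ shows that $H$ is an invertible matrix-valued function obeying $H_t+\frac12HA=0$, which is precisely the condition imposed on the parameter matrix in~\eqref{eq:EquivTransbarLtoL'}. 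Thus the columns of $Y$, that is, the solutions of the stated adjoint system, furnish $H$ with no further integration.

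Next I would apply the equivalence transformation~\eqref{eq:EquivTransbarLtoL'} with this $H$ and with $\boldsymbol h=\boldsymbol0$, deliberately keeping the inhomogeneity rather than absorbing it into a particular solution of $\bar L_\theta$ (which would be circular). Substituting $H_t=-\frac12HA$ and $H_{tt}=\frac14HA^2-\frac12HA_t$ into~\eqref{eq:EquivTransbarLtoL'B} yields $\tilde A=0$, $\tilde B=H(B-\frac12A_t+\frac14A^2)H^{-1}=vE$ and $\tilde{\boldsymbol f}=H\boldsymbol f$. Hence $\bar L_\theta$ is mapped to the \emph{decoupled} inhomogeneous system $\tilde{\boldsymbol x}_{tt}=v\tilde{\boldsymbol x}+H\boldsymbol f$, i.e.\ to the $n$ scalar equations $\tilde x^a_{tt}=v\tilde x^a+(H\boldsymbol f)^a$, all governed by the single homogeneous second-order equation $\varphi_{tt}=v\varphi$, which is exactly $n\varphi_{tt}=\mathop{\rm tr}(B-\frac12A_t+\frac14A^2)\varphi$ of the statement.

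Given a fundamental set $\{\varphi_1,\varphi_2\}$ of $\varphi_{tt}=v\varphi$, the homogeneous part of each scalar equation is known at once, so the only genuine integration is the computation of a particular solution. Since this equation has no first-derivative term, its Wronskian $W:=\varphi_1(\varphi_2)_t-(\varphi_1)_t\varphi_2$ is constant by Abel's formula; therefore variation of parameters produces a particular solution of $\tilde x^a_{tt}=v\tilde x^a+(H\boldsymbol f)^a$ using exactly two quadratures, namely $\int W^{-1}\varphi_1(H\boldsymbol f)^a\,dt$ and $\int W^{-1}\varphi_2(H\boldsymbol f)^a\,dt$. Summed over $a$ this amounts to at most $2n$ quadratures, and none are needed when $\boldsymbol f=\boldsymbol0$, which accounts for the phrase ``at most''. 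Finally I would invert the transformation, $\boldsymbol x=H^{-1}\tilde{\boldsymbol x}$, which requires no additional integration.

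The argument is essentially bookkeeping, so there is no serious obstacle; the points needing care are the algebraic identity $\tilde B=vE$ (which relies on the precise form of $H_{tt}$ coming from $H_t=-\frac12HA$) and the verification that all integration is confined to the $2n$ variation-of-parameters quadratures, the construction of $H$ and of $\{\varphi_1,\varphi_2\}$ being the prescribed auxiliary tasks rather than extra quadratures. The constancy of $W$ is what keeps each scalar inhomogeneous equation at two quadratures and not more.
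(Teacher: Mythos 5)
Your proof is correct, and it takes a route that differs from the paper's in how the two auxiliary ingredients are used. The paper builds the full equivalence transformation~\eqref{eq:EquivGroupbarL} including a reparametrization of time: it sets $T:=\varphi^1/\varphi^2$, observes that $T$ satisfies $\{T,t\}=-2U$ with $U=n^{-1}\mathop{\rm tr}\big(B-\tfrac12A_t+\tfrac14A^2\big)$, and combines this $T$ with $H=M^{\mathsf T}$ to map $\bar L_\theta$ directly onto the free-particle form $\tilde{\boldsymbol x}_{\tilde t\tilde t}=\,\,\tilde{\!\!\boldsymbol f}(\tilde t)$, which is then integrated by $2n$ straightforward double quadratures. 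You instead keep $\tilde t=t$, use only the $\boldsymbol x$-linear part of the transformation to decouple the system into $n$ copies of the scalar equation $\tilde x^a_{tt}=v\tilde x^a+(H\boldsymbol f)^a$, and then spend the $2n$ quadratures on variation of parameters, the constancy of the Wronskian (no first-derivative term) keeping the count at exactly two per component. Both arguments rest on the same algebraic identity $\tilde B=H\big(B-\tfrac12A_t+\tfrac14A^2\big)H^{-1}=vE$, which you verify correctly. Your version is slightly more elementary in that it avoids the Schwarzian derivative and does not require choosing a nonvanishing $\varphi^2$ on the interval; the paper's version buys a cleaner conceptual picture, exhibiting explicitly the point transformation that realizes $\bar L_\theta$ as a member of the $G^\sim_{\bar{\mathcal L}}$-orbit of the elementary system, which is the structural fact underlying the definition of $\bar{\mathcal L}_0$. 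Either way the quadrature count and the two prescribed auxiliary problems are the same, so your argument is a valid substitute.
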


\begin{proof}
Fix a value of the arbitrary-element tuple $\theta=(A,B,\boldsymbol f)$.
Let a matrix-valued function~$M$ of~$t$ be a fundamental matrix of the system $\boldsymbol y_t+\frac12A^{\mathsf T}\boldsymbol y=\boldsymbol0$
and let functions~$\varphi^1$ and~$\varphi^2$ form
a fundamental set of solutions of the equation $\varphi_{tt}=U\varphi$ with $U:=n^{-1}\mathop{\rm tr}\big(B-\frac12A_t+\frac14A^2\big)$,
where $\varphi^2(t)\ne0$ for~$t$ from the interval under consideration.
Then the ratio $\varphi^1/\varphi^2=:T$ satisfies the ordinary differential equation $\{T,t\}=-2U$
with the Schwarzian derivative~$\{T,t\}$ of~$T$ with respect to~$t$.
Hence the equivalence transformation~$\mathscr T$ of the form~\eqref{eq:EquivGroupbarL}
with this $T$, $H:=M^{\mathsf T}$ and $\boldsymbol h=\boldsymbol0$
reduces the system~$\bar L_\theta$ to the system $\tilde{\boldsymbol  x}_{\tilde t\tilde t}=\,\,\tilde{\!\!\boldsymbol f}(\tilde t)$,
where $\,\,\tilde{\!\!\boldsymbol f}\circ T=T_t^{\,-2}H\boldsymbol f$, which is obviously integrated by at most $2n$ quadratures.
\end{proof}

In other words, for the systems from the class~$\bar{\mathcal L}_0$,
the total order of systems to be integrated can be efficiently lowered by~$n-2$.
It is obvious that we need no further quadratures after the equivalence transformation~$\mathscr T$ at all
if the system~$\bar L_\theta$ is homogeneous,
i.e., it belongs, modulo neglecting the arbitrary elements~$\boldsymbol f$, to the class~$\mathcal L_0$.

Solving systems from the class~$\bar{\mathcal L}_1$ requires more delicate approaches
involving specific Lie symmetries of these systems.
At first consider an approach based on knowing single specific symmetries of systems from the class~$\mathcal L_1$.

\begin{proposition}\label{pro:SLODEIntergrationOfSytemsFromL1WithOneNontrivSym}
If a system~$L_\vartheta$ from the class~$\mathcal L_1$ admits a known Lie-symmetry vector field with nonzero $t$-component,
then its integration is reduced by one quadrature and algebraic operations to constructing a fundamental matrix
of a linear system of $n$ first-order ordinary differential equations,
i.e., the total order of system to be integrated can be efficiently lowered by~$n$.
\end{proposition}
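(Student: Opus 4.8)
The plan is to use the known symmetry to rectify it to a translation in a suitably chosen new independent variable; by the linearity of the class this rectification forces the transformed system to have constant coefficients, and the announced cost can then be read off. First I would discard the linear-superposition part of the symmetry. By Lemma~\ref{lem:MAIofLvartheta} the given symmetry has the form $Q=\tau\p_t+(\eta^{ab}x^b+\chi^a)\p_{x^a}$ with $\tau\ne0$, where $(\tau,\eta)$ satisfy the classifying condition~\eqref{eq:ClassifyingCondLvartheta} irrespectively of~$\boldsymbol\chi$; hence $Q^{\rm ess}:=\tau\p_t+\eta^{ab}x^b\p_{x^a}\in\mathfrak g^{\rm ess}_\vartheta\subset\mathfrak g_\vartheta$ is itself a known symmetry, and I work with it. Since the consideration is local and $\tau\not\equiv0$, I restrict to an interval on which $\tau\ne0$.

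Next I would pass to canonical coordinates of~$Q^{\rm ess}$. Setting $s:=\int\mathrm dt/\tau$, which is the single quadrature, gives $Q^{\rm ess}s=1$. For the dependent variables I take $\boldsymbol r:=\Psi^{-1}\boldsymbol x$, where $\Psi$ is a fundamental matrix of the linear system of $n$ first-order ordinary differential equations $\Psi_t=\tau^{-1}\eta\Psi$. Using $\p_t(\Psi^{-1})=-\tau^{-1}\Psi^{-1}\eta$, a direct computation yields $Q^{\rm ess}\boldsymbol r=\boldsymbol0$, so $Q^{\rm ess}$ is rectified to~$\p_s$. Because $s=s(t)$ and $\tilde{\boldsymbol x}=\Psi^{-1}\boldsymbol x$ is linear and homogeneous in~$\boldsymbol x$ with a $t$-dependent matrix, this change of variables is an equivalence transformation of the form~\eqref{eq:EquivGroupbarLA} with $T=s$, $H=\Psi^{-1}$ and $\boldsymbol h=\boldsymbol0$, so it maps $L_\vartheta$ to a system $L_{\hat\vartheta}$ of the class~$\mathcal L$ by Corollary~\ref{cor:EquivGroupoidL}.

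In the coordinates $(s,\boldsymbol r)$ the image system admits $\p_s$ as a Lie symmetry. Applying the classifying condition~\eqref{eq:ClassifyingCondLvartheta} to~$L_{\hat\vartheta}$ with $\tau=1$ and $\eta=0$ forces $\hat A_s=0$ and $\hat B_s=0$, that is, $L_{\hat\vartheta}$ has constant matrix coefficients. A normal homogeneous linear system with constant coefficients is integrated in closed form by algebraic operations alone, via its characteristic roots, the associated Jordan form and the corresponding matrix exponential. Pulling the resulting fundamental set of solutions back through $\boldsymbol x=\Psi\boldsymbol r$ and $t=t(s)$ then solves~$L_\vartheta$. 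Consequently the integration of~$L_\vartheta$ costs exactly one quadrature for~$s$, a collection of algebraic operations for~$L_{\hat\vartheta}$ and for inverting the coordinate change, and the construction of the fundamental matrix~$\Psi$ of an $n$-dimensional first-order linear system, so the total order to be integrated drops from $2n$ to~$n$.

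I expect the main point to be the verification that the coordinate change stays inside the class and rectifies~$Q^{\rm ess}$ --- namely the identification of the invariants of~$Q^{\rm ess}$ with $\Psi^{-1}\boldsymbol x$ for $\Psi_t=\tau^{-1}\eta\Psi$ together with the check $Q^{\rm ess}\boldsymbol r=\boldsymbol0$ --- and, coupled with it, the implication via~\eqref{eq:ClassifyingCondLvartheta} that invariance under~$\p_s$ is equivalent to constancy of the coefficients; everything after that is routine. This parallels the structure of Proposition~\ref{pro:IntergrationOfeqsFrombarL0}, the only genuine remaining integration again being a fundamental matrix of an $n$-dimensional first-order linear system.
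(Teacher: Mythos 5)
Your proposal is correct and follows essentially the same route as the paper's proof: discard the superposition part of the symmetry, straighten the remaining vector field by a transformation $\tilde t=T(t)$, $\tilde{\boldsymbol x}=H(t)\boldsymbol x$ with $T_t=1/\tau$ and $\tau H_t+H\eta=0$, observe that $\p_{\tilde t}$-invariance forces constant coefficients, and integrate algebraically. The only cosmetic difference is that you realize $H$ as the inverse of a fundamental matrix of $\tau\Psi_t=\eta\Psi$ while the paper takes the transpose of a fundamental matrix of $\tau\boldsymbol y_t+\eta\boldsymbol y=\boldsymbol0$; both satisfy the same matrix equation and carry the same cost, so the accounting (one quadrature, one $n$-dimensional first-order fundamental matrix, the rest algebraic) is identical.
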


\begin{proof}
According to the assumption, the system~$L_\vartheta$ is invariant with respect to a vector field~$Q$
of the form~\eqref{eq:GenElementOfInvAlgOfLvartheta} with known functions $\tau$, $\eta^{ab}$ and $\chi^a$ of~$t$, and $\tau\ne0$.
Then this system is also invariant with respect to the vector field~$\hat Q$
with the same values of $\tau$ and $\eta^{ab}$, whereas $\chi^a=0$.
We can straighten the vector field~$\hat Q$ to~$\p_{\tilde t}$
by a point transformation~$\Phi$: $\tilde t=T(t)$, $\tilde{\boldsymbol x}=H(t)\boldsymbol x$,
where $\tau T_t=1$ and $\tau H_t+H\eta=0$.
In the variables $(\tilde t,\tilde{\boldsymbol x})$, the system~$L_\vartheta$ takes the form
$\tilde{\boldsymbol x}_{\tilde t\tilde t}=\tilde A\tilde{\boldsymbol x}_{\tilde t}+\tilde B\tilde{\boldsymbol x}$,
where $\tilde A$ and~$\tilde B$ are constant matrices
in view of the invariance of the system with respect to the vector field~$\p_{\tilde t}$.
Since the last form of the system~$L_\vartheta$ is solved using only algebraic operations,
the solution of this system reduces to finding $T$ from the equation $T_t=1/\tau$ by quadrature
and constructing~$H$ as the transpose of a fundamental matrix of the system of $n$ first-order ordinary differential equations
$\tau\boldsymbol y_t+\eta\boldsymbol y=\boldsymbol0$.
\end{proof}

Based on the proof of Proposition~\ref{pro:SLODEIntergrationOfSytemsFromL1WithOneNontrivSym},
we formulate the following step-by-step procedure
for the integration of a system~$L_\vartheta$ from the class~$\mathcal L_1$
with involving its known Lie-symmetry vector field
$Q=\tau\p_t+(\eta^{ab}x^b+\chi^{a})\p_{x^a}$ with the nonzero $t$-component~$\tau$:

\begin{enumerate}\itemsep=0ex
\item
Replace $Q$ by its counterpart $Q'=\tau\p_t+\eta^{ab}x^b\p_{x^a}$
with the zero values of~$\chi^a$.
\item
Find~$H$ as the transpose of a fundamental matrix of solutions
of the system of $n$ first-order ordinary differential equations
$\tau\boldsymbol y_t+\eta\boldsymbol y=\boldsymbol0$.
\item
According to~\eqref{eq:EquivGroupbarLB}--\eqref{eq:EquivGroupbarLC},
compute the matrices~$\tilde A$ and~$\tilde B$, which are necessarily constant.
\item
Solve the system~$L_{\tilde\vartheta}$ with constant matrix coefficients $\tilde\vartheta=(\tilde A,\tilde B)$.
\item
Find $T$ by integrating the equation $T_t=1/\tau$.
\item
Push forward the found general solution of the system~$L_{\tilde\vartheta}$
to the general solution of the system~$L_\vartheta$
by the transformation $\Phi^{-1}$, $\boldsymbol x(t)=H^{-1}(t)\tilde{\boldsymbol x}\big(T(t)\big)$.
\end{enumerate}

\begin{corollary}\label{cor:SLODEIntergrationOfSytemsFromBarL1WithOneNontrivSym}
If a system~$\bar L_\theta$ from the class~$\bar{\mathcal L}_1$ admits a known Lie-symmetry vector field with nonzero $t$-component,
then its integration is reduced to finding a fundamental matrix
of a~linear system of $n$ first-order ordinary differential equations,
$n+1$ quadratures and algebraic operations,
i.e., the total order of system to be integrated can be efficiently lowered by~$n$.
\end{corollary}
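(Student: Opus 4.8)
The plan is to mirror the proof of Proposition~\ref{pro:SLODEIntergrationOfSytemsFromL1WithOneNontrivSym}, but to straighten the given symmetry \emph{directly} on the inhomogeneous system instead of first passing to its homogeneous counterpart; the latter route would force us to integrate the general forcing~$\boldsymbol f$ against a fundamental system and would therefore cost far more than $n+1$ quadratures. By Corollary~\ref{cor:MAIofBarLtheta}, every Lie-symmetry vector field of $\bar L_\theta\in\bar{\mathcal L}_1$ has the affine-in-$\boldsymbol x$ form $Q=\tau\p_t+(\eta^{ab}x^b+g^a)\p_{x^a}$, and since $Q$ is known, the functions $\tau$, $\eta^{ab}$ and the $\boldsymbol x$-free part~$g^a$ are all known functions of~$t$; the hypothesis of the corollary means $\tau\ne0$ on the interval of consideration.

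First I would straighten $Q$ to~$\p_{\tilde t}$ by an equivalence transformation of the form~\eqref{eq:EquivGroupbarLA}, namely $\tilde t=T(t)$, $\tilde{\boldsymbol x}=H(t)\boldsymbol x+\boldsymbol k(t)$. Imposing $Q(\tilde t)=1$ and $Q(\tilde{\boldsymbol x})=\boldsymbol0$ and splitting the latter into the coefficient of~$\boldsymbol x$ and the $\boldsymbol x$-free part yields
\[
\tau T_t=1,\qquad \tau H_t+H\eta=0,\qquad \tau\boldsymbol k_t+H\boldsymbol g=\boldsymbol0 .
\]
Exactly as in Proposition~\ref{pro:SLODEIntergrationOfSytemsFromL1WithOneNontrivSym}, $H$ is obtained as the transpose of a fundamental matrix of the linear system $\tau\boldsymbol y_t+\eta\boldsymbol y=\boldsymbol0$ of $n$ first-order equations, and $T$ is found from $T_t=1/\tau$ by a single quadrature. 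The new feature is the last equation: since $H$, $\tau$ and the known $\boldsymbol g$ are already in hand, $\boldsymbol k$ is recovered from $\boldsymbol k_t=-\tau^{-1}H\boldsymbol g$ by $n$ further quadratures. This produces the straightening transformation~$\Phi$ of the form~\eqref{eq:EquivGroupbarL}.

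Because the transformed system is invariant with respect to~$\p_{\tilde t}$, it has no explicit $\tilde t$-dependence and hence is a \emph{constant-coefficient} inhomogeneous system $\tilde{\boldsymbol x}_{\tilde t\tilde t}=\tilde A\tilde{\boldsymbol x}_{\tilde t}+\tilde B\tilde{\boldsymbol x}+\tilde{\boldsymbol f}$ with constant $\tilde A$, $\tilde B$ and a constant vector~$\tilde{\boldsymbol f}$, the coefficients being computed algebraically from~\eqref{eq:EquivGroupbarLB}--\eqref{eq:EquivGroupbarLD}. Rewriting it as the associated first-order $2n$-dimensional system with companion matrix and \emph{constant} inhomogeneity, its general solution is the algebraically obtained homogeneous solution (via the Jordan form of a constant matrix, as in the cited proposition) plus one particular solution, and the constancy of the forcing makes such a particular solution available by purely algebraic operations, with no genuine quadrature. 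Pushing the result back by $\Phi^{-1}$, $\boldsymbol x(t)=H^{-1}(t)\big(\tilde{\boldsymbol x}(T(t))-\boldsymbol k(t)\big)$, is again algebraic, and the total bill is precisely one fundamental matrix of an $n$-dimensional first-order linear system, $n+1$ quadratures (one for~$T$, $n$ for~$\boldsymbol k$) and algebraic operations, so that the effective total order drops by~$n$.

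The step requiring the most care is the quadrature bookkeeping, and specifically the observation that $\tilde{\boldsymbol f}$ in the straightened system is \emph{constant}: this is what keeps the particular solution algebraic and prevents the $n$ quadratures spent on~$\boldsymbol k$ from being compounded by the $2n$ quadratures that a naive variation-of-parameters treatment of a genuinely $\tilde t$-dependent forcing would demand. I would therefore verify explicitly both that $\p_{\tilde t}$-invariance forces $\tilde{\boldsymbol f}$ to be constant and that the $\boldsymbol x$-free part of $Q(\tilde{\boldsymbol x})=\boldsymbol0$ contributes exactly the $n$ scalar equations determining~$\boldsymbol k$.
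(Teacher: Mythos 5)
Your proposal is correct and follows essentially the same route as the paper's proof: the paper likewise straightens the full vector field~$Q$ (rather than its homogeneous part~$\hat Q$) via $\tilde t=T(t)$, $\tilde{\boldsymbol x}=H(t)\boldsymbol x+\boldsymbol h(t)$ with $\tau T_t=1$, $\tau H_t+H\eta=0$ and $\tau\boldsymbol h_t+H\boldsymbol\chi=\boldsymbol0$, charging $n$ extra quadratures for~$\boldsymbol h$. Your explicit check that the transformed forcing is constant (hence the particular solution is algebraic) is a sound addition that the paper leaves implicit.
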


\begin{proof}
The only difference from the proof of Proposition~\ref{pro:SLODEIntergrationOfSytemsFromL1WithOneNontrivSym}
is that we straighten the vector field~$Q$ instead of~$\hat Q$ to~$\p_{\tilde t}$ using
a point transformation~$\Phi$: $\tilde t=T(t)$, $\tilde{\boldsymbol x}=H(t)\boldsymbol x+\boldsymbol h(t)$,
where $\tau T_t=1$, $\tau H_t+H\eta=0$ and $\tau\boldsymbol h_t+H\boldsymbol\chi=0$.
Thus, we in addition need $n$ quadratures to find~$\boldsymbol h$ from the system $\boldsymbol h_t=-H\boldsymbol\chi/\tau$.
\end{proof}

For the above integration procedure to be applicable to systems from the class~$\bar{\mathcal L}_1$,
it suffices to only change the last two steps of this procedure
via additionally constructing $\boldsymbol h$ as a solution of the system $\boldsymbol h_t=-H\boldsymbol\chi/\tau$
in step~5 and modifying the transformation in step~6 to  $\boldsymbol x(t)=H^{-1}(t)\tilde{\boldsymbol x}\big(T(t)\big)-H^{-1}(t)\boldsymbol h(t)$.

Given a system~$L_\vartheta$ from the class~$\mathcal L_1$
that is Lie-invariant with respect to two vector fields with linearly independent $t$-components,
the simultaneous use of these two symmetries for integrating~$L_\vartheta$ is more sophisticated
than the above procedure.

\begin{theorem}\label{thm:SLODEIntergrationOfSytemsFromL1WithTwoNontrivSyms}
Suppose that a system~$L_\vartheta$ from the class~$\mathcal L_1$ admits a Lie invariance algebra
that is spanned by two known Lie-symmetry vector fields with linearly independent $t$-components.
Then the integration of this system is reduced by algebraic operations to constructing
a fundamental matrix of a linear system of $n$ first-order ordinary differential equations
that splits into subsystems not coupled to each other,
and each of these subsystems itself is partially coupled
(see the end of the proof for a more specific description of the block structure of the system).
\end{theorem}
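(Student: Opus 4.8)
The plan is to use the two given symmetries to bring $L_\vartheta$ to a constant-coefficient, scaling-invariant form and then to read off the block structure from the eigenvalue grading induced by the scaling generator. First I would invoke the analysis of the case $k=2$ in Section~\ref{sec:DescriptionOfEssLieSymExtensions} together with Remark~\ref{rem:SLODEConstCoeffsRepresentatives}: modulo the $G^\sim_{\mathcal L}$-equivalence, the two Lie-symmetry vector fields can be taken in the form $P=\p_t+\Upsilon^{ab}x^b\p_{x^a}$ and $D=t\p_t+\Lambda^{ab}x^b\p_{x^a}$ with constant matrices $\Upsilon,\Lambda\in\mathfrak{sl}(n,\mathbb F)$ satisfying $[\Lambda,\Upsilon]=\Upsilon$, where $\Lambda$ may be assumed diagonalizable (over $\mathbb C$ directly, and over $\mathbb R$ after the complexification argument of Section~\ref{sec:DescriptionOfEssLieSymExtensions}). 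Equivalently, $L_\vartheta$ is $G^\sim_{\mathcal L}$-equivalent to a constant-coefficient system $L_{\tilde\vartheta}$ with $\tilde A=-2\Upsilon$ and $\tilde B=V(0)-\Upsilon^2$, for which the invariance under $D$ yields the grading relations $[\Lambda,\tilde A]=\tilde A$ and $[\Lambda,\tilde B]=2\tilde B$.

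Next I would decompose $\mathbb F^n=\bigoplus_i\mathscr U_i$ into the eigenspaces $\mathscr U_i$ of $\Lambda$ for its distinct eigenvalues $\mu_i$. The grading relations force the $(i,j)$-block of $\tilde A$ to vanish unless $\mu_i-\mu_j=1$ and that of $\tilde B$ unless $\mu_i-\mu_j=2$, and likewise $\Upsilon_{ij}=0$ unless $\mu_i=\mu_j+1$. Grouping the eigenvalues into their chains (maximal subsets of $\sigma(\Lambda)$ whose successive elements differ by $1$ or $2$, exactly as in the $k=2$ analysis) therefore partitions the coordinates into blocks that are mutually uncoupled, while within a single chain, ordered by decreasing value along the chain, the coupling is strictly triangular. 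This is precisely the ``splits into subsystems not coupled to each other, each of which is partially coupled'' structure asserted in the statement, and I would record it explicitly in these terms at the end of the proof, distinguishing the couplings through $\Upsilon$ (eigenvalue difference $1$) from those through the position term $\tilde B$ (difference $2$).

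The remaining step is to convert ``solving $L_\vartheta$'' into ``constructing one fundamental matrix plus algebraic operations''. Here I would follow the reduction underlying Proposition~\ref{pro:SLODEIntergrationOfSytemsFromL1WithOneNontrivSym}: passing to the eigenbasis of $\Lambda$ is purely algebraic, and straightening $P$ is implemented by the point transformation whose matrix part is the fundamental matrix $H$ of the $n$-dimensional first-order linear system $H_t+H\Upsilon=0$. Because $\Upsilon$ coincides with its hat-part $\hat\Upsilon$ with respect to $\Lambda$, the coefficient matrix of this first-order system is block-triangular in the chain decomposition, so it is exactly the $n$-dimensional first-order system that splits into uncoupled, partially-coupled subsystems. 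Once $H$ is known, $L_\vartheta$ has been transformed into the constant-coefficient, scaling-invariant system $L_{\tilde\vartheta}$, whose companion matrix is block-triangular in the same chain structure; this system is integrated by purely algebraic operations (Jordan forms and back-substitution along each chain), and inverting the transformations recovers the general solution of $L_\vartheta$.

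The main obstacle, and the part requiring the most care, is the bookkeeping that turns the abstract grading into the asserted block form. One has to verify that the chain decomposition of $\Lambda$ is respected by $\Upsilon$, $\tilde A$ and $\tilde B$ simultaneously; exploit the indefiniteness of $\Lambda$ and $\Upsilon$ (replacing them by $\Lambda_{\rm s}$ and $\hat\Upsilon$, as in Section~\ref{sec:DescriptionOfEssLieSymExtensions}) so that $[\Lambda,\Upsilon]=\Upsilon$ holds exactly and the triangular pattern is clean; descend from the complexified picture back to the real field while keeping the first-order system real; and confirm that, apart from constructing the fundamental matrix of the single split first-order system, every remaining operation is indeed algebraic. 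Pinning down the precise within-chain triangular pattern is what gives the closing sentence of the theorem its concrete meaning.
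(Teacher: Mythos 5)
There is a genuine gap: your argument is essentially circular. You begin by invoking the $k=2$ classification and Remark~\ref{rem:SLODEConstCoeffsRepresentatives} to place the system, ``modulo the $G^\sim_{\mathcal L}$-equivalence,'' into the canonical form with constant matrices $\Upsilon$ and $\Lambda$ satisfying $[\Lambda,\Upsilon]=\Upsilon$. But the equivalence transformation realizing that reduction is precisely the object whose construction the theorem is quantifying. In the class $\mathcal L_1$ the given symmetries are $Q_\iota=\tau^\iota\p_t+(\eta^{\iota ab}(t)x^b+\chi^{\iota a}(t))\p_{x^a}$ with $t$-dependent $\tau^\iota$ and $\eta^\iota$, and straightening $Q'_1$ requires a matrix-valued solution $H(t)$ of $\tau^1H_t+H\eta^1=0$ --- an $n\times n$ first-order system with variable coefficients and no a priori block structure. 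Once you are already in the canonical picture, your first-order system $H_t+H\Upsilon=0$ with constant $\Upsilon$ is solved by $\mathrm e^{-t\Upsilon}$, i.e.\ by purely algebraic operations, so in your setting there is nothing left whose ``fundamental matrix'' needs to be constructed, and the theorem's statement about a split, partially coupled first-order system becomes vacuous. You establish the block structure for the wrong system: the canonical one, rather than the one that must actually be integrated from the given data $(\tau^\iota,\eta^\iota)$.

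The missing idea is the factorization of the straightening matrix. The paper observes that if $\Phi$ straightens $Q'_1$ and pushes $Q'_2$ to $\tilde t\p_{\tilde t}+\Lambda^{ab}\tilde x^b\p_{\tilde x^a}$ with constant $\Lambda$, then combining $\tau^1H_t+H\eta^1=0$ with $\tau^2H_t+H\eta^2=\Lambda H$ yields $H\zeta H^{-1}=\Lambda$ for the \emph{known} matrix $\zeta:=\eta^2-(\tau^2/\tau^1)\eta^1$. One can therefore compute algebraically, for each $t$, a generalized modal matrix $\hat H^{-1}(t)$ with $\hat H\zeta\hat H^{-1}=\Lambda$, and the residual factor $\check H:=H\hat H^{-1}$ then commutes with $\Lambda$. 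The commutation relation $[\Lambda,\check\eta^1]=0$ for $\check\eta^1:=(\tau^1\hat H_t+\hat H\eta^1)\hat H^{-1}$ is what forces the remaining equation $\tau^1\check H_t+\check H\check\eta^1=0$ to split into uncoupled blocks indexed by the distinct eigenvalues of $\Lambda$, each partially coupled along the flag of kernels $\mathop{\rm ker}(\Lambda-\mu_iE)^l$. Without peeling off the algebraically computable part $\hat H$, you have no mechanism for transferring the eigenvalue grading of $\Lambda$ onto the coefficient matrix of the first-order system that actually has to be solved; your chain-decomposition bookkeeping, correct as a description of the canonical form, never attaches to that system.
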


\begin{proof}
The beginning of the proof is similar to that of Proposition~\ref{pro:SLODEIntergrationOfSytemsFromL1WithOneNontrivSym}.
Under the theorem's assumption, Lemma~\ref{lem:MAIofLvartheta} implies that
the system~$L_\vartheta$ is invariant with respect to two vector fields of the form~\eqref{eq:GenElementOfInvAlgOfLvartheta},
$Q_\iota=\tau^\iota\p_t+(\eta^{\iota ab}x^b+\chi^{\iota a})\p_{x^a}$, $\iota=1,2$,
with known coefficients~$\tau^\iota$, $\eta^{\iota ab}$ and $\chi^{\iota a}$ depending at most on~$t$, and
the $t$-components~$\tau^1$ and~$\tau^2$ are linearly independent.
Then this system is also invariant with respect to the vector fields~$Q'_\iota$
with the same values of~$\tau^\iota$ and~$\eta^{\iota ab}$, whereas $\chi^{\iota a}=0$.
In other words, $Q'_\iota\in\mathfrak g^{\rm ess}_\vartheta$.
Consider an open domain~$\mathcal I$ in~$\mathbb F$ such that $\tau^1(t)\ne0$ for any $t\in\mathcal I$.
The algebra $\langle Q'_1,Q'_2\rangle$ is not commutative
since otherwise $\tau^1\tau^2_t-\tau^2\tau^1_t=0$, i.e., the \mbox{$t$-components} $\tau^1$ and~$\tau^2$ are linearly dependent on~$\mathcal I$.
We linearly recombine $Q'_1$ and~$Q'_2$ to have the commutation relation $[Q'_1,Q'_2]=Q'_1$,
and hence $\tau^1\tau^2_t-\tau^2\tau^1_t=\tau^1$, i.e., $(\tau^2/\tau^1)_t=1/\tau^1$ on~$\mathcal I$.

On the domain $\mathcal I\times\mathbb R^n$, we make a point transformation~$\Phi$: $\tilde t=\tau^2(t)/\tau^1(t)$, $\tilde{\boldsymbol x}=H(t)\boldsymbol x$,
where $H$ is an invertible matrix-valued function on~$\mathcal I$ satisfying the matrix differential equation $\tau^1H_t+H\eta^1=0$.
The transformation~$\Phi$ straightens the vector field~$Q'_1$ to~$\tilde Q_1:=\p_{\tilde t}$,
and thus it maps the system~$L_\vartheta$ to a system with constant matrix coefficients,
which is easily integrated using algebraic operations.

At this stage, we have reduced the integration of the system~$L_\vartheta$
to finding a particular invertible solution~$H$ of the matrix differential equation $\tau^1H_t+H\eta^1=0$
and algebraic operations.

The vector field~$Q'_2$ is pushed forward by~$\Phi$ to the vector field
$\tilde Q_2:=\tilde t\p_{\tilde t}+\Lambda^{ab}\tilde x^b\p_{\tilde x^a}$,
where the matrix~$\Lambda=(\Lambda^{ab}):=(\tau^2H_t+H\eta^2)H^{-1}$ is constant in view of the commutation relation \mbox{$[Q'_1,Q'_2]=Q'_1$}
pushed forward by~$\Phi$ to $[\tilde Q_1,\tilde Q_2]=\tilde Q_1$.
Combining the matrix equations \mbox{$\tau^1H_t+H\eta^1=0$} and $\tau^2H_t+H\eta^2=\Lambda H$,
we derive the equality
\[
H\zeta H^{-1}=\Lambda\quad\mbox{with}\quad\zeta:=\eta^2-\frac{\tau^2}{\tau^1}\eta^1.
\]
In other words, the matrix~$\zeta$ depending on~$t$ is similar to the constant matrix~$\Lambda$
with respect to the matrix~$H^{-1}$ also depending on~$t$.
Without loss of generality, we can assume that the matrix~$\Lambda$ has been put in Jordan normal form,
and then $H^{-1}$ is a generalized modal matrix for~$\zeta$.
(If this is not the case, we reduce the matrix~$\Lambda$ to its Jordan normal form by a linear transformation of~$\tilde{\boldsymbol x}$
with a constant invertible matrix~$C$ and merge this matrix with~$H$, $CH\rightsquigarrow H$.)
Since the matrix~$\zeta$ is known, we can find, using only algebraic tools, its Jordan normal form~$\Lambda$
and an invertible matrix~$\hat H$ whose inverse establishes the similarity of~$\zeta$ to~$\Lambda$,
$\hat H\zeta\hat H^{-1}=\Lambda$.

If $\mathbb F=\mathbb R$ and the matrix~$\Lambda$ has nonreal eigenvalues,
we can modify it and, consequently, the matrices~$\hat H^{-1}$, $\zeta$ and~$\eta^2$ in such a way
that the system~$L_\vartheta$ is invariant with respect to the new~$Q'_2$,
all the eigenvalues of the new matrix-valued function~$\eta^2$ are constant and real,
and the equality $\hat H\zeta\hat H^{-1}=\Lambda$ still holds;
cf.\ the discussion for $\mathbb F=\mathbb R$
in the end of the consideration of the case $k=2$ in Section~\ref{sec:DescriptionOfEssLieSymExtensions}.

Denote $\check H:=H\hat H^{-1}$.
Then $H=\check H\hat H$ and $\check H\Lambda\check H^{-1}=\Lambda$, i.e., $[\Lambda,\check H]=0$.
Therefore, $\check H$ is a solution of the Frobenius problem~\cite[Chapter~VIII]{gant1959A} with the matrix~$\Lambda$.
In particular, each generalized eigenspace of~$\Lambda$
or, moreover, the subspace $\mathop{\rm ker}(\Lambda-\lambda E)^l$
for arbitrary $\lambda\in\mathbb F$ and $l\in\mathbb N$
is invariant with respect to~$\check H$.
Since $L_\vartheta\in\mathcal L_1$, the matrix~$\Lambda$ necessarily has distinct eigenvalues;
see again the consideration of the case $k=2$ in Section~\ref{sec:DescriptionOfEssLieSymExtensions}.
This implies that the matrix~$\check H$ has a block-diagonal form
corresponding to the partition $\mathbb F^n=\bigoplus_{i=1}^r\mathscr U_i$ of~$\mathbb F^n$
into the generalized eigenspaces~$\mathscr U_i$ of~$\Lambda$,
and each of the diagonal block of~$H$ is of block-triangular form associated with the flag
\begin{gather}\label{eq:SLODEKernelFlag}
\mathop{\rm ker}(\Lambda-\mu_iE)\subset\mathop{\rm ker}(\Lambda-\mu_iE)^2\subset\dots\subset\mathop{\rm ker}(\Lambda-\mu_iE)^{m_i},
\end{gather}
where $\mu_i$ is the eigenvalue of~$\Lambda$ corresponding to this block, and
$m_i$ is the index of the eigenvalue~$\mu_i$, i.e.,
the maximum rank of generalized eigenvectors of~$\Lambda$ for this eigenvalue.

We represent the transformation~$\Phi$ as the composition $\Phi=\check\Phi\circ\hat\Phi$ of the transformations
$\hat\Phi$:~$\check t=t$, $\check{\boldsymbol x}=\hat H(t)\boldsymbol x$ and
$\check\Phi$: $\tilde t=\tau^2(\check t)/\tau^1(\check t)$, $\tilde{\boldsymbol x}=\check H(\check t)\check{\boldsymbol x}$.
The transformation~$\hat\Phi$ pushes forward the vector fields $Q'_\iota$
to the vector fields $\check Q_\iota=\tau^\iota(\check t)\p_{\check t}+\check\eta^{\iota ab}(\check t)\check x^b\p_{\check x^a}$, $\iota=1,2$, respectively,
where $\check\eta^\iota=(\tau^\iota\hat H_t+\hat H\eta^\iota)\hat H^{-1}$.
From these expressions for $\check\eta^\iota$, we derive that
\[
\check\zeta:=\check\eta^2-\frac{\tau^2}{\tau^1}\check\eta^1=\hat H\zeta\hat H^{-1}=\Lambda.
\]
The commutation relation $[\check Q_2,\check Q_1]=\check Q_1$
obtained via pushing forward the commutation relation $[Q'_1,Q'_2]=Q'_1$ by~$\hat\Phi$
implies that $[\Lambda,\check\eta^1]=0$,
i.e., any subspace $\mathop{\rm ker}(\Lambda-\lambda E)^l$ with $\lambda\in\mathbb F$ and $l\in\mathbb N$
is invariant with respect to~$\check\eta^1$.
Therefore, the matrix~$\check\eta^1$ is of the same block structure as that described above for the matrix~$\check H$.

The matrix equation $\tau^1H_t+H\eta^1=0$ in~$H$ reduces to the matrix equation
\begin{gather}\label{eq:SLODE2NontrivLieSymsSplitSystemForReducingPoinTrans}
\tau^1\check H_t+\check H\check\eta^1=0
\end{gather}
in~$\check H$,
which is natural since it is equivalent to the fact that
the transformation~$\check\Phi$ straightens the vector field~$\check Q_1$ to~$\tilde Q_1:=\p_{\tilde t}$.
The equation~\eqref{eq:SLODE2NontrivLieSymsSplitSystemForReducingPoinTrans}
considered as a system for the entries of~$\check H$
splits into the subsystems associated with the diagonal blocks~$\check H_{ii}$ of~$\check H$,
$\tau^1\check H_{ii,t}+\check H_{ii}\check\eta^1_{ii}=0$,
where the matrix coefficients~$\check\eta^1_{ii}$ are the respective diagonal blocks of~$\check\eta^1$.
Each of these subsystems is of partially coupled structure that is consistent with the flag~\eqref{eq:SLODEKernelFlag}
for the associated eigenvalue of~$\Lambda$.
Therefore, we can find~$\check H$ as the transpose of a fundamental matrix of the system of $n$ first-order ordinary differential equations
$\tau^1\boldsymbol y_t+\check\eta^1\boldsymbol y=\boldsymbol0$.

Let us more thoroughly describe the structure
of the diagonal blocks~$\check\eta^1_{ii}$, $i=1,\dots,r$, of the matrix~$\check\eta^1$,
which is required for understanding how to integrate this system.
Recall that $\sigma(\Lambda)=\{\mu_1,\dots,\mu_r\}$ is the spectrum of the matrix~$\Lambda$.
For each $i\in\{1,\dots,r\}$, by $p_i$ and $n_{i1}$,~\dots,~$n_{ip_i}$
we denote the number of elementary divisors of~$\Lambda$
associated with the eigenvalue~$\mu_i$ and their degrees, respectively.
Hence $n_{i1}+\dots+n_{ip_i}=n_i$, $i=1,\dots,r$, and $n_1+\dots+n_r=n$.
We take a canonical basis for~$\Lambda$ such that
the sequence of the lengths of the basis Jordan chains corresponding to~$\mu_i$
or, equivalently, the sequence of the degrees of related elementary divisors does not ascend.
In other words, there exist $s_i\in\{1,\dots,p_i\}$ and
$q_{i1},\dots,q_{i,s_i-1}\in\mathbb N_0$ with $0=:q_{i0}<q_{i1}<\dots<q_{is_i}:=p_i$
such that
\[
n_{i,q_{i,j-1}+1}=\dots=n_{iq_{ij}}>n_{i,q_{ij}+1},\quad j=1,\dots,s_i, \quad\mbox{where}\quad n_{i,p_i+1}:=0.
\]
Then we reorder the basis elements within the generalized eigenspace~$\mathscr U_i$ in the following way.
Adhering the fixed order of chains, we successively choose
the first vectors in the chains,
the second vectors in the chains of length greater than one,
the third vectors in the chains of length greater than two, etc.
In the modified basis, the submatrix~$\check\eta^1_{ii}$ is block-triangular.
The diagonal blocks of~$\check\eta^1_{ii}$ in the first portion of these blocks
are of sizes $q_{ij}':=q_{ij}-q_{i,j-1}$, $j=1,\dots,s_i$.
Thus, $q_{ij}'$ is the number of basis Jordan chains of the same length~$n_{iq_{ij}}$.
For the $j$th portion of the diagonal blocks of~$\check\eta^1_{ii}$, $j=2,\dots,s_i$,
we repeat the same blocks, just excluding the blocks that correspond to the chains of lengths less than $n_{iq_{ij}}$.

Due to the block structure of the matrix~$\check\eta^1$, the system $\tau^1\boldsymbol y_t+\check\eta^1\boldsymbol y=\boldsymbol0$
splits into $r$ subsystems that are not coupled to each other
and corresponds to the diagonal blocks~$\check\eta^1_{ii}$, $i=1,\dots,r$.
Moreover, each of these subsystems is partially coupled.
To integrate the $i$th subsystem, one should solve $s_i$ homogeneous linear systems
of $q_{i1}'$, \dots, $q_{is_i}'$ first-order ordinary differential equations, respectively,
and then compute $n_i-q_{i1}'$ quadratures for finding particular solutions
of involved inhomogeneous systems, which have the same matrices as the above homogeneous systems.

Finding~$\check H$ completes the construction of the transformation~$\Phi$.
\end{proof}

Rearranging the proof of Theorem~\ref{thm:SLODEIntergrationOfSytemsFromL1WithTwoNontrivSyms},
we can formulate the following step-by-step procedure
for integrating a system~$L_\vartheta$ from the class~$\mathcal L_1$
simultaneously using a special pair of its known Lie-symmetry vector fields,
$Q_\iota=\tau^\iota\p_t+(\eta^{\iota ab}x^b+\chi^{\iota a})\p_{x^a}$, $\iota=1,2$,
where the $t$-components~$\tau^1$ and~$\tau^2$ are linearly independent:

\begin{enumerate}\itemsep=0ex
\item
Replace $Q_\iota$ by their counterparts $Q'_\iota=\tau^\iota\p_t+\eta^{\iota ab}x^b\p_{x^a}$
with the zero values of~$\chi^{\iota a}$.
\item
Linearly recombine $Q'_1$ and~$Q'_2$ to get the commutation relation $[Q'_1,Q'_2]=Q'_1$.
\item
Set $T:=\tau^2/\tau^1$ and $\zeta:=\eta^2-T\eta^1$.
\item
Construct the Jordan normal form~$\Lambda$ of the matrix~$\zeta$
and a generalized modal matrix~$M$ for~$\zeta$ that is associated with~$\Lambda$.
Take $\hat H:=M^{-1}$. Thus, $\hat H\zeta\hat H^{-1}=\Lambda$.
\item
Compute the matrix $\check\eta^1:=(\tau^1\hat H_t+\hat H\eta^1)\hat H^{-1}$, which is block-diagonal.
\item
Find~$\check H$ as the transpose of a (block-diagonal) fundamental matrix of solutions
of the split system of $n$ first-order ordinary differential equations
$\tau^1\boldsymbol y_t+\check\eta^1\boldsymbol y=\boldsymbol0$.
\item
Set $H=\check H\hat H$ and, according to~\eqref{eq:EquivGroupbarLB}--\eqref{eq:EquivGroupbarLC},
compute the matrices~$\tilde A$ and~$\tilde B$, which are necessarily constant.
\item
Solve the system~$L_{\tilde\vartheta}$ with constant matrix coefficients $\tilde\vartheta=(\tilde A,\tilde B)$.
\item
Push forward the found general solution of the system~$L_{\tilde\vartheta}$
to the general solution of the system~$L_\vartheta$
by the transformation $\Phi^{-1}$, $\boldsymbol x(t)=H^{-1}(t)\tilde{\boldsymbol x}\big(T(t)\big)$.
\end{enumerate}

Step 6 is the only step, where one needs to integrate linear systems of ordinary differential equations.
In step 9, it is convenient to compute the inverse~$H^{-1}$ of the matrix~$H$ as~\mbox{$H^{-1}=M\check H^{-1}$}.

If the elementary divisors of the matrix~$\Lambda$ differ to each other,
the structure of the matrix~$\check\eta^1$ is especially simple,
and Theorem~\ref{thm:SLODEIntergrationOfSytemsFromL1WithTwoNontrivSyms} implies
the following assertion.

\begin{corollary}\label{cor:SLODECompleteIntergrationOfSytemsFromL1WithTwoNontrivSyms}
Let a system~$L_\vartheta$ from the class~$\mathcal L_1$ admit a Lie invariance algebra
that is spanned by two known Lie-symmetry vector fields
$Q_\iota=\tau^\iota\p_t+\eta^{\iota ab}x^b\p_{x^a}$, $\iota=1,2$, with linearly independent~$\tau^1$ and~$\tau^2$,
where $[Q_1,Q_2]=Q_1$,
and let the matrix $\zeta:=\eta^2-(\tau^2/\tau^1)\eta^1$ has no coinciding elementary divisors.
Then this system can be completely integrated by algebraic operations and at most $n+p-r$ quadratures,
where
$p$ is the number of elementary divisors of~$\zeta$ and
$r$ is the number of distinct eigenvalues of~$\zeta$.
\end{corollary}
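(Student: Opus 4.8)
The plan is to obtain this corollary as a direct specialization of Theorem~\ref{thm:SLODEIntergrationOfSytemsFromL1WithTwoNontrivSyms}, whose proof already does all the analytic work; what remains is to simplify the block–chain combinatorics under the extra hypothesis on elementary divisors and then to count quadratures. Since the given vector fields $Q_\iota=\tau^\iota\p_t+\eta^{\iota ab}x^b\p_{x^a}$ have vanishing $\chi^{\iota a}$ and satisfy $[Q_1,Q_2]=Q_1$, they coincide with their counterparts $Q'_\iota$ in that theorem, so its hypotheses hold verbatim. Its conclusion reduces the integration of $L_\vartheta$ to algebraic operations (forming the Jordan form $\Lambda$ of $\zeta$, a generalized modal matrix $\hat H$, the constant matrices $\tilde A,\tilde B$, solving the constant-coefficient system $L_{\tilde\vartheta}$, and pushing the solution back by $\Phi^{-1}$) together with finding a fundamental matrix of the split, partially coupled system $\tau^1\boldsymbol y_t+\check\eta^1\boldsymbol y=\boldsymbol0$, where $\check\eta^1$ is block-diagonal relative to the generalized eigenspaces of $\Lambda$.

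First I would record that elementary divisors are invariant under the similarity $\hat H\zeta\hat H^{-1}=\Lambda$, so the hypothesis that $\zeta$ has no coinciding elementary divisors transfers to $\Lambda$. In the notation of Theorem~\ref{thm:SLODEIntergrationOfSytemsFromL1WithTwoNontrivSyms} this means that for each eigenvalue $\mu_i$ the degrees $n_{i1},\dots,n_{ip_i}$ are pairwise distinct, so every group of equal chain lengths is a singleton: $s_i=p_i$ and $q'_{ij}=1$ for all $i$ and $j$. Consequently each diagonal block $\check\eta^1_{ii}$ degenerates to a block-triangular matrix with one-dimensional diagonal blocks, and the $i$th uncoupled subsystem of $\tau^1\boldsymbol y_t+\check\eta^1\boldsymbol y=\boldsymbol0$ reduces to $p_i$ scalar first-order linear homogeneous equations, each integrable by a single quadrature.

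It then suffices to tally the quadratures. The scalar homogeneous equations contribute $\sum_{i=1}^r p_i=p$ quadratures, whereas the inhomogeneous completions of that theorem require $n_i-q'_{i1}=n_i-1$ quadratures for the $i$th block, hence $\sum_{i=1}^r(n_i-1)=n-r$ in total; every remaining operation is algebraic. Adding the two contributions gives the bound $n+p-r$. The main point requiring care is the bookkeeping: one must verify that, under distinct elementary divisors, the ``partially coupled'' subsystems of Theorem~\ref{thm:SLODEIntergrationOfSytemsFromL1WithTwoNontrivSyms} genuinely collapse so that every coupled block becomes scalar, and that a scalar homogeneous linear first-order equation is counted as exactly one quadrature. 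Granting this, the estimate is immediate, and since the bound is stated with ``at most'' it is unaffected if some of these equations happen to be trivial.
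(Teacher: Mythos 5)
Your proposal is correct and follows essentially the same route as the paper: both specialize the block--chain analysis from the proof of Theorem~\ref{thm:SLODEIntergrationOfSytemsFromL1WithTwoNontrivSyms}, observe that distinct elementary divisors force $s_i=p_i$ and $q'_{ij}=1$ so that each block~$\check\eta^1_{ii}$ becomes triangular with scalar diagonal entries grouped by elementary divisor, and count $n_i+p_i-1$ quadratures per eigenvalue block, summing to $n+p-r$. The only presentational difference is that you make explicit the (standard) transfer of elementary divisors from~$\zeta$ to~$\Lambda$ under similarity, which the paper leaves implicit.
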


\begin{proof}
The matrix~$\check\eta^1$ is necessarily block-diagonal with $r$ main-diagonal blocks,
where the $i$th block~$\check\eta^1_{ii}$ corresponds to the $i$th eigenvalue~$\mu_i$ of~$\Lambda$
(or, equivalently, of~$\zeta$), $i=1,\dots,r$.
Since the elementary divisors of~$\Lambda$ for the eigenvalue~$\mu_i$ are different,
we can reorder the canonical basis for~$\Lambda$ in such a way
that the submatrix~$\check\eta^1_{ii}$ is triangular
and the sequence of its diagonal elements splits into the subsequences
being in one-to-one correspondence with the elementary divisors of~$\Lambda$ for~$\mu_i$.
Each of these subsequences is constituted by identical entries
and its length coincides with the degree of the associated elementary divisor.
For the system $\tau\boldsymbol y_t+\check\eta^1\boldsymbol y=\boldsymbol0$,
this means that it necessarily splits into $r$ subsystems that are not coupled to each other
and the $i$th subsystem is integrated at most $n_i+p_i-1$ quadratures,
where $n_i$ is the algebraic multiplicity of the eigenvalue~$\mu_i$ of~$\Lambda$,
and $p_i$ is the number of the elementary divisors of~$\Lambda$ for~$\mu_i$.
\end{proof}

\begin{remark}\label{rem:SLODEWeiNormanApproach}
Under certain additional restrictions, the integration of auxiliary linear systems of first-order differential equations
that arise in the assertions of this section
or in the course of mapping systems from the class~$\mathcal L_1$ to ones from the class~$\mathcal L'_1$
also become algorithmic, e.g., within the framework of the Wei--Norman approach~\cite{wei1963a,wei1964a}.
More specifically, let a matrix-valued function~$F$ of~$t$ be represented in the form
$F(t)=\sum_{l=1}^m\varphi^l(t)K_l$, where $\varphi^1$, \dots, $\varphi^m$ are scalar functions of~$t$,
and $K_1$, \dots, $K_m$ are linearly independent constant matrices spanning a Lie algebra~$\mathfrak f$.
Then the solution of the matrix Cauchy problem $H_t=F(t)H$, $H(0)=E$
admits the representation $H(t)=\prod_{l=1}^m\exp(\psi^l(t)K_l)$.
The tuple $(\psi^1,\dots,\psi^m)$ is the solution of the Cauchy problem
$\psi^l_t=\sum_{l'=1}^m\varrho^{ll'}(\psi^1,\dots,\psi^m)\varphi^{l'}(t)$, $\psi^l(0)=0$, $l=1,\dots,m$.
Here the coefficients~$\varrho^{ll'}$ are analytic functions of~$(\psi^1,\dots,\psi^m)$
that are defined only by the coefficients $\varphi^1$, \dots, $\varphi^m$
and the structure constants of~$\mathfrak f$ in the basis $(K_1,\dots,K_m)$.
Thus, the system of ordinary differential equations for~$(\psi^1,\dots,\psi^m)$ is nonlinear in general.
At the same time, if $\mathbb F=\mathbb C$ and the algebra~$\mathfrak f$ is solvable,
then the basis $(K_1,\dots,K_m)$ can be chosen in such a way that $\varrho^{ll'}=0$, $l'>l$,
and $\varrho^{ll'}$ depends only on $\psi^{l''}$ with $l''<l$.
This means that the system for~$\psi^1$, \dots, $\psi^m$ can be integrated by quadratures for such~$\mathfrak f$.
As a result, the solution of the matrix equation $H_t=F(t)H$ associated with the solvable Lie algebra~$\mathfrak f$
reduces to computing exponents of constant matrices.
The case of abelian~$\mathfrak f$ is particularly simple;
then $\psi^l(t)=\int_0^t\varphi^l(t')\,{\rm d}t'$, $l=1,\dots,m$.
\end{remark}

\section{Systems of two equations}\label{sec:SLODEn2}

As an illustrative example of application of the theory developed,
we carry out the complete group classification of normal linear systems of two second-order ordinary differential equations, i.e., $n=2$.
For this value of~$n$, systems~$L'_V$ from the class $\mathcal L'_1$ have the form
\[
\boldsymbol x_{tt} =V(t)\boldsymbol x, \quad V(t) = \begin{pmatrix}
 V^{11}(t) & V^{12}(t) \\
 V^{21}(t) & V^{22}(t)
 \end{pmatrix},
\]
where $\boldsymbol x=(x^1,x^2)$, and the arbitrary-element tuple~$V$ runs through the set of $2\times 2$ matrix-valued functions of~$t$
that are not proportional to the $2\times 2$ identity matrix~$E$ with time-dependent proportionality factors.

Recall that for any system~$L'_V$ from the class~$\mathcal L'_1$,
its maximal Lie invariance algebra~$\mathfrak g_V^{}$ is the semidirect sum
$\mathfrak g_V^{}=\mathfrak g^{\rm ess}_V\lsemioplus\mathfrak g^{\rm lin}_V$,
where
\begin{gather*}
\mathfrak g^{\rm lin}_V:=\big\{\chi^a(t)\p_{x^a}\mid\boldsymbol\chi=(\chi^1,\chi^2)^{\mathsf T}\ \mbox{is a solution of}\ L'_V\big\},\\
\mathfrak g^{\rm ess}_V:=\big\{\tau(t)\p_t+\eta^{ab}(t)x^b\p_{x^a}\mid(\tau,\eta)\ \mbox{is a solution of}\ \eqref{eq:ClassifyingCondL'_V}\big\}
\end{gather*}
are respectively the four-dimensional abelian ideal, which is associated with the linear superposition of solutions,
and its complementary subalgebra called the essential Lie invariance algebra of system~$L'_V$,
cf.\ the end of Section~\ref{sec:SLODEPreliminaryAnalysisOfLieSyms} and the beginning of Section~\ref{sec:DescriptionOfEssLieSymExtensions}.
(In the above formulas and in what follows the indices~$a$ and~$b$ run from~1 to~2.)
This leads to the conclusion that it is more natural to classify the essential Lie invariance algebras of the systems from the class~$\mathcal L'_1$
than the maximal Lie invariance algebras of these systems.

The essential Lie invariance algebra~$\mathfrak g^{\rm ess}_V$ of any system~$L'_V$ from the class $\mathcal L'_1$
contains the subalgebra $\mathfrak s^{\rm vf}:=\{Q_\Gamma\mid\Gamma\in\mathfrak s\}$,
where $Q_\Gamma:=\Gamma^{ab}x^b\p_{x^a}$, and $\mathfrak s$ is a subalgebra of $\mathfrak{sl}(2,\mathbb F)$
whose double centralizer coincides with it,
${\rm C}_{\mathfrak{sl}(2,\mathbb F)}({\rm C}_{\mathfrak{sl}(2,\mathbb F)}(\mathfrak s))=\mathfrak s$.
We choose the basis in $\mathfrak{sl}(2,\mathbb F)$ that consists of the matrices
\begin{gather*}
S_1=\begin{pmatrix}
 0 & 1 \\
 0 & 0 \end{pmatrix}, \quad
S_2= \begin{pmatrix}
 1 & \hphantom{-}0 \\
 0 & -1
 \end{pmatrix}, \quad
S_3=\begin{pmatrix}
 \hphantom{-}0 & 0 \\
 -1 & 0
 \end{pmatrix}.
 \end{gather*}
Thus, $[S_1,S_2]=-2S_1$, $[S_2,S_3]=-2S_3$, $[S_1,S_3]=-S_2$.
In the case $\mathbb F =\mathbb C$,
a complete list of ${\rm SL}(2,\mathbb F)$-inequivalent subalgebras~$\mathfrak s$ of $\mathfrak{sl}(2,\mathbb F)$
is exhausted by the subalgebras
$\{0 \}$, $\langle S_1 \rangle$, $\langle S_2 \rangle$, $\langle S_1,S_2\rangle$ and $\mathfrak{sl}(2,\mathbb F)$ itself,
whereas for $\mathbb F =\mathbb R$, the list should be extended by the one more subalgebra $\langle S_1+S_3 \rangle$.
The double centralizers ${\rm C}_{\mathfrak{sl}(2,\mathbb F)}({\rm C}_{\mathfrak{sl}(2,\mathbb F)}(\mathfrak s))$ of these subalgebras
are respectively $\{0 \}$, $\langle S_1 \rangle$, $\langle S_2 \rangle$, $\mathfrak{sl}(2,\mathbb F)$, $\mathfrak{sl}(2,\mathbb F)$ and $\langle S_1+S_3 \rangle$.
Since ${\rm C}_{\mathfrak{sl}(2,\mathbb F)}({\rm C}_{\mathfrak{sl}(2,\mathbb F)}(\mathfrak s))\ne \mathfrak s$ if and only if
$\mathfrak s = \langle S_1,S_2\rangle$, this is the only proper subalgebra of $\mathfrak{sl}(2,\mathbb F)$
that is not appropriate for using in the course of group classification of the class~$\mathcal L'_1$ with $n=2$.
The subalgebra~$\mathfrak s=\mathfrak{sl}(2,\mathbb F)$ is also not relevant for this classification in view of the fact
that any system possessing the corresponding algebra~$\mathfrak s^{\rm vf}$ belongs to the class~$\mathcal L''_0$.

\begin{theorem}\label{thm:SLODEGroupClassification(n=2)}
For $\mathbb F =\mathbb C$, a complete list of \smash{$G^\sim_{\mathcal L'}$}-inequivalent essential Lie-symmetry extensions
in the class~$\mathcal L'_1$ with $n=2$ is exhausted by the following cases:
{\rm
\begin{enumerate}\itemsep=0.5ex\setcounter{enumi}{-1}
\item\label{case:SLODEn20}
General case $V(t)$: \
$\mathfrak g^{\rm ess}_V=\langle I\rangle$;
\item\label{case:SLODEn2(k=0)1}
$V=v(t)S_1$: \
$\mathfrak g^{\rm ess}_V=\langle I,\,x^2\p_{x^1}\rangle$;
\item\label{case:SLODEn2(k=0)2}
$V=v(t)S_2$: \
$\mathfrak g^{\rm ess}_V=\langle I,\,x^1\p_{x^1}-x^2\p_{x^2}\rangle$;
\item\label{case:SLODEn2(k=1)1}
$V=\varepsilon E+(\beta_1-2\beta_2t+\beta_3t^2)S_1+(\beta_2-\beta_3 t)S_2+\beta_3S_3$, $(\beta_2,\beta_3)\ne(0,0)$: \
$\mathfrak g^{\rm ess}_V=\langle I,\,\p_t+x^2\p_{x^1}\rangle$;
\item\label{case:SLODEn2(k=1)2}
$V=\varepsilon E+\beta_1{\rm e}^{2t}S_1+\beta_2S_2+\beta_3{\rm e}^{-2t}S_3$, \ $(\beta_1\beta_2,\beta_2\beta_3,\beta_3\beta_1)\ne(0,0,0)$:\\
$\mathfrak g^{\rm ess}_V=\langle I,\,\p_t+x^1\p_{x^1}-x^2\p_{x^2}\rangle$;
\item\label{case:SLODEn2(k=1)3}
$V=\varepsilon E+{\rm e}^{2\gamma t}S_1$, \ $4\varepsilon\ne\gamma^2$: \
$\mathfrak g^{\rm ess}_V=\langle I,\,x^2\p_{x^1},\,\p_t+\gamma(x^1\p_{x^1}-x^2\p_{x^2})\rangle$;
\item\label{case:SLODEn2(k=1)4}
$V=\varepsilon E+S_2$: \
$\mathfrak g^{\rm ess}_V=\langle I,\,x^1\p_{x^1}-x^2\p_{x^2},\,\p_t\rangle$;
\item\label{case:SLODEn2(k=2)}
$V=S_1$: \
$\mathfrak g^{\rm ess}_V=\langle I,\,x^2\p_{x^1},\,\p_t,\,t\p_t+2x^1\p_{x^1}\rangle$.
\end{enumerate}}
If $\mathbb F=\mathbb R$, then this list is supplemented with three more cases,
{\rm
\begin{enumerate}\itemsep=0.5ex\renewcommand{\theenumi}{\arabic{enumi}$^{\mathbb R}$}
\item\label{case:SLODEn2(k=0)1R}
$V=v(t)(S_1+S_3)$: \
$\mathfrak g^{\rm ess}_V=\langle I,\,x^1\p_{x^2}-x^2\p_{x^1}\rangle$;
\setcounter{enumi}{3}
\item\label{case:SLODEn2(k=1)2R}
$V=\varepsilon E+\mu(S_1+S_3)+\nu\cos(2t)(S_1-S_3)+\nu\sin(2t)S_2$, \ $\nu\ne0$: \
$\mathfrak g^{\rm ess}_V=\langle I,\,\p_t+x^2\p_{x^1}-x^1\p_{x^2}\rangle$;
\setcounter{enumi}{5}
\item\label{case:SLODEn2(k=1)4R}
$V=\varepsilon E+S_1+S_3$: \
$\mathfrak g^{\rm ess}_V=\langle I,\,x^1\p_{x^2}-x^2\p_{x^1},\,\p_t\rangle$.
\end{enumerate}}

\noindent
Here $\varepsilon,\gamma,\mu,\nu,\beta_1,\beta_2,\beta_3\in\mathbb F$, $I:=x^1\p_{x^1}+x^2\p_{x^2}$,
and $v$ runs through the set of functions of~$t$ with
nonzero Wronskian of $(t^2v)_t$, $tv_t$, $v_t$ and~$v$.
Modulo the \smash{$G^\sim_{\mathcal L'}$}-equivalence,
$V$ is a nonzero traceless matrix-valued function of~$t$ in Case~\ref{case:SLODEn20},
$\beta_2=0$ if $\beta_3\ne0$ or $\beta_1=0$ if $\beta_3=0$ and $\beta_2\ne0$ in Case~\ref{case:SLODEn2(k=1)1},
one of the nonzero $\beta_1$ or $\beta_3$ is equal to~$1$ in Case~\ref{case:SLODEn2(k=1)2},
$\gamma\in\{0,1\}$ in Case~\ref{case:SLODEn2(k=1)3},
and $\nu>0$ in  Case~\ref{case:SLODEn2(k=1)2R}.
\end{theorem}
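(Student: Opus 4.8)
The plan is to classify, up to $G^\sim_{\mathcal L'}$-equivalence, the essential Lie invariance algebras $\mathfrak g^{\rm ess}_V$ of systems $L'_V\in\mathcal L'_1$ with $n=2$; since $\mathcal L'_1$ is uniformly semi-normalized with respect to linear superposition of solutions, this $G^\sim_{\mathcal L'}$-classification coincides with the point-equivalence classification, and $\mathfrak g_V^{}=\mathfrak g^{\rm ess}_V\lsemioplus\mathfrak g^{\rm lin}_V$ then recovers the full maximal algebra. By Theorem~\ref{thm:StructureOfgessInL'1} each $\mathfrak g^{\rm ess}_V$ decomposes as $\mathfrak i\oplus(\mathfrak t_V\lsemioplus\mathfrak s^{\rm vf}_V)$ with $\mathfrak i=\langle I\rangle$, $k:=\dim\mathfrak t_V\in\{0,1,2\}$, and $\mathfrak s^{\rm vf}_V$ governed by a subalgebra $\mathfrak s\subseteq\mathfrak{sl}(2,\mathbb F)$ equal to its own double centralizer. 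I would therefore run a finite case analysis over the pair $(k,\mathfrak s)$, using the list of admissible $\mathfrak s$ recorded just before the theorem --- namely $\{0\}$, $\langle S_1\rangle$, $\langle S_2\rangle$ over $\mathbb C$, together with $\langle S_1+S_3\rangle$ over $\mathbb R$ --- and discarding $\langle S_1,S_2\rangle$ (double centralizer $\mathfrak{sl}(2,\mathbb F)$) and $\mathfrak{sl}(2,\mathbb F)$ itself (which forces $L'_V\in\mathcal L''_0$). Throughout, Corollary~\ref{cor:MAIofL'_V} supplies the classifying condition~\eqref{eq:ClassifyingCondL'_V}, Theorem~\ref{thm:EquivGroupoidL'} supplies the equivalence transformations for normalizing parameters, and Lemma~\ref{lem:SLODEEquivOfSystemsSimilarToConstCoeffOnes} settles equivalence among the constant- or exponential-coefficient representatives.

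For $k=0$ the subalgebra $\mathfrak s$ is either trivial, giving the generic Case~\ref{case:SLODEn20} with $\mathfrak g^{\rm ess}_V=\langle I\rangle$, or nonzero, in which case $V(t)$ ranges over ${\rm C}_{\mathfrak{sl}(2,\mathbb F)}(\mathfrak s)=\mathfrak s$, yielding $V=v(t)S_1$, $V=v(t)S_2$ and (only over $\mathbb R$) $V=v(t)(S_1+S_3)$. Substituting each form into~\eqref{eq:ClassifyingCondL'_V} and splitting over the basis $S_1,S_2,S_3$ of $\mathfrak{sl}(2,\mathbb F)$, I would show that a symmetry with nonzero $t$-component exists exactly when $v$ obeys a relation of the type $\tau v_t=(\kappa-2\tau_t)v$ for some constant $\kappa$ and some $\tau$ with $\tau_{ttt}=0$; negating this gives the stated general-position condition on $v$ and produces Cases~\ref{case:SLODEn2(k=0)1},~\ref{case:SLODEn2(k=0)2} and~\ref{case:SLODEn2(k=0)1R}, each with $\mathfrak g^{\rm ess}_V=\langle I\rangle\oplus\mathfrak s^{\rm vf}$.

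For $k=1$, modulo $G^\sim_{\mathcal L'}$ one symmetry vector field is $P=\p_t+\Upsilon^{ab}x^b\p_{x^a}$, and~\eqref{eq:ClassifyingCondL'_V} forces $V=\varepsilon E+{\rm e}^{t\Upsilon}W{\rm e}^{-t\Upsilon}$ with $W$ nonzero and traceless. The core step is to classify $\Upsilon\in\mathfrak{sl}(2,\mathbb F)$ up to ${\rm SL}(2,\mathbb F)$-conjugacy, scaling of $t$, and the $t$-shift that conjugates $W$ by ${\rm e}^{t_0\Upsilon}$: over $\mathbb C$ this leaves the nilpotent $\Upsilon\sim S_1$, the split-semisimple $\Upsilon\sim\gamma S_2$ and the case $\Upsilon=0$ (constant $V$), while over $\mathbb R$ the elliptic representative $\Upsilon\sim S_1+S_3$ appears in addition. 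For each $\Upsilon$ I would expand ${\rm e}^{t\Upsilon}W{\rm e}^{-t\Upsilon}$ by the Baker--Hausdorff series, recovering the polynomial $t$-dependence of Case~\ref{case:SLODEn2(k=1)1}, the exponential dependence of Case~\ref{case:SLODEn2(k=1)2}, and the trigonometric dependence of the real Case~\ref{case:SLODEn2(k=1)2R}; then compute $\mathfrak s={\rm C}_{\mathfrak{sl}(2,\mathbb F)}(\{K_l\})$ from $K_0=W$, $K_{l+1}=[\Upsilon,K_l]$ and split by $\dim\mathfrak s$. The subcases $\mathfrak s=\{0\}$ give Cases~\ref{case:SLODEn2(k=1)1},~\ref{case:SLODEn2(k=1)2},~\ref{case:SLODEn2(k=1)2R}, whereas $\mathfrak s\ne\{0\}$ (which forces $W$ into the centralizer of $\Upsilon$) gives Cases~\ref{case:SLODEn2(k=1)3},~\ref{case:SLODEn2(k=1)4} and the real Case~\ref{case:SLODEn2(k=1)4R}. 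The discrete parameters are then reduced by the induced action of ${\rm St}_{{\rm SL}(2,\mathbb F)}(\mathfrak s)$ together with the residual $t$-shifts and scalings, yielding the normalizations stated after the case list; the genuineness constraints $(\beta_2,\beta_3)\ne(0,0)$, the two-nonzero-$\beta$ condition, $4\varepsilon\ne\gamma^2$ and $\nu\ne0$ are imposed precisely so that $\mathfrak g^{\rm ess}_V$ coincides with the displayed algebra and neither enlarges nor collapses onto another case.

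For $k=2$ the matrices $V(t)$ are nilpotent for every $t$, and the eigenvalue-chain analysis of the case $k=2$ in Section~\ref{sec:DescriptionOfEssLieSymExtensions} (or, directly, Theorem~\ref{thm:MaxDimOfMIAinL1} for $n=2$, where $J=S_1$) forces $V\sim S_1$, the single submaximal Case~\ref{case:SLODEn2(k=2)}. It then remains to verify pairwise inequivalence: the invariants $k$ and $\dim\mathfrak s^{\rm vf}$ separate most cases, while the ${\rm SL}(2,\mathbb F)$-conjugacy class of $\Upsilon$ --- in particular the split-versus-elliptic dichotomy of its eigenvalues --- distinguishes the rest and explains why over $\mathbb C$ the elliptic representatives merge with the split ones, so that Cases~\ref{case:SLODEn2(k=0)1R},~\ref{case:SLODEn2(k=1)2R},~\ref{case:SLODEn2(k=1)4R} are genuinely new only over $\mathbb R$. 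The main obstacle I anticipate is the $k=1$ bookkeeping: classifying the pair $(\Upsilon,W)$ modulo the intertwined equivalences (simultaneous conjugation, the $t$-shift acting as ${\rm e}^{t_0\Upsilon}$-conjugation of $W$, and the scaling $\tilde\Upsilon=\alpha\Upsilon$, $\tilde W=\alpha^2W$ of Lemma~\ref{lem:SLODEEquivOfSystemsSimilarToConstCoeffOnes}), and pinning down the sharp genuineness inequalities, all while keeping the real and complex strata properly separated.
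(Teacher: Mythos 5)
Your proposal is correct and follows essentially the same route as the paper's proof: the same decomposition $\mathfrak g^{\rm ess}_V=\mathfrak i\oplus(\mathfrak t_V\lsemioplus\mathfrak s^{\rm vf}_V)$, the same case split over $k\in\{0,1,2\}$ and over the subalgebras $\{0\}$, $\langle S_1\rangle$, $\langle S_2\rangle$ (plus $\langle S_1+S_3\rangle$ over $\mathbb R$) equal to their double centralizers, the representation $V=\varepsilon E+{\rm e}^{t\Upsilon}W{\rm e}^{-t\Upsilon}$ with the $K_l$-centralizer computation for $k=1$, the nilpotency argument forcing $V\sim S_1$ for $k=2$, and Lemma~\ref{lem:SLODEEquivOfSystemsSimilarToConstCoeffOnes} for the residual parameter normalizations. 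The only cosmetic difference is that for $k=1$ you enumerate Jordan forms of $\Upsilon$ before splitting by $\mathfrak s$ (note that $\Upsilon=0$ can only occur in the $\mathfrak s\ne\{0\}$ branch, since otherwise ${\rm C}_{\mathfrak{sl}(2,\mathbb F)}(\{W\})\supseteq\langle W\rangle\ne\{0\}$), whereas the paper fixes $\mathfrak s$ first; both orderings are sanctioned by the classification procedures of Section~\ref{sec:DescriptionOfEssLieSymExtensions} and lead to the same finite case list.
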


\begin{proof}
As discussed in the beginning of Section~\ref{sec:DescriptionOfEssLieSymExtensions},
the classifying condition~\eqref{eq:ClassifyingCondL'_V} implies
that $I\in\mathfrak g^{\rm ess}_V$ for any system~$L'_V$ from the class~$\mathcal L'_1$,
and $\mathfrak g^{\rm ess}_V=\langle I\rangle$ for general systems in this class,
which leads to Case~\ref{case:SLODEn20} of the theorem.

Following the consideration of Section~\ref{sec:DescriptionOfEssLieSymExtensions},
looking for essential Lie-symmetry extensions within the class~$\mathcal L'_1$,
we separately study the cases $k=0$, $k=1$ and~$k=2$.

\medskip\par\noindent$\boldsymbol{k=0.}$
Up to the \smash{$G^\sim_{\mathcal L'}$}-equivalence, it is convenient
to assume from the very beginning that $\mathop{\rm tr}V=0$, i.e., $L'_V\in\mathcal L''_1$.
Here the subalgebra $\mathfrak s=\{0\}$ of $\mathfrak{sl}(2,\mathbb F)$ is not appropriate
since it corresponds to the general case with no Lie-symmetry extension.
The subalgebras $\langle S_1 \rangle$, $\langle S_2 \rangle$ and, if $\mathbb F=\mathbb R$, $\langle S_1+S_3 \rangle$
lead to Cases~\ref{case:SLODEn2(k=0)1}, \ref{case:SLODEn2(k=0)2} and~\ref{case:SLODEn2(k=0)1R} of the theorem,  respectively,
where $V=v(t)\Gamma$ and $\mathfrak g^{\rm ess}_V=\langle I, Q_\Gamma\rangle$
with matrix~$\Gamma$ being the basis element of the corresponding subalgebra~$\mathfrak s$.
In view of Corollary~\ref{cor:MAIofL''_V},
we have further Lie-symmetry extensions if and only if $\tau v_t=(\kappa-2\tau_t)v$
for some constant~$\kappa$ and some function~$\tau$ of~$t$ with $\tau_{ttt}=0$,
i.e., the functions $(t^2v)_t$, $tv_t$, $v_t$ and~$v$ are linearly dependent.
For the absence of extensions, the corresponding Wronskian should be nonvanishing.

\medskip\par\noindent$\boldsymbol{k=1.}$
Then $\mathfrak g^{\rm ess}_V=\langle I\rangle\oplus\big(\langle P\rangle\lsemioplus\mathfrak s^{\rm vf}\big)$,
where $P:=\p_t+\Upsilon^{ab}x^b\p_{x^a}$.
The matrix-valued parameter function~$V$ is of the form~\eqref{eq:k=1RepresentationForV} with $W\ne0$.
We separately consider each of the appropriate elements
of the complete list of ${\rm SL}(2,\mathbb F)$-inequivalent subalgebras of~$\mathfrak{sl}(2,\mathbb F)$,
$\{0 \}$, $\langle S_1 \rangle$, $\langle S_2 \rangle$ and, for $\mathbb F=\mathbb R$, $\langle S_1+S_3 \rangle$,
as a candidate for~$\mathfrak s$.

Setting $\mathfrak s=\{0\}$ leads to no strict preliminary restrictions for~$\Upsilon$ and~$W$.
The only obvious restriction that the matrix~$\Upsilon$ is also nonzero
since otherwise ${\rm C}_{\mathfrak{sl}(2,\mathbb F)}\big(\{K_l,l\in\mathbb N_0\}\big)=\langle W\rangle\ne\{0\}=\mathfrak s$.
Therefore, the consideration of this case reduces to the classification of pairs of nonzero $2\times2$ matrices up to matrix similarity.
We fix each of possible $2\times2$ Jordan normal forms as a value for the parameter matrix~$\Upsilon$,
$\Upsilon=S_1$, $\Upsilon=\gamma S_2$ and, over $\mathbb R$, $\Upsilon=\gamma(S_1+S_3)$, where $\gamma\ne0$
and thus $\gamma=1$ modulo scalings of~$t$.
For each of these~$\Upsilon$, we have \smash{${\rm C}_{\mathfrak{sl}(2,\mathbb F)}(\{\Upsilon\})=\langle\Upsilon\rangle$},
and hence any of the invertible matrices~$M$ that commute with the fixed form of~$\Upsilon$
is proportional to ${\rm e}^{\beta\Upsilon}$ for some $\beta\in\mathbb F$.
According to Lemma~\ref{lem:SLODEEquivOfSystemsSimilarToConstCoeffOnes},
we can transform the matrix~$W$ as $\tilde W={\rm e}^{\beta\Upsilon}W{\rm e}^{-\beta\Upsilon}$,
still preserving the matrix~$\Upsilon$.
Taking the general form for~$W$,
$W=:K_0=\beta_1S_1+\beta_2S_2+\beta_3S_3$ with $(\beta_1,\beta_2,\beta_3)\ne(0,0,0)$,
we consider possible reductions for the matrix~$W$ by the above transformations.

If $\Upsilon = S_1$, then $K_1=-2\beta_2S_1-\beta_3S_2$, $K_2=2 \beta_3S_1$, $K_l=0$, $l\geqslant 3$,
and the centralizer ${\rm C}_{\mathfrak{sl}(2,\mathbb F)}\big(\{K_0,K_1,K_2\}\big)$ has
the required value $\{0\}$ if and only if $(\beta_2,\beta_3)\ne(0,0)$.
Up to the \smash{$G^\sim_{\mathcal L'}$}-equivalence, we can set $\beta_2=0$ if $\beta_3\ne0$ or $\beta_1=0$ if $\beta_3=0$ and $\beta_2\ne0$.
This results in Case~\ref{case:SLODEn2(k=1)1}.

If $\Upsilon=S_2$, then $K_l=[\Upsilon,K_{l-1}]=2^l\beta_1 S_1+(-2)^l\beta_3S_3$, $l\in\mathbb N$, and hence
the centralizer ${\rm C}_{\mathfrak{sl}(2,\mathbb F)}\big(\{K_l,l\in\mathbb N_0\}\big)$ coincides with $\mathfrak s=\{0\}$
if and only if at least two of $\beta_1$, $\beta_2$ and $\beta_3$ are nonzero.
We can set one of the nonzero $\beta_1$ or $\beta_3$ to be equal to~1 up to the shifts of~$t$,
thus obtaining Case~\ref{case:SLODEn2(k=1)2}.

In the case $\mathbb F=\mathbb R$ and $\Upsilon=S_1+S_3$, we can set $\beta_2=0$,
i.e., $W=:K_0=\beta_1S_1+\beta_3S_3$, $K_{2l+1}=(\beta_1-\beta_3)(-4)^lS_2$, $K_{2l+2}=2(\beta_1-\beta_3)(-4)^l(S_3-S_1)$, $l\in\mathbb N_0$,
and thus
${\rm C}_{\mathfrak{sl}(2,\mathbb F)}\big(\{K_l,l\in\mathbb N_0\}\big)=\{0\}=\mathfrak s$
if and only if $\beta_1-\beta_3\ne0$.
Up to the shifts of~$t$ proportional to $\pi/2$, we can assume $\beta_1-\beta_3>0$.
Denoting $\mu:=(\beta_1+\beta_3)/2$ and $\nu:=(\beta_1-\beta_3)/2$ leads to Case~\ref{case:SLODEn2(k=1)2R}.

For the subalgebra $\mathfrak s=\langle S_1 \rangle$,
we choose $\langle S_2\rangle$ as its complementary subspace~$\mathsf s$
in ${\rm N}_{\mathfrak{sl}(n,\mathbb F)}(\mathfrak s)=\langle S_1,S_2 \rangle$
and $\Upsilon\in\langle S_2\rangle$, i.e., $\Upsilon=\gamma S_2$ with $\gamma\in \mathbb F$.
Moreover, $W\in {\rm C}_{\mathfrak{sl}(n,\mathbb F)}(\mathfrak s)=\langle S_1\rangle$ and $W\ne0$,
i.e., $W=\beta S_1$ with $\beta\ne0$, and thus $K_l=\beta(2\gamma)^lS_1$, $l\in \mathbb N_0$.
As result, we have Case~\ref{case:SLODEn2(k=1)3},
where $4\varepsilon\ne\gamma^2$ as well (otherwise $k=2$; see the consideration below).
We can set $\gamma\in\{0,1\}$ and $\beta=1$ due to the scalings and shifts of $t$, respectively.

If $\mathfrak s=\langle S_2 \rangle$, then ${\rm N}_{\mathfrak{sl}(2,\mathbb F)}(\mathfrak s)=\mathfrak s$,
$\mathsf s=\{0\}$, and thus the only appropriate choice for the matrix~$\Upsilon$ is $\Upsilon=0$.
Further, $K_0:=W=\beta S_2$ with $\beta\ne0$ since $W\in{\rm C}_{\mathfrak{sl}(2,\mathbb F)}(\mathfrak s)=\langle S_2 \rangle$ and $W\ne0$,
$K_l=0$ for $l\geqslant 1$, and ${\rm C}_{\mathfrak{sl}(2,\mathbb F)}(\{K_0\})=\mathfrak s$.
We can set $\beta=1$ due to scalings of $t$ and the permutation of $x^1$ and $x^2$,
which gives Case~\ref{case:SLODEn2(k=1)4}.

For $\mathbb F=\mathbb R$ and the subalgebra $\mathfrak s=\langle S_1+S_3 \rangle$,
the normalizer ${\rm N}_{\mathfrak{sl}(2,\mathbb F)}(\mathfrak s)$ again coincides with~$\mathfrak s$.
Therefore, $\mathsf s=\{0\}$, $\Upsilon =0$, $W =:K_0=\beta( S_1+S_3)$ with $\beta\ne0$, and $K_l=0$, $l\geqslant1$.
After setting again $\beta=1$ due to scalings of $t$ and the permutation of $x^1$ and $x^2$,
we have Case~\ref{case:SLODEn2(k=1)4R}.

\medskip\par\noindent$\boldsymbol{k=2.}$
Since $W\ne0$, the matrix~$\Lambda$ can have only one chain of two eigenvalues whose difference is equal to two.
Moreover, only the difference of eigenvalues is essential, and the matrix~$\Lambda$ can be assumed to be diagonalizable,
see the respective case in Section~\ref{sec:DescriptionOfEssLieSymExtensions}.
Therefore, from the very beginning, we can take the matrix~$\Lambda$ in the form $\Lambda=\mathop{\rm diag}(2, 0)$.
Then $\Upsilon=0$, $W=:K_0=S_1$ modulo the ${\rm SL}(2,\mathbb F)$-equivalence, $K_l=0$, $l\in\mathbb N$, and
${\rm C}_{\mathfrak{sl}(2,\mathbb F)}(\{K_0\})=\langle S_1\rangle=\mathfrak s$,
which gives Case~\ref{case:SLODEn2(k=2)}.
The improper $t$-shift-invariant version of this case is
$V=\frac14E+{\rm e}^{2t}S_1$ with $\mathfrak g^{\rm ess}_V=\langle I,\,x^2\p_{x^1},\,{\rm e}^{-t}\p_t,\,\p_t+2x^1\p_{x^1}\rangle$.
\end{proof}

\begin{corollary}
Let $n=2$.

(i) $\dim\mathfrak g_V^{}\in\{5,6,7,8\}$ for any system~$L'_V$ from the class~$\mathcal L'_1$.

(ii) $\dim\mathfrak g_\theta^{}\in\{5,6,7,8\}$ for any system~$\bar L_\theta$ from the class~$\bar{\mathcal L}_1$.

(iii) $\dim\mathfrak g_\theta^{}\in\{5,6,7,8,15\}$ for any system~$\bar L_\theta$ from the class~$\bar{\mathcal L}$.

(iv) Any system~$\bar L_\theta\in\bar{\mathcal L}$ with $\dim\mathfrak g_\theta^{}>8$
is \smash{$G^\sim_{\bar{\mathcal L}}$}-equivalent to the elementary system $\boldsymbol x_{tt}=\boldsymbol0$.

(v) Any system~$\bar L_\theta\in\bar{\mathcal L}$ with $\dim\mathfrak g_\theta^{}\geqslant7$
is \smash{$G^\sim_{\bar{\mathcal L}}$}-equivalent to a homogeneous system with constant matrix coefficients.
\end{corollary}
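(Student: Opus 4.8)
The plan is to treat the corollary as a transcription of the $n=2$ classification in Theorem~\ref{thm:SLODEGroupClassification(n=2)} through the structural and equivalence results already established, so that each item reduces to counting dimensions and transporting them along equivalence transformations.

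For item~(i) I would use the splitting $\mathfrak g_V^{}=\mathfrak g^{\rm ess}_V\lsemioplus\mathfrak g^{\rm lin}_V$ with $\dim\mathfrak g^{\rm lin}_V=2n=4$, so that $\dim\mathfrak g_V=4+\dim\mathfrak g^{\rm ess}_V$, and then read off from Theorem~\ref{thm:SLODEGroupClassification(n=2)} that the essential algebras have dimension~$1$ in Case~\ref{case:SLODEn20}, dimension~$2$ in Cases~\ref{case:SLODEn2(k=0)1}, \ref{case:SLODEn2(k=0)2}, \ref{case:SLODEn2(k=1)1}, \ref{case:SLODEn2(k=1)2} (and, over~$\mathbb R$, \ref{case:SLODEn2(k=0)1R}, \ref{case:SLODEn2(k=1)2R}), dimension~$3$ in Cases~\ref{case:SLODEn2(k=1)3}, \ref{case:SLODEn2(k=1)4} (and \ref{case:SLODEn2(k=1)4R}), and dimension~$4$ in Case~\ref{case:SLODEn2(k=2)}. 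Since these exhaust $\{1,2,3,4\}$, adding~$4$ gives exactly $\{5,6,7,8\}$.

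For items~(ii) and~(iii) I would pass to superclasses by equivalence. Every system of $\bar{\mathcal L}_1$ is carried to a system of $\mathcal L'_1$ by the equivalence transformation constructed at the start of Section~\ref{sec:SLODEEquivGroupoidsAndEquivGroups}, and such a transformation induces an isomorphism of maximal Lie invariance algebras, hence preserves their dimension; item~(ii) then follows from~(i). For item~(iii) I would split $\bar{\mathcal L}=\bar{\mathcal L}_0\sqcup\bar{\mathcal L}_1$: on $\bar{\mathcal L}_1$ I invoke~(ii), while on $\bar{\mathcal L}_0$ Proposition~\ref{pro:SLODEsl(n+2,F)Invariance} gives $\mathfrak g_\theta\cong\mathfrak{sl}(n+2,\mathbb F)=\mathfrak{sl}(4,\mathbb F)$ of dimension $(n+2)^2-1=15$; the union of the two value sets is $\{5,6,7,8,15\}$.

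For items~(iv) and~(v) I would combine this dimension count with the fact (Theorem~\ref{thm:EquivGroupoidbarL}) that $\bar{\mathcal L}_0$ is the $G^\sim_{\bar{\mathcal L}}$-orbit of $\boldsymbol x_{tt}=\boldsymbol0$. Item~(iv) is then immediate: $\dim\mathfrak g_\theta>8$ forces $\dim\mathfrak g_\theta=15$ by~(iii), so $\bar L_\theta\in\bar{\mathcal L}_0$ and is $G^\sim_{\bar{\mathcal L}}$-equivalent to the elementary system. For item~(v) the value $\dim\mathfrak g_\theta=15$ is already handled by~(iv), since $\boldsymbol x_{tt}=\boldsymbol0$ is homogeneous with constant (in fact zero) coefficients; it remains to treat $\dim\mathfrak g_\theta\in\{7,8\}$, where $\bar L_\theta\in\bar{\mathcal L}_1$. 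Passing to its counterpart $L'_V\in\mathcal L'_1$ gives $\dim\mathfrak g^{\rm ess}_V\geqslant3$, and since for $n=2$ the ideal $\mathfrak s^{\rm vf}_V$ has dimension at most $n^2-2n+1=1$ by Theorem~\ref{thm:StructureOfgessInL'1}, the projection dimension $k_V$ must satisfy $k_V\geqslant1$. Hence, up to $G^\sim_{\mathcal L'}$-equivalence, $\mathfrak g^{\rm ess}_V$ contains a field $P=\p_t+\Upsilon^{ab}x^b\p_{x^a}$, so that $V(t)={\rm e}^{t\Upsilon}V(0){\rm e}^{-t\Upsilon}$, and Remark~\ref{rem:SLODEConstCoeffsRepresentatives} then straightens $L'_V$ via $\tilde t=t$, $\tilde{\boldsymbol x}={\rm e}^{-t\Upsilon}\boldsymbol x$ to a homogeneous system with constant matrix coefficients. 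The one point requiring genuine care---and the main obstacle I anticipate---is verifying that the whole chain of reductions (to $\mathcal L'_1$, the normalization of the $t$-shift symmetry, and the exponential straightening) consists of $G^\sim_{\bar{\mathcal L}}$-equivalences, so that the resulting similarity is truly realized within $G^\sim_{\bar{\mathcal L}}$; this holds because each step is a transformation of the form~\eqref{eq:EquivTransbarLtoL'A}.
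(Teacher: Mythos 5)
Your proposal is correct and is exactly the derivation the paper intends: the corollary is stated without proof as an immediate consequence of Theorem~\ref{thm:SLODEGroupClassification(n=2)} together with the splitting $\mathfrak g_V^{}=\mathfrak g^{\rm ess}_V\lsemioplus\mathfrak g^{\rm lin}_V$, Proposition~\ref{pro:SLODEsl(n+2,F)Invariance} and Remark~\ref{rem:SLODEConstCoeffsRepresentatives}, and your dimension counts and equivalence transports check out. Your closing concern about item~(v) is also resolved correctly, since the transformations of the forms~\eqref{eq:EquivTransbarLtoL'A} and~\eqref{eq:EquivGroupL'A} used in the reduction chain are all special cases of the $(t,\boldsymbol x)$-components~\eqref{eq:EquivGroupbarLA} of elements of $G^\sim_{\bar{\mathcal L}}$.
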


Theorem~\ref{thm:SLODEGroupClassification(n=2)} enhances all the results
on Lie symmetries of normal linear systems of two second-order ordinary differential equations
that have been presented in the literature; cf.~\cite{gorr1988a,mkhi2015a,moyo2013a,wafo2000b}.
We carry out the classifications of such symmetries for both the basic fields~$\mathbb C$ and~$\mathbb R$,
accurately distinguishing them.
Cases~\ref{case:SLODEn2(k=0)1R}, \ref{case:SLODEn2(k=1)2R} and~\ref{case:SLODEn2(k=1)4R}
of Theorem~\ref{thm:SLODEGroupClassification(n=2)}, which are specific for $\mathbb F=\mathbb R$,
are equivalent to Cases~\ref{case:SLODEn2(k=0)1}, \ref{case:SLODEn2(k=1)2} and~\ref{case:SLODEn2(k=1)4}
over the complex field, respectively.
Moreover, considering the class~$\mathcal L'_1$ instead of its subclass~$\mathcal L''_1$
or, equivalently, allowing the arbitrary-element matrix~$V$ not to be traceless
significantly simplifies both the classification procedure and the constructed classification list
via reducing both the number of classification cases and their complexity.
See Remark~\ref{rem:SLODEConvenientClassesForGroupClassification} for a more detailed discussion
of this optimization in the case of general~$n$.

Following this remark, we can convert the classification list from Theorem~\ref{thm:SLODEGroupClassification(n=2)}
to one for the class~$\mathcal L''_1$ with $n=2$ by routine computations,
reducing cases with non-traceless~$V$ to their counterparts with traceless~$V$.
It is necessary just to transform all the values of~$V$
in Cases~\ref{case:SLODEn2(k=1)1}--\ref{case:SLODEn2(k=1)4}, \ref{case:SLODEn2(k=1)2R} and~\ref{case:SLODEn2(k=1)4R}
with $\varepsilon\ne0$ to traceless matrices by elements of~\smash{$G^\sim_{\mathcal L'}$}.

In view of the discussion in the beginning of Section~\ref{sec:SLODEPreliminaryAnalysisOfLieSyms},
it is obvious that the list from Theorem~\ref{thm:SLODEGroupClassification(n=2)} can also be interpreted
as a solution of the group classification problems for the wider classes~$\mathcal L_1$ and~$\bar{\mathcal L}_1$.
At the same time, we can modify this classification list within~$\mathcal L_1$ and~$\bar{\mathcal L}_1$ in the way
discussed in Remark~\ref{rem:SLODEConstCoeffsRepresentatives}, which leads to the following assertion.

\begin{corollary}\label{cor:SLODEGroupClassification(n=2)L}
For $\mathbb F =\mathbb C$, a complete list of \smash{$G^\sim_{\mathcal L}$}-inequivalent essential Lie-symmetry extensions
in the class~$\mathcal L_1$ with $n=2$ is exhausted by the following cases:
{\rm
\begin{enumerate}\itemsep=0.5ex\setcounter{enumi}{-1}
\item\label{case:SLODEn2L0}
General case, $A=0$, \ $B=V(t)$: \
$\mathfrak g^{\rm ess}_\vartheta=\langle I\rangle$;
\item\label{case:SLODEn2L(k=0)1}
$A=0$, \ $B=v(t)S_1$: \
$\mathfrak g^{\rm ess}_\vartheta=\langle I,\,x^2\p_{x^1}\rangle$;
\item\label{case:SLODEn2L(k=0)2}
$A=0$, \ $B=v(t)S_2$: \
$\mathfrak g^{\rm ess}_\vartheta=\langle I,\,x^1\p_{x^1}-x^2\p_{x^2}\rangle$;
\item\label{case:SLODEn2L(k=1)1}
$A=-2S_1$, \ $B=\varepsilon E+\beta_1S_1+\beta_2S_2+\beta_3S_3$, \ $(\beta_2,\beta_3)\ne(0,0)$: \
$\mathfrak g^{\rm ess}_\vartheta=\langle I,\,\p_t\rangle$;
\item\label{case:SLODEn2L(k=1)2}
$A=-2S_2$, \ $B=\varepsilon E+\beta_1S_1+\beta_2S_2+\beta_3S_3$, \ $(\beta_1\beta_2,\beta_2\beta_3,\beta_3\beta_1)\ne(0,0,0)$: \
$\mathfrak g^{\rm ess}_\vartheta=\langle I,\,\p_t\rangle$;
\item\label{case:SLODEn2L(k=1)3}
$A=-2\gamma S_2$, \ $B=(\varepsilon-\gamma^2)E+S_1$, \ $4\varepsilon\ne\gamma^2$: \
$\mathfrak g^{\rm ess}_\vartheta=\langle I,\,{\rm e}^{-2\gamma t}x^2\p_{x^1},\,\p_t\rangle$;
\item\label{case:SLODEn2L(k=1)4}
$A=0$, \ $B=\varepsilon E+S_2$: \
$\mathfrak g^{\rm ess}_\vartheta=\langle I,\,x^1\p_{x^1}-x^2\p_{x^2},\,\p_t\rangle$;
\item\label{case:SLODEn2L(k=2)}
$A=0$, \ $B=S_1$: \
$\mathfrak g^{\rm ess}_\vartheta=\langle I,\,x^2\p_{x^1},\,\p_t,\,t\p_t+2x^1\p_{x^1}\rangle$.
\end{enumerate}}
If $\mathbb F=\mathbb R$, then this list is supplemented with three more cases,
{\rm
\begin{enumerate}\itemsep=0.5ex\renewcommand{\theenumi}{\arabic{enumi}$^{\mathbb R}$}
\item\label{case:SLODEn2L(k=0)1R}
$A=0$, \ $B=v(t)(S_1+S_3)$: \
$\mathfrak g^{\rm ess}_\vartheta=\langle I,\,x^1\p_{x^2}-x^2\p_{x^1}\rangle$;
\setcounter{enumi}{3}
\item\label{case:SLODEn2L(k=1)2R}
$A=-2(S_1+S_3)$, \ $B=\varepsilon E+\beta_1S_1+\beta_3S_3$, \ $\beta_1\ne\beta_3$: \
$\mathfrak g^{\rm ess}_\vartheta=\langle I,\,\p_t\rangle$;
\setcounter{enumi}{5}
\item\label{case:SLODEn2L(k=1)4R}
$A=0$, \ $B=\varepsilon E+S_1+S_3$: \
$\mathfrak g^{\rm ess}_\vartheta=\langle I,\,x^1\p_{x^2}-x^2\p_{x^1},\,\p_t\rangle$.
\end{enumerate}}

\noindent
Here $\varepsilon,\gamma,\beta_1,\beta_2,\beta_3\in\mathbb F$, $I:=x^1\p_{x^1}+x^2\p_{x^2}$,
and $v$ runs through the set of functions of~$t$ with
nonzero Wronskian of $(t^2v)_t$, $tv_t$, $v_t$ and~$v$.
Modulo the \smash{$G^\sim_{\mathcal L}$}-equivalence,
$V$ is a nonzero traceless matrix-valued function of~$t$ in Case~\ref{case:SLODEn2L0},
$\beta_2=0$ if $\beta_3\ne0$ or $\beta_1=0$ if $\beta_3=0$ and $\beta_2\ne0$ in Case~\ref{case:SLODEn2L(k=1)1},
one of the nonzero $\beta_1$ or $\beta_3$ is equal to~$1$ in Case~\ref{case:SLODEn2L(k=1)2},
$\gamma\in\{0,1\}$ in Case~\ref{case:SLODEn2L(k=1)3},
and $\beta_1>\beta_3$ in  Case~\ref{case:SLODEn2L(k=1)2R}.
\end{corollary}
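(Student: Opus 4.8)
The plan is to obtain Corollary~\ref{cor:SLODEGroupClassification(n=2)L} as a direct translation of Theorem~\ref{thm:SLODEGroupClassification(n=2)} from the class~$\mathcal L'_1$ to the class~$\mathcal L_1$, following precisely the recipe codified in Remark~\ref{rem:SLODEConstCoeffsRepresentatives}. Since the discussion at the beginning of Section~\ref{sec:SLODEPreliminaryAnalysisOfLieSyms} shows that the group classifications of the chain members on the same position coincide (the classes~$\mathcal L_1$ and~$\mathcal L'_1$ have the same equivalence group up to the embedding, and both are uniformly semi-normalized with respect to linear superposition of solutions), every $G^\sim_{\mathcal L'}$-inequivalent case of Theorem~\ref{thm:SLODEGroupClassification(n=2)} yields exactly one $G^\sim_{\mathcal L}$-inequivalent case here. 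The essential Lie invariance algebras themselves are invariant objects under the pushforward by the linearizing transformation, so the only work is to rewrite each arbitrary-element value~$V$ in the homogeneous form $(A,B)$ and to transport the symmetry vector fields accordingly.

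First I would recall the mapping from Remark~\ref{rem:SLODEConstCoeffsRepresentatives}: a system~$L'_V$ with $V(t)={\rm e}^{t\Upsilon}V(0){\rm e}^{-t\Upsilon}$ is carried by $\Phi$: $\tilde t=t$, $\tilde{\boldsymbol x}={\rm e}^{-t\Upsilon}\boldsymbol x$ to the constant-coefficient system $L_\vartheta$ with $A=-2\Upsilon$ and $B=V(0)-\Upsilon^2$. For the Cases with $k=0$ (Cases~\ref{case:SLODEn20}--\ref{case:SLODEn2(k=0)2} and~\ref{case:SLODEn2(k=0)1R}) the matrix~$V$ is already of the form $v(t)\Gamma$ with $\Upsilon=0$, so $A=0$ and $B=V$, and the symmetry fields $I$, $Q_\Gamma$ are unchanged; these become Cases~\ref{case:SLODEn2L0}--\ref{case:SLODEn2L(k=0)2},~\ref{case:SLODEn2L(k=0)1R}. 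For the $k=1$ Cases, I would read off~$\Upsilon$ from the exponential conjugation in the listed~$V$: in Case~\ref{case:SLODEn2(k=1)1} the Baker--Hausdorff expansion $V=\varepsilon E+(\beta_1-2\beta_2t+\beta_3t^2)S_1+(\beta_2-\beta_3t)S_2+\beta_3S_3$ is exactly ${\rm e}^{t\Upsilon}W{\rm e}^{-t\Upsilon}+\varepsilon E$ with $\Upsilon=S_1$ and $W=\beta_1S_1+\beta_2S_2+\beta_3S_3$, giving $A=-2S_1$, $B=\varepsilon E+W-S_1^2=\varepsilon E+\beta_1S_1+\beta_2S_2+\beta_3S_3$ since $S_1^2=0$; similarly $\Upsilon=S_2$ in Case~\ref{case:SLODEn2(k=1)2} (with the ${\rm e}^{\pm2t}$ factors reflecting the eigenvalues $\pm2$ of $\mathop{\rm ad}_{S_2}$) and $\Upsilon=\gamma S_2$ in Case~\ref{case:SLODEn2(k=1)3}, whence $A=-2\gamma S_2$ and $B=(\varepsilon-\gamma^2)E+S_1$ because $S_2^2=E$; and $\Upsilon=S_1+S_3$ in Case~\ref{case:SLODEn2(k=1)2R}.

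Next I would transport the Lie-symmetry vector fields through~$\Phi$. The field $P=\p_t+\Upsilon^{ab}x^b\p_{x^a}$ is pushed forward to $\tilde P=\p_{\tilde t}$, which explains why every $k=1$ case acquires the clean generator $\p_t$; the purely $\boldsymbol x$-linear fields $Q_\Gamma$ with $\Gamma\in\mathfrak s$ are pushed forward to $({\rm e}^{-t\Upsilon}\Gamma{\rm e}^{t\Upsilon})^{ab}x^b\p_{x^a}$, which is constant-coefficient only when $[\Upsilon,\Gamma]=0$. This accounts for the appearance of the $t$-dependent field ${\rm e}^{-2\gamma t}x^2\p_{x^1}$ in Case~\ref{case:SLODEn2L(k=1)3} (here $\Gamma=S_1$, $\Upsilon=\gamma S_2$, and ${\rm e}^{-t\gamma S_2}S_1{\rm e}^{t\gamma S_2}={\rm e}^{-2\gamma t}S_1$), whereas in Cases~\ref{case:SLODEn2L(k=1)4} and~\ref{case:SLODEn2L(k=1)4R} we have $\Upsilon=0$ so the fields are preserved verbatim. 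For the $k=2$ Case~\ref{case:SLODEn2(k=2)}, $\Upsilon=0$ and $V=S_1$ is already constant, so $A=0$, $B=S_1$, and all four fields, including $D=t\p_t+2x^1\p_{x^1}$, carry over unchanged, yielding Case~\ref{case:SLODEn2L(k=2)} (which coincides with the canonical submaximal system~\eqref{eq:CanonicalSystemOfMaxDimOfMIAinL1} for $n=2$). Finally I would note that the genericity restrictions on~$v$ and on the parameters, and the residual normalizations listed after the cases, are inherited verbatim since the reparameterization $\boldsymbol x\mapsto{\rm e}^{-t\Upsilon}\boldsymbol x$ commutes with the $G^\sim$-equivalence used to reduce them. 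The only mild obstacle is bookkeeping: one must verify case by case that the conjugated fields $Q_\Gamma$ remain Lie symmetries of the transformed system (guaranteed abstractly by $\Phi$ being a point equivalence, but worth a line) and that no two distinct cases of Theorem~\ref{thm:SLODEGroupClassification(n=2)} collapse under the extra freedom introduced by allowing non-constant~$\eta^{ab}$ in~$\mathcal L_1$ — which they do not, precisely because the induced isomorphisms of essential algebras preserve $\dim\pi_*\mathfrak g^{\rm ess}_\vartheta$ and the structure described in Corollary~\ref{cor:StructureOfgessInL1}.
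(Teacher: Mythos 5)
Your proposal is correct and follows exactly the route the paper itself takes: the corollary is obtained from Theorem~\ref{thm:SLODEGroupClassification(n=2)} by the consistency of the group classifications along the class chain (Section~\ref{sec:SLODEPreliminaryAnalysisOfLieSyms}) together with the constant-coefficient mapping $A=-2\Upsilon$, $B=V(0)-\Upsilon^2$ and the pushforward of symmetry fields described in Remark~\ref{rem:SLODEConstCoeffsRepresentatives}. Your case-by-case computations (including $S_1^2=0$, $S_2^2=E$, and the conjugation giving ${\rm e}^{-2\gamma t}x^2\p_{x^1}$) are accurate, with the only unspoken detail being that the scalar summand $-\Upsilon^2\in\langle E\rangle$ in Cases~\ref{case:SLODEn2L(k=1)1}, \ref{case:SLODEn2L(k=1)2} and~\ref{case:SLODEn2L(k=1)2R} is absorbed by relabeling the free parameter~$\varepsilon$.
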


Consider the classes~$\bar{\mathcal L}$, $\mathcal L$, $\mathcal L'$ and~$\mathcal L''$ with $n=2$.
Theorem~\ref{thm:SLODEGroupClassification(n=2)} presents a complete list of
inequivalent regular essential Lie-symmetry extensions within the first three of these classes;
see Definition~\ref{def:RegularAndSingularEssLieSymExtensions}.
The modification of this list via converting all the presented values of~$V$ to the equivalent traceless values
according to Remark~\ref{rem:SLODEConvenientClassesForGroupClassification}
gives a complete list of such extensions within the class~$\mathcal L''$ with $n=2$.
For the classes~$\bar{\mathcal L}$ and $\mathcal L$ with $n=2$,
the list from Corollary~\ref{cor:SLODEGroupClassification(n=2)L} does the same.
To obtain the solutions of the complete group classification problems for the above classes,
it suffices to supplement a relevant complete list of inequivalent regular essential Lie-symmetry extensions
with the single singular classification case canonically given by the elementary system~$\boldsymbol x_{tt}=\boldsymbol0$.
The maximal Lie invariance algebra~$\mathfrak g_0$ of this system is well known
and is isomorphic to $\mathfrak{sl}(4,\mathbb F)$, see~\eqref{eq:SLODEMIAOfElementarySystem}.
(The notion of essential Lie invariance algebra is not relevant for this system at all.)

\section{On generalization to arbitrary order}\label{sec:OnGeneralizationToArbitraryEqOrder} 

For a fixed value of $(r,n)$ with $r\in\{2,3,\dots\}$ and $n\in\mathbb N$, we denote by~$\bar{\mathcal L}_{r,n}$
the class of normal linear systems of $n$ ordinary differential equations of the same order~$r$,
\begin{gather}\label{eq:LSOfrthOrderODEs}
\frac{{\rm d}^r\boldsymbol x}{{\rm d}t^r}
=A_{r-1}(t)\frac{{\rm d}^{r-1}\boldsymbol x}{{\rm d}t^{r-1}}+\dots
+A_1(t)\frac{{\rm d}\boldsymbol x}{{\rm d}t}+A_0(t)\boldsymbol x+\boldsymbol f(t),
\end{gather}
where the tuple $\theta=(A_0,\dots,A_{r-1},\boldsymbol f)$ of arbitrary elements
consists of arbitrary (sufficiently smooth) $n\times n$ matrix-valued functions~$A_0$, \dots, $A_{r-1}$ of~$t$
and an arbitrary (sufficiently smooth) vector-valued function~$\boldsymbol f$ of~$t$.
The value $r=2$ is singular for the family of the classes~$\bar{\mathcal L}_{r,n}$,
which is related to the singularity of the elementary system
${\rm d}^r\boldsymbol x/{\rm d}t^r=\boldsymbol0$ with $r=2$ for an arbitrary $n\in\mathbb N$.
The solution of group classification problems for the classes~$\bar{\mathcal L}_{r,1}$
is well known \cite{krau1991a,maho1990a}.
The extended group analysis of these classes,
including the construction of the associated equivalence groupoids and equivalence groups
and the complete group classification using the algebraic method, was carried out in~\cite{boyk2015a}.
In the present paper, we have realized a similar study
for the classes $\bar{\mathcal L}_{2,n}$ with $n\geqslant2$.
The analysis of the above results shows that they can be extended to an arbitrary value of~$(r,n)$.
In this context, let us list problems to be considered and expected results.

The class~$\bar{\mathcal L}_{r,n}$ with fixed $r\geqslant3$ and $n\geqslant2$ is normalized.
Its equivalence group consists of the point transformations in the space
\smash{$\mathbb F_t\times\mathbb F^n_{\boldsymbol x}\times\mathbb F^{rn^2+n}_\theta$}
whose $(t,\boldsymbol x)$-components are of the form~\eqref{eq:EquivGroupbarLA}
and  uniquely define the corresponding $\theta$-components.
Wide families of admissible transformations from the action groupoid of the equivalence group can be used for
successively imposing the gauges $\boldsymbol f=\boldsymbol0$, $A^{r-1}=0$ and $\mathop{\rm tr}A^{r-2}=0$.
After each of the first two gauges, it is convenient to reparameterize the singled out subclasses
via excluding~$\boldsymbol f$ and then~$A^{r-1}$ from the tuple of arbitrary elements for the corresponding classes.
This results in the chain of classes
\[\bar{\mathcal L}_{r,n}\hookleftarrow\mathcal L_{r,n}\hookleftarrow\mathcal L'_{r,n}\supset\mathcal L''_{r,n}.\]
In particular, the elements of the class~$\mathcal L''_{r,n}$ satisfy all the three gauges,
i.e., they are of the Laguerre--Forsyth form in the terminology of~\cite[p.~266]{seas1993a}.
The classes~$\mathcal L_{r,n}$, $\mathcal L'_{r,n}$ and~$\mathcal L''_{r,n}$
are uniformly semi-normalized with respect to the linear superposition of solutions
whereas the ``inhomogeneous'' counterparts $\bar{\mathcal L}'_{r,n}$ and~$\bar{\mathcal L}''_{r,n}$
of $\mathcal L'_{r,n}$ and~$\mathcal L''_{r,n}$ are still normalized.
The $(t,\boldsymbol x)$-components of equivalence transformations
within the classes~$\mathcal L_{r,n}$, $\mathcal L'_{r,n}$ and~$\mathcal L''_{r,n}$
are of the form~\eqref{eq:EquivGroupbarLA} with $\boldsymbol h=\boldsymbol0$,
\eqref{eq:EquivGroupL'A} and~\eqref{eq:EquivGroupL''A}, respectively,
and the components for the corresponding arbitrary elements can be easily computed
whereas the $(t,\boldsymbol x)$-components are known.\looseness=-1

For any system of the form~\eqref{eq:LSOfrthOrderODEs},
its essential Lie invariance algebra is well defined as a complement subalgebra,
in its maximal Lie invariance algebra,
of the $rn$-dimensional abelian ideal associated with the linear superposition of solutions.
The group classifications of the above classes with fixed $(r,n)$ are equivalent to each other,
and it suffices to classify only the essential Lie invariance algebras of systems from these classes.
Although the significant equivalence group or, equivalently,
the significant equivalence algebra of the class~$\mathcal L''_{r,n}$
is finite-dimensional, in fact the class~$\mathcal L'_{r,n}$ is the most convenient
for carrying out the group classification and presenting the classification results.
The tuple of arbitrary elements of the class~$\mathcal L'_{r,n}$ is $\vartheta'=(A_0,\dots,A_{r-2})$.
Denote by $\mathfrak g_{\vartheta'}^{}$, $\mathfrak g^{\rm ess}_{\vartheta'}$ and $\mathfrak g^{\rm lin}_{\vartheta'}$
the maximal Lie invariance algebra of a system $L'_{\vartheta'}\in\mathcal L'_{r,n}$,
its essential Lie invariance algebra and the abelian ideal of $\mathfrak g_{\vartheta'}^{}$
associated with the linear superposition of solutions, respectively.
The algebra~$\mathfrak g_{\vartheta'}^{}$ consists of the vector fields of the form
\begin{equation*}
Q=\tau\p_t+\left(\tfrac12(r-1)\tau_tx^a+\Gamma^{ab}x^b+\chi^a\right)\p_{x^a},
\end{equation*}
where the vector-valued function $\boldsymbol\chi=(\chi^1,\dots,\chi^n)^{\mathsf T}$ of~$t$ is an arbitrary solution of the system~$L'_{\vartheta'}$,
whereas $\tau$ is an arbitrary function of~$t$
and $\Gamma=(\Gamma^{ab})$ is an arbitrary constant $n\times n$ matrix that satisfy the classifying condition
\begin{gather}\label{eq:ClassifyingCondL'_vartheta'}
\begin{split}
\tau\frac{{\rm d}A_s}{{\rm d}t}={}&[\Gamma,A_s]-(r-s)\tau_tA_s-\sum_{p=s+1}^r 
\left(\frac{r-1}2-\frac s{p-s+1}\right)\binom ps\tau_{(p-s+1)}A_p
\,,\\
&s=0,\dots,r-2,\quad A^{r-1}:=0, \quad A^r:=-E.
\end{split}
\end{gather}
We have
\smash{$\mathfrak g_{\vartheta'}^{}=\mathfrak g^{\rm ess}_{\vartheta'}\lsemioplus\mathfrak g^{\rm lin}_{\vartheta'}$},
\smash{$\mathfrak g^{\rm ess}_{\vartheta'}=\mathfrak g_{\vartheta'}^{}\cap\varpi_*\mathfrak g^\sim_{\mathcal L'_{r,n}}$},
\smash{$k=k_{\vartheta'}^{}:=\dim\pi_*\mathfrak g_{\vartheta'}^{}=\dim\pi_*\mathfrak g^{\rm ess}_{\vartheta'}\leqslant3$},
and, moreover, modulo the \smash{$G^\sim_{\mathcal L'_{r,n}}$}-equivalence
\[
\pi_*\mathfrak g_{\vartheta'}^{}=\pi_*\mathfrak g^{\rm ess}_{\vartheta'}\in
\big\{\{0\},\langle\p_t\rangle,\langle\p_t,t\p_t\rangle,\langle\p_t,t\p_t,t^2\p_t\rangle\big\}.
\]
Here
$\varpi$ is the natural projection
from \smash{$\mathbb F_t\times\mathbb F^n_{\boldsymbol x}\times\mathbb F^{(r-1)n^2}_{\vartheta'}$}
onto $\mathbb F_t\times\mathbb F^n_{\boldsymbol x}$,
and $\pi$ is the natural projection from $\mathbb F_t\times\mathbb F^n_{\boldsymbol x}$ onto $\mathbb F_t$.

In the course of describing Lie symmetries of systems from the class~$\mathcal L'_{r,n}$,
each of the potential values of~$k\in\{0,1,2,3\}$ should be considered separately, cf.\ Section~\ref{sec:DescriptionOfEssLieSymExtensions}.
Similarly to the other systems of the form~\eqref{eq:LSOfrthOrderODEs} with $r\geqslant3$,
the point-symmetry transformations of the elementary system ${\rm d}^r\boldsymbol x/{\rm d}t^r=\boldsymbol0$
with $r\geqslant3$ are fiber-preserving and affine with respect to~$\boldsymbol x$.
This is why the orbits of the elementary system ${\rm d}^r\boldsymbol x/{\rm d}t^r=\boldsymbol0$
with respect to the corresponding equivalence groups are not so singular
in the classes~$\bar{\mathcal L}_{r,n}$, $\mathcal L_{r,n}$,
$\bar{\mathcal L}'_{r,n}$, $\mathcal L'_{r,n}$,
$\bar{\mathcal L}''_{r,n}$ and~$\mathcal L''_{r,n}$ for $r\geqslant3$ as for $r=2$.
Hence the systems from the orbit of the elementary system in the class~$\mathcal L'_{r,n}$
can be considered within the above framework as those with $k=3$,
i.e., the value $k=3$ is possible if $r\geqslant3$.
Moreover, we conjecture that a system $L'_{\vartheta'}\in\mathcal L'_{r,n}$ with $r\geqslant3$
is similar to the elementary system ${\rm d}^r\boldsymbol x/{\rm d}t^r=\boldsymbol0$
if and only if $k_{\vartheta'}^{}=3$.
For any system~$L'_{\vartheta'}$ from the class~$\mathcal L'_{r,n}$,
its essential Lie invariance algebra~$\mathfrak g^{\rm ess}_{\vartheta'}$ can be represented as
$
\mathfrak g^{\rm ess}_{\vartheta'}=\mathfrak i\oplus(\mathfrak t_{\vartheta'}\lsemioplus\mathfrak s^{\rm vf}_{\vartheta'})
$,
where
\begin{itemize}\itemsep=0.ex
\item
$\mathfrak i:=\langle x^a\p_{x^a}\rangle$ is an ideal of~$\mathfrak g^{\rm ess}_{\vartheta'}$,
which is common for all systems from the class~$\mathcal L'_{r,n}$,
\item
$\mathfrak s^{\rm vf}_{\vartheta'}:=\{\Gamma^{ab}x^b\p_{x^a}\mid\Gamma\in\mathfrak{sl}(n,\mathbb F)\colon[\Gamma,A_0]=\dots=[\Gamma,A_{r-2}]=0\}$
is an ideal of~$\mathfrak g^{\rm ess}_{\vartheta'}$, and
\item
$\mathfrak t_{\vartheta'}:=\big\langle\tau^\iota\p_t+\big(\tfrac12(r-1)\tau^\iota_tx^a+\Lambda^{ab}_\iota x^b\big)\p_{x^a},\,\iota=1,\dots,k_{\vartheta'}\big\rangle$
is a subalgebra of~$\mathfrak g^{\rm ess}_{\vartheta'}$ with $\dim\mathfrak t_{\vartheta'}=k_{\vartheta'}\in\{0,1,2,3\}$,
the components $\tau^\iota=\tau^\iota(t)$ are linearly independent, and
each of $\tau^\iota$ satisfies~\eqref{eq:ClassifyingCondL'_vartheta'} with $\Gamma=\Lambda_\iota\in\mathfrak{sl}(n,\mathbb F)$.
\end{itemize}
The complete group classification of the class~$\mathcal L'_{r,n}$ for arbitrary values of $(r,n)$
looks as a wild problem since its solution in particular includes the classification, up to the ${\rm SL}(n,\mathbb F)$-equivalence,
of the subalgebras~$\mathfrak s$ of~$\mathfrak{sl}(n,\mathbb F)$
with $\mathfrak s={\rm C}_{\mathfrak{sl}(n,\mathbb F)}({\rm C}_{\mathfrak{sl}(n,\mathbb F)}(\mathfrak s))$.
Although the collection of such subalgebras of~$\mathfrak{sl}(n,\mathbb F)$ seems
essentially smaller than the entire set of subalgebras of~$\mathfrak{sl}(n,\mathbb F)$,
for now we do know an algorithm for the direct classification of subalgebras~$\mathfrak s$
with $\mathfrak s={\rm C}_{\mathfrak{sl}(n,\mathbb F)}({\rm C}_{\mathfrak{sl}(n,\mathbb F)}(\mathfrak s))$
without classifying all the subalgebras of~$\mathfrak{sl}(n,\mathbb F)$.
Even if a direct classification is possible, it may still be a wild problem.
Nevertheless, this group classification is possible for low values of~$r$ and~$n$.

For any system from the class~$\bar{\mathcal L}_{r,n}$ (resp.\ $\mathcal L_{r,n}$, $\mathcal L'_{r,n}$ or $\mathcal L''_{r,n}$),
the dimension of its maximal Lie invariance algebra is greater than or equal to $rn+1$ and less than or equal to $n^2+rn+3$.
The lower bound is greatest and is attained for a general system of the class.
The upper bound is least, and it is attained only for the systems from the orbit
of the elementary system ${\rm d}^r\boldsymbol x/{\rm d}t^r=\boldsymbol0$ with respect to the corresponding equivalence group.
The last claim is a direct consequence of Theorem~\ref{thm:SODEsMaxDim}.
A basis of the maximal Lie invariance algebra of the elementary system consists of the vector fields
\begin{gather*}
t^{s-1}\p_{x^a},\,_{s=1,\dots,r},\quad
x^b\p_{x^a},\quad
\p_t,\quad t\p_t+\tfrac12(r-1)x_c\p_{x_c},\quad t^2\p_t+(r-1)tx_c\p_{x_c}.
\end{gather*}
(The second summand in the penultimate vector fields is inessential
since it can be canceled via linearly combining with the element $x_c\p_{x_c}$ of this algebra;
we preserve it for the last three vector fields to span a Lie algebra, which is isomorphic to~$\mathfrak{sl}(2,\mathbb F)$.)
The submaximum dimension for $\{\mathfrak g_{\theta}^{}\mid\bar L_{\theta}\in\bar{\mathcal L}_{r,n}\}$
is equal to $n^2+rn+1$ \cite{kess2022a} and it is attained, e.g., by the maximal Lie invariance algebra
\begin{gather*}
\big\langle
\varphi^s(t)\p_{x^a},\,_{s=1,\dots,r},\ x^b\p_{x^a},\ \p_t
\big\rangle
\end{gather*}
of any uncoupled system
${\rm d}^r\boldsymbol x/{\rm d}t^r=c_{r-3}{\rm d}^{r-3}\boldsymbol x/{\rm d}t^{r-3}+\dots+c_1{\rm d}\boldsymbol x/{\rm d}t+c_0\boldsymbol x$
of $n$ copies of the same constant-coefficient linear ordinary differential equation
with nonzero coefficient tuple $(c_0,\dots,c_{r-3})$.
Here and in what follows $\{\varphi^s,\,_{s=1,\dots,r}\}$ is a fundamental set of solutions of the equation under consideration.
The next (``subsubmaximum'') dimension in the decreasing order is equal to $n^2+rn$,
and it realizes, e.g., for the maximal Lie invariance algebra
$\big\langle\varphi^s(t)\p_{x^a},\,_{s=1,\dots,r},\ x^b\p_{x^a}\big\rangle$
of any general uncoupled system of $n$ copies of the same variable-coefficient linear ordinary differential equation.
For $n\geqslant3$, the next less dimension may be the dimension
the maximal Lie invariance algebra
\begin{gather*}
\big\langle
t^s\p_{x^a}\!+\delta_{a2}(s\!+\!1)^{-1}\cdots(s\!+\!r)^{-1}t^{s+r}\p_{x^1},\,_{s=0,\dots,r-1},\,
x^b\p_{x^c},\,_{b\ne1,\,c\ne2},\, x^d\p_{x^d},\,
\p_t,\, t\p_t\!+rx^1\p_{x^1}
\big\rangle
\end{gather*}
of the system ${\rm d}^rx^1/{\rm d}t^r=x^2$, ${\rm d}^rx^2/{\rm d}t^r=0$, \dots, ${\rm d}^rx^n/{\rm d}t^r=0$,
which is equal to $n^2+(r-2)n+4$ and coincides with $n^2+rn$ for $n=2$.

An interesting question is how these submaximum dimensions are related to the analogous values
for the entire class~$\mathcal E_{r,n}$ of normal systems of $n$ ordinary differential equations of the same order~$r$.
The related discussion in~\cite{olve1994b} and~\cite[pp.~202--206]{olve1995A} on the class~$\mathcal E_{r,1}$
of single ordinary differential equations of order $r\geqslant3$
that are solvable with respect to the $r$th order derivative
shows that the answer on the above question is not simple even for $n=1$.
It is well known that the maximum dimension of the maximal Lie invariance algebras of equations
from~$\mathcal E_{r,1}$ (resp.\ $\mathcal L_{r,1}$) with $r\geqslant3$ is equal to~$r+4$ and
attained only for the equations that are similar to the elementary equation ${\rm d}^rx/{\rm d}t^r=0$
with respect to point transformations of~$(t,x)$.
Denote by $\mathop{\rm smd}(\mathcal E_{r,n})$ (resp.\ $\mathop{\rm smd}(\mathcal L_{r,n})$)
the submaximum dimension of the maximal Lie invariance algebras of equations
from the class~$\mathcal E_{r,n}$ (resp.\ $\mathcal L_{r,n}$).
In this notation,
\[
\mathop{\rm smd}(\mathcal E_{3,1})=6>\mathop{\rm smd}(\mathcal L_{3,1})=5,\quad
\mathop{\rm smd}(\mathcal E_{5,1})=8>\mathop{\rm smd}(\mathcal L_{5,1})=7,
\]
whereas
$\mathop{\rm smd}(\mathcal E_{r,1})=\mathop{\rm smd}(\mathcal L_{r,1})=r+2$
for all the other values of~$r$, i.e., for $r=4$ or ${r\geqslant6}$.
Nevertheless, there are nonlinearizable equations from~$\mathcal E_{r,1}$
whose maximal Lie invariance algebras are of dimension~$r+2$.
The equality $\mathop{\rm smd}(\mathcal E_{2,n})=\mathop{\rm smd}(\mathcal L_{2,n})+1$
follows from Theorems~\ref{thm:SODEsSubMaxDim} and~\ref{thm:MaxDimOfMIAinL}.
At the same time, Theorem~1.1 and Table~8 from~\cite{kess2022a} imply that
$\mathop{\rm smd}(\mathcal E_{r,n})=\mathop{\rm smd}(\mathcal L_{r,n})$ for $r\geqslant3$.
The maximum known dimension for the maximal Lie invariance algebras
of nonlinearizable normal systems of ordinary differential equations of the same order~$r\geqslant3$ is
$n^2+(r-1)n+3$ if $r=3$ and $n^2+(r-1)n+2$ if $r\geqslant4$ \cite[Table~10]{kess2022a},
which is attained for the system
\[
\frac{{\rm d}^r\boldsymbol x}{{\rm d}t^r}=\frac r{r-1}\frac{{\rm d}^{r-1}x^1}{{\rm d}t^{r-1}}
\left(\frac{{\rm d}^{r-2}x^1}{{\rm d}t^{r-2}}\right)^{-1}\frac{{\rm d}^{r-1}\boldsymbol x}{{\rm d}t^{r-1}}.
\]
The system ${\rm d}^rx^a/{\rm d}t^r=\delta_{a1}({\rm d}^{r-1}x^2/{\rm d}t^{r-1})^2$,
which is also related to a parabolic geometry \cite[Example~4.6]{cap2017a},
realizes the next known dimension of such kind,
$n^2+(r-2)n+5$ if $r=3$ and $n^2+(r-2)n+4$ if $r\geqslant4$ \cite[Table~10]{kess2022a}.

\section{Conclusion}\label{sec:Conclusion}

Although normal linear systems of second-order ordinary differential equations seemed easy
to be studied within the framework of group analysis of differential equations,
to describe their Lie symmetries,
we needed to apply various advanced methods and modern techniques.
The algebraic method of group classification, which is basic for the present paper,
was supplemented with
gauging the arbitrary class elements by wide families of admissible transformations,
splitting the class under consideration into subclasses with better transformations properties,
finding integer values that characterize cases of Lie-symmetry extensions
and are invariant under the action of the corresponding equivalence group, etc.

In the context of the above characterization,
an important tool was also Lie's theorem on realizations of finite-dimensional Lie algebras on the line.
Previously, this theorem had successfully been applied for solving group classification problems
for a number of classes of differential equations, including
multidimensional nonlinear Schr\"odinger equations with modular nonlinearity in \cite[Section~7]{popo2010a} and \cite[Lemma~42]{kuru2020a},
linear ordinary differential equations of arbitrary order $r\geqslant3$ \cite[Section~3]{boyk2015a},
(1+1)-dimensional linear evolution equations of arbitrary order $r\geqslant3$ \cite[Lemma~14]{bihlo2017a},
general Burgers--Korteweg--de Vries equations \cite[Lemma~18]{opan2017a} and
(1+1)-dimensional linear Schr\"odinger equations \cite[Lemma~2]{kuru2018a}.
The usage of Lie's theorem is particularly efficient when classifying Lie symmetries
of (1+1)-dimensional generalized Klein--Gordon equations in the light-cone (characteristic) coordinates
since then Lie's theorem is relevant for both coordinates;
see Corollary~12 and Section~4 in~\cite{boyk2020a}.

Within the classical Lie approach,
the first step in solving the group classification problem
for a class~$\mathcal K$ of (systems of) differential equations
is to construct the equivalence group of this class.
Moreover, usually one merely finds the infinitesimal counterpart of the equivalence group,
which is the equivalence algebra of the class~$\mathcal K$, instead of the entire equivalence group,
thus neglecting discrete equivalence transformations.
At the same time, the description of the equivalence groupoid of the class~$\mathcal K$ gives
much more information about transformational properties of~$\mathcal K$
than the description of the equivalence group and allows one to properly select the optimal way
for classifying Lie symmetries of systems from the class~$\mathcal K$.

In contrast to the classical Lie approach,
the initial step of our consideration is to gauge the arbitrary elements of the class~$\bar{\mathcal L}$
by a known wide family of admissible transformations from the action groupoid of the equivalence group of~$\bar{\mathcal L}$
before exhaustively describing both this group and the equivalence groupoid of~$\bar{\mathcal L}$.
We use the obvious fact that
any transformation of the form~\eqref{eq:EquivGroupbarL} is an equivalence transformation of the class~$\bar{\mathcal L}$.
This fact can be directly checked and allows us to successively impose the gauges
$\boldsymbol f=\boldsymbol0$, $A=0$ and $\mathop{\rm tr}B=0$.
After each of the first two gauges,
we exclude~$\boldsymbol f$ and then~$A$ from the tuple of arbitrary elements of the singled out subclasses,
thus reparameterizing them to classes that are more convenient for studying.
As a result of gauging and reparameterizing, we construct
the class chain $\bar{\mathcal L}\hookleftarrow\mathcal L\hookleftarrow\mathcal L'\supset\mathcal L''$.

It turned out that the most suitable for computing admissible transformations is the class~$\mathcal L'$,
whose arbitrary-element tuple is the single matrix-valued function~$V$.
We have singled out the subclass~$\mathcal L'_0$ of~$\mathcal L'$ by the constraint
that the arbitrary-element matrix~$V$ is proportional to the identity matrix~$E$ with time-dependent proportionality factor.
This constraint can be written down as a system of algebraic equations with respect to the entries of~$V$.
Non-differential nature of the constraint leads to certain complications
when computing and presenting equivalence transformations within the subclass~$\mathcal L'_0$,
see Section~\ref{sec:AlgebraicAuxiliaryEqsAndClassReparameterization}.
Later, we have shown that the subclass~$\mathcal L'_0$
coincides with the $G^\sim_{\mathcal L'}$-orbit of the elementary system $\boldsymbol x_{tt}=\boldsymbol0$,
and thus it is the singular part of the class~$\mathcal L'$
whereas its complement~$\mathcal L'_1$ is the regular one.
We have constructed the equivalence groupoids and the equivalence groups of the class~$\mathcal L'$
and its subclasses~$\mathcal L'_0$ and~$\mathcal L'_1$.
The equivalence group of the subclass~$\mathcal L'_1$ and
the canonical significant equivalence group of the subclass~$\mathcal L'_0$
coincide with the equivalence group~$G^\sim_{\mathcal L'}$ of the entire class~$\mathcal L'$.
The partition $\mathcal L'=\mathcal L'_0\sqcup\mathcal L'_1$ of the class~$\mathcal L'$
induces the partition
\[\mathcal G^\sim_{\mathcal L'}=\mathcal G^\sim_{\mathcal L'_0}\sqcup\mathcal G^\sim_{\mathcal L'_1}\]
of its equivalence groupoid~$\mathcal G^\sim_{\mathcal L'}$.
As the regular subclass of~$\mathcal L'$, the subclass~$\mathcal L'_1$
is uniformly semi-normalized with respect to linear superposition of solutions.
The equivalence groupoid~\smash{$\mathcal G^\sim_{\mathcal L'_0}$} of~$\mathcal L'_0$ is the Frobenius product
of the vertex group~\smash{$\mathcal G_0$} of the elementary system~$L'_0$
and the restriction of the action groupoid of~$G^\sim_{\mathcal L'}$ on~$\mathcal L'_0$,
which is a particular case of semi-normalization in the usual sense.
The results obtained for the class~$\mathcal L'$ have been extended
to the classes~$\bar{\mathcal L}$ and~$\mathcal L$
via pulling back by the mappings of these classes on~$\mathcal L'$
that are generated by the above gaugings and reparameterizations.
Analogous results for the class~$\mathcal L''$ have been obtained
by modifying the corresponding proofs for~$\mathcal L'$
via taking into account the constraint $\mathop{\rm tr}V=0$,
which singles out~$\mathcal L''$ as a subclass of~$\mathcal L'$.
Since the constraints on~$V$ that are associated with~$\mathcal L''$, $\mathcal L''_0$ and~$\mathcal L''_1$
as subclasses of~$\mathcal L'$ are algebraic, i.e., of non-differential nature,
the consideration of essential equivalence groups instead of the entire equivalence groups
is relevant for these subclasses as for~$\mathcal L'_0$.
The equivalence algebra of an appropriate kind for each of the above classes is easily computed as the set of vector fields
(on the underlying space run by the collection of the independent and dependent variables and the arbitrary elements)
that are infinitesimal generators of one-parameter subgroups of the associated equivalence group.

In view of the partitions of the corresponding equivalence groupoids,
the solution of the group classification problem for each class
from the chain $\bar{\mathcal L}\hookleftarrow\mathcal L\hookleftarrow\mathcal L'\supset\mathcal L''$
splits into two independent parts for its singular and regular subclasses.
The group classification problem for the singular subclass is trivial,
and its solution is given by the elementary system with the maximal Lie invariance algebra~$\mathfrak g_0$
presented in~\eqref{eq:SLODEMIAOfElementarySystem}.
For systems from the regular subclass, the notion of essential Lie invariance algebra is well defined,
and it suffices to classify such algebras instead of the corresponding maximal Lie invariance algebras.
The class~$\mathcal L'_1$ is optimal for both carrying out the classification
and presenting the final classification results.
Due to uniform semi-normalization of all the regular subclasses and the consistency of their equivalence groups,
the classification results for~$\mathcal L'_1$ can be converted into
those for the classes~$\bar{\mathcal L}_1$, $\mathcal L_1$ and~$\mathcal L''_1$,
and the essential Lie invariance algebras of systems from each of the above classes are regular.

Using the equivariance of the natural projection~$\pi$ from $\mathbb F_t\times\mathbb F^n_{\boldsymbol x}$ onto $\mathbb F_t$
under the action of~$G^\sim_{\mathcal L'}$,
we have introduced the $G^\sim_{\mathcal L'}$-invariant nonnegative integer characteristic~$k_V$
for the maximal and the essential Lie invariance algebras~$\mathfrak g_V^{}$ and~$\mathfrak g^{\rm ess}_V$
of systems~$L'_V$ from the class~$\mathcal L'_1$,
${k=k_V:=\dim\pi_*\mathfrak g_V^{}=\dim\pi_*\mathfrak g^{\rm ess}_V}$.
The upper bound for~$k$ that is equal to three can be found via mapping~$\mathcal L'_1$ onto~$\mathcal L''_1$
by gauging~\cite{opan2022a} the arbitrary elements of~$\mathcal L'_1$ with a wide family of admissible transformations
from the action groupoid of the equivalence group~\smash{$G^\sim_{\mathcal L'_1}$} of~$\mathcal L'_1$,
see Corollary~\ref{cor:MAIofL''_V}.
Since for any system~$L'_V$, $\pi_*\mathfrak g^{\rm ess}_V$
is a finite-dimensional Lie algebra of vector fields on the $t$-line,
Lie's theorem on such algebras has efficiently been applied here
for listing the possible $t$-projections of the essential Lie invariance algebras of systems from the class~$\mathcal L'_1$
up to the $G^\sim_{\mathcal L'}$-equivalence.
We have separately studied each of the cases $k=0$, $k=1$ and $k=2$,
describing the structure of the corresponding essential Lie invariance algebras
and developing techniques for their classification.
Based on the results obtained, we have proved that $k$ cannot be equal to three.
The complete group classification for a fixed~$n$ requires
classifying subalgebras of the algebra $\mathfrak{sl}(n,\mathbb F)$.
Complete lists of ${\rm SL}(n,\mathbb F)$-inequivalent subalgebras of~$\mathfrak{sl}(n,\mathbb F)$
were constructed only for $n\in\{2,3\}$.
Using the well-known list for $n=2$, we have carried out the complete group classification
of normal linear systems of second-order ordinary differential equations
with two dependent variables.
Even this specific problem was not properly and exhaustively studied in the literature,
in spite of a number of papers devoted to its solution.
The revisited version of Winternitz's list~\cite{wint2004a} of subalgebras of~$\mathfrak{sl}(3,\mathbb F)$
from~\cite{chap2024a} forms a basis for the analogous classification in the case of three dependent variables.
Nevertheless, this classification is still nontrivial and cumbersome.
It requires a separate consideration and will be a subject of our future studies.
The classification of subalgebras~$\mathfrak s$ of~$\mathfrak{sl}(n,\mathbb F)$,
even satisfying the condition $\mathfrak s={\rm C}_{\mathfrak{sl}(n,\mathbb F)}({\rm C}_{\mathfrak{sl}(n,\mathbb F)}(\mathfrak s))$,
for a general value of~$n$ seems to be a wild problem,
and thus it looks hopeless to completely classify Lie symmetries
of normal linear systems of second-order ordinary differential equations
with an arbitrary number of dependent variables.

For each class in the chain $\bar{\mathcal L}\hookleftarrow\mathcal L\hookleftarrow\mathcal L'\supset\mathcal L''$,
the estimates for the dimensions of the maximal Lie invariance algebras of systems
from its regular subclass in Theorem~\ref{thm:MaxDimOfMIAinL1} can be combined with
the description of its singular subclass in Section~\ref{sec:SLODEEquivGroupoidsAndEquivGroups}
and with the discussion related to the equation~\eqref{eq:SLODEMIAOfElementarySystem}
into an assertion on the dimensions of the maximal Lie invariance algebras of systems
from the entire class.

\begin{theorem}\label{thm:MaxDimOfMIAinL}
For any system from the class~$\bar{\mathcal L}$ (resp.\ $\mathcal L$, $\mathcal L'$ or $\mathcal L''$),
the dimension of its maximal Lie invariance algebra is greater than or equal to $2n+1$ and less than or equal to $(n+2)^2-1$.
The lower bound is greatest and is attained for a general system of the class.
The upper bound is least, and it is attained only for the systems from the orbit
of the elementary system $\boldsymbol x_{tt}=\boldsymbol0$ with respect to the corresponding equivalence group.
The submaximum dimension $n^2+4$, which  is maximum for the systems from the complement to the above orbit,
is attained only for the systems that are equivalent to the system~\eqref{eq:CanonicalSystemOfMaxDimOfMIAinL1}
with respect to the corresponding equivalence group.
\end{theorem}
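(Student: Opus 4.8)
The plan is to assemble the statement from two ingredients that are already in place for all four classes, together with one elementary arithmetic comparison. The regular subclasses $\bar{\mathcal L}_1$, $\mathcal L_1$, $\mathcal L'_1$ and $\mathcal L''_1$ are covered verbatim by Theorem~\ref{thm:MaxDimOfMIAinL1}, which supplies the bounds $2n+1\leqslant\dim\mathfrak g\leqslant n^2+4$, the attainment of the lower value $2n+1$ by a general system, and the attainment of the upper value $n^2+4$ only on the orbit of~\eqref{eq:CanonicalSystemOfMaxDimOfMIAinL1}. What remains is to pin down the dimension on the singular subclasses and to glue the two pieces. Throughout I would use that the dimension of the maximal Lie invariance algebra is a point-invariant: a point transformation carrying one system of a class to another induces, via its pushforward, an isomorphism of the corresponding maximal Lie invariance algebras, hence preserves their dimension.

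For the singular subclass, I would invoke items~(iv) of Theorem~\ref{thm:EquivGroupoidL'} and Theorem~\ref{thm:EquivGroupoidbarL} and of Corollaries~\ref{cor:EquivGroupoidL} and~\ref{cor:EquivGroupoidL''}, which identify the singular part of each class with the orbit of the elementary system $\boldsymbol x_{tt}=\boldsymbol0$ under the corresponding equivalence group. By the point-invariance just recalled, every system in such an orbit has maximal Lie invariance algebra of the same dimension as $\mathfrak g_0$. Since $\mathfrak g_0$ is the algebra displayed in~\eqref{eq:SLODEMIAOfElementarySystem}, isomorphic to $\mathfrak{sl}(n+2,\mathbb F)$, this common dimension equals $(n+2)^2-1$. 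Thus the dimension is \emph{constant}, equal to $(n+2)^2-1$, on the whole singular subclass; this is also the content of the equivalences (i)$\Leftrightarrow$(ii)$\Leftrightarrow$(v) of Proposition~\ref{pro:SLODEsl(n+2,F)Invariance}.

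The two subclasses are then combined using the strict inequality $n^2+4<n^2+4n+3=(n+2)^2-1$, valid for every $n\geqslant1$. Because the constant singular value strictly exceeds every value occurring on the regular part, the global maximum over the entire class is $(n+2)^2-1$, attained precisely on the singular subclass, that is, on the orbit of $\boldsymbol x_{tt}=\boldsymbol0$; and the largest value over the complement of that orbit (the regular part) is the submaximum $n^2+4$, attained only on the orbit of~\eqref{eq:CanonicalSystemOfMaxDimOfMIAinL1}. Conversely, since $(n+2)^2-1\geqslant2n+1$, the singular systems do not lower the minimum, so the global lower bound stays $2n+1$ and is realized by a general (hence regular) system of the class. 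Passing among $\bar{\mathcal L}$, $\mathcal L$, $\mathcal L'$ and $\mathcal L''$ costs nothing here, since both ingredients are already formulated for all four classes and $\mathcal L''\subset\mathcal L'$.

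The argument is in essence a bookkeeping assembly, so no fresh estimate is needed; the points deserving care are the two structural facts underpinning it. First, one must be certain that the dimension is genuinely invariant under the equivalence groups in play and under the gauging transformations realizing the embeddings $\bar{\mathcal L}\hookleftarrow\mathcal L\hookleftarrow\mathcal L'$ — this is exactly where the semi-normalization established in Section~\ref{sec:SLODEEquivGroupoidsAndEquivGroups} is used, guaranteeing that the relevant maps are honest point transformations of $\mathbb F_t\times\mathbb F^n_{\boldsymbol x}$. Second, the ``only for'' clauses rest entirely on the strict separation $n^2+4<(n+2)^2-1$, which isolates the singular orbit as the unique locus of the maximal dimension and thereby lets the maximum and the submaximum be read off cleanly from the two subclass descriptions. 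I expect this trivial-looking comparison to be the decisive logical step of the whole proof.
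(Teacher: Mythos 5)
Your proposal is correct and follows exactly the route the paper itself indicates: it assembles Theorem~\ref{thm:MaxDimOfMIAinL1} for the regular subclasses with the identification of each singular subclass as the equivalence-group orbit of $\boldsymbol x_{tt}=\boldsymbol0$, on which the dimension is constantly $\dim\mathfrak{sl}(n+2,\mathbb F)=(n+2)^2-1$, and then uses the strict inequality $n^2+4<(n+2)^2-1$ to separate the maximum from the submaximum. The paper states this assembly in a single sentence preceding the theorem; your write-up merely makes the bookkeeping (point-invariance of the dimension and the arithmetic comparison) explicit.
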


We have developed symmetry-based approaches for order reduction and integration
of normal linear systems of $n$ second-order ordinary differential equations.
These approaches essentially involve
the explicit description of the admissible point transformations between such systems.
The second required ingredient for integrating the systems from the $G^\sim_{\bar{\mathcal L}}$-orbit~$\bar{\mathcal L}_0$
of the elementary system $\boldsymbol x_{tt}=\boldsymbol0$,
which is the constraint for~$\theta$ singling out~$\bar{\mathcal L}_0$ as a subclass of~$\bar{\mathcal L}$,
is well known.
The necessary prerequisite for applying symmetry-based approaches to order reduction and integration
of a system~$\bar L_\theta$ from the complement subclass~$\bar{\mathcal L}_1$ to~$\bar{\mathcal L}_0$ in~$\bar{\mathcal L}$
is that the system~$\bar L_\theta$ admits Lie-symmetry vector fields with nonzero $t$-components
and, moreover, these vector fields are known.
Under additional conditions for such vector fields,
the corresponding systems can be completely integrated merely using algebraic operations and quadratures.

Although the class~$\bar{\mathcal L}=:\bar{\mathcal L}_{2,n}$ is singular
among the classes~$\bar{\mathcal L}_{r,n}$ of normal systems of $n$ ordinary differential equations of the same order~$r\in\{2,3,\dots\}$
due to the singularity of the elementary system ${\rm d}^r\boldsymbol x/{\rm d}t^r=\boldsymbol0$ with $r=2$,
the results of the present paper can be extended to an arbitrary value of~$r$.
It is also insistent to carry out the group classification of the class~$\bar{\mathcal L}_{r,n}$ at least for $(r,n)=(3,2)$
in the same manner as it has been done for $(r,n)=(2,2)$ in the present paper.
However, classifying Lie symmetries of systems of ordinary differential equations of mixed order
is quite a different matter, cf.\ \cite{doub2014a}.

\appendix

\section{Algebraic auxiliary equations and class reparameterization}\label{sec:AlgebraicAuxiliaryEqsAndClassReparameterization}

\newcommand{\EqOrd}{r}
\newcommand{\DerOrder}[1]{{\mbox{\tiny$($\scriptsize$#1$\tiny$)$}}}

If the definition of a class of differential equations~$\mathcal K$ includes
algebraic auxiliary equations with respect to the class arbitrary elements,
where ``algebraic'' means opposite to ``being truly differential'',
then both the usual and generalized equivalence groups of~$\mathcal K$
have certain specific features.
Here we consider these features only for the usual equivalence group~$G^\sim_{\mathcal K}$ of~$\mathcal K$
using a notation different from the notation in other sections of the paper.
Note that classes of the above kind are often arise as subclasses of a superclass
in course of considering hierarchies of subclasses with normalization properties and/or
gauging~\cite{opan2022a} the arbitrary elements of the superclass by wide families of admissible transformations
from the action groupoid of its equivalence group.
For required notions and results on equivalence groups of classes of differential equations,
see~\cite{bihl2012b,opan2017a,popo2010a,vane2020b}.

Consider a parametric form $\mathsf K_\theta$: $K\big(x,u_{\DerOrder\EqOrd},\theta_{\DerOrder q}(x,u_{\DerOrder\EqOrd})\big)=0$
of systems of differential equations
with $n$ independent variables $x=(x_1,\dots,x_n)$ and $m$ dependent variables $u=(u^1,\dots,u^m)$.
The notation~$u_{\DerOrder\EqOrd}$ is used for the tuple of derivatives of~$u$ with respect to $x$ up to order~$\EqOrd$,
by convention including $u$ as the derivatives of order zero.
The collection of equation left-hand sides~$K$ is a tuple of differential functions
in~$u$ and in the tuple of parametric functions
$\theta=\big(\theta^1(x,u_{\DerOrder\EqOrd}),\dots,\theta^k(x,u_{\DerOrder\EqOrd})\big)$,
which themselves are differential functions in~$u$ and are called the arbitrary elements.
Thus, the tuple~$\theta_{\DerOrder q}$ is constituted by the partial derivatives of~$\theta$
up to order $q$ with respect to the arguments~$x$ and~$u_{\DerOrder\EqOrd}$ of~$\theta$.
By~$\mathcal S_1$, $\mathcal S_2$ and~$\mathcal S_3$ we respectively denote the solution sets of auxiliary systems
$\mathsf S_1$, $\mathsf S_2$ and~$\mathsf S_3$
\begin{itemize}\itemsep=0ex
\item
of differential equations $S_1^{i_1}(x,u_{\DerOrder\EqOrd},\theta_{\DerOrder{\hat q}})=0$,
\item
of algebraic equations $S_2^{i_2}(x,u_{\DerOrder\EqOrd},\theta)=0$, and
\item
of differential and algebraic inequalities $S_3^{i_3}(x,u_{\DerOrder\EqOrd},\theta_{\DerOrder{\check q}})\ne0$ ($>0,<0,\dots$)
\end{itemize}
in~$\theta$.
Here, both~$x$ and~$u_{\DerOrder\EqOrd}$ play the role of the independent variables,
the index $i_j$ runs through a finite index set~$I_j$, $j=1,2,3$, $|I_2|\leqslant\#\theta=k$, and
the function tuple $S_2:=(S_2^{i_2},i_2\in I_2)$ is of maximal rank with respect to~$\theta$.
The class $\mathcal K:=\{\mathsf K_\theta\mid\theta\in\mathcal S_1\cap\mathcal S_2\cap\mathcal S_3\}$
is a subclass of the class $\bar{\mathcal K}:=\{\mathsf K_\theta\mid\theta\in\mathcal S_1\cap\mathcal S_3\}$,
which is singled out from~$\bar{\mathcal K}$ by the system of algebraic constraints~$S_2^{i_2}=0$, $i_2\in I_2$.

Taking into account the condition of maximal rank for~$S_2$,
we split the arbitrary-element tuple~$\theta$ into ``parametric'' and ``leading'' parts,
$\theta'$ and~$\theta''$ with $\#\theta''=|I_2|$ and $|\p S_2/\p \theta''|\ne0$,
and (locally) solve the system $S_2=0$ with respect to~$\theta''$,
$\theta''=\mathit\Psi(x,u_{\DerOrder\EqOrd},\theta')$.
This means that without loss of generality we can assume $S_2=\theta''-\mathit\Psi(x,u_{\DerOrder\EqOrd},\theta')$.
Denote by~$\mathsf\Psi$ the substitution for $\theta''$ in view of the system~$\mathsf S_2$,
$\mathsf\Psi\colon\theta''=\mathit\Psi(x,u_{\DerOrder\EqOrd},\theta')$.

Another possibility for simplifying the system~$\mathsf S_2$
is to ``straighten'' via reparameterizing the classes~$\bar{\mathcal K}$ and~$\mathcal K$.
The new arbitrary-element tuple~$\hat\theta$ is defined by
$\hat\theta'=\theta'$, $\hat\theta''=S_2(x,u_{\DerOrder\EqOrd},\theta)$.
The reparameterized classes~$\mathcal K^{\rm s}$ and~$\bar{\mathcal K}^{\rm s}$
are respectively similar to the classes~$\bar{\mathcal K}$ and~$\mathcal K$
with respect to the point transformation
$\tilde x=x$, $\tilde u_{\DerOrder\EqOrd}=u_{\DerOrder\EqOrd}$, $\hat\theta'=\theta'$, $\hat\theta''=S_2(x,u_{\DerOrder\EqOrd},\theta)$
in the space with coordinates $(x,u_{\DerOrder\EqOrd},\theta)$.
The system $\mathsf S_2^{\rm s}$ singling out~$\mathcal K^{\rm s}$ from~$\bar{\mathcal K}^{\rm s}$ as a subclass
is \smash{$\hat\theta''=0$}.
Similar classes have similar transformational properties.
In particular, their equivalence groups (resp.\ their equivalence groupoids) are similar.
Therefore, up to the class similarity, we can take $\theta''=0$ as the system~$\mathsf S_2$.

It is obvious that any point transformation of the form
\begin{gather}\label{eq:GenFormOfInessEquivTrans}
\tilde x=x,\quad \tilde u_{\DerOrder\EqOrd}=u_{\DerOrder\EqOrd},\quad \tilde\theta=\theta+F(x,u_{\DerOrder\EqOrd},\theta),
\end{gather}
where the components of the function $k$-tuple~$F$ are arbitrary differential functions of~$\theta$
vanishing on the set~$\mathcal S_2$ of solutions of the auxiliary system~$\mathsf S_2$,
$F|_{\mathcal S_2}^{}\equiv0$, and $\big|\p(\theta+F)/\p\theta\big|\ne0$,
is formally a (usual) equivalence transformation of the class~$\mathcal K$.
At the same time, the action of any transformation of the form~\eqref{eq:GenFormOfInessEquivTrans}
on the systems from the class $\mathcal K$ coincides with
that of the identity transformation $(\tilde x,\tilde u_{\DerOrder\EqOrd},\tilde\theta)=(x,u_{\DerOrder\EqOrd},\theta)$.
This is why we call the transformations of the form~\eqref{eq:GenFormOfInessEquivTrans}
the insignificant equivalence transformations of the class $\mathcal K$,
and the group constituted by these transformations the \emph{insignificant equivalence group} of the class~$\mathcal K$,
which is denoted by~$G^{{\rm i}\sim}_{\mathcal K}$.
We can show that the group~$G^{{\rm i}\sim}_{\mathcal K}$
is a normal subgroup of the gauge equivalence group~$G^{{\rm g}\sim}_{\mathcal K}$ of~$\mathcal K$
and, moreover, of the entire equivalence group~$G^\sim_{\mathcal K}$ of~$\mathcal K$,
\smash{$G^{{\rm i}\sim}_{\mathcal K}\triangleleft G^{{\rm g}\sim}_{\mathcal K}\triangleleft G^\sim_{\mathcal K}$}
and \smash{$G^{{\rm i}\sim}_{\mathcal K}\triangleleft G^\sim_{\mathcal K}$}.

There necessarily exists a subgroup of~$G^\sim_{\mathcal K}$
formed via a one-to-one selection of representatives from the cosets of~$G^{{\rm i}\sim}_{\mathcal K}$.
We call such a subgroup a \emph{significant equivalence group} of the class $\mathcal K$
and denote it by~$G^{{\rm s\vphantom{g}}\sim}_{\mathcal K}$.
In other words, we have the following assertion.

\begin{theorem}\label{thm:OnSignificantEquivGroups}
The group~$G^\sim_{\mathcal K}$ splits over~$G^{{\rm i}\sim}_{\mathcal K}$, i.e.,
it is the semidirect product of a subgroup~$G^{{\rm s\vphantom{g}}\sim}_{\mathcal K}$ acting on~$G^{{\rm i}\sim}_{\mathcal K}$,
$G^\sim_{\mathcal K}=G^{{\rm s\vphantom{g}}\sim}_{\mathcal K}\ltimes G^{{\rm i}\sim}_{\mathcal K}$.
\end{theorem}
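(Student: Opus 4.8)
The plan is to realize the abstract group extension
\[
1\longrightarrow G^{{\rm i}\sim}_{\mathcal K}\longrightarrow G^\sim_{\mathcal K}\stackrel{\rho}{\longrightarrow} G^\sim_{\mathcal K}/G^{{\rm i}\sim}_{\mathcal K}\longrightarrow 1
\]
concretely and to exhibit an explicit homomorphic section, whose image is the desired subgroup $G^{{\rm s\vphantom{g}}\sim}_{\mathcal K}$. First I would pass to the straightened reparameterization discussed above, so that the algebraic auxiliary system~$\mathsf S_2$ reads $\theta''=0$ and $\theta=(\theta',\theta'')$; since similar classes have similar equivalence groups and groupoids, this entails no loss of generality. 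In this parameterization a usual equivalence transformation $\mathscr T\in G^\sim_{\mathcal K}$ has $(x,u_{\DerOrder\EqOrd})$-components independent of~$\theta$ (this is precisely the feature distinguishing the usual equivalence group, to which the appendix restricts) and a $\theta$-component $\tilde\theta=\big(\Theta'(x,u_{\DerOrder\EqOrd},\theta),\Theta''(x,u_{\DerOrder\EqOrd},\theta)\big)$; preservation of~$\mathsf S_2$ forces $\mathscr T$ to map $\{\theta''=0\}$ to itself, i.e.\ $\Theta''|_{\theta''=0}=0$. An insignificant transformation~\eqref{eq:GenFormOfInessEquivTrans} is exactly one with identity $(x,u_{\DerOrder\EqOrd})$-part and $\theta$-component restricting to the identity on $\mathcal S_2=\{\theta''=0\}$.

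Next I would introduce the significant space~$\mathcal M$ with coordinates $(x,u_{\DerOrder\EqOrd},\theta')$, identified with $\mathcal S_2$, and define the restriction map $\rho\colon G^\sim_{\mathcal K}\to\mathrm{Aut}(\mathcal M)$ sending $\mathscr T$ to its restriction to $\mathcal S_2$ (well defined because $\mathscr T$ is invertible and preserves $\mathcal S_2$). A direct check using the cocycle/chain-rule computation for left and right composition shows that composing $\mathscr T$ with an element of $G^{{\rm i}\sim}_{\mathcal K}$ changes neither the $(x,u_{\DerOrder\EqOrd})$-components nor the restriction of the $\theta$-component to~$\mathcal S_2$; hence $\rho$ is a group homomorphism. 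Conversely, $\mathscr T\in\ker\rho$ means $\mathscr T$ acts as the identity on $(x,u_{\DerOrder\EqOrd})$ and on $\theta|_{\mathcal S_2}$, which is precisely the defining form~\eqref{eq:GenFormOfInessEquivTrans} of an insignificant transformation. This establishes $\ker\rho=G^{{\rm i}\sim}_{\mathcal K}$, so that $G^\sim_{\mathcal K}/G^{{\rm i}\sim}_{\mathcal K}\cong H:=\rho(G^\sim_{\mathcal K})$.

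The heart of the argument is the construction of the section $\sigma\colon H\to G^\sim_{\mathcal K}$ by the \emph{product extension}: for $h\in H$ with components $(x,u_{\DerOrder\EqOrd},\theta')\mapsto(X,U,\Theta')$, set $\sigma(h)$ to have these same $(x,u_{\DerOrder\EqOrd},\theta')$-components (now read as functions independent of~$\theta''$) together with the trivial fiber action $\tilde\theta''=\theta''$. Functoriality of this extension gives $\sigma(h_1)\circ\sigma(h_2)=\sigma(h_1h_2)$ and $\rho\circ\sigma=\mathrm{id}_H$ immediately, so $\sigma$ is an injective homomorphism and $G^{{\rm s\vphantom{g}}\sim}_{\mathcal K}:=\sigma(H)$ is a subgroup with $G^{{\rm s\vphantom{g}}\sim}_{\mathcal K}\cap G^{{\rm i}\sim}_{\mathcal K}=\{\mathrm{id}\}$. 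The one nontrivial point, which I expect to be the main obstacle, is showing that each $\sigma(h)$ actually lies in $G^\sim_{\mathcal K}$, i.e.\ that the product extension preserves the full auxiliary data $\mathsf S_1$, $\mathsf S_2$, $\mathsf S_3$ and not merely~$\mathsf S_2$. I would overcome this without reverifying those systems: choosing $\mathscr T\in G^\sim_{\mathcal K}$ with $\rho(\mathscr T)=h$, the composite $\mathscr T^{-1}\circ\sigma(h)$ has identity $(x,u_{\DerOrder\EqOrd})$-part and, since both $\mathscr T$ and $\sigma(h)$ restrict to~$h$ on~$\mathcal S_2$, restricts to the identity on~$\mathcal S_2$; thus it has the insignificant form~\eqref{eq:GenFormOfInessEquivTrans} and so belongs to $G^{{\rm i}\sim}_{\mathcal K}\subseteq G^\sim_{\mathcal K}$. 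Therefore $\sigma(h)=\mathscr T\circ\big(\mathscr T^{-1}\circ\sigma(h)\big)\in G^\sim_{\mathcal K}$.

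Finally I would assemble the pieces: for arbitrary $\mathscr T\in G^\sim_{\mathcal K}$ the element $\sigma(\rho(\mathscr T))^{-1}\circ\mathscr T$ lies in $\ker\rho=G^{{\rm i}\sim}_{\mathcal K}$, giving the factorization $G^\sim_{\mathcal K}=G^{{\rm s\vphantom{g}}\sim}_{\mathcal K}\cdot G^{{\rm i}\sim}_{\mathcal K}$. Combined with the normality $G^{{\rm i}\sim}_{\mathcal K}\triangleleft G^\sim_{\mathcal K}$ already recorded before the statement and the trivial intersection above, this yields the semidirect decomposition $G^\sim_{\mathcal K}=G^{{\rm s\vphantom{g}}\sim}_{\mathcal K}\ltimes G^{{\rm i}\sim}_{\mathcal K}$, completing the proof.
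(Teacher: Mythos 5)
Your proposal is correct and follows essentially the same route as the paper's proof: after straightening $\mathsf S_2$ to $\theta''=0$, your section $\sigma(\rho(\mathscr T))$ is exactly the paper's map $\mathsf\Psi_*\mathscr T$ (restrict the $\theta'$-component to $\theta''=0$ and act trivially on $\theta''$), and your key step showing $\sigma(h)\in G^\sim_{\mathcal K}$ via the insignificance of $\mathscr T^{-1}\circ\sigma(h)$ is precisely the paper's observation that $\mathscr T^{-1}\circ\mathsf\Psi_*\mathscr T\in G^{{\rm i}\sim}_{\mathcal K}$. The only difference is presentational (a quotient-and-section framing versus the paper's idempotent endomorphism $\mathsf\Psi_*$ with kernel $G^{{\rm i}\sim}_{\mathcal K}$), and both yield the same complement $G^{{\rm s\vphantom{g}}\sim}_{\mathcal K}=\mathsf\Psi_*G^\sim_{\mathcal K}$.
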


\begin{proof}
Up to the class similarity, we assume the system~$\mathsf S_2$ to be $\theta''=0$.
An arbitrary transformation $\mathscr T\in G^\sim_{\mathcal K}$,
\[
\mathscr T\colon\ \
\tilde x=X(x,u),\ \
\tilde u_{\DerOrder\EqOrd}=U^{\DerOrder{\EqOrd}}(x,u_{\DerOrder\EqOrd}),\ \
\tilde\theta'=\Theta'(x,u_{\DerOrder\EqOrd},\theta),\ \
\tilde\theta''=\Theta''(x,u_{\DerOrder\EqOrd},\theta),
\]
preserves the system~$\mathsf S_2$.
This implies that $\Theta''|_{\theta''=0}=0$ identically with respect to~$\theta'$,
and thus $(\p\Theta''\!/\p\theta')|_{\theta''=0}^{}\!=\p(\Theta''|_{\theta''=0}^{})/\p\theta'\!=0$.
Hence  $\big|(\p\Theta'\!/\p\theta')|_{\theta''=0}^{}\big|\ne0$
since \mbox{$\big|\p(\Theta',\Theta'')/\p(\theta',\theta'')\big|\ne0$}.
This means that
\[
\mathsf\Psi_*\mathscr T\colon\ \
\tilde x=X(x,u),\ \
\tilde u_{\DerOrder\EqOrd}=U^{\DerOrder{\EqOrd}}(x,u_{\DerOrder\EqOrd}),\ \
\tilde\theta'=\Theta'(x,u_{\DerOrder\EqOrd},\theta)|_{\theta''=0}^{},\ \
\tilde\theta''=\theta''
\]
is a well-defined point transformation in the space with coordinates $(x,u_{\DerOrder\EqOrd},\theta)$.
One can show that $\mathsf\Psi_*(\mathscr T_1\circ\mathscr T_2)=(\mathsf\Psi_*\mathscr T_1)\circ(\mathsf\Psi_*\mathscr T_2)$
for any $\mathscr T_1,\mathscr T_2\in G^\sim_{\mathcal K}$.
In addition, for any $\mathscr T\in G^\sim_{\mathcal K}$ one have
$(\Theta')^{-1}\circ(\Theta'|_{\theta''=0})=\theta'$ on~$\mathcal S_2$,
where $(\Theta')^{-1}$ is the inverse of~$\Theta'$ with respect to~$\theta'$,
i.e., \mbox{$\mathscr T^{-1}\circ\mathsf\Psi_*\mathscr T\in G^{{\rm i}\sim}_{\mathcal K}$}.
Therefore, $\mathsf\Psi_*\mathscr T$ belongs to the coset $\mathscr T\circ G^{{\rm i}\sim}_{\mathcal K}$
of $G^{{\rm i}\sim}_{\mathcal K}$ in~$G^\sim_{\mathcal K}$.
Varying~$\mathscr T$ within~$G^\sim_{\mathcal K}$,
we obtain the map $\mathsf\Psi_*\colon G^\sim_{\mathcal K}\to G^\sim_{\mathcal K}$ with $\mathscr T\mapsto\mathsf\Psi_*\mathscr T$,
which a group homomorphism.
The transformation~$\mathsf\Psi_*\mathscr T$ is the identity transformation, ${\rm id}$,
in the space with coordinates $(x,u_{\DerOrder\EqOrd},\theta)$
if and only if $\mathscr T\in G^{{\rm i}\sim}_{\mathcal K}$, i.e., $\ker\mathsf\Psi_*=G^{{\rm i}\sim}_{\mathcal K}$.
The image $\mathsf\Psi_*G^\sim_{\mathcal K}$ of~$G^\sim_{\mathcal K}$ under the homomorphism~$\mathsf\Psi_*$
is a subgroup of~$G^\sim_{\mathcal K}$ and trivially intersects~$G^{{\rm i}\sim}_{\mathcal K}$,
$(\mathsf\Psi_*G^\sim_{\mathcal K})\cap G^{{\rm i}\sim}_{\mathcal K}=\{{\rm id}\}$.
Hence
$G^\sim_{\mathcal K}=(\mathsf\Psi_*G^\sim_{\mathcal K})\ltimes G^{{\rm i}\sim}_{\mathcal K}$,
i.e., $G^{{\rm s\vphantom{g}}\sim}_{\mathcal K}:=\mathsf\Psi_*G^\sim_{\mathcal K}$
is a significant equivalence group of the class~$\mathcal K$.
\end{proof}

\begin{remark}\label{rem:OnMappingForSignificantEquivGroups}
If the system~$\mathsf S_2$ is of the more general form $\theta''=\mathit\Psi(x,u,\theta')$,
then the map $\mathsf\Psi_*\colon G^\sim_{\mathcal K}\to G^\sim_{\mathcal K}$ is defined by
$G^\sim_{\mathcal K}\ni\mathscr T\mapsto\mathsf\Psi_*\mathscr T$,
where the components of the transformation~$\mathsf\Psi_*\mathscr T$~are
\begin{subequations}\label{eq:SignificantEquivGroupPsiTrans}
\begin{gather}\label{eq:SignificantEquivGroupPsiTransA}
\tilde x=X(x,u),\ \
\tilde u_{\DerOrder\EqOrd}=U^{\DerOrder{\EqOrd}}(x,u_{\DerOrder\EqOrd}),\ \
\tilde\theta'=\hat\Theta'(x,u_{\DerOrder\EqOrd},\theta'),
\\ \label{eq:SignificantEquivGroupPsiTransB}
\tilde\theta''=\theta''-\mathit\Psi(x,u,\theta')+\mathit\Psi\big(X(x,u),U^{\DerOrder{\EqOrd}}(x,u_{\DerOrder\EqOrd}),\hat\Theta'(x,u_{\DerOrder\EqOrd},\theta')\big),
\end{gather}
where $\hat\Theta'(x,u_{\DerOrder\EqOrd},\theta'):=\Theta'(x,u_{\DerOrder\EqOrd},\theta)|_{\theta''=\mathit\Psi(x,u,\theta')}^{}$.
\end{subequations}
\end{remark}

It is clear that the significant equivalence group of the class~$\mathcal K$
obtained in the proof of Theorem~\ref{thm:OnSignificantEquivGroups} is in general not unique
since the construction of the homomorphism~$\mathsf\Psi_*$ depends on
splitting the arbitrary-element tuple~$\theta$ into ``parametric'' and ``leading'' parts.
If the intersection $G^\sim_{\mathcal K}\cap G^\sim_{\bar{\mathcal K}}$ is a significant equivalence group of the class $\mathcal K$,
then we call it the \emph{canonical significant equivalence group} of the class~$\mathcal K$.

To factor out the insignificant equivalence group~$G^{{\rm i}\sim}_{\mathcal K}$
from the entire equivalence group~$G^\sim_{\mathcal K}$ of~$\mathcal K$, we reparameterize the class~$\mathcal K$
via making the substitution $\mathsf\Psi\colon\theta''=\mathit\Psi(x,u,\theta')$
for~$\theta''$ into~$\mathsf K_\theta$, $\mathsf S_1$ and~$\mathsf S_3$.
Thus, the required reparameterization~$\mathcal K_{\mathsf\Psi}$ of the class~$\mathcal K$ consists of
the systems~$\mathsf K_\theta|_{\mathsf\Psi}^{}$,
where the tuple~$\theta'$ is assumed as the arbitrary-element tuple
and runs through the joint solution set of
the auxiliary system of differential equations $\mathsf S_1|_{\mathsf\Psi}^{}$ and
the auxiliary system of differential and algebraic inequalities $\mathsf S_3|_{\mathsf\Psi}^{}$.
It is not correct to say that the class~$\mathcal K_{\mathsf\Psi}$ is a subclass of the class~$\bar{\mathcal K}$.
Instead of this, we write $\mathcal K_{\mathsf\Psi}\hookrightarrow\bar{\mathcal K}$,
denoting that the class~$\mathcal K_{\mathsf\Psi}$ is associated, via the reparameterization in the above way,
with the subclass~$\mathcal K$ of the class~$\bar{\mathcal K}$.

One can prove that $G^\sim_{\mathcal K_{\mathsf\Psi}}=\mathsf\Pi_*(\mathsf\Psi_*G^\sim_{\mathcal K})$,
where $\mathsf\Pi$ is the natural projection from the space with coordinates $(x,u_{\DerOrder\EqOrd},\theta)$
onto the space with coordinates $(x,u_{\DerOrder\EqOrd},\theta')$.
Moreover, the projection~$\mathsf\Pi$ establishes an isomorphism
between the groups~$G^\sim_{\mathcal K_{\mathsf\Psi}}$ and $\mathsf\Psi_*G^\sim_{\mathcal K}$.
The inverse~$\mathsf\Pi_*^{-1}$ of~$\mathsf\Pi_*$ on~\smash{$G^\sim_{\mathcal K_{\mathsf\Psi}}$} is defined
in the following way.
For an arbitrary transformation~\smash{$\mathscr T'\in G^\sim_{\mathcal K_{\mathsf\Psi}}$},
whose components are of the form~\eqref{eq:SignificantEquivGroupPsiTransA},
the image $\mathsf\Pi_*^{-1}\mathscr T'$ is the transformation
whose components are of the form~\eqref{eq:SignificantEquivGroupPsiTrans}
with the same functions $X$, $U^{\DerOrder{\EqOrd}}$ and~$\hat\Theta'$,
which definitely belongs to~$\mathsf\Psi_*G^\sim_{\mathcal K}$.
In this context, the equation~\eqref{eq:SignificantEquivGroupPsiTransB} can be interpreted
as the prolongation of the transformation~$\mathscr T'$ on~$\theta''$.

\subsection*{Acknowledgments}

The authors thank Boris Doubrov, Boris Kruglikov, Serhii Koval, Michael Kunzinger, Dmytro Popovych and Galyna Popovych
for helpful discussions.
The authors also sincerely thank the three reviewers for their helpful suggestions and comments,
which led to essentially improving the presentation of results.
This research was supported in part
by a grant from the Simons Foundation (1290607, V.M.B. and O.V.L.),
by the Austrian Science Fund (FWF), projects P25064 and P28770 (R.O.P.),
by the Ministry of Education, Youth and Sports of the Czech Republic (M\v SMT \v CR) under RVO funding for I\v C47813059 (R.O.P.),
by the NAS of Ukraine under the projects 0116U003059 (V.M.B., O.V.L. and R.O.P.) and 0121U110543 (O.V.L.),
as well as the project of the National Research Foundation of Ukraine 2020.02/0089, state registration number 0120U104004 (O.V.L.).
The authors express deepest thanks to the Armed Forces of Ukraine and the civil Ukrainian people
for their bravery and courage in defense of peace and freedom in Europe and in the entire world from russism.

\end{document}